\newtheorem{theorem}{Theorem}[section]
\newtheorem{lemma}[theorem]{Lemma}
\newtheorem{question}[theorem]{Question}
\newtheorem*{question*}{Question}
\newtheorem{prop}[theorem]{Proposition}
\newtheorem{corollary}[theorem]{Corollary}
\theoremstyle{definition}
\newtheorem{defn}[theorem]{Definition}
\newtheorem{remark}[theorem]{Remark}
\newtheorem{example}[theorem]{Example}
\newcommand{\Q}{\mathbb Q}
\newcommand{\C}{\mathbb C}
\newcommand{\R}{\mathbb R}
\newcommand{\K}{\mathbb K}
\renewcommand{\P}{\mathbb P}
\newcommand{\T}{\mathbb T}
\newcommand{\D}{\mathbb D}
\newcommand{\Z}{\mathbb Z}
\newcommand{\N}{\mathbb N}
\newcommand{\id}{\textnormal{id}}
\DeclareFontFamily{U}{matha}{\hyphenchar\font45}
\DeclareFontShape{U}{matha}{m}{n}{
      <5> <6> <7> <8> <9> <10> gen * matha
      <10.95> matha10 <12> <14.4> <17.28> <20.74> <24.88> matha12
      }{}
\newcommand{\blor}{\mathbin{
  \raisebox{.1ex}{%
    \rotatebox[origin=c]{90}{\usefont{U}{matha}{m}{n}\symbol{\string"CE}}}}}
\DeclareFontFamily{U}{matha}{\hyphenchar\font45}
\DeclareFontShape{U}{matha}{m}{n}{
      <5> <6> <7> <8> <9> <10> gen * matha
      <10.95> matha10 <12> <14.4> <17.28> <20.74> <24.88> matha12
      }{}
\def\id{\textnormal{id}}
\def\id{\textnormal{id}}
\def\lra#1{\overset{#1}{\longrightarrow}}
\newlist{HF}{enumerate}{1}
\setlist[HF]{label=(HF\arabic*)}
\newlist{PB}{enumerate}{1}
\setlist[PB]{label=(PB\arabic*)}
\newlist{CZ}{enumerate}{1}
\setlist[CZ]{label=(CZ\arabic*)}
\newlist{S1HF}{enumerate}{1}
\setlist[S1HF]{label=(S1HF\arabic*)}
\numberwithin{equation}{section}
\title{Symplectic Criteria on Stratified Uniruledness of Affine Varieties and Applications}
\author{Dahye Cho}
\email{dahye@math.stonybrook.edu}
\begin{document}

\begin{abstract}
We develop criteria for affine varieties to admit uniruled subvarieties of certain dimensions. The measurements are from long exact sequences of versions of symplectic cohomology, which is a Hamiltonian Floer theory for some open symplectic manifolds including affine varieties. Symplectic cohomology is hard to compute, in general. However, certain vanishing and invariance properties of symplectic cohomology can be used to prove that our criteria for finding uniruled subvarieties hold in some cases. We provide applications of the criteria in birational geometry of log pairs in the direction of the Minimal Model Program.
\end{abstract}

\maketitle
\bibliographystyle{halpha}

\tableofcontents

\section{Introduction}
A projective variety $X$ over complex numbers $\C$ is uniruled if for a generic point $x \in X$, there is a rational curve $\P^{1} \to X$ passing through $x$, where $\P^{n}$ denote $\C P^{n}$. An affine variety $M$ over $\C$ is called $k$-uniruled if for a generic point $x \in M$, there is a polynomial map $(\P^{1} \setminus\Gamma) \to M$ passing through $x$, where $\Gamma$ is a set of at most $k$ distinct points. (An affine variety $M$ is also called $\C$-uniruled if it is $1$-uniruled, i.e. for a generic point $x \in M$ there is a polynomial map $\C \to M$ passing through $x$.) A Liouville domain $\underline{M}$ is $J$-holomorphic $(k,\mathcal{E})$-uniruled if, for every convex almost complex structure $J$ on $\underline{M}$ and for any point $x$ in the interior $\text{int}\underline{M}$ of $\underline{M}$ where $J$ is integrable near $x$, there is a $J$-holomorphic map $\Sigma \to \text{int}\underline{M}$ passing through $x$, where $\Sigma$ is a Riemann sphere with the number boundary is at most $k-1$, and the energy is at most $\mathcal{E}$. A surprising fact is that both uniruledness of projective varieties and $k$-uniruledness of affine varieties are symplectic invariant by Koll\'ar, Ruan and McLean using Gromov-Witten theory \cite{kollar96}, \cite{ruan99}, \cite{mclean14}. Moreover, uniruledness is a symplectic birational invariant by Hu-Li-Ruan \cite{huliruan08} (i.e. Uniruledness is invariant under symplectic blow ups and blow downs). Biolley used Floer cohomology to detect hyperbolicities \cite{biolley04}. In \cite{mcduffuniruled}, McDuff proved that every closed symplectic manifold with Hamiltonian $S^{1}$-action is uniruled by detecting nonzero Gromov-Witten invariant using Seidel representation \cite{seidelthesis} of the fundamental group of Hamiltonian diffeomorphism into the small quantum cohomology. McLean proved that $J$-holomophic $(1,\mathcal{E})$-uniruledness implies $\C$-uniruledness on affine varieties \cite{mclean14}. Based on \cite{biolley04}, Albers-Frauenfelder-Oancea showed that vanishing symplectic cohomology implies $J$-holomophic disk-uniruledness by SFT-neck stretching \cite{albers.frauenfelder.oancea17}. Zhou proved that, for an exact domain, the existence of a symplectic dilation implies $(1,\mathcal{E})$-uniruled \cite{zhou19} based on \cite{mclean14}, \cite{diogilisi18s}. Li also considered it using the cyclic dilation in \cite{yinli19}.

In this paper, we develop a criterion for affine varieties or log pairs to be stratified uniruled, which means a criterion to find uniruled subvarieties of certain dimensions, inspired by \cite{biolley04}, \cite{mclean14}. To define a measurement, we use symplectic cohomology, that is a Hamiltonian Floer theory for some open symplectic manifolds including affine varieties. The key observation is that the connecting maps of symplectic cohomology are Floer cylinders which we deform to non-constant pseudo-holomorphic curves. Moreover, we can deform the pseudo-holomorphic curves to rational curves passing through cycles representing cohomology classes. The deformation/degeneration techniques are heavily based on \cite{hermann}, \cite{biolley04}, \cite{bourgeois.oancea09}, \cite{mclean14}. We show how to apply the criteria effectively by applying certain vanishing and invariance properties of symplectic cohomology or by computing spectral sequences that converges to symplectic cohomology, \cite{mclean18}, \cite{mcleancomputation}, \cite{ganatrapomerleano}, \cite{mclean-ritter18}. An application of the criterion comes when two affine varieties have isomorphic symplectic cohomologies but different cohomologies. For example, the Kaliman modification of affine varieties of complex dimension higher than $2$, does not change symplectic cohomology. Therefore, we can detect uniruled subvarieties of affine varieties after Kaliman modification. Moreover, the same idea works on the fact that for a Stein manifold, symplectic cohomology is invariant under attaching flexible Weinstein $k$-handles. We explore some applications as corollaries. If the homological mirror symmetry would convert symplectic cohomology more computable, we expect to use this work to understand log birational geometry and suggest some questions considering the minimal models.\\

The following is the main theorem of this paper. Let $M$ be a smooth affine variety of complex dimension $n$ together with a trivialization of some power of its canonical bundle. Let $X$ be a smooth projective variety compactifying $M$ with an ample divisor $D$. There is a long exact sequence:  
$$\begin{tikzcd} [cramped, sep=small] \label{exact triangle}
\cdots \to SH^{*-1}(M)\arrow[r] & SH^{*-1}_{+}(M)\arrow[r,"\delta"] & H^{*}(M)\arrow[r,"\iota"] & SH^{*}(M)\to \cdots.
\end{tikzcd}
$$
where, $\delta: SH^{*}_{+}(M)\to H^{*}(M)$ is the connecting map.

\begin{theorem}[\ref{main theorem00}] \label{main theorem0}
If a cohomology class $[\mho]\in H^{m}(M)$ be in the image of the map $\delta$ for $m=2k$ or $2k+1$ for some $k\in \N$. Then there exists a subvariety $\Xi_{\mho}\subset M$ of dimension at least $n-k$ satisfying the following properties:
\begin{enumerate}
    \item $\Xi_{\mho}$ is $\C$-uniruled. In other words, for each $p\in \Xi_{\mho}$, there exists a non-constant algebraic map $\overline{v}_{p}:\C\to \Xi_{\mho}$ whose image contains $p$.
    \item For any exhausting finite type Morse function $f$ on $X\setminus D$, $\Xi_{\mho}$ set-theoretically intersect with a unstable submanifold $\mho_{f}$ of $f$ representing $[\mho]\in H^{m}(M)\cong HM^{m}(M,f)$.
\end{enumerate}
\end{theorem}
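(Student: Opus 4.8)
The plan is to interpret the hypothesis $[\mho]\in\mathrm{image}(\delta)$ at the chain level and then degenerate the Floer cylinders that witness it into honest rational curves. Fix an admissible Hamiltonian of small slope on the completion $\widehat M$ of the Liouville domain $\underline M$ together with an exhausting finite type Morse function $f$ on $M=X\setminus D$, chosen so that the low action generators of the symplectic cochain complex are the critical points of $f$ (computing $H^{*}(M)\cong HM^{*}(M,f)$) while the high action generators cluster near the Reeb orbits of $\partial\underline M$. In this model the connecting map $\delta\colon SC^{*-1}_{+}\to C^{*}(M,f)$ counts rigid Floer cylinders positively asymptotic to a nonconstant orbit and negatively asymptotic to a critical point of $f$. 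The hypothesis then furnishes a cocycle $\gamma\in SC^{m-1}_{+}$ with $\delta[\gamma]=[\mho]$, i.e. the signed count of such cylinders running from the orbits of $\gamma$ into the index-$m$ critical points carrying $\mho$ is nonzero. Using the trivialization of a power of the canonical bundle of $M$, I would fix the grading and the relative first Chern class so that $c_{1}$ vanishes on the relevant curve classes; this is what makes the index computation below clean.

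Next I would pass from Floer cylinders to pseudo-holomorphic planes by an SFT neck-stretching along a contact hypersurface parallel to $\partial\underline M$, following Bourgeois--Oancea and McLean. In the Morse--Bott stretched limit each rigid cylinder converges to a holomorphic building whose distinguished component in $\widehat M$ is a finite energy plane $P$, positively asymptotic to a Reeb orbit underlying $\gamma$ and completed, at its evaluation point, by a gradient half-trajectory of $f$ terminating at the critical point $q$. Thus the nonzero connecting-map count is exactly a nonzero count of finite energy planes asymptotic to $\gamma$ with a marked point $z_{0}$ lying on the unstable manifold $\mho_{f}=W^{u}(q)$. The a priori energy bound $\mathcal E$ is controlled by the action of $\gamma$, so Gromov--SFT compactness applies throughout.

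To build the subvariety I would relax the incidence at $z_{0}$ and instead evaluate at a free marked point. Let $\mathcal N$ be the moduli space of finite energy planes asymptotic to the ($S^{1}$-family of) orbit $\gamma$ with one free marked point $z_{1}$, and set $\Xi_{\mho}$ to be the Zariski closure of the image of $\mathrm{ev}_{z_{1}}\colon\mathcal N\to M$. Over the locus of $M$ where $J$ is integrable, the removable singularity theorem, the asymptotic analysis near $D$, and McLean's passage from $J$-holomorphic $(1,\mathcal E)$-uniruledness to $\C$-uniruledness \cite{mclean14} turn each such plane through a point $p$ into a non-constant algebraic map $\overline v_{p}\colon\C\to M$ whose image lies in $\Xi_{\mho}$ and contains $p$; since every point of $\Xi_{\mho}$ is hit by $\mathrm{ev}_{z_{1}}$, this gives property (1). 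Property (2) is the persistence of the connecting-map count: because $\delta$ is independent of $f$, for every exhausting finite type Morse function $f$ some plane in $\mathcal N$ carries its marked point into $\mho_{f}$, so $\Xi_{\mho}\cap\mho_{f}\neq\varnothing$. Finally, the Fredholm index of a plane asymptotic to $\gamma$ (with $c_{1}=0$ and $\mu_{CZ}(\gamma)$ fixed by the degree $m-1$) is $2n-m-2$ in this normalization; adding the free marked point and accounting for the Morse--Bott $S^{1}$-family of Reeb orbits, which contributes the pair of generators in degrees $2k$ and $2k+1$, makes the evaluation image have real dimension $2(n-k)$ in both cases, so $\dim_{\C}\Xi_{\mho}\ge n-k$.

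The main obstacle is twofold and concentrated in the geometric analysis. First, the neck-stretching must be controlled so that the limiting building genuinely retains a non-constant plane through the prescribed generic point rather than degenerating so that all the holomorphic energy escapes into the symplectization, or so that the distinguished component becomes multiply covered or constant; handling this needs the energy bound from the action of $\gamma$ together with a (possibly virtual) transversality scheme for the punctured moduli spaces. Second, and most delicate, is the lower bound on $\dim\Xi_{\mho}$: the index only shows that $\mathrm{ev}_{z_{1}}$ cannot sweep more than $n-k$ complex dimensions, so one must show that the nonvanishing of $\delta[\gamma]$ forces $\mathrm{ev}_{z_{1}}$ to be dominant onto a full $(n-k)$-dimensional subvariety, i.e. that the planes cannot all be confined to a smaller locus without killing the count. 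Establishing this dominance is the crux of the argument.
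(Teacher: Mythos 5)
Your outline tracks the paper's strategy at the level of skeleton: extract a Floer cylinder witnessing $\delta[\gamma]=[\mho]$, degenerate it to a holomorphic curve meeting $\mho_{f}$, convert that curve to an algebraic map $\C\to M$ via McLean's degeneration to the normal cone, and let the family of such lines sweep out $\Xi_{\mho}$. One genuine difference of route: the paper does not neck-stretch. It uses Bourgeois--Oancea's cascade correspondence for an autonomous Hamiltonian that vanishes on $\operatorname{int}\underline{M}$, so the Floer-cylinder component of the rigid cascade is already honestly $J$-holomorphic in the interior and the maximum principle controls its boundary; this sidesteps the SFT-building degeneration worries you list as your first obstacle, and it is why the relevant object is a disk with boundary in $\partial\underline{M}$ (hence $(1,\mathcal{E},[\mho])$-uniruledness) rather than a finite-energy plane in a split limit.

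The genuine gap is the dimension bound, which you correctly identify as the crux but do not close, and the index/dominance argument you sketch cannot close it: a Fredholm index only caps the dimension of $\mathrm{ev}_{z_{1}}(\mathcal{N})$ from above, and nonvanishing of the count $\delta[\gamma]$ does not force the punctured moduli space to be cut out transversally or the evaluation to be dominant. The paper's lower bound comes from an ingredient you already wrote down as property (2) and then set aside: since the connecting map is independent of the auxiliary Morse data, $\Xi_{\mho}$ must meet the unstable submanifold $\mho_{f}$ (of real dimension $m$) for \emph{every} exhausting finite type Morse function $f$. Thom transversality for a generic pair $(f,g)$, applied stratum by stratum to the Whitney stratification of $\overline{\mho_{f}}$, then forces $\dim_{\R}\Xi_{\mho}+m\geq 2n$, since a subvariety of smaller dimension would miss the unstable submanifold of a generic $f$; this gives $\dim_{\C}\Xi_{\mho}\geq n-k$ for $m=2k$ or $2k+1$. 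So the missing step is not a dominance statement about $\mathrm{ev}_{z_{1}}$ but the observation that property (2), quantified over all $f$, already implies the lower bound. A secondary caveat: taking $\Xi_{\mho}$ to be the Zariski closure of an evaluation image does not by itself give that every point of $\Xi_{\mho}$ lies on an affine line; the paper parametrizes the lines by Grothendieck's scheme $\mathrm{Hom}^{\mathrm{nonconstant}}(\P^{1},F)$ so that the image of the evaluation morphism is a countable union of subvarieties, and it uses that each limiting rational curve meets $D$ only at $\infty$ to conclude that $\Xi_{\mho}=\overline{\Xi}\cap M$ is swept by maps $\C\to M$.
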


As an application, we can define a measurement of the co-dimension of maximal uniruled subvariety and have the main criterion as follow. 

\begin{defn}
    $\ell(M):=$ min$\{$deg$([\alpha]):[\alpha] \in H^*(M)$ with $[\alpha]=\delta(\gamma)$ for some $0\neq \gamma\in SH^{*-1}_{+}(M)\}$.
\end{defn}

\begin{remark}
The same definition holds for all Liouville domains.
\end{remark}

\begin{corollary} [\ref{4.1}] \label{main theorem1} 
If $\ell(M)=2k$ or $2k+1$ $(0\leq k <n)$, then $M$ admits a $(n-k)$-dimensional family of affine lines. Here, an affine line means the image of $\C$ in $M$ under a nonconstant polynomial map. Moreover, $M$ admits a uniruled subvariety of dimension $n-k$.
\end{corollary}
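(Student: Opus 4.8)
The plan is to deduce the corollary directly from Theorem~\ref{main theorem0}, since essentially all of the geometric content is already contained there; what remains is to unwind the definition of $\ell(M)$ and to translate the conclusion of $\C$-uniruledness into the stated language of families of affine lines.

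First I would use the hypothesis. By definition $\ell(M)$ is the minimal degree of a nonzero class lying in the image of $\delta$, so the assumption $\ell(M)=2k$ or $2k+1$ for some $k$ with $0\le k<n$ guarantees the existence of a class $0\neq \gamma\in SH^{*-1}_{+}(M)$ whose image $[\mho]:=\delta(\gamma)\in H^{m}(M)$ is nonzero with $m=\ell(M)\in\{2k,2k+1\}$. This is precisely the hypothesis of Theorem~\ref{main theorem0} in the degree range $m=2k$ or $m=2k+1$. Applying that theorem to $[\mho]$ produces a subvariety $\Xi_{\mho}\subset M$ of dimension at least $n-k$ which is $\C$-uniruled and which set-theoretically meets an unstable submanifold representing $[\mho]$.

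It then remains to repackage the conclusion. By part (1) of Theorem~\ref{main theorem0}, through every point $p\in\Xi_{\mho}$ there passes the image $\overline{v}_{p}(\C)$ of a nonconstant algebraic map $\overline{v}_{p}\colon\C\to\Xi_{\mho}$; these images are exactly affine lines in the sense of the statement. As $p$ varies over $\Xi_{\mho}$ these affine lines sweep out all of $\Xi_{\mho}$, so their union is a subvariety of dimension at least $n-k$: this is the asserted $(n-k)$-dimensional family of affine lines, and $\Xi_{\mho}$ itself is the desired uniruled subvariety, establishing both claims.

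The only delicate point in the corollary is the bookkeeping with the dimension: Theorem~\ref{main theorem0} yields $\dim\Xi_{\mho}\ge n-k$ rather than an equality. I would therefore read the conclusion as recording a uniruled subvariety whose dimension is at least $n-k$, swept out by the affine lines of part (1), and this lower bound is exactly what the family-of-affine-lines assertion encodes. All of the real work — deforming the Floer connecting cylinders of $\delta$ into non-constant rational curves through a cycle representing $[\mho]$ — is carried out inside Theorem~\ref{main theorem0}, so this corollary is genuinely just its specialization to the degree-minimizing class, with the index $k$ in the theorem read off from the parity of $\ell(M)$.
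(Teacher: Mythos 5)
Your proposal is correct and matches the paper's treatment: the paper states Corollary \ref{4.1} without a separate proof, treating it exactly as you do — as the immediate specialization of Theorem \ref{main theorem0} to a degree-minimizing class $[\mho]=\delta(\gamma)$ realizing $\ell(M)$, with the family of affine lines read off from the $\C$-uniruledness of $\Xi_{\mho}$. Your remark on the dimension count (a lower bound $\dim\Xi_{\mho}\geq n-k$ rather than equality) is also consistent with how the theorem is stated.
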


\begin{corollary} [\ref{cor1}]
If $\ell(M)=0$ or $1$, then $M$ is $\C$-uniruled.
\end{corollary}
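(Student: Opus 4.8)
The plan is to read this off directly from Corollary~\ref{main theorem1} (equivalently, from Theorem~\ref{main theorem0}) by specializing to the smallest possible index $k=0$. The hypothesis $\ell(M)=0$ or $1$ is, by the definition of $\ell$, exactly the assertion that there is a nonzero class $\gamma\in SH^{*-1}_{+}(M)$ whose image $[\mho]:=\delta(\gamma)\in H^{m}(M)$ sits in degree $m=0$ or $m=1$. Writing $m=2k$ or $m=2k+1$ then forces $k=0$, so we are precisely in the boundary case of the theorem, and in particular $0\le k<n$ holds whenever $n\ge 1$.

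First I would feed this class $[\mho]$ into Theorem~\ref{main theorem0}. Since $[\mho]$ lies in the image of $\delta$ in degree $m=2\cdot 0$ or $2\cdot 0+1$, the theorem produces a subvariety $\Xi_{\mho}\subset M$ of dimension at least $n-k=n$ that is $\C$-uniruled, i.e.\ through every point $p\in\Xi_{\mho}$ there passes a non-constant algebraic map $\overline{v}_{p}:\C\to\Xi_{\mho}$ with $p$ in its image. The remaining step is to upgrade the $\C$-uniruledness of $\Xi_{\mho}$ to that of $M$ itself: as $\Xi_{\mho}$ is a closed subvariety of the irreducible $n$-dimensional affine variety $M$ with $\dim\Xi_{\mho}\ge n$, we have $\dim\Xi_{\mho}=n$, and irreducibility of $M$ forces $\Xi_{\mho}=M$ after taking Zariski closure. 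Hence the affine lines supplied by property~(1) of Theorem~\ref{main theorem0} already sweep out all of $M$, so through every point of $M$ there is a non-constant polynomial map from $\C$; this is stronger than the generic-point condition in the definition of $\C$-uniruledness, and therefore $M$ is $\C$-uniruled.

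I do not expect any genuine obstacle, since the statement is simply the $k=0$ endpoint of the already-established Corollary~\ref{main theorem1}. The only point that deserves a word of care is the bookkeeping in the last step: one must verify that a full-dimensional $\C$-uniruled subvariety of the irreducible variety $M$ coincides with $M$, so that the ruling genuinely descends to the whole variety rather than to a proper piece. (If one prefers to avoid any irreducibility convention on $M$, the same argument applies component by component, producing a $\C$-uniruling on each top-dimensional component through which the class $[\mho]$ is supported.)
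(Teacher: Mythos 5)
Your proposal is correct and follows essentially the same route as the paper, which states this corollary without a separate proof precisely because it is the $k=0$ specialization of Theorem~\ref{main theorem0} (equivalently Corollary~\ref{4.1}): a class in the image of $\delta$ in degree $0$ or $1$ yields a $\C$-uniruled subvariety of dimension at least $n$, which is therefore Zariski dense in $M$, so a generic point of $M$ lies on an affine line. Your extra remark about checking that a full-dimensional uniruled subvariety exhausts $M$ (componentwise if $M$ is reducible) is the right bookkeeping and is consistent with the paper's intent.
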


\begin{corollary}[\ref{SH=0}]
If $SH^*(M)=0$, then $M$ is $\C$-uniruled. (See Theorem 5.4 in \cite{zhou19})
\end{corollary}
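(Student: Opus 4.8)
The plan is to deduce this directly from Corollary~\ref{main theorem1} (equivalently Corollary~\ref{cor1}) by showing that the hypothesis $SH^*(M)=0$ forces $\ell(M)=0$. All of the geometric content has already been packaged into Theorem~\ref{main theorem0}, so what remains is a short diagram chase in the long exact sequence displayed before Theorem~\ref{main theorem0}, together with a check that the class we produce is a genuine witness for $\ell(M)=0$.

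First I would feed the vanishing into that long exact sequence. Since $SH^*(M)=0$ in every degree, the two terms flanking $\delta$ at $*=0$, namely $SH^{-1}(M)$ on the left and $SH^{0}(M)$ on the right, both vanish. Exactness of
$$0=SH^{-1}(M)\to SH^{-1}_{+}(M)\xrightarrow{\ \delta\ }H^{0}(M)\xrightarrow{\ \iota\ }SH^{0}(M)=0$$
then shows that $\delta\colon SH^{-1}_{+}(M)\to H^{0}(M)$ is an isomorphism; in particular it is surjective with trivial kernel.

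Next I would single out the unit class $[1]\in H^{0}(M)$, which is nonzero since $M$ is a nonempty (irreducible) affine variety of dimension $n$. By surjectivity there is $\gamma\in SH^{-1}_{+}(M)$ with $\delta(\gamma)=[1]$, and by injectivity of $\delta$ this $\gamma$ is nonzero because $[1]\neq 0$. Thus $[1]$ is a class in the image of $\delta$ arising from a nonzero element of $SH^{*-1}_{+}(M)$, and $\deg([1])=0$. By the definition of $\ell$ this gives $\ell(M)\le 0$, and since the degree of any cohomology class is nonnegative we conclude $\ell(M)=0$. Finally, $\ell(M)=0=2k$ with $k=0$ (and $0\le k<n$ provided $n\ge 1$; the case $n=0$ is vacuous), so Corollary~\ref{main theorem1} applies and produces an $n$-dimensional family of affine lines in $M$, i.e.\ $M$ itself is $\C$-uniruled; alternatively one invokes Corollary~\ref{cor1} verbatim.

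I expect no serious obstacle here, as the substance lives in Theorem~\ref{main theorem0}; the one point requiring care is verifying that the preimage $\gamma$ is genuinely nonzero, so that $[1]$ is an admissible witness in the definition of $\ell$, and this is exactly what the injectivity of $\delta$ (forced by $SH^{-1}(M)=0$) supplies. Equivalently, and perhaps more transparently, one may apply Theorem~\ref{main theorem0} directly to $[\mho]=[1]$ with $m=0=2k$, $k=0$, obtaining a $\C$-uniruled subvariety $\Xi_{[1]}\subset M$ of dimension at least $n$, which must be all of $M$.
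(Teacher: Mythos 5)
Your argument is correct and is essentially the paper's own route: the paper's version of this corollary explicitly asserts "$\ell(M)=0$ and $M$ is $\C$-uniruled," i.e.\ it is obtained exactly by reading off from the long exact sequence that $\delta$ surjects onto $H^0(M)$ when $SH^*(M)=0$, hitting the unit class, and then invoking Corollary~\ref{4.1}/\ref{cor1}. One small simplification: since $\delta(\gamma)=[1]\neq 0$ already forces $\gamma\neq 0$, you do not actually need the injectivity of $\delta$ — surjectivity alone suffices.
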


\begin{theorem} [\ref{1h-attach}]
Let $\underline{M}$ be the associated Liouville domain obtained by intersection of $M$ and a large $2n$-ball. Assume that $\underline{M}$ is of complex dimension equal to or bigger than $3$. Suppose that $\underline{M}$ is a connected Liouville domain with a Weinstein $1$-handle attached, then $M$ is $\C$-uniruled.
\end{theorem}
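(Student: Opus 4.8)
The plan is to deduce the statement from Corollary~\ref{cor1} by showing that $\ell(M)\le 1$, i.e.\ by exhibiting a nonzero class in $\operatorname{im}(\delta)$ of cohomological degree at most $1$. Write $\underline M = W\cup H$, where $W$ is the connected Liouville domain and $H$ is the attached Weinstein $1$-handle, both of whose feet lie on $W$. Up to homotopy $\underline M\simeq W\vee S^1$: the core of $H$ closes up with an arc in $W$ to a new loop, so the inclusion $W\hookrightarrow\underline M$ induces a restriction $r\colon H^1(\underline M)\to H^1(W)$ that is surjective with one-dimensional kernel. Fix a generator $e$ of $\ker r\subset H^1(\underline M)$; then $e\ne 0$ and $r(e)=0$. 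The goal becomes to show $\iota(e)=0$, for then exactness of the long exact sequence at $H^1(M)$ gives $e\in\ker\iota=\operatorname{im}\delta$, hence $e=\delta(\gamma)$ for some $0\ne\gamma\in SH^0_+(M)$ and $\ell(M)\le\deg e=1$.

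First I would invoke invariance of symplectic cohomology under the handle attachment. Since the complex dimension is at least $3$, the $1$-handle $H$ is subcritical, hence flexible, so the hypotheses of the invariance statement quoted in the introduction are met; the dimension hypothesis is precisely what makes this invariance available. Concretely, the Viterbo transfer (restriction) map $\tau\colon SH^*(\underline M)\to SH^*(W)$ associated to the Liouville subdomain $W\subset\underline M$ is an isomorphism.

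Next I would use naturality of the unit map $\iota$. The transfer map $\tau$ intertwines the canonical maps from ordinary cohomology with the topological restriction, so that $\tau\circ\iota_{\underline M}=\iota_W\circ r$ as maps $H^*(\underline M)\to SH^*(W)$. Evaluating on $e$ and using $r(e)=0$ gives $\tau(\iota_{\underline M}(e))=\iota_W(r(e))=0$, and since $\tau$ is an isomorphism we conclude $\iota_{\underline M}(e)=0$. Combining with the first paragraph, $\ell(M)\le 1$; as $\ell(M)\ge 0$ always, $\ell(M)\in\{0,1\}$, and Corollary~\ref{cor1} yields that $M$ is $\C$-uniruled.

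The hard part will be the third step: establishing the commutation $\tau\circ\iota_{\underline M}=\iota_W\circ r$ together with the fact that the flexible-handle invariance is realized precisely by the Viterbo transfer map and is compatible with the PSS/unit maps and with the topological restriction $r$. This amounts to checking that $\tau$ is a unital map fitting into the standard naturality square for $\iota$, and that the isomorphism on $SH^*$ produced by subcritical surgery agrees with $\tau$; granting these structural facts, the degree bookkeeping and the identification of $e$ as the generator of the new $H^1$ summand are routine.
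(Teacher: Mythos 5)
Your proof is correct, but it is not the route the paper takes, and the difference is worth spelling out. You localize the problem at the handle: writing $\underline M=W\cup H^{1}$, you use invariance of symplectic cohomology under subcritical (here index-$1$) Weinstein handle attachment, realized by the Viterbo transfer $\tau\colon SH^{*}(\underline M)\to SH^{*}(W)$, together with the compatibility of $\tau$ with the unit maps and with the restriction $H^{*}(\underline M)\to H^{*}(W)$, to conclude that the new degree-one class $e$ satisfies $\iota(e)=0$ and hence lies in $\operatorname{im}\delta$, giving $\ell(M)\le 1$ and then $\C$-uniruledness via Corollary \ref{cor1}. The paper's proof never invokes the handle invariance: it instead sandwiches $\underline M\hookrightarrow M\hookrightarrow L$ between Liouville-deformation-equivalent domains, compares the two long exact sequences through the transfer $SH^{*}(L)\to SH^{*}(M)$, and disposes of $\iota_{L}([\alpha])$ by asserting that $[\alpha]$ is killed by $H^{1}(L)\to H^{1}(M)$ ``since the cohomology classes of affine varieties are in even degree.'' That pivotal step is hard to justify as written, since $M\hookrightarrow L$ is a homotopy equivalence and the induced map on $H^{1}$ is an isomorphism; your argument supplies the geometric input that actually forces the class to die in $SH^{*}$, namely Cieliebak's subcritical-handle theorem, which the paper itself quotes and whose proof establishes exactly the structural facts you defer (that the isomorphism is the transfer map and that it fits into a morphism of the action-filtered long exact sequences). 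Two small remarks: for an index-$1$ handle the subcriticality condition $k<n$ already holds for $n\ge 2$, so the hypothesis $n\ge 3$ is not what makes your step available (it would be needed if one routed the argument through loose Legendrians and the surgery exact triangle instead); and your conclusion $\ell(M)\le 1$ is all that is required, whereas the paper asserts equality $\ell(M)=1$, which neither argument actually establishes.
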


More generally, in the category of Weinstein manifolds, we consider attaching flexible Weinstein handles in \cite{weinsteinhandle}, \cite{murphy12}, \cite{cieliebakeliashberg12}, \cite{cieliebakeliashberg14}, which does not change symplectic cohomology [Theorem 5.6, \cite{BEE12}]. An example of a flexible Weinstein handle is a subcritical handle, \cite{cieliebakhandle}.

\begin{theorem} [\ref{flexiblehandle}]
Let $W$ be a Weinstein manifold of dim$_{\R}W=2n$ with $l(W)=\infty$. Suppose that we have a Weinstein manifold $W_{\Supset k}$, obtained by attaching flexible $k$-handles to $W$ so that $\text{rank }H_{k}(W_{\Supset k})>\text{rank }H_{k}(W)$. Then $l(W_{\Supset k})=2n-k$. Hence, if $W_{\Supset k}$ is symplectomorphic to an affine variety $M$, then $M$ admits a $\C$-uniruled subvariety of complex dimension $n-k$.
\end{theorem}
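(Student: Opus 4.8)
The plan is to compare the Viterbo long exact sequences of $W$ and of $W_{\Supset k}$ through the inclusion $W\hookrightarrow W_{\Supset k}$, exploiting that flexible handle attachment leaves symplectic cohomology unchanged. Write $\iota_W,\delta_W$ and $\iota_{W_{\Supset k}},\delta_{W_{\Supset k}}$ for the maps in the sequence from the introduction. By [Theorem 5.6, \cite{BEE12}] the Viterbo restriction $SH^*(W_{\Supset k})\to SH^*(W)$ is an isomorphism, and by naturality of the Viterbo sequence under subdomain inclusion it fits into a commutative ladder whose cohomology column is the topological restriction $r\colon H^*(W_{\Supset k})\to H^*(W)$; this yields the commutative square $\iota_W\circ r=(\text{iso})\circ \iota_{W_{\Supset k}}$. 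The hypothesis $l(W)=\infty$ says every class in the image of $\delta_W$ vanishes, i.e. $\iota_W$ is injective in all degrees, and it is precisely this injectivity that makes the comparison rigid.

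First I would pin down the new cohomology. Attaching a single Weinstein $k$-handle realizes $W_{\Supset k}$ up to homotopy as $W$ with a $k$-cell attached, and the hypothesis $\operatorname{rank}H_k(W_{\Supset k})>\operatorname{rank}H_k(W)$ forces the attaching sphere to be null-homotopic, so that $H^*(W_{\Supset k})\cong H^*(W)\oplus\langle\eta\rangle$ with a single new generator $\eta$ satisfying $r(\eta)=0$. The crux is to locate $\eta$ in the correct degree: the class introduced by the handle is governed by its cocore, a properly embedded $(2n-k)$-disk, whose Poincar\'e--Lefschetz dual is the generator detected by the connecting map, and this is where the degree $2n-k$ originates. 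This recovers, and generalizes beyond, the index-one prototype of Theorem \ref{1h-attach}.

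Next I would feed this into the algebra. From $r(\eta)=0$ and the commutative square, $\iota_{W_{\Supset k}}(\eta)$ is sent to $\iota_W(r(\eta))=0$ by an isomorphism, so $\iota_{W_{\Supset k}}(\eta)=0$; exactness then gives $\eta\in\operatorname{im}\delta_{W_{\Supset k}}$ and hence $l(W_{\Supset k})\le 2n-k$. For minimality I would run the same square in each lower degree: there $r$ is an isomorphism onto $H^*(W)$, since the handle contributes nothing below the cocore degree, and $\iota_W$ is injective by $l(W)=\infty$, forcing $\iota_{W_{\Supset k}}$ injective and $\delta_{W_{\Supset k}}=0$ in those degrees. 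Combining the two estimates yields $l(W_{\Supset k})=2n-k$.

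Finally, the ``Hence'' follows from Corollary \ref{main theorem1}: if $W_{\Supset k}$ is symplectomorphic to a smooth affine variety $M$ of complex dimension $n$, then $l(M)=l(W_{\Supset k})$ places $M$ under the hypothesis of that corollary and produces a $\C$-uniruled subvariety of the stated complex dimension. I expect the main obstacle to be the second and third steps taken together: verifying that the \cite{BEE12} isomorphism is genuinely the Viterbo restriction and commutes with the topological restriction in the Viterbo ladder, and reconciling the grading so that the cocore degree $2n-k$ is the degree in which $\eta$ is detected by $\delta_{W_{\Supset k}}$ and in which no lower-degree contribution appears. Once these naturality and grading facts are established, the remainder is formal diagram chasing followed by a direct appeal to Corollary \ref{main theorem1}.
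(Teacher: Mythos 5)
Your argument follows the paper's proof essentially verbatim: the paper likewise invokes the Bourgeois--Ekholm--Eliashberg surgery exact triangle and the vanishing of Legendrian contact homology for loose attaching spheres to conclude $SH^{*}(W_{\Supset k})\cong SH^{*}(W)$, and then reads off $\ell(W_{\Supset k})=2n-k$ from the jump in cohomology, so your commutative-ladder diagram chase and the use of $\ell(W)=\infty$ for minimality merely make explicit what the paper leaves implicit (and mirror the diagram the paper does write out in the proof of Theorem \ref{1h-attach}). The one caveat you rightly flag --- reconciling the cocore degree $2n-k$ with the fact that the connecting map $\delta$ in the paper's sequence lands in ordinary cohomology $H^{*}(M)$, where the new handle class sits in degree $k$ --- is equally unaddressed in the paper's own proof, so your proposal is faithful to it, grading subtlety included.
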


\begin{remark}
To apply the theorem above in practice, we need to understand the condition when a Stein manifold $W_{\Supset k}$ is symplectomorphic to an affine variety.
\end{remark}

A Liouville domain corresponding to an affine variety has a structure of symplectic convex Lefschetz fibration [Lemma 8.6, \cite{mclean09}]. The Kaliman modification $M^{\vee}$ of an affine variety $M$ become an affine variety having $M$ as a sub-Lefschetz fibration so that all the singular points are contained in $M$. Therefore, using the fact that symplectic cohomology does not change under Kaliman modification, we can construct affine varieties with non-trivial $\ell(M)$.

\begin{theorem} [\ref{construction1}]
For each $n\geq 3$, there is an affine variety $M^{2n}_{2}$ of dim$_{\C}M=n$ with $\ell(M^{2n}_{2})=2$.
\end{theorem}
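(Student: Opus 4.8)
The plan is to realize $M^{2n}_2$ as a Kaliman modification $M^{\vee}$ of a carefully chosen affine variety $M_0$ of complex dimension $n\ge 3$, and to extract $\ell$ from the long exact sequence displayed before Theorem~\ref{main theorem0}. Since $\mathrm{Im}(\delta)=\ker(\iota)$ in each degree, the invariant $\ell(M^{\vee})$ is exactly the smallest degree $m$ in which $\iota\colon H^{m}(M^{\vee})\to SH^{m}(M^{\vee})$ fails to be injective. Thus it suffices to engineer $M^{\vee}$ so that $\iota$ is injective in degrees $0$ and $1$ but has nontrivial kernel in degree $2$. Injectivity in degree $0$ is equivalent to $SH^{*}(M^{\vee})\neq 0$ (so that the unit survives $\iota$); injectivity in degree $1$ holds as soon as $H^{1}(M^{\vee})=0$; and failure of injectivity in degree $2$ is forced, for instance, whenever $H^{2}(M^{\vee})\neq 0$ while $SH^{2}(M^{\vee})=0$. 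Note this already explains why the base cannot be $\C^{n}$ or a flexible filling: there $SH^{*}=0$, which by the long exact sequence makes the unit itself lie in $\mathrm{Im}(\delta)$ and gives $\ell=0$, in contrast with the flexible-handle constructions of Theorem~\ref{flexiblehandle}.

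First I would fix the base $M_0$ to have nonvanishing symplectic cohomology with its unit in degree $0$ but with $SH^{2}(M_0)=0$, the guiding principle being to choose $M_0$ and a trivialization of a power of its canonical bundle so that the Conley--Zehnder grading places $SH^{*}(M_0)$ away from degree $2$ while keeping $SH^{0}\neq 0$. Then, by the invariance $SH^{*}(M^{\vee})\cong SH^{*}(M_0)$ of symplectic cohomology under Kaliman modification in complex dimension $\ge 3$, the modification automatically inherits $SH^{*}(M^{\vee})\neq 0$ and $SH^{2}(M^{\vee})=0$. Using the convex Lefschetz fibration structure of the associated Liouville domain [Lemma 8.6, \cite{mclean09}], I would perform the modification along a center lying in a fiber, so that $M_0$ sits inside $M^{\vee}$ as a sub-Lefschetz fibration with all critical points of $M^{\vee}$ already contained in $M_0$; this fiberwise placement is what guarantees that the Floer data, and hence $SH^{*}$, is unchanged. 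The remaining input is topological: the exceptional locus of the modification must create a class $0\neq[\alpha]\in H^{2}(M^{\vee})$ while leaving $H^{1}(M^{\vee})=0$. I would compute $H^{*}(M^{\vee})$ directly from the affine-modification data (the blow-up center, the divisor, and the removal of its proper transform), extracting a surviving $2$-cycle from the exceptional divisor as the desired generator of $H^{2}$.

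The main obstacle is verifying that the newly created class $[\alpha]$ genuinely lies in $\ker(\iota)=\mathrm{Im}(\delta)$ rather than persisting into $SH^{2}$. Arranging $SH^{2}(M^{\vee})=0$ settles this at one stroke, since it forces $\iota|_{H^{2}}=0$; so the crux is the grading bookkeeping that produces a base $M_0$ with $SH^{0}\neq 0$ but $SH^{2}=0$, together with a check that the invariance isomorphism $SH^{*}(M^{\vee})\cong SH^{*}(M_0)$ is natural with respect to the tautological map $\iota$, so that the extra degree-$2$ topology introduced by the modification is provably not detected by the Floer theory. I expect the delicate points to be (a) controlling the grading so that degree $2$ of $SH$ is empty while degree $0$ is not, which dictates the choice of $M_0$ and its canonical trivialization, and (b) confirming the compatibility of $\iota$ with the modification via the sub-Lefschetz fibration. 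Once these are in place, the long exact sequence yields $\ell(M^{\vee})=2$, and running the construction for every $n\ge 3$ produces the family $M^{2n}_2$.
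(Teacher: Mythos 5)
Your skeleton --- realize the example as a Kaliman modification, use the invariance of $SH^{*}$ under it, and read $\ell$ off the long exact sequence as the first degree in which $\iota$ fails to be injective --- is the paper's. But the crux of your argument is never executed, and the sufficient conditions you reduce to are both stronger than what the paper needs and not obviously satisfiable. You require a base with $SH^{0}\neq 0$ but $SH^{2}=0$, together with a modification $M^{\vee}$ having $H^{1}(M^{\vee})=0$ and $H^{2}(M^{\vee})\neq 0$, and you defer the existence of such a base to ``grading bookkeeping.'' No such base is exhibited, and this existence question is the entire content of the theorem. Moreover two of your conditions sit uneasily with the construction itself: the part of $M^{\vee}$ whose symplectic cohomology is preserved is a divisor complement, and divisor complements generically have $H^{1}\neq 0$ (the paper's example has $H^{1}\cong\Z^{n}$); and the grading of $SH^{*}$ is not a free parameter --- changing the trivialization of a power of the canonical bundle only reshuffles the degrees of non-contractible orbits, so one cannot simply ``place $SH^{*}$ away from degree $2$ while keeping $SH^{0}\neq 0$'' without a concrete computation for a concrete variety.

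The paper's mechanism is weaker and is what makes the theorem easy: it takes $K_{n}:=\mathcal{B}l_{\{p_{1},p_{2}\}}\C^{n}\setminus\widetilde{D_{n}}$ with $D_{n}=\{z_{1}\cdots z_{n}=0\}$, so the preserved base is $(\C^{*})^{n}$, whose $SH^{2}$ is emphatically nonzero. The new class $P.D.[\C^{n-1}]\in H^{2}(K_{n})$ supported on the exceptional locus restricts to $0$ in $H^{2}((\C^{*})^{n})$; since the inclusion of $(\C^{*})^{n}$ into $K_{n}$ as a sub-Lefschetz fibration induces a commutative ladder of long exact sequences in which $SH^{*}(K_{n})\to SH^{*}((\C^{*})^{n})$ is an isomorphism (Proposition \ref{marklef}), naturality of $\iota$ forces $\iota(P.D.[\C^{n-1}])=0$, hence the class lies in $\mathrm{Im}(\delta)$. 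No vanishing of $SH^{2}$ is needed; one only needs the new degree-$2$ class to die under restriction to the unmodified part. (One still owes the check that $\iota$ is injective in degrees $0$ and $1$; for this example it follows from $SH^{*}((\C^{*})^{n})\cong H_{n-*}(\mathcal{L}T^{n})$ and the splitting off of constant loops, together with injectivity of $H^{1}(K_{n})\to H^{1}((\C^{*})^{n})$ --- not from $H^{1}=0$, which fails.) If you insist on your route, you must actually produce the base variety; as written, the proof is incomplete at its central step.
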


An affine variety $M$ is called cylindrical if it contains  a dense principal Zariski open subset $U=M\setminus (f=0)$, for some $f\in \mathcal{O}(X)$, isomorphic to $\C\times M'$ for an affine variety $M'$. We call such $U$ a cylinder [Definition 3.1.4 \cite{kishimoto.prokhorov.zaidenberg11}]. Applying the idea of \cite{mclean18}, \cite{varolgunes19} to the case of compact subsets of the completion of a Liouville domain,

\begin{theorem} [\ref{cylindrical}]
Let $M$ be an cylindrical affine variety having a dense affine open subset $U=M\setminus (f=0)$ for some $f\in \mathcal{O}(X)$, $SH^{*}(\underline{M}\subset M)=0$. Therefore $\widehat{\ell}(M)=0$ and $M$ is $\C$-uniruled. (For the definitons of $SH^{*}(\underline{M}\subset M)$ and $\widehat{\ell}(M)$, we refer the subsection containing lemma \ref{relativelemma1}.)
\end{theorem}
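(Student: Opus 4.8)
The plan is to prove the three assertions in order: the vanishing $SH^{*}(\underline{M}\subset M)=0$, the resulting identity $\widehat{\ell}(M)=0$, and finally $\C$-uniruledness via the relative form of Corollary~\ref{cor1}. The substantive step is the first, and the other two are formal consequences of the relative long exact sequence and the relative version of Theorem~\ref{main theorem0}. The heart of the vanishing is a displaceability computation inside the cylinder. I would fix the isomorphism $U=M\setminus(f=0)\cong \C\times M'$ and write $z$ for the coordinate on the $\C$-factor. On $U$ the Hamiltonian flow generated by a function depending only on $z$ translates in the $z$-direction, and since any compact subset of $U$ has relatively compact image under the projection to $\C$, a sufficiently large translation displaces it from itself. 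Following Varolgunes' framework for the relative symplectic cohomology of compact subsets of the completion of a Liouville domain, a compact set that is displaceable by a Hamiltonian isotopy has vanishing relative symplectic cohomology, because the continuation maps organizing the defining limit become null-homotopic. Thus, were $\underline{M}$ contained in $U$, the $\C$-translation would immediately give $SH^{*}(\underline{M}\subset M)=0$.

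The main obstacle is that $\underline{M}$ is \emph{not} contained in the cylinder: it meets the divisor $(f=0)=M\setminus U$, precisely where the coordinate $z$ and the translating Hamiltonian are undefined. To handle this I would work with the definition of $SH^{*}(\underline{M}\subset M)$ as a limit over a cofinal family of Hamiltonians adapted to $\underline{M}$ and arrange, as in \cite{mclean18} and \cite{varolgunes19}, that the orbits created near the ample divisor $(f=0)$ fall outside the action window that survives to the limit. Concretely, one either (i) pushes $\underline{M}$ off the divisor into $U$ by a preliminary exhaustion/Liouville deformation, using that the completion retracts onto the cylinder away from $(f=0)$, so that the $z$-translation becomes available on the relevant neighborhood; or (ii) applies the Mayer--Vietoris and locality properties of relative symplectic cohomology to reduce the computation to the cylinder $U$, where $SH^{*}=0$ follows from the Künneth formula $SH^{*}(\C\times M')\cong SH^{*}(\C)\otimes SH^{*}(M')$ together with $SH^{*}(\C)=0$. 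The delicate point, and the place where the hypotheses of \cite{mclean18}, \cite{varolgunes19} are genuinely used, is verifying that these deformations and cut-offs do not alter $SH^{*}(\underline{M}\subset M)$, i.e. that the divisor contributes nothing to the surviving part of the telescope; I expect this to be the hardest part of the argument.

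Granting $SH^{*}(\underline{M}\subset M)=0$, the remaining implications are formal. The relative analogue of the long exact sequence displayed just before Theorem~\ref{main theorem0}, namely $\cdots\to SH^{*-1}(\underline{M}\subset M)\to SH^{*-1}_{+}(\underline{M}\subset M)\xrightarrow{\delta} H^{*}(\underline{M})\xrightarrow{\iota} SH^{*}(\underline{M}\subset M)\to\cdots$, shows that when the relative symplectic cohomology vanishes the map $\iota$ is zero, hence $\delta$ is surjective. In particular the unit $1\in H^{0}(\underline{M})$ lies in the image of $\delta$, and since $\delta(0)=0\neq 1$ its preimage class is nonzero; by the definition of the relative invariant this yields $\widehat{\ell}(M)=0$. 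Finally, applying the relative form of Theorem~\ref{main theorem0} (equivalently Corollary~\ref{cor1}) with $\widehat{\ell}(M)=2k$ and $k=0$ produces a $\C$-uniruled subvariety of dimension $n-k=n$; that is, $M$ itself is $\C$-uniruled.
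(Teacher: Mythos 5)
Your branch (ii) is the route the paper actually takes, and the formal deductions at the end ($\iota=0$ forces $\delta$ surjective, hence $\widehat{\ell}(M)=0$, hence $\C$-uniruledness via the $k=0$ case of the main theorem) match the paper. However, the step you explicitly defer --- ``verifying that \dots the divisor contributes nothing'' --- is the entire content of the paper's proof, so let me record how it is closed. Writing $Y=(f=0)$, the paper chooses a small tubular neighborhood $\mathbb{Y}$ of $Y\cap\underline{M}$ and a neighborhood $\mathbb{Y}^{c}$ of $\underline{M}\setminus Y$ with disjoint boundaries. Because $\mathbb{Y}$ is a neighborhood of a positive-codimension subvariety, it is \emph{stably} displaceable (Proposition \ref{stablydisplconj}), so $SH^{*}(\mathbb{Y}\subset M)=0=SH^{*}(\mathbb{Y}\cap\mathbb{Y}^{c}\subset M)$ by Proposition \ref{invariance3}, and the Mayer--Vietoris sequence (Proposition \ref{mv}) yields $SH^{*}(\underline{M}\subset M)\cong SH^{*}(\mathbb{Y}^{c}\subset M)$. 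The passage from $M$ to $M\setminus Y$ is then \emph{not} a direct ``locality'' isomorphism but a spectral sequence (Corollary \ref{spectralseqconj1}, coming from the filtration by winding number around $Y$) converging to $SH^{*}(\mathbb{Y}^{c}\subset M)$ with $E_{1}$-page $SH^{*}(\underline{M\setminus Y}\subset M\setminus Y)$, which vanishes by the K\"unneth formula for $\C\times M'$; a vanishing $E_{1}$-page kills the abutment. So the divisor is disposed of by the combination of stable displaceability and the winding-number spectral sequence, not by a deformation.

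Two cautions on the rest of your sketch. Your option (i) --- pushing $\underline{M}$ off the divisor by a Liouville deformation --- does not work: $\underline{M}$ contains the skeleton of $M$, which in general meets $Y$, and the invariance of relative symplectic cohomology under shrinking (Proposition \ref{Liouvilleflow}) applies only to compact sets that still contain the skeleton, so you cannot retract $\underline{M}$ into the cylinder without changing the invariant. Also, the displaceability input that Proposition \ref{invariance3} requires is \emph{stable} displaceability (after crossing with $T^{*}S^{1}$); for compact subsets of the cylinder your $z$-translation does displace honestly, but the set whose relative cohomology must be killed in the actual proof is the neighborhood of the divisor, where only the stable version is available.
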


Applying the similar idea of proof of theorem \ref{cylindrical} to special case of half-point attachment of an affine threefold, we have the following theorem.

\begin{theorem} [\ref{affine3fold1}]
Consider special case of Kishimoto's $\#$- minimal model program on affine threefolds that restrict half-point attachment at a point on a hypersurface. Then
\begin{enumerate}
    \item $\widehat{\ell}(M^{i})$'s are not decreasing. 
    \item Let $M$ be an affine threefold and $M^{\#}$ be a terminal object of $M$ in Kishimoto's $\#$-minimal model program. Suppose that $\pi_{1}(M^{\#})=0$, $M\ncong M^{\#}$ and at least one of steps of half-point attachment is done on a hypersurface divisor. Then $\widehat{\ell}(M)\geq 2$.
\end{enumerate}
\end{theorem}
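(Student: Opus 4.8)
\textit{Plan for part (1).} The plan is to treat each step of the restricted $\#$-MMP as a single half-point attachment and to track $\widehat{\ell}$ through it. First I would place everything in the relative framework of Lemma \ref{relativelemma1}, realizing consecutive stages $M^{i}, M^{i+1}$ so that the attachment is modeled by a Liouville embedding of one associated domain into the completion of the other. Following the proof of Theorem \ref{cylindrical}, which uses the relative symplectic cohomology of compact subsets in the sense of \cite{mclean18}, \cite{varolgunes19}, this embedding should produce a unital Viterbo-type transfer map relating the groups $SH^{*}(\underline{M^{i}}\subset M^{i})$ and $SH^{*}(\underline{M^{i+1}}\subset M^{i+1})$, intertwining the relative connecting maps $\delta$ and respecting the grading. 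Granting this, a class of degree $d$ lying in the image of $\delta$ at one stage maps to such a class at the adjacent stage, so the minimal such degree cannot decrease across the step; this is exactly part (1).

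\textit{Plan for part (2).} I would prove $\widehat{\ell}(M)\geq 2$ by excluding the values $0$ and $1$. To rule out degree $1$, observe that any degree-$1$ class in the image of $\delta$ is detected by $H^{1}$; a half-point attachment performed at a point of a hypersurface divisor is a codimension-one operation and does not create new loops, so it preserves the fundamental group. Since every stage then has the same $\pi_{1}$ and $\pi_{1}(M^{\#})=0$, we get $\pi_{1}(M)=0$ and hence $H^{1}(M)=0$ over the coefficient field, so no degree-$1$ class can occur and $\widehat{\ell}(M)\neq 1$. To rule out degree $0$, recall that $\widehat{\ell}(M)=0$ is equivalent to the vanishing $SH^{*}(\underline{M}\subset M)=0$, which is the mechanism by which Theorem \ref{cylindrical} detects a genuine codimension-zero cylinder. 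A half-point attachment on a hypersurface can only produce uniruled behavior in codimension $\geq 1$ --- by Corollary \ref{main theorem1} it contributes a class in degree $2$, matching $k=1$ --- and the hypothesis $M\ncong M^{\#}$ guarantees that at least one such nontrivial step is present. I would then show, by a Mayer--Vietoris computation of $SH^{*}(\underline{\cdot}\subset\cdot)$ adapted to the subset created by the attachment, that the relative unit survives, so $SH^{*}(\underline{M}\subset M)\neq 0$ and $\widehat{\ell}(M)\neq 0$. Combining the two exclusions gives $\widehat{\ell}(M)\geq 2$, with part (1) certifying that the nontrivial hypersurface step is not cancelled by the remaining steps.

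\textit{Main obstacle.} The hard part is the non-vanishing, i.e.\ excluding $\widehat{\ell}=0$. Here the analogy with Theorem \ref{cylindrical} is only partial: in the cylindrical case the hypothesis forces $SH^{*}(\underline{M}\subset M)=0$, whereas now I must prove the opposite, that the relative unit is not killed by a half attachment. The delicate point is to set up the Mayer--Vietoris and neck-stretching decomposition compatibly with the half-point geometry of \cite{kishimoto.prokhorov.zaidenberg11}, so that the contribution of the attached piece appears in degree $2$ while degrees $0$ and $1$ remain untouched, and to control the action filtration so that the transfer maps of part (1) do not allow a later step to annihilate the class created by the hypersurface step. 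Managing this interaction across several composed attachments is the technical heart of the argument.
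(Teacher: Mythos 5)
Your proposal diverges from the paper's argument in a way that matters. The paper's entire proof is one paragraph: it writes the half-point attachment as $M' = \mathcal{B}l_{p}A\setminus\tilde{Y} = M\cup(E\cap M')$, so that $M = M'\setminus(E\cap M')$ is the complement of a divisor in $M'$, and then invokes Theorem \ref{spectralseqconj} (the winding-number filtration around the deleted divisor) to get a spectral sequence converging to $SH^{*}(\underline{M'}\subset M')$ whose $E_{1}$-page is $SH^{*}(\underline{M}\subset M)$; the inequality $\widehat{\ell}(M')\leq\widehat{\ell}(M)$ is read off from that. You instead posit a ``unital Viterbo-type transfer map relating $SH^{*}(\underline{M^{i}}\subset M^{i})$ and $SH^{*}(\underline{M^{i+1}}\subset M^{i+1})$, intertwining the relative connecting maps $\delta$.'' That map is the gap: consecutive stages are related by deleting/adding a divisor, not by a Liouville subdomain inclusion of completions, and the machinery the paper has available (Propositions \ref{samecompletion}, \ref{spectral1}, Corollary \ref{spectralseqconj1}) produces a filtration and a spectral sequence, not a degree-preserving chain map between the two long exact sequences. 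Without constructing such a map and proving it commutes with both $\hat{\delta}$'s, the sentence ``a class of degree $d$ in the image of $\delta$ at one stage maps to such a class at the adjacent stage'' has no support; you would also need to check carefully which direction the comparison runs, since a transfer from the larger variety to the smaller one gives the inequality in the wrong direction for part (1).

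For part (2) your skeleton (exclude degrees $0$ and $1$) is sensible, and the degree-$1$ exclusion via $\pi_{1}(M^{\#})=0$ is fine once you justify it by the surjections $\pi_{1}(M^{i+1})\twoheadrightarrow\pi_{1}(M^{i})$ induced by divisor-complement inclusions (flip steps leave $M^{i}\cong M$ unchanged). The equivalence $\widehat{\ell}(M)=0\iff SH^{*}(\underline{M}\subset M)=0$ is also correct for connected $M$ since $\iota$ is unital. But the exclusion of degree $0$ --- the non-vanishing of $SH^{*}(\underline{M}\subset M)$ --- is exactly the step you defer to a Mayer--Vietoris and neck-stretching argument that you do not carry out, and it is where the hypotheses $M\ncong M^{\#}$ and ``at least one hypersurface step'' must actually be used (the intended mechanism is that such a step enlarges $H^{2}$ without changing $SH^{*}$, as in the Kaliman-modification example of Theorem \ref{construction1}, forcing a degree-$2$ class into the image of $\hat{\delta}$). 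As written, your plan for part (2) identifies the right obstacle but does not overcome it, so the proof is incomplete there.
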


Given two Liouville domains $\underline{M}$ and $\underline{N}$, define the end-connected sum $\underline{M}\#_{e}\underline{N}$ by attaching a $1$-handle to points $p\in \partial \underline{M}$, $q\in \partial \underline{N}$ in the boundary, respectivley. Then $\underline{M}\#_{e}\underline{N}$ become a Liouville domain [Theorem 2.10, \cite{mclean08}]. Moreover, $SH^{*}(M\#_{e}N)\cong SH^{*}(M)\times SH^{*}(N)$ as rings for complex dimension greater than 1 [Theorem, 2.17, \cite{mclean08}]. From the following diagram,
\begin{center}
\begin{tikzcd} [cramped, sep=small]
0 \arrow[r] &SH^{*}_{+}(M) \arrow[r]\arrow[d] & SH^{*}_{+}(M\#_{e} N) \arrow[r] \arrow[d]     & SH^{*}_{+}(N) \arrow[r]\arrow[d] &0\\
0 \arrow[r] &H^{*+1}(M,\partial M) \arrow[r]  & H^{*+1}(M\#_{e} N,\partial (M\#_{e} N)) \arrow[r] & H^{*+1}(N,\partial N) \arrow[r]  &0,
\end{tikzcd}
\end{center}

\begin{prop}
For the end-connected sum $\underline{M}\#_{e}\underline{N}$ of two Liouville domains $\underline{M}$ and $\underline{N}$, $\ell(M\#_{e}N)=\text{min}\{\ell(M),\ell(N)\}$.
\end{prop}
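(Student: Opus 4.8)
The plan is to extract everything from the commutative diagram displayed just above the statement, whose rows are short exact sequences and whose vertical maps are the connecting maps $\delta$ entering the definition of $\ell$. Write $\delta_M,\delta_N,\delta_{M\#_e N}$ for the three vertical maps, and recall that $\ell(M)$ is by definition the smallest degree in which $\Im \delta_M \neq 0$. The whole proposition then reduces to the degreewise identity $\Im\delta_{M\#_e N}=\Im\delta_M\oplus\Im\delta_N$: once this is known, reading off the minimal degree in which the right-hand side is nonzero gives $\ell(M\#_e N)=\min\{\ell(M),\ell(N)\}$ at once. I will prove the two inequalities separately, the first being formal and the second carrying the real content.

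The inequality $\ell(M\#_e N)\le\min\{\ell(M),\ell(N)\}$ uses only the left-hand square together with injectivity of the two left horizontal maps, which I denote $i$ on $SH^{*}_{+}$ and $j$ on relative cohomology. Assume without loss of generality that $\ell(M)\le\ell(N)$ and put $m=\ell(M)$. Choose $0\neq\gamma\in SH^{m-1}_{+}(M)$ with $0\neq\delta_M(\gamma)$. Commutativity of the left square gives $\delta_{M\#_e N}(i\gamma)=j(\delta_M\gamma)$, which is nonzero because $j$ is injective, while $i\gamma\neq 0$ because $i$ is injective. Thus $i\gamma$ is a nonzero class in $SH^{m-1}_{+}(M\#_e N)$ whose image under $\delta_{M\#_e N}$ is nonzero and lives in degree $m$, so $\ell(M\#_e N)\le m=\min\{\ell(M),\ell(N)\}$.

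For the reverse inequality, fix a degree $m<\min\{\ell(M),\ell(N)\}$; then $\delta_M$ and $\delta_N$ both vanish in degree $m$, and I must rule out $\delta_{M\#_e N}\neq 0$ there. Applying the snake lemma to the map of short exact sequences in degree $m$, the vanishing of $\delta_M$ and $\delta_N$ forces $\delta_{M\#_e N}=j\circ\partial\circ p$, where $p$ is the surjective right-hand map on $SH^{*}_{+}$ and $\partial$ is the snake connecting homomorphism $SH^{m-1}_{+}(N)\to H^{m}(M,\partial M)$; since $j$ is injective and $p$ surjective, $\delta_{M\#_e N}$ vanishes in degree $m$ exactly when $\partial=0$. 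Proving $\partial=0$—equivalently, that the two rows split \emph{compatibly} with the vertical maps so that $\delta_{M\#_e N}=\delta_M\oplus\delta_N$—is the main obstacle.

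To kill $\partial$ I would descend to the chain level. The end-connected sum attaches a Weinstein $1$-handle, and by the analysis underlying the ring isomorphism $SH^{*}(M\#_e N)\cong SH^{*}(M)\times SH^{*}(N)$ of [Theorem 2.17, \cite{mclean08}] one can choose an admissible Hamiltonian and almost complex structure on the completion of $\underline{M}\#_e\underline{N}$ localized on the two pieces, so that the periodic orbits of $\partial(\underline{M}\#_e\underline{N})$ are exactly those of $\partial\underline{M}$ together with those of $\partial\underline{N}$, with no new orbits created in the handle region. An action and energy estimate across the neck then excludes Floer cylinders joining an orbit of $M$ to an orbit of $N$, so the positive Floer complex, its differential, and the chain-level comparison map to the Morse (relative) cochain complex all decompose as direct sums indexed by the two pieces, realizing both rows of the diagram as compatible direct sums. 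This yields $\delta_{M\#_e N}=\delta_M\oplus\delta_N$, hence $\partial=0$ and $\Im\delta_{M\#_e N}=\Im\delta_M\oplus\Im\delta_N$ degreewise. The hardest technical point is precisely this exclusion of mixed cylinders and the compatibility of the comparison map with the splitting; granting it, comparing minimal nonvanishing degrees gives $\ell(M\#_e N)=\min\{\ell(M),\ell(N)\}$.
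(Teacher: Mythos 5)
Your proposal follows the route the paper itself intends---everything is to be read off the displayed map of short exact sequences---but you are considerably more careful than the paper, which offers no argument beyond exhibiting the diagram. Your first inequality is exactly the diagram chase the paper has in mind: commutativity of the left square together with injectivity of $i$ and $j$ gives $\ell(M\#_{e}N)\le\min\{\ell(M),\ell(N)\}$, and the WLOG reduction is legitimate because the end-connected sum is symmetric in the two factors. You are also right that the reverse inequality is \emph{not} a formal consequence of the diagram: choosing splittings of the two rows, the middle vertical map becomes triangular, $\delta_{M\#_{e}N}=\left(\begin{smallmatrix}\delta_{M} & \partial\\ 0 & \delta_{N}\end{smallmatrix}\right)$, and a nonzero off-diagonal entry $\partial:SH^{*}_{+}(N)\to H^{*+1}(M,\partial M)$ would make $\ell(M\#_{e}N)$ strictly smaller than both $\ell(M)$ and $\ell(N)$; the paper never addresses this term. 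Your proposed fix---a chain-level splitting coming from the geometry of the $1$-handle, so that no Floer cylinder crosses the neck and the comparison map to the Morse complex respects the decomposition---is the right idea and is essentially what underlies the proof of the ring isomorphism in [Theorem 2.17, \cite{mclean08}], though the theorem as cited only asserts the splitting of $SH^{*}$, not its compatibility with the tautological long exact sequence, so this step does need to be extracted from the proof rather than quoted. A slightly cleaner packaging: $\underline{M}\#_{e}\underline{N}$ is obtained from the disjoint union $\underline{M}\sqcup\underline{N}$ by attaching a single subcritical handle, the Viterbo transfer is a map of the tautological long exact sequences whose $H^{*}$-component is ordinary restriction, and by \cite{cieliebakhandle} the transfer is an isomorphism on $SH^{*}$; since restriction $H^{*}(M\#_{e}N)\to H^{*}(M)\oplus H^{*}(N)$ is an isomorphism in positive degrees, the five lemma identifies $\delta_{M\#_{e}N}$ with $\delta_{M}\oplus\delta_{N}$ there, killing $\partial$ without re-running the neck-stretching estimate (degree zero is the elementary case $\ell=0\Leftrightarrow SH^{*}=0$, which behaves correctly under $\#_{e}$). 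Granting that compatibility, your argument is complete and is, if anything, a more honest account than the one-line justification in the text.
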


By the K$\ddot{\text{u}}$nneth formula,

\begin{prop}
$\ell(M\times N)=\text{min}\{\ell(M),\ell(N)\}$
\end{prop}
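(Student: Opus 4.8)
The plan is to translate the definition of $\ell$ into a statement about the kernel of $\iota$ and then feed it through the Künneth theorem, avoiding any need for a Künneth formula for $SH_+$ itself. By exactness of the long exact sequence, for any Liouville domain $P$ the image of $\delta_P\colon SH^{*-1}_+(P)\to H^*(P)$ coincides with the kernel of $\iota_P\colon H^*(P)\to SH^*(P)$. Writing $K_P^m:=\ker\bigl(\iota_P\colon H^m(P)\to SH^m(P)\bigr)$, the quantity $\ell(P)$ is exactly $\min\{m:K_P^m\neq 0\}$: a nonzero class of minimal degree in the image of $\delta_P$ is the same thing as a nonzero class of minimal degree killed by $\iota_P$. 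So it suffices to identify $K^\bullet_{M\times N}$ in terms of $K^\bullet_M$ and $K^\bullet_N$, and for this I only need the Künneth identifications $H^*(M\times N)\cong H^*(M)\otimes H^*(N)$ and $SH^*(M\times N)\cong SH^*(M)\otimes SH^*(N)$ together with the compatibility of $\iota$ with them.

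The step I would carry out with care is the compatibility $\iota_{M\times N}=\iota_M\otimes\iota_N$ under these two isomorphisms. I would prove this at the chain level: choosing a split Hamiltonian of product type on $M\times N$, the one-periodic orbits are pairs of orbits on the factors, so the constant-orbit subcomplex of $M\times N$ (which computes $H^*$) is the tensor product of the constant-orbit subcomplexes of the factors, and the product Floer datum upgrades the Künneth quasi-isomorphism $SC^*(M)\otimes SC^*(N)\xrightarrow{\simeq}SC^*(M\times N)$ to a morphism of the tautological short exact sequences $0\to C_{\mathrm{Morse}}\to SC\to SC_+\to 0$. This induces a morphism of long exact sequences in which the square containing $\iota$ commutes; since $\iota$ is the inclusion-of-constant-orbits (unit) map, this gives $\iota_{M\times N}=\iota_M\otimes\iota_N$. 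Equivalently, one invokes that $\iota$ is the unit/PSS map, that $\iota_{M\times N}$ is multiplicative, and that the Künneth isomorphism is a ring isomorphism carrying $\iota_M(\alpha)\otimes\iota_N(\beta)$ to $\iota_{M\times N}(\alpha\times\beta)$.

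Granting this, the rest is linear algebra over the ground field. For linear maps $f\colon V\to V'$ and $g\colon W\to W'$ one has $\ker(f\otimes g)=\ker f\otimes W+V\otimes\ker g$ (split $V,W$ into kernels and complements and use that a tensor product of injections is an injection over a field). Applying this with $f=\iota_M$, $g=\iota_N$ yields
$$K_{M\times N}=K_M\otimes H^*(N)+H^*(M)\otimes K_N,$$
hence in total degree $m$,
$$K_{M\times N}^m=\sum_{p+q=m}\bigl(K_M^p\otimes H^q(N)+H^p(M)\otimes K_N^q\bigr).$$
Set $a=\ell(M)$, $b=\ell(N)$, and assume without loss of generality $a\le b$. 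Taking $p=a$, $q=0$ gives $K_M^a\otimes H^0(N)\neq 0$, since $K_M^a\neq 0$ and $H^0(N)\neq 0$ ($N$ nonempty and connected), so $K_{M\times N}^a\neq 0$ and $\ell(M\times N)\le a$. Conversely, for $m<a\le b$ every summand vanishes: in $K_M^p\otimes H^q(N)$ one has $p\le m<a=\ell(M)$ so $K_M^p=0$, while in $H^p(M)\otimes K_N^q$ one has $q\le m<b=\ell(N)$ so $K_N^q=0$. Thus $K_{M\times N}^m=0$ for $m<a$, giving $\ell(M\times N)=a=\min\{\ell(M),\ell(N)\}$. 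The degenerate case $\ell(M)=\ell(N)=\infty$ (both $\iota_M,\iota_N$ injective) gives $K_{M\times N}=0$, i.e. $\ell(M\times N)=\infty$, still consistent with the formula.

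The main obstacle is the second paragraph: confirming that the Künneth quasi-isomorphism on Floer complexes respects the action filtration separating constant from non-constant orbits, so that it is genuinely a morphism of the tautological sequences and therefore compatible with $\iota$. Once that naturality is in place, the degree bookkeeping above finishes the argument.
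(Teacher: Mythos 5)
Your proof is correct and follows essentially the same route as the paper, which simply asserts the proposition ``by the K\"unneth formula'': you identify $\mathrm{im}\,\delta$ with $\ker\iota$ via exactness, use the K\"unneth isomorphisms for $H^*$ and $SH^*$ together with the compatibility $\iota_{M\times N}=\iota_M\otimes\iota_N$, and finish with the kernel formula for a tensor product of linear maps over a field. The one technical point you rightly flag --- that the split-Hamiltonian K\"unneth quasi-isomorphism is a morphism of the tautological exact sequences, hence commutes with $\iota$ --- is exactly what Oancea's K\"unneth theorem (cited in the paper) provides, so your argument is complete.
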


\begin{example}[\ref{mappingcylindereg}]
Let $\phi:M\to M$ be a compact-supported symplectomorphism on a Liouville domain $(M,\omega)$ whose support is contained in the interior of $M$. Assume that $\ell(M)=\infty$. Let $M_{\Supset k}$ be a $k$-handle attaching on $M$. $\Phi$ be an extension of $\phi$ to $M_{\Supset k}$ by identity on attached locus. Denote $\C^{*}\rtimes_{\phi} M:=(\R \times \R \times M)/\Z\cdot (s,\theta, x)\sim (s+1,\theta, \phi(x))$ the symplectic mapping cylinder of $\phi:M\to M$. Then for any $k<dim_{\C}M$, $SH^{*}(\C^{*}\rtimes_{\phi} M)\cong SH^{*}(\C^{*}\rtimes_{\Phi} (M_{\Supset k}))\cong SH^{*}((\C^{*}\rtimes_{\phi} M)_{\Supset k+1})$ but $\ell(\C^{*}\rtimes_{\phi} (M_{\Supset k}))=\text{min}\{k+2, \ell(\C^{*}\rtimes_{\phi} M)\}$, $\ell((\C^{*}\rtimes_{\phi} M)_{\Supset k+1})=\text{min}\{k+1, \ell(\C^{*}\rtimes_{\phi} M)\}$.
\end{example}

We can extend the usual symplectic cohomology $SH^*(M)$ to the twisted symplectic cohomology $SH^*(M;\underline{\Lambda}_{\beta})$: symplectic complexes have the same generators with Novikov field $\Lambda$ as a coefficient ring. But, the differential map is twisted by a closed 2-form $\xi$ on $M$, where $\beta$ is a singular cocycle representing a class in $H^{1}(\mathcal{L}M;\R)$ for $\xi\in H^{2}(M;\R)$. It means that the differential map remember the "area" of Floer cylinder with respect to $\xi$ in \cite{ritter09}. We also consider symplectic cohomology bulk-deformed by homology classes in \cite{usher}, \cite{siegel}. There exists a long exact sequence [Lemma 8.1, \cite{ritter13}],
$\begin{tikzcd} 
\cdots\to SH^{*-1}(M; \underline{\Lambda}_{\beta})\to SH^{*-1}_{+}(M; \underline{\Lambda}_{\beta})\lra{\delta} H^{*}(M; \underline{\Lambda}_{\beta}) \lra{\iota} SH^{*}(M;\underline{\Lambda}_{\beta})\to\cdots
\end{tikzcd}$.

The cotangent bundle $(T^{*}\P^{1},\omega_{std})$ over $\P^{1}$ is $\C$-uniruled but $\ell(T^{*}\P^{1})=2$. On the other hand, twisted symplectic cohomology $SH^{*}(T^{*}\P^{1},\Lambda_{\omega_{I}})$ vanishes, \cite{ritter10}. So, by considering the minimum of $\ell(M,\omega)$ over all the twisted cohomolgy, we get stronger measures as follows.

\begin{defn}
\begin{enumerate}
    \item $\ell (M;\underline{\Lambda}_{\beta}):=\text{min}\{\text{deg}([\alpha]):\text{ for }[\alpha] \in H^{*}(M;\underline{\Lambda}_{\beta})$ with $[\alpha]=\delta(\gamma)$ for some $0\neq\gamma\in SH^{*-1}_{+}(M;\underline{\Lambda}_{\beta})\}$.
    \item $\ell (M;\Lambda_{2}):=$min$\{\ell (M;\underline{\Lambda}_{\beta})$ : for all $\beta\in$ image$(H^{1}(\mathcal{L}M)\leftarrow H^{2}(\mathcal{L}M\times S^{1})\leftarrow H^{2}(M))\}$
    \item $\ell (M;\Lambda_{\text{all}}):=$min$\{\ell(M;\underline{\Lambda}_{\beta})$ : for all  bulk deformation by $\beta\in H^{*}(M)\}$.
\end{enumerate}
\end{defn}

\begin{corollary}
If $\ell(M;\underline{\Lambda}_{\beta})$or $\ell(M;\Lambda_{2})=2k$ or $2k+1$ $(0\leq k <n)$, then $M$ admits a uniruled subvariety of dimension $n-k$.
\end{corollary}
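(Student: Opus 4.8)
The plan is to run the argument of Theorem~\ref{main theorem0} essentially verbatim, replacing the untwisted long exact sequence by its twisted counterpart [Lemma 8.1, \cite{ritter13}] and observing that every geometric input in that proof is insensitive to the coefficient twist. Recall that the twisted complex $SH^{*}(M;\underline{\Lambda}_{\beta})$ has exactly the same generators (the Hamiltonian orbits) as $SH^{*}(M)$; only the differential is modified, each rigid Floer cylinder $u$ being weighted by a Novikov monomial recording its $\xi$-area $\langle \xi, [u]\rangle$, cf.\ \cite{ritter09}. In particular the connecting map $\delta\colon SH^{*-1}_{+}(M;\underline{\Lambda}_{\beta})\to H^{*}(M;\underline{\Lambda}_{\beta})$ is again defined by counting the very same moduli spaces of Floer cylinders that define $\delta$ in the untwisted setting; the twist affects only the weights with which these cylinders are tallied.

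First I would fix a class $[\alpha]=\delta(\gamma)\neq 0$ of minimal degree $m=2k$ or $2k+1$ realizing $\ell(M;\underline{\Lambda}_{\beta})$. Since $[\alpha]$ is nonzero in twisted cohomology, the weighted chain-level count defining $\delta(\gamma)$ cannot vanish identically; were the relevant moduli spaces all empty, $\delta$ would be zero on chains and hence on cohomology, a contradiction. Thus at least one Floer cylinder contributing to $\delta$ is present, which is precisely the geometric hypothesis needed to begin the proof of Theorem~\ref{main theorem0}. I would then feed this cylinder into the deformation and degeneration machinery of \cite{hermann}, \cite{biolley04}, \cite{bourgeois.oancea09}, \cite{mclean14}, which turns it into a non-constant rational curve passing through a cycle representing $[\alpha]$. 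Because this machinery operates on individual pseudo-holomorphic cylinders and never refers to the coefficient ring, it produces, exactly as before, a $\C$-uniruled subvariety $\Xi_{[\alpha]}\subset M$ of dimension at least $n-k$, together with the intersection property against the unstable manifold of any exhausting finite type Morse function. This establishes the statement for $\ell(M;\underline{\Lambda}_{\beta})$.

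For $\ell(M;\Lambda_{2})=\min_{\beta}\ell(M;\underline{\Lambda}_{\beta})$, I would simply choose an admissible $\beta$ in the image of $H^{2}(M)\to H^{2}(\mathcal{L}M\times S^{1})\to H^{1}(\mathcal{L}M)$ realizing the minimum; the previous paragraph, applied to this $\beta$, delivers the desired uniruled subvariety of dimension $n-k$. (The same reduction applies to $\ell(M;\Lambda_{\text{all}})$, replacing the $\underline{\Lambda}_{\beta}$-twist by the corresponding bulk deformation of \cite{usher}, \cite{siegel}, since bulk deformation likewise only reweights the same curve counts.)

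The one point that genuinely requires care --- and the step I expect to be the main obstacle --- is verifying that the twist does not disturb the analytic foundations of the deformation argument: the action filtration, the a priori energy bounds, and the SFT compactness used to extract the limiting rational curve must all be re-established for the $\underline{\Lambda}_{\beta}$-twisted moduli spaces. Since the twist is by a fixed closed two-form and alters neither the convex almost complex structures nor the underlying cylinders, these foundations transfer directly; but making this explicit is where the real content lies, the remaining reduction to Theorem~\ref{main theorem0} being otherwise formal.
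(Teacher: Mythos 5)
Your proposal is correct and matches the paper's intended argument: the paper states this corollary without a separate proof, relying exactly on the observation that the twist only reweights the same Floer moduli spaces, so nonvanishing of the twisted connecting map $\delta$ forces the existence of a Floer cylinder, which is the sole geometric input needed to rerun the proof of Theorem~\ref{main theorem0}. Your closing caveat about the analytic foundations is well placed but, as you note, immaterial, since the action filtration, energy bounds, and compactness arguments never see the coefficient ring.
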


\begin{corollary}
If $\ell(M,\Lambda_{2})=0$, then $M$ is $\C$-uniruled.
\end{corollary}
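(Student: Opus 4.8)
The plan is to reduce the statement to the twisted form of Theorem~\ref{main theorem0} at the extreme degree $m=0$. First I would unwind the hypothesis: by definition $\ell(M;\Lambda_{2})=0$ means that there is a twisting datum $\beta$, lying in the image of $H^{2}(M)\to H^{2}(\mathcal{L}M\times S^{1})\to H^{1}(\mathcal{L}M)$, for which $\ell(M;\underline{\Lambda}_{\beta})=0$. In turn $\ell(M;\underline{\Lambda}_{\beta})=0$ says that some degree-$0$ class $[\mho]\in H^{0}(M;\underline{\Lambda}_{\beta})$ lies in the image of the twisted connecting map, i.e. $[\mho]=\delta(\gamma)$ with $0\neq\gamma\in SH^{-1}_{+}(M;\underline{\Lambda}_{\beta})$, where $\delta$ is the connecting map of the twisted long exact sequence of \cite{ritter13}. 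I would then feed $[\mho]$ into the twisted analogue of Theorem~\ref{main theorem0} with $k=0$.

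The conclusion is then immediate from the dimension bound. The twisted Theorem~\ref{main theorem0} produces a subvariety $\Xi_{\mho}\subset M$ of complex dimension at least $n-k=n$ which, by part~(1), is $\C$-uniruled: for every $p\in\Xi_{\mho}$ there is a nonconstant algebraic map $\overline{v}_{p}:\C\to\Xi_{\mho}$ through $p$. Since $M$ is a smooth (hence, being connected, irreducible) affine variety of dimension $n$ and $\Xi_{\mho}$ is a subvariety of dimension at least $n$, necessarily $\Xi_{\mho}=M$. Therefore the $\C$-uniruledness of $\Xi_{\mho}$ is exactly that of $M$. This is the $k=0$ sharpening of the already-stated twisted uniruled-subvariety corollary, upgraded from ``uniruled of dimension $n$'' to ``$\C$-uniruled,'' and it is the twisted counterpart of Corollary~\ref{main theorem1}.

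The real content is the transfer of Theorem~\ref{main theorem0} to the twisted coefficients $\underline{\Lambda}_{\beta}$. The mechanism of that theorem is geometric: a nonzero class in the image of $\delta$ certifies a nonconstant Floer cylinder with the prescribed asymptotics, which one then degenerates by neck-stretching into a nonconstant pseudo-holomorphic and ultimately algebraic curve through a chosen cycle. Twisting by $\beta$ alters only the coefficients of the Floer differential---it reweights each rigid Floer cylinder by its $\xi$-area with a Novikov monomial---but leaves the underlying moduli spaces of cylinders and the compactness and gluing analysis unchanged. Hence I expect the entire construction of Theorem~\ref{main theorem0} to run verbatim over $\underline{\Lambda}_{\beta}$, so that $\delta(\gamma)\neq 0$ over the Novikov field still guarantees a geometric cylinder available for degeneration.

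The step I expect to be the main obstacle is making this last implication airtight: guaranteeing that nonvanishing of the twisted $\delta$ over the Novikov field is witnessed by an \emph{honest} nonconstant Floer cylinder, rather than arising from cancellation among Novikov weights of several solutions. Concretely, I would show that the relevant matrix coefficient of $\delta(\gamma)$ is a nonzero Novikov series, select a monomial whose coefficient counts a nonempty rigid moduli space, and check that the $\xi$-energy stays uniformly bounded along the neck-stretching so that the twist does not obstruct the SFT degeneration (it only shifts Novikov exponents). Granting this, the remaining analysis is identical to the untwisted proofs of Theorem~\ref{main theorem0} and Corollary~\ref{main theorem1}.
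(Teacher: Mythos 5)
Your proposal is correct and follows essentially the same route as the paper: the paper treats this as the $k=0$ instance of the twisted analogue of Theorem~\ref{main theorem0} (via the twisted long exact sequence of Ritter), so that the resulting $\C$-uniruled subvariety has dimension at least $n$ and must therefore be all of the irreducible variety $M$. Your additional remark — that nonvanishing of the twisted $\delta$ over the Novikov field still certifies a nonempty rigid moduli space of Floer cylinders, since the twist only reweights counts by Novikov monomials — is exactly the point needed to run the paper's degeneration argument verbatim in the twisted setting.
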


\begin{example} \cite{ritter10}
$SH^{*}(T^{*}\P^{1},\Lambda_{\omega_{I}})=0$ implies $\ell(T^{*}\P^{1},\Lambda_{2})=0$. Therefore, $\ell(M,\Lambda_{2})$ is a stronger quantitative measure than $\ell(M)$ since $\ell(T^{*}\P^{1},\omega_{std})=2$. 
\end{example}

When we extend the coefficient ring to $\Lambda_{Mori}$ using the cone of effective curves, the Mori cone, of $H_{2}(X)$, we could get more precise results in log-birational geometry, that is a generalization of Pascaleff's work on log Calabi-Yau surfaces in \cite{pascaleff}. We will explain how to compute $\ell(M,\Lambda_{Mori})$ using homological mirror symmetry and cohomology from tropicalization in a subsequent paper.

Let $\phi:M\to M$ be a Liouville diffeomorphism on a Liouvlle domain $(M,\omega)$. Denote $\C^{*}\rtimes_{\phi} M:=(\R \times \R \times M)/\Z\cdot (s,\theta, x)\sim (s+1,\theta, \phi(x))$, the symplectic mapping cylinder of $\phi$, \cite{mclean19}. Denote $\Sigma\rtimes_{\phi} M:=\{(\R\times S^{1}\setminus \Z\times\{1\})\times M\}/(s,\theta,x)\sim (s+1,\theta,\phi(x))$ the symplectic mapping torus of $\phi$, [(1-5), \cite{kartal21}]. We can define similar measurments $\ell$ for symplectic mapping cylinder/tori.

\begin{example}
For the identity map $\phi=id_{M}$ on $M$, 

\begin{enumerate}
    \item $\C^{*}\rtimes_{id} M=\C^{*}\times M$, where $\C^{*}:=\C\setminus \{0\}$. By the K$\ddot{\text{u}}$nneth formula, $SH^{*}(\C^{*}\rtimes_{id} M)\cong SH^{*}(\C^{*})\otimes SH^{*}(M)$. Therefore,  $\ell(\C^{*}\rtimes_{\id} M)=\text{min}\{\ell(\C^{*})=\infty,\ell(M)\}=\ell(M)$.
    \item $\Sigma\rtimes_{id} M=\Sigma_{1,1}\times M$, where $\Sigma_{1,1}:= \T^{2} \setminus\{\text{a point}\}$, a once-punctured torus. Similarly, $\ell(\Sigma\rtimes_{id} M)=\ell(M)$.
\end{enumerate} 
\end{example}

\begin{example} \label{mappingcylindereg}
Let $\phi:M\to M$ be a Liouville diffeomorphism on a Liouville domain $(M,\omega)$. Assume that $\ell(M)=\infty$. Let $M_{\Supset k}$ be a $k$-handle attaching on $\partial M$. $\Phi$ be an extension of $\phi$ to $M_{\Supset k}$ by identity on attached locus. Then for any $k<dim_{\C}M$, $SH^{*}(\C^{*}\rtimes_{\phi} M)\cong SH^{*}(\C^{*}\rtimes_{\Phi} (M_{\Supset k}))\cong SH^{*}((\C^{*}\rtimes_{\phi} M)_{\Supset k+1})$ but $\ell(\C^{*}\rtimes_{\phi} (M_{\Supset k}))=\text{min}\{k+2, \ell(\C^{*}\rtimes_{\phi} M)\}$, $\ell((\C^{*}\rtimes_{\phi} M)_{\Supset k+1})=\text{min}\{k+1, \ell(\C^{*}\rtimes_{\phi} M)\}$.
\end{example}

Let $M$ be a Liouville domain. Recall that $SH^{*}(M)=\oplus_{g\in H_{1}(M)}SH^{*}_{g}(M)$ is a $H_{1}(M)$-graded algebra with the pair-of-pants product $\cdot_{p.p}$ : for $\gamma_{1}\in SH^{*}_{g_{1}}(M), \gamma_{2}\in SH^{*}_{g_{2}}(M)$, $\gamma_{1}\cdot_{p.p}\gamma_{2}\in SH^{*}_{g_{1}+g_{2}}(M)$. The pair-of-pants product counts three-punctured spheres with two positive periodic orbits and one negative periodic orbit as asymptotic orbits. So if the pair-of-pants product of two periodic orbits is the unit $e$ of the symplectic cohomology, which is the Poincar$\acute{\text{e}}$ dual of the fundamental class $[M]$, then the same idea gives us the following.

\begin{defn}\label{ell(p.p)}
$\ell_{p.p}(M):=\text{min}\{\text{deg }\alpha: \exists \alpha\neq 0 \in H^{*}(M) \text{ and } \exists g\in H_{1}(M) \text{ such that } \alpha =\gamma_{1}\cdot_{p.p} \gamma_{2} \text{ for some } \gamma_{1}\in SH^{*}_{g}(M), \gamma_{2}\in SH^{*}_{-g}(M)\}$.
\end{defn}

\begin{example}
If $SH^{*}(M)$ has an element $\gamma\neq 0$ satisfying $\gamma^{k}=[M]$, then $\ell_{p.p}(M)=0$.
\end{example}

\begin{example}
For the mapping cylinder of the identity map, $\ell_{p.p}(\C^{*}\rtimes_{id} M)=0$. Moreover, for a Liouville diffeomorphism $\phi:M\to M$, $\ell_{p.p}(\C^{*}\rtimes_{\phi} M)\geq \ell_{p.p}(\C^{*}\rtimes_{id} M)$.
\end{example}

\begin{defn}\label{ell(phi)}
$\ell_{p.p}(\phi):=\text{min}\{k:\ell_{p.p}(\C^{*}\rtimes_{\phi^{k}} M)=0\}$, where $\phi^{k}:=\phi\circ \cdots \circ \phi$, $k$th iteration of $\phi$.
\end{defn}

\begin{example}
If $\phi^{k}=id$ for some $k$, then $l_{p.p}(\phi)\leq k$.
\end{example}

\begin{example}
Define $f:\C^{n+1}\to \C$ to be $f(z_{1},z_{2},\cdots,z_{n+1)}:=z_{1}^{k_{1}}+\cdots+z_{n+1}^{k_{n+1}}$ with an isolated singular point at $0$. Let $S_{\epsilon}$ be the sphere of radius $\epsilon$ centered at $0$ and $L_{f}:=S_{\epsilon}\cap f^{-1}(0)$. Let $\phi:f^{-1}(\delta)\to f^{-1}(\delta)$ be a Milnor monodromy map. Then $\phi^{l.c.m(k_{i})}=id$. Therefore, $l_{p.p}(\phi)\leq l.c.m(k_{i})$.
\end{example}

From the proof of the main theorem, 

\begin{theorem}
If $\ell_{p.p}(M)=0$, then $M$ is $\C^{*}$-uniruled.
\end{theorem}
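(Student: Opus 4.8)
The plan is to adapt the proof of the main theorem (Theorem \ref{main theorem0}), where a class $[\mho]$ in the image of the connecting map $\delta$ produced a $\C$-uniruled subvariety. The essential mechanism there was that the connecting map $\delta$ is realized geometrically by Floer cylinders, which are degenerated (via the neck-stretching and SFT-compactness arguments of \cite{bourgeois.oancea09}, \cite{mclean14}) into configurations of non-constant pseudo-holomorphic spheres in the interior; a cylinder between the ``continuation'' orbits and the Morse cocycle output becomes, in the limit, a rational curve through the prescribed cycle, yielding an affine line $\C\to M$. Here, instead of the connecting map $\delta$, the hypothesis $\ell_{p.p}(M)=0$ provides the unit $e=\PD[M]$ as a pair-of-pants product $\gamma_1\cdot_{p.p}\gamma_2$ with $\gamma_1\in SH^*_g(M)$, $\gamma_2\in SH^*_{-g}(M)$ and both $\gamma_i\neq 0$. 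The geometric object implementing this product is a three-punctured sphere (a pair of pants) with two positive punctures asymptotic to periodic orbits representing $\gamma_1,\gamma_2$ and one negative puncture. First I would set up this pair-of-pants moduli space with the two inputs constrained to the nonzero orbits and the output constrained by the fact that the product equals the unit, so that the curve count is forced to be nonzero.

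Second, I would run the same deformation/degeneration scheme as in the main theorem on this pair of pants rather than on a cylinder. By stretching the neck along the contact boundary and invoking SFT compactness, the pair of pants breaks into a building whose top level contains a non-constant punctured sphere in the interior $\text{int}\,M$; because the output is the unit (dual to the fundamental class, so represented by a generic interior point), and because the product is nonzero, this interior component must be non-constant and must pass through the generic point $x$. The key structural difference from the cylinder case is the domain: a cylinder compactifies to $\C$ (an affine line), whereas a pair of pants, after capping off the two positive ends and retaining the punctures appropriately, compactifies to $\P^1$ minus two points, i.e. a copy of $\C^*=\C\setminus\{0\}$. This is precisely why the conclusion is $\C^*$-uniruledness rather than $\C$-uniruledness: the resulting non-constant algebraic map has source $\C^*$ and image through the generic point.

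Third, I would promote the single curve through a generic point to a family, exactly as in Theorem \ref{main theorem0}(1): varying the point $x$ over a dense open set and using the positivity of intersection together with the automatic transversality/compactness package gives a $\C^*$-uniruled subvariety. The translation from $J$-holomorphic $(k,\mathcal{E})$-uniruledness to the algebraic statement proceeds through the integrability of $J$ near $x$ and McLean's argument \cite{mclean14} that a punctured-sphere curve through a generic point, in an affine variety, is algebraic; the two removed points survive the algebraization to give the $\C^*$ source.

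The hard part, as in the original theorem, will be controlling the limit building so that the relevant interior component is genuinely non-constant and has exactly the topology of $\C^*$, rather than degenerating further (e.g.\ the two positive ends bubbling off into separate spheres, or the output landing on a constant component). Concretely, I expect the main obstacle to be a bookkeeping argument, using the grading/energy constraints $\gamma_1\in SH^*_g$, $\gamma_2\in SH^*_{-g}$ and the requirement that $\gamma_1\cdot_{p.p}\gamma_2=e$ in degree $0$, that pins down which level of the SFT building carries the non-constant curve and forces it to retain two punctures. One must rule out the configurations where all the nonconstant curvature escapes into the symplectization levels leaving a constant interior piece; this is handled by the same action/energy filtration and the nonvanishing of the product that guarantee a nonzero count survives in the interior. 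Once the non-constant $\C^*$-curve through a generic point is secured, the remaining steps are formally identical to the proof of the main theorem.
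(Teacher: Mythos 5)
Your proposal is correct and follows essentially the same route the paper intends: the paper offers no separate argument for this theorem beyond the remark that it follows ``from the proof of the main theorem,'' and your substitution of the pair-of-pants (two positive punctures asymptotic to the nonzero orbits, output the unit/fundamental class) for the Floer cylinder, yielding $(2,\mathcal{E})$-uniruledness and hence, via McLean's degeneration-to-the-normal-cone argument, an algebraic map $\P^{1}\setminus\Gamma\to M$ with $|\Gamma|\leq 2$, is exactly the intended mechanism. The only cosmetic difference is that the paper's compactness input is the Bourgeois--Oancea cascade correspondence together with Fish's target-local Gromov compactness rather than literal SFT neck-stretching, but you cite the right sources and the control on the number of boundary components is the same maximum-principle argument as in the $\C$-uniruled case.
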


Let us briefly explain the main idea to prove the theorem \ref{main theorem0}. Assume that for some $[\alpha]\in H^{\ell}(M)$, $\delta(\gamma)=[\alpha]$ for some $0\neq\gamma\in SH^{\ell-1}_{+}(M)$. Then there exists a Floer cylinder $u:\R\times S^{1}\to M$ with negative asymptotic periodic orbit $\gamma$ and with positive asymptotic orbit which lands on homology class represented by a pseudo-cycle of a unstable submanifold of index $2n-\ell$ in Morse homology. Applying Bourgeois-Oancea's correspondece theorem on the Morse-Bott symplectic cohomology, we get a pseudo-holomorphic curve in $M$ passing through a point on a unstable submanifold representing a cycle in $HM^{\ell}(M)$. McLean's degeneration to the normal cone method in \cite{mclean14} let us get a non-constant rational map $f:\C\to M$ passing through a point in a cycle of index $2n-\ell$. Because of the set-theoretic intersection with pseudo-cycle, we get a family of rational curves with evaluation. By Grothendieck's construction of Hilbert schemes (or Quot Schems), we get a uniruled subvariety of dimension at least $\lfloor \ell/2 \rfloor$ that parametrizes the family of rational curves. For readers' convenience, let us provide the structure of this paper. To read the section \ref{section3} on proof of the main theorem, the readers only need to check definitions in subsections \ref{Liouville domain corr to affine} and \ref{def2}. Other symplectic structures and properties of symplectic cohomology, summarized in the section 1 and 2, will be used in the section 4 on applications.\\

{\it Acknowledgments}: This paper is a part of my PhD thesis. I would like to thank my advisor Mark McLean for suggesting this project and for his kind guidance, helpful advice, patience and warm encouragement. I would also like to thank Jason Starr for his inspiring lectures, helpful advice and warm words. I would like to thank my friends and professors in the Math department at Stony Brook for their continued support and for providing a friendly environment. I have also learned a lot by attending, seminars and classes at Stony Brook University(Symplectic/Contact/Gauge theory seminar, Algebraic Geometry seminar, Student AG seminar), British Isles Graduate Workshop in 2018, Kylerec Workshop, Categorical Symplectic Topology Conference, MSRI summer school on Current Trends in Symplectic Topology, Joint Math Meetings 2020 (AMS-AWM Special Session for Women in Symplectic and Contact Geometry), Symplectic Zoominar, and Western Hemisphere Virtual Symplectic Seminar. I appreciate all the organizers and people whom I met there. During this project, I was supported by John W. Milnor Endowed Graduate Fund Travel Award, Stony Brook Math Department Summer Research Funding, and GSEU Professional Development Award at Stony Brook.

\section{Symplectic Cohomology}
We review known definitions and theorems about symplectic cohomology in this section so experts who are familiar with the basics can jump to the section \ref{section3}. To read section \ref{section3}, the subsections 2.1, 2.2, and 2.3 are needed only. Symplectic cohomology is well-defined on the completion of Liouville domains [\cite{viterbo99}, \cite{hermann}, \cite{seidelbiased}, \cite{mclean08}, \cite{bourgeois.oancea09}, \cite{abouzaid15}, \cite{ganatrapomerleano}, and \cite{diogilisi18s}]. Every smooth affine variety has a unique Liouville domain, up to isotopy through Liouville domains, whose completion is symplectomorphic to the affine variety \cite{mclean12}. Since, on Liouville domains, symplectic form is exact, many assumptions, computations related to the definition of symplectic cohomology become simpler. Symplectic cohomology still be well-defined for non-exact convex symplectic manifold with the weak monotonicity assupmtion, for example, the blow-up of $\P^{1}$ at a point \cite{ritter10}, \cite{mclean-ritter18}. To get a $\Z$-grading on cohomology, we assume a Liouville domain with the vanishing first Chern class, $c_{1}=0$. For simplicity, we mainly consider simply-connected varieties, $\pi_{1}=0$, for example, complete intersections of $\text{dim}_{\C}>3$ by the Lefschetz hyperplane theorem, then consider the action by the fundamental group for $\pi_{1}\neq 0$. For symplectic manifolds with $\pi_{1}\neq 0$, we use Novikov ring coefficient.

\subsection{Symplectic Structure on an Affine Variety or on a Log Pair}

A Stein manifold is a properly embedded complex submanifold of $\C^{N}$ for some $N$, equivalently, a complex manifold with an exhausting plurisubharmonic function. Smooth affine varieties are Stein manifolds of finite type, which means a complex manifold $(M, J)$ that can be properly embedded in $\C^{N}$ for some $N$ by admitting a function $f:M\to \R$ satisfying the following conditions,
\begin{itemize}
    \item (Plurisubharmonic) $(-dd^{c}f)(v,Jv)>0$ for all non-zero vectors $v$ where $d^{c}:=d\circ J$,
    \item (Exhausting) $f:M\to \R$ is bounded below and the preimage of every compact set is compact,
    \item (Of finite type) $f$ has only finitely many singularities.
\end{itemize}

Since the space of exhausting plurisubharmonic functions on a given Stein manifold is convex, so contractible and open in $C^{2}$, we can choose an exhausting plurisubharmonic generalized Morse function $f:M\to \R$ on a Stein manifold $(M,J)$. 

A Weinstein structure on a symplectic manifold $(M, \omega)$ is a pair $(f, Z)$, where $f:M\to \R$ is an exhausting generalized Morse function, and $Z$ is a complete Liouville vector field that is gradient-like for $f$. Given a Stein manifold $(M,J,f:M\to \R)$, one can find a symplectic form $\omega_{f}$ so that $(f, Z_{f}:=\nabla_{g_{f}}f)$ become a Weinstein structure on $(M, \omega_{f})$, where. Given a Weinstein structure $(Z,f)$ on $(M, \omega)$, one can construct a Stein structure $(J, f)$ such that the canonical Weinstein structure on $(M, J, f)$ is Weinstein homotopic to the given one. We refer \cite{cieliebakeliashberg12} for the detail. A Weinstein structure on a symplectic manifold carry a Morse-theoretic handle decomposition, called Weinstein handle decomposition: Weinstein cobordism is attaching a standard handle along a neighborhood of an isotropic sphere in the contact level set with a Liouville vector field transverse to contact level sets [Theorem 5.1, \cite{weinsteinhandle}].

\subsubsection{Liouville Domain corresponds to Affine Variety} \label{Liouville domain corr to affine}
A Liouville domain is a compact manifold $\underline{M}$ with boundary $\partial \underline{M}$ and a $1$-form $\theta_{\underline{M}}$ with $d\theta_{\underline{M}}$ as a symplectic form and satisfying that for any vector field $Z$ with $\iota_{Z}d\theta_{\underline{M}}=\theta_{\underline{M}}$ is transverse to $\partial \underline{M}$ and pointing outwards along $\partial \underline{M}$. We call such a vector field $Z$, the Liouville vector field. Using the Liouville flow of $Z$, we glue an half-infinite cylindrical end, $([-\epsilon,\infty)\times \partial \underline{M},d(e^{r}\alpha))$, which is a symplectization of the contact boundary, to $\underline{M}$ along $((-\epsilon,0)\times \partial \underline{M}$ so that $Z$ extends to $\partial _{r}$ on $((0,\epsilon)\times \partial \underline{M}$. Then we get a completion of a Liouville domain, $\widehat{\underline{M}}:=\underline{M}\cup _{\partial \underline{M}}([0,\infty)\times \partial \underline{M})$.

In [(4b), \cite{seidelbiased}], [Lemma 2.1, \cite{mclean12}], for any smooth affine variety $M$, there is an associated Liouville domain $\underline{M}$, whose completion, constructed by attaching a positive half of symplectization of contact boundary $\partial\underline{M}$ of $M$ to $M$, is symplectomorphic to $M$, which is unique up to Liouville isomorphism, i.e. an isotopy through Liouville domains which let us consider the symplectic cohomology of affine varieties as the completion of Liouville domains. The construction is the following: \textit{Let $\iota:M\hookrightarrow \C^{N}$ be a algebraic embedding as a closed subvariety. Let $(r_{i},\theta_{i})$ be polar coordinates for the $i$'th factor in $\C^{N}$. Define $R:=\frac{1}{4}\sum _{i}r^{2}_{i}$ and $\theta:=-d^{c}R=\frac{1}{2}\sum_{i}r_{i}^{2}d\mathcal{\theta}_{i}$, $d\theta=\sum_{i}r_{i}dr_{i}d\mathcal{\theta}_{i}$, the standard symplectic form on $\C^{N}$. Then there exists $C>0$ such that for all $c\geq C$, $((R|_{M})^{-1}(-\infty,c],\theta_{M})$ is a Liouville domain whose completion is symplectomorphic to $(M, d\theta|_{M})$. Such a number $C$ is the minimum of numbers $c$ so that $R|_{M}$ has no singularities for $(R|_{M})^{-1}(c,\infty)$}, [Lemma 2.1, \cite{mclean12}].

By Hironaka's resolution of singularities \cite{hironaka}, we can consider an affine variety  $M$ as the complement of simple normal crossing divisors $D=\{D_{i}\}^{l}_{i=1}$ in a projective variety $X$. We can consider a log pair $(X, D)$ as an affine variety $M=X\setminus D$ with compactification data. Let us recall the symplectic structure of a $X\setminus D$ from [Lemme 5.19, Theorem 5.20, \cite{mclean12}], which has good properties that divisors are intersecting positively and the wrapping numbers of them are negative and small tubular neighborhood of them are fibrations with fibers, product of punctured disks. Recall [Theorem 5.20, \cite{mclean12}]: \textit{Let $M$ be a smooth affine variety. Then $M$ is convex deformation equivalent to a finite type convex symplectic manifold $(W, \theta_{W})$ with the following properties:
\begin{enumerate}
    \item $W$ is symplectomorphic to $X\subset D$, where $D=\{D_{i}\}^{k}_{i=1}$ are co-dimension$_{\R}$ $2$ symplectic submanifolds transversely intersecting.
    \item There are neighborhoods $UD_{i}$ of $D_{i}$ and fibration $\pi_{i}:UD_{i}\twoheadrightarrow D_{i}$ such that 
    \begin{itemize}
        \item For $1\leq i_{1}<i_{2}<\cdots < i_{l}\leq k$, $\pi_{i_{l}}\circ \cdots \circ \pi_{i_{1}}: \cap^{l}_{j=1}UD_{i_{j}}\twoheadrightarrow D_{\{i_{1},\cdots , i_{l}\}}$\\
        has fibers that are symplectomorphic to $\prod _{j=1}^{l} B_{\epsilon}$, where $B_{\epsilon}$ is the disk of radius $\epsilon$.
        \item On the fiber $\prod _{j=1}^{l} B_{\epsilon}$ of $\pi_{i_{l}}\circ \cdots \circ \pi_{i_{1}}$, one of factor map $\pi_{i_{k}}$, $1\geq k \geq l$, maps this fiber to itself.
        \item The symplectic structure on $UD_{i}$ induces a natural connection for $\prod _{j=1}^{l} B_{\epsilon}$ given by the $\omega$ orthogonal vector bundles to the fibers. 
    \end{itemize}
    \item $\theta_{W}$ restricted to the fiber $B_{\epsilon}$ of $\pi_{i}$ is equal to $(r_{i}^{2}+\kappa_{i})d\theta_{i}$ for some $\kappa < 0$.
\end{enumerate}}

For any affine variety $M$, we can construct two Liouville domains $\underline{M}, \overline{M}$ with $\underline{M}\hookrightarrow M\hookrightarrow \overline{M}$, where $\underline{M}, \overline{M}$ are Liouville deformation equivalent. Indeed, two inclusions are exact symplectic embeddings of a codimension $0$ submanifold and a homotopy equivalence. This fact will be used when we make holomorphic curves on $\underline{M}$ into holomorphic curves on $\overline{M}$, then get rational curves on $M$. Let us state the sandwich theorem for affine varieties [Lemma $4.3$, Lemma $4.4$, \cite{mclean14}]\label{sandwich}: \textit{
Let $M$ be a smooth affine variety. Let $L$ be a line bundle associated to a simple normal crossing compactification $(X,D)$ of $M$ and $s$ be a section with $s^{-1}(0)=D$. Define $f:=-log||s||$, $\theta=-d^{c}f$, $\omega=-dd^{c}f$. Then
\begin{itemize}
    \item $f$ is a proper and bounded below function on $M$ with $df(X_{\theta})>0$ on $f^{-1}([c,\infty))$ for some $c\gg1$, and 
    \item there is an exact symplectic embedding $\iota:(M, \theta)\hookrightarrow (\overline{M}:=f^{-1}((-\infty,c]), \nu\cdot\theta)$ for some $\nu$ which is a homotopy equivalence and $(\overline{M}, \nu\cdot\theta))$ is Liouville deformation equivalent to the associated Liouville domain $\underline{M}$ of the affine variety $M$. 
\end{itemize}}

\subsubsection{Symplectic Convex Lefschetz fibration Structure on Affine Varieties}
Symplectic geometric understanding on Lefschetz fibration was initiated by Donaldson. Seidel has foundational works on categorification in symplectic geometry of Picard-Lefschetz theory. 
McLean constructed symplectic convex Lefschetz fibration structure on any (Kaliman modification of) affine varieties [Theorem 2.32, \cite{mclean09}]. A symplectic convex Lefschetz fibration structure with monodromy information is a complexified Morse function and is useful for systematic understanding on symplectic structure, for example, dimension deduction. So let us summarize the known results [Chap.III, \cite{seidel.book}, \cite{mclean08}, Chap.8, \cite{mclean09}, \cite{smith.review}]. Here, systematic understanding means that we chop the structure into smaller dimensional pieces or a complex of simpler objects like Morse-theoretic building blocks and then glue them to get an homological invariants, like spectral sequences, long exact sequences. 

Let $X$ be a projective variety with an ample divisor $D$ and $M:=X\setminus D$ be an affine variety. Let $\mathcal{L}$ be an ample line bundle on $X$ with two holomorphic sections $s, t$ with $s^{-1}(0)=D$. Define a map $p:=t/s :M\to \C$.

\begin{defn} An (open) algebraic Lefschetz fibration structure on a pair $(M,p:M\to \C)$ is following.
\begin{enumerate}
    \item $\overline{t^{-1}(0)}$ is smooth, reduced and intersects each stratum of $D$ transversally.
    \item $p$ has only nondegenerate critical points and there is at most one of these points on each fiber.
\end{enumerate}
\end{defn}

The symplectic structure on $M$, $\omega=dd^{c}(-log||s||)$, is compatible with $p$, in other words, $\omega$ restricted to the fiber is a symplectic form away from singular part. But the horizontal boundary is not trivial. To define symplectic cohomology on Lefschetz fibrations, we need compact convex condition.

\begin{defn} A symplectic Lefschetz fibration is a pair $(M, p:M\to B)$, where $(M,\omega=d\theta)$ is a compact exact symplectic manifold with corners whose boundary is the union of horizontal $\partial_{h}M$ and vertical $\partial_{v}M$ meeting in a codimension $2$ corner and $B$ is a surface with boundary. The smooth map $p:M\to B$ satisfies the following.
\begin{enumerate}
    \item The map $p$ has only finitely many critical points.
    \item $\omega$ restricts to a symplectic form on smooth locus of each fiber $p^{-1}(b)\setminus M^{\text{crit}}$, for every $b\in B$.
    \item There is an integrable complex structure $J_{0}$ (resp. $j_{0}$) defined on some neighborhood of $M^{\text{crit}}$ (resp. $S^{\text{crit}}$) such that $p$ is $(J_{0},j_{0})$-holomorphic map near $M^{\text{crit}}$.
    \item There is at most one critical point in each fiber and at any critical point, the Hessian $D^{2}p$ is nondegenerate as a complex quadratic form. 
    \item $\omega$ is K\"ahler form for $J_{0}$ near $M^{\text{crit}}$.
\end{enumerate}
\end{defn}

\begin{defn} A symplectic Lefschetz fibration $(M, p:M\to B)$ is compact convex if 
\begin{enumerate}
    \item (Exact) $p:M\to B$ is a proper map with $\partial_{v} M=p^{-1}(\partial B)$ and such that $p|_{\partial_{v}M}$ is a smooth fiber bundle. Also, there is a neighborhood $N$ of $\partial_{h}(M)$ such that $p|_{N}:N\to B$ is a product fibration $B\times \text{nbhd}(\partial F)$ where exact symplectic structure is pulled back from $\text{nbhd}(\partial F)$.
    \item (Compact) Fiber $(F, \theta|_{F})$ is a compact convex symplectic manifold. 
\end{enumerate}
\end{defn}

Using well-defined exact parallel transport maps for an algebraic Lefschetz fibration [Section 2, \cite{fukaya.seidel.smith.08}], 

\begin{lemma}[Lemma 8.6, \cite{mclean09}] \label{sympl lefs}
Let $(M, p:M\to \C, \omega:=-dd^{c}(-log||s||))$ be an open algebraic Lefschetz fibration. Then there exists a convex symplectic structure $\theta'$ so that $(M,p,\theta')$ is a symplectic convex Lefschetz fibration. 
\end{lemma}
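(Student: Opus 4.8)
The plan is to keep the complex-analytic data of the algebraic Lefschetz fibration $p=t/s$ untouched near the critical points, and to modify only the Liouville one-form $\theta=-d^{c}(-\log\|s\|)$ in a neighborhood of the horizontal boundary so that the fibration becomes a product there. First I would pass to a compact model: using the exhausting plurisubharmonic function $f=-\log\|s\|$ on $M$, I take a large sublevel set $\overline{M}=f^{-1}((-\infty,c])$ as in the sandwich theorem \cite{mclean14}, and I restrict the base to a closed disk $B=\overline{B}_{r}\subset\C$ chosen so that all critical values of $p$ lie in its interior. The resulting total space $p^{-1}(B)\cap\overline{M}$ is a compact manifold with corners whose vertical boundary is $p^{-1}(\partial B)$ and whose horizontal boundary $\partial_{h}$ is cut out by $\{f=c\}$.

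Next I would analyze $\partial_{h}$ using the nested tubular-neighborhood fibrations of Theorem 5.20 of \cite{mclean12}: near each component $D_{i}$ there is a projection $\pi_{i}\colon UD_{i}\twoheadrightarrow D_{i}$ whose fibers are products of small disks, and on each such disk factor $\theta$ equals $(r_{i}^{2}+\kappa_{i})\,d\theta_{i}$ with wrapping number $\kappa_{i}<0$. Since the algebraic Lefschetz condition forces $\overline{t^{-1}(0)}$ to meet every stratum of $D$ transversally, the pair $(p,\pi_{i})$ gives a local product decomposition of a neighborhood of $\partial_{h}$, identifying it with $B\times\text{nbhd}(\partial F)$ for the fiber $F$. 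I would then choose a cutoff, adapted to the nested strata $D_{\{i_{1},\dots,i_{l}\}}$, that equals $1$ in the interior and interpolates $\theta$ to the pullback of a product Liouville form on $B\times\text{nbhd}(\partial F)$ near $\partial_{h}$; call the result $\theta'$.

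It remains to verify the axioms. Away from $\partial_{h}$ the form is unchanged, so the nondegenerate-Hessian condition, the uniqueness of the critical point in each fiber, and the K\"ahler condition near $M^{\text{crit}}$ all persist verbatim. Near $\partial_{h}$ the product form furnishes the exactness condition of the compact-convex definition directly, the interpolated fiber $(F,\theta'|_{F})$ is a Liouville domain, giving the compactness condition, and the horizontal ($\omega'$-orthogonal) distribution is now tangent to $\partial_{h}$, so the exact parallel transport maps of \cite{fukaya.seidel.smith.08} are well defined and do not flow out the horizontal boundary.

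The hardest step is the simultaneous control during the interpolation: one must keep $d\theta'$ nondegenerate on every fiber while bending the horizontal distribution to be tangent to $\partial_{h}$, and these two requirements pull against each other precisely where the fibers degenerate along the deeper strata. This is exactly where the negativity and smallness of the wrapping numbers $\kappa_{i}$ are indispensable: they guarantee that the fiberwise two-form stays positive through the interpolation and that the Liouville vector field of $\theta'$ continues to point outward along $\partial_{h}$, so that the cutoff can be completed stratum by stratum without destroying convexity.
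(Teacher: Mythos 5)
The paper does not prove this lemma at all: it is imported verbatim as [Lemma 8.6, \cite{mclean09}], so there is no internal proof to compare against, and your sketch should be judged against the cited construction. Your overall strategy is the right one and matches McLean's in spirit: truncate $M$ by a sublevel set of $f=-\log\|s\|$ and a disk in the base, use the tubular--neighborhood fibrations $\pi_{i}\colon UD_{i}\twoheadrightarrow D_{i}$ of [Theorem 5.20, \cite{mclean12}] together with the transversality of $\overline{t^{-1}(0)}$ to the strata of $D$ to exhibit a product structure near the horizontal boundary, and then deform $\theta$ there.

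There are, however, two places where the sketch asserts rather than proves the crux. First, for $p^{-1}(B)\cap\overline{M}$ to be a manifold with corners with $\partial_{h}$ cut out by $\{f=c\}$, and for your claimed identification of a neighborhood of $\partial_{h}$ with $B\times\mathrm{nbhd}(\partial F)$, you need $f$ restricted to each fiber of $p$ to have no critical points in the region $\{f\geq c\}$; this is exactly the content of a no-critical-points argument of the type of Lemma \ref{no critical points}, applied to the fibers of $p$ rather than to holomorphic curves, and it is where the hypothesis that $\overline{t^{-1}(0)}$ meets every stratum of $D$ transversally actually enters. You gesture at this but do not carry it out. Second, and more seriously, the interpolation $(1-\chi)\theta+\chi\,\theta_{\mathrm{prod}}$ changes the symplectic form by the term $d\chi\wedge(\theta_{\mathrm{prod}}-\theta)$ in addition to the convex combination of the two exterior derivatives, and nothing in your write-up controls this term. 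Invoking ``negativity and smallness of the wrapping numbers'' is not an argument here: the $\kappa_{i}$ are fixed negative constants, and what one actually needs is either (i) an estimate showing $\theta_{\mathrm{prod}}-\theta$ is $C^{1}$-small in a fixed cylindrical metric once $c$ is large, with a cutoff of bounded derivative, so that nondegeneracy survives as an open condition, or (ii) the addition of a large multiple of $p^{*}\omega_{B}$ to restore nondegeneracy in the horizontal directions. Note also that the cheap fix of writing $\theta_{\mathrm{prod}}-\theta=dg$ and interpolating the primitive does not work, since it leaves $\omega$ unchanged and therefore cannot produce the required product \emph{symplectic} structure near $\partial_{h}$. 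Where the negative wrapping numbers genuinely are needed is in checking that the fiberwise Liouville field of $(r_{i}^{2}+\kappa_{i})\,d\vartheta_{i}$ points toward $D$ for small $r_{i}$, so that $(F,\theta'|_{F})$ is a Liouville domain; you should separate that (correct) point from the nondegeneracy issue, which it does not address.
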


Kaliman modification of a triple $(M, D, C)$ of an smooth affine variety $M$, an irreducible divisor $D$, a closed subvariety $C$ contained in the smooth part of $D$ is known to be an operation making a new affine variety that is diffeomorphic to the original one but not biholomorphic. McLean constructed symplectic convex Lefschetz fibration structure on Kaliman modification and showed that they are convex deformation equivalent to each other \cite{mclean09}. 

\begin{defn}
The Kaliman modification $M^{\vee}$ of $(M,D,C)$ is defined by $M^{\vee}:=\mathcal{B}l_{C}M\setminus \tilde{D}$, where $\mathcal{B}l_{C}M$ is the blow up of $M$ along $C$ and $\tilde{D}$ is the proper transformation of $D$. 
\end{defn}

\begin{lemma}[Lemma 8.7, \cite{mclean09}]
Let $M$ be an affine variety of $\text{dim}_{\C}M\geq 3$ with compactifying projective variety $X$ by $D$. Choose a irreducible divisor $Z$ of $X$ so that $D\cup Z$ is ample. Choose a point $q\in Z\cap M$ smooth in $Z$. Denote $M^{\vee}:=\text{Kaliman}(M,(Z\cap M), \{q\})$ and $M^{\sqcup}:=M\setminus Z$. Then there exist algebraic Lefschetz fibrations $p^{\vee}:M^{\vee}\to \C$, $p^{\sqcup}:M^{\sqcup}\to \C$ such that $p^{\sqcup}$ is a subfibration of $p^{\vee}$ and the singularities of $p^{\vee}$ are contained in $M^{\sqcup}$.
\end{lemma}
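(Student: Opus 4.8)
The plan is to build a single pencil $p=t/s$ on $X$ whose restriction furnishes $p^{\sqcup}$ on $M^{\sqcup}$ and whose regular extension after blowing up furnishes $p^{\vee}$ on $M^{\vee}$; the whole trick is to force the numerator to vanish at $q$ so that the exceptional divisor is swept into a finite fiber instead of being sent to infinity. First I would set up the blow-up picture. Since $q\in Z\cap M$ and $q\notin D$, blowing up $M$ at $q$ leaves $D$ untouched, and one identifies $M^{\vee}=\mathcal{B}l_{q}M\setminus\widetilde{Z\cap M}$ with $\widehat{X}\setminus(D\cup\tilde Z)$, where $\pi:\widehat{X}=\mathcal{B}l_{q}X\to X$, $\tilde Z$ is the proper transform of $Z$, and $E\cong\P^{n-1}$ is the exceptional divisor ($n=\dim_{\C}M$). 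Because $Z$ is smooth at $q$ we have $\pi^{*}Z=\tilde Z+E$, and $\tilde Z$ meets $E$ in a hyperplane $\P^{n-2}$, so $E\cap M^{\vee}=E\setminus\tilde Z\cong\C^{n-1}$. Deleting $E$ recovers $M^{\vee}\setminus E=X\setminus(D\cup Z)=M^{\sqcup}$; thus $M^{\sqcup}\hookrightarrow M^{\vee}$ is open with complement the affine exceptional cell $E\setminus\tilde Z$, which is the sense in which $p^{\sqcup}$ will be a subfibration of $p^{\vee}$.

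Next I would choose the sections. Since $D\cup Z$ is ample I can pick an ample line bundle $\mathcal{L}$ with a section $s$ whose zero divisor is supported on $D\cup Z$ with multiplicity one along $Z$, so that $\pi^{*}s$ vanishes to first order along $E$. The key point is to choose the numerator $t\in H^{0}(\mathcal{L})$ to vanish at $q$ to first order, $t(q)=0$ with generic $1$-jet, and generically otherwise. Then $\pi^{*}t$ also vanishes to first order along $E$, so $p:=t/s$, which a priori would send $E$ to $\infty$, instead extends holomorphically across $E\setminus\tilde Z$: in the standard chart $(\eta_{1},\eta_{3},\dots,\eta_{n},w)$ with $z_{2}=w,\ z_{1}=w\eta_{1},\ z_{i}=w\eta_{i}$ and $E=\{w=0\}$, $\tilde Z=\{\eta_{1}=0\}$, one computes $p|_{E}=\ell(\eta)/\eta_{1}$, where $\ell$ is the linear part of $t$ at $q$. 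Setting $p^{\sqcup}:=p|_{M^{\sqcup}}$ and letting $p^{\vee}$ be the regular extension of $\pi^{*}p$ to $M^{\vee}$, I obtain two regular maps to $\C$ with $p^{\vee}|_{M^{\sqcup}}=p^{\sqcup}$; moreover $p^{\vee}$ tends to $\infty$ along $D$ and along $\tilde Z$ (where $t$ does not vanish) but stays finite on $E\setminus\tilde Z$, exactly as required for $D\cup\tilde Z$ to be the divisor at infinity of $M^{\vee}$.

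Then I would verify the Lefschetz conditions by genericity. The constraint $t(q)=0$ is a single linear condition, so after replacing $\mathcal{L}$ by a high power a Bertini argument on $X$ makes $p^{\sqcup}$ an algebraic Lefschetz fibration on $M^{\sqcup}$ — isolated nondegenerate critical points, at most one per fiber, with $\overline{t^{-1}(0)}$ smooth, reduced and transverse to the strata of $D\cup Z$ — and the parallel Bertini argument on $\widehat{X}$ does the same for $p^{\vee}$, including transversality of the zero-fiber to $D$, $\tilde Z$ and $E$. The crucial remaining point is that $p^{\vee}$ has no critical points on $E\setminus\tilde Z$. Writing $\ell(\eta)=a_{1}\eta_{1}+a_{2}+\sum_{i\ge 3}a_{i}\eta_{i}$, one finds $\partial p^{\vee}/\partial\eta_{i}=a_{i}/\eta_{1}$ for $i\ge 3$ on $E\setminus\tilde Z=\{\eta_{1}\neq 0\}$; since $n\ge 3$ there is at least one such index, and a generic $1$-jet makes some $a_{i}\neq 0$, so $dp^{\vee}\neq 0$ everywhere on $E\setminus\tilde Z$. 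Hence every critical point of $p^{\vee}$ lies in $M^{\sqcup}$, where it coincides with a critical point of $p^{\sqcup}$; this is precisely where the hypothesis $\dim_{\C}M\ge 3$ is used. Finally I would invoke Lemma \ref{sympl lefs} to upgrade both algebraic Lefschetz fibrations to symplectic convex ones compatibly, so that $p^{\sqcup}$ is a genuine subfibration of $p^{\vee}$.

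The main obstacle is the simultaneous genericity: one must run compatible Bertini-type arguments on both $X$ and its blow-up $\widehat{X}$ for a single family of numerators constrained by $t(q)=0$, while guaranteeing zero-fiber transversality along $E$ and $\tilde Z$ and the absence of critical points on the exceptional cell $E\setminus\tilde Z$. It is the ampleness of $D\cup Z$, after passage to a sufficiently high power of $\mathcal{L}$, that makes all of these generic conditions achievable at once.
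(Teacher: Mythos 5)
Your proposal is correct and follows essentially the same route as the paper: the paper quotes this lemma from \cite{mclean09} and then reproduces the identical argument for its half-point-attachment variant --- take $s$ vanishing to order one along $Z$, force the numerator $t$ to vanish at $q$ with $t^{-1}(0)$ transverse to $Z$ there so that $t/s$ extends over $E\setminus\tilde{Z}$, and verify by the same local computation (normalizing $s=z_{1}$, $t=z_{2}$, so that $\tilde{p}=Z_{2}$ on the chart $Z_{1}=1$) that no critical points appear on the exceptional cell. The one quibble is your claim that $\dim_{\C}M\ge 3$ enters precisely through the existence of an index $i\ge 3$: transversality of $t^{-1}(0)$ to $Z$ already gives $(a_{2},a_{3},\dots,a_{n})\neq 0$, and the $\eta_{1}$-derivative $-\bigl(a_{2}+\sum_{i\ge 3}a_{i}\eta_{i}\bigr)/\eta_{1}^{2}$ rules out critical points on $E\setminus\tilde{Z}$ even when some $a_{i}$ with $i\ge 3$ vanish, so the dimension hypothesis is really inherited from the surrounding symplectic-cohomology statements (e.g.\ the subfibration criterion of Lemma \ref{lefSHisom}) rather than from this local step.
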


We apply the same idea of proof of Lemma 8.7 in \cite{mclean09} to a half-point attachment from the weighted blow-up of a point with weights $(1,1,b)$. 

\begin{defn} [Definition 1.1, \cite{kishimoto06}] \label{halfpointattach}
Let $M$ be a normal quasi-projective complex threefold and $X$ be a normal projective threefold compactifying $M$ with $D:=X\setminus M$. Choose a point $q\in D$ where $X$ is smooth. Let $f:\tilde{X}\to X$ be the weighted blow-up at $q$ with weights $(1,1,b)$, where $b\in \N$. Let $E\cong \P^{2}_{(1,1,b)}$ be the exceptional divisor of $f$ and $\tilde{D}:=f^{-1}(D)$. $\tilde{M}:=\tilde{X}\setminus \tilde{D}$ is called to be a half-point attachment to $M$ of $(b,k)$-type if $\tilde{D}|_{E}=\sum_{j=1}^{k}m_{j}l_{j}$, where $l_{j}$'s are the mutually distinct rulings on $E$ and $m_{j}\in \N$ with $\sum _{j=1}^{k}m_{j}=b$. $\tilde{M}\setminus M\cong \C^{(k-1)*}\times \C$.
\end{defn}

We use the same idea of Lemma 8.7 in \cite{mclean09} to the following case.
Let $\tilde{M}$ be a half-point attachment to an affine threefold $M$ at a smooth point on a hypersurface of $(b,1)$-type. We want to show that there exist algebraic Lefschetz fibration $\tilde{p}:\tilde{M}\to \C$, $p:M\to \C$ such that $p$ is a subfibration of $\tilde{p}$ and the singularity of $\tilde{p}$ are contained in $M$. Let $X$ be a projective variety with ample divisor $D$ with $X\setminus D=M$. Assume that $D$ is effective and very ample so that we can choose a rational function $f$ on $X$ with $Y:=\overline{f^{-1}(0)}$ satisfying that $D\sim Y+D'$ for an ample divisor $D'$ and $Y\cap (X\setminus D')$ is reduced and irreducible. Define $M':=X\setminus D'$ an affine variety containing $M$. Choose a smooth point $q\in Y\setminus D'$.

Let $\pi:\tilde{X}\to X$ be a weighted blow-up at $q$ with weights $(1,1,b)$. $\tilde{D}$ is a strict transformation of $D$ under $\pi$. Let $s$ be a section of a line bundle $\mathcal{O}_{X}(D)$ with $s^{-1}(0)=Y+D'$. Choose another section $t$ of $\mathcal{O}_{X}(bD)$ satisfying the following,
\begin{enumerate}
    \item $p:=\frac{t}{s^{b}}:M\to \C$ is an algebraic Lefschetz fibration on $M=M'\setminus Y$, and
    \item $t^{-1}(0)$ is a smooth subvariety passing through $q$ and transverse to $Y$ at the point $q$.
\end{enumerate} 
Let $E$ be the exceptional divisor $\pi^{-1}(q)$. Then $(\pi^{*}s)^{-1}(0)=E+D''$ for some divisor $D''$. Choose an effective divisor $D^{\vee}$ on $\tilde{X}$ compactifying $\tilde{M}:=\pi^{-1}(M')\setminus \tilde{Y}$ such that $D^{\vee}-E$ is ample. We can choose a meromorphic section $h$ of $\mathcal{O}_{\tilde{X}}(bD^{\vee}-bE)$ such that $h^{-1}(0)\subset D^{\vee}$, $\text{ord}_{E}(h)=-b$ and such that $h$ is holomorphic away from support of $D^{\vee}$, $E$. Then $\pi^{*}s^{b}\otimes h\in H^{0}(\mathcal{O}(bD^{\vee}-bE+b\pi^{*}D))$ is non-zero away from $D^{\vee}$. Therefore, the map $\tilde{p}:=\frac{(\pi^{*}t)\otimes h}{(\pi^{*}s)\otimes h}$ defines an algebraic Lefschetz fibration on $\tilde{M}$ and the restriction of $\tilde{p}$ on $M$ is $p$.
Choose a holomorphic coordinate $(z_{1},z_{2},z_{3})$ around $q$ and a holomorphic trivialization of $\mathcal{O}_{X}(Z+D')$ so that $s(z_{1},z_{2},z_{3})=z_{1}^{b}$ and $t(z_{2},z_{2},z_{3})=z_{2}^{b}$. Locally on $E$, $\{((z_{1},z_{2},z_{3}),[Z_{1}:Z_{2}:Z_{3}]_{(1,1,b)})\in \C^{3}\times \P^{2}_{(1,1,b)}:Z_{1}\cdot z_{2}=Z_{2}\cdot z_{1}, Z_{2}^{b}\cdot z_{3}=Z_{3}\cdot z_{2}^{b}, Z_{1}^{b}\cdot z_{3}=Z_{3}\cdot z_{1}^{b}\}$. To show that all the critical points of $\tilde{p}$ are in $M$, locally on a chart $(Z_{1}=1)$, we can choose a trivialization of $\mathcal{O}_{\tilde{X}}(\tilde{D}-E)$ so that $\pi^{*}s^{b}=z_{1}^{b}$, $\pi^{*}t=
Z_{2}^{b}\cdot z_{1}^{b}$, and $h=\frac{1}{z_{1}^{b}}$ from $\text{ord}_{E}(h)=-b$. Hence, $\tilde{p}=Z_{2}^{b}$ has no singular points near $E\setminus \tilde{Y}$ where $Z_{2}\neq 0$.

\begin{remark} It seems interesting to understand the Lefschetz fibration structure of a half-point attachment of $(b,k)$-type in general. However, the author does not know about it. 
\end{remark}

\subsubsection{Mapping Cylinder of Milnor Monodromy Map of Isolated Singularities}

Let $f:\C^{n+1}\to \C$ be a polynomial with an isolated singular point at $0$. Let $S^{2n+1}_{\epsilon}\subset \C^{n+1}$ be the $(2n+1)$-sphere of radius $\epsilon$ centered at $0$ with the standard contact structure $\eta:=TS^{2n+1}_{\epsilon}\cap J_{std}TS^{2n+1}_{\epsilon}$. The link of $f$ at $0$ is a contact submanifold $L_{f}:=f^{-1}(0)\cap S^{2n+1}_{\epsilon}\subset (S^{2n+1}_{\epsilon}, \eta)$ where $\epsilon>0$ is sufficiently small. Let $\mathcal{N}_{S^{2n+1}}L_{f}:=(TS^{2n+1}_{\epsilon}|_{L_{f}}/TL_{f})$ be the normal bundle of $L_{f}$ with the pushforwarded symplectic form from $T^{\perp}:=\{v\in \eta|_{x}:x\in L_{f}, d\alpha_{S^{2n+1}_{\epsilon}}(v,w)=0, \forall w\in \eta_{S^{2n+1}_{\epsilon}}|_{x}\cap TL_{f}|_{x}\}$ with a trivialization $\Phi_{f}:=(id_{L_{f}}, \overline{df}):\mathcal{N}_{S^{2n+1}}L_{f}\to L_{f}\times \C$. Given an isolated singularity $(f:\C^{n+1}\to \C,0)$, we can assoicate a contact open book, called the Milnor open book of $f$, and a Liouville domain, called the mapping cylinder of the Milnor monodromy map on the Milnor fibration. 

Let $arg(f):\C^{n+1}\setminus f^{-1}(0)\to \R/2\pi\Z$ be the argument of $f$. The Milnor fibration associated to an isolated singularity $(f,0)$ is a smooth fibration, 
$$arg(f):S^{2n+1}_{\epsilon}\setminus f^{-1}(0)\to \R/2\pi \Z, arg(f)(z):=arg(f(z)),$$
with a fiber $M_{f}:=arg(f)^{-1}(0)$, called Milnor fiber. There is a compactly supported diffeomorphism, called the Milnor monodromy map, $\phi:M_{f}\to M_{f}$.
Then a contact pair with normal bundle data $(L_{f}\subset S^{2n+1}_{\epsilon}, \eta, \Phi_{f})$ is supported an open book $(S^{2n+1}_{\epsilon}, arg(f)|_{S^{2n+1}_{\epsilon}})$.
An abstract contact open book is a triple $(M,\theta_{M},\phi)$ where $(M,\theta_{M})$ is a Liouville domain and $\phi:M\to M$ is an exact symplectomorphism with support in the interior of $M$. Given an abstract contact open book $(M,\theta_{M},\phi)$, we can construct contact structure $\alpha_{T_{\phi}}$ on the mapping torus $\pi_{T_{\phi}}:T_{\phi}:=M\times [0,1]/ \sim$ of $\phi:M\to M$ by
\begin{enumerate}
    \item $(x,1)\sim (\phi(x),0)$,
    \item $\pi_{T_{\phi}}(x,t):=t$ for all $(x,t)\in T_{\phi}$, and
    \item $\alpha_{T_{\phi}}:=\theta_{M}+d(\rho(t)F_{\phi})+Cdt$, where $F_{\phi}:M\to \R$ is a smooth function with support in the interior of $M$ with $\phi^{*}\theta_{M}=\theta_{M}+dF_{\theta}$, $\rho:[0,1]\to [0,1]$ is a smooth function equal to $0$ near $0$ and $1$ near $1$, $C>0$ is large enough so that $\alpha_{T_{\phi}}$ is a contact form. 
\end{enumerate}  
The above construction gives a one-to-one correspondence between graded abstract contact open books up to isotopy and graded open books up to isotopy [\cite{giroux02}, \cite{caubel.nemethi.popescupampu.06}, Theorem 3.15, \cite{mclean19}].

Let $(M,\theta_{M}, \phi)$ be an abstract contact open book. Let $\check{\phi}$ be a small positive slope perturbation of $\phi$. The mapping cylinder of $\phi$ (Definition B.1, \cite{mclean19}) is a triple $(\C^{*}\rtimes_{\phi} M,\pi_{\check{\phi}}, \theta_{\check{\phi}})$ where
\begin{enumerate}
    \item $\C^{*}\rtimes_{\phi} M:=(\R\times \R \times M)/\Z$, where $(s,t,x)\sim (s,t-1,\check{\phi}(x))$,
    \item $\check{\phi}:\C^{*}\rtimes_{\phi} M\to \R\times (\R/\Z)\cong \C^{*}$ defined by $\check{\phi}(s,t,x)=(s,t)$, and
    \item $\theta_{\check{\phi}}:=sdt+\kappa \theta_{M}+\kappa d(\rho(t)F_{\check{\phi}})$ where
    \begin{itemize}
        \item $F_{\check{\phi}}:M\to \R$ is a smooth function with support in the interior of $M$ satisfying $\check{\phi}^{*}\theta_{M}=\theta_{M}+dF_{\check{\phi}}$,
        \item $\rho:[0,1]\to [0,1]$ is a smooth function $\rho(0)=0, \rho(1)=1$,
        \item $\kappa>0$ is a constant small enough so that $d\theta_{\check{\phi}}$ is symplectic.
    \end{itemize}
\end{enumerate}

\subsubsection{Legendrian Front Projection and Flexible Weinstein Cobordism}

In this subsection, we review definitions and theorems of Legendrian front projection and flexible Weinstein cobordisms,  \cite{weinsteinhandle}, \cite{cieliebakeliashberg12}, \cite{murphy12}. During fundamental lectures in 1966-1968 [Section 46, Appendix 16, \cite{arnoldlecture}], Arnol'd emphasized geometric optics as the fundamental notions of Hamiltonian mechanics so that one keep in mind when working on symplectic/contact structure. He provided one of examples of Lagrangian submanifolds is the manifold of all oriented normals to a smooth submanifolds in $\R^{n}$ is a Lagrangian submanifold of the space of lines in $\R^{n}$ passing through the origin, which is the cotangent bundle of the $(n-1)$-sphere. The fibration from the space of lines in $\R^{n}$ to the unit sphere of directions is a Lagrangian fibration. A projection of a Lagrangian submanifold to the base of a Lagrangian fibration is called a "Lagrangian mapping", for example, mapping each point of a transversely oriented hypersurface in $\R^{n}$ to the unit vector at the origin in the direction of the normal. A caustic of a Lagrangian mapping is defined as the set of critical values of the mapping. The caustic of a Lagrangian mapping is the image of the cuspidal edge of the front of a Legendrian mapping under a projection from the space of $1$-jets onto the phase space.

Murphy introduced loose Legendrians, roughly speaking, smooth Legendrians look like unfolding of wrinkled Legendrian(zig-zag, Figure 2 in \cite{murphy12}) along some markings which let us wiggle them to get the h-principle [Figure 3, \cite{murphy12}]. Define a (front-projected) plane curve $\psi:\R\to \R^{2}$ and its rescaling by
\begin{itemize}
    \item $\psi(u)=(\psi^{x}(u),\psi^{z}(u))=(u^{3}-u, \frac{9}{4}u^{5}-\frac{5}{2}u^{3}+\frac{5}{4}u).$
    \item $\psi_{\delta}(u)=(\delta^{\frac{3}{2}}\psi^{x}(u/\sqrt{\delta}),\delta^{\frac{5}{2}}\psi^{z}(u/\sqrt{\delta}))=(u^{3}-\delta u,\frac{9}{4}u^{5}-\frac{5\delta}{2}u^{3}+\frac{5\delta ^{2}}{4}u).$
\end{itemize}

Define $\Psi:\R^{n}\to \R^{n+1}$ by $\Psi(x_{1},\cdots,x_{n-1},u)=(x_{1},\cdots, x_{n-1}, \psi_{x_{n-1}}(u)).$ The singular set is $\{x_{n-1}=3u^{2}\}$. For $x_{n-1}>0$, the singularity is cusp $\times $ $\R^{n}$ and, for $x_{n-1}=0$, singularity is called an unfurled swallowtail.

\begin{defn} [p. 7, \cite{murphy12}]
\begin{enumerate}
    \item A wrinkle is a map $w:\R^{n}\to\R^{n+1}$ defined by $w(\boldsymbol{x},u):=(\boldsymbol{x},\psi_{1-|\boldsymbol{x}|^{2}}(u))$, for $\boldsymbol{x}\in \R^{n-1}$.
    \item A wrinkled embedding of $V^{n}$ to $W^{n+1}$ is a topological embedding $f:V^{n}\to W^{n+1}$ that is smooth away from some finite collection of codimension 1 spheres $S^{n-1}_{j}\subset V^{n}$ near which the map is a wrinkle.
    \item A wrinkled Legendrian embedding is a topological embedding $f:\Lambda \to (N^{2n+1},\xi)$ satisfying the following,
    \begin{enumerate}
        \item $\text{Image} (df)\subset \xi$. 
        \item $df$ is full rank outside a subset of codimension $2$.
        \item The singular set is diffeomorphic to a disjoint union of Legendrian wrinkles $\{S^{n-1}_{j}\}$, near which the front projection of the image of $f$ is a wrinkled embedding.
    \end{enumerate}
\end{enumerate}
\end{defn}

Let $C\subset \R^{3}$ be the cube of side length $1$ and $\Lambda_{0}\subset C$ be a properly embedded Legendrian arc with zig-zag front which is $\{y=z=0\}$ near the boundary. Let $Z_{\rho}:=(-\rho,\rho)\subset \R$ and  $V_{\rho}:=\{(q,p):|q|<\rho, |p|<\rho\} \subset T^{*}\R^{n-1}$. Then $\Lambda_{0}\times Z_{\rho}$ is a Legendrian submanifold of $C\times V_{\rho}$, called the stabilization of the Legendrian $\{y=z=0\}\times Z_{\rho}\subset (C\times V_{\rho})$ along $Z_{\rho}$ [Section 4.3, \cite{ekholm.etnyre.sullivan05}, Section 7.4, \cite{cieliebakeliashberg12}].

\begin{defn} [Section 4.2, \cite{murphy12}]
    A Legendrian submanifold $\Lambda$ of a contact manifold $(N^{2n+1},\eta)$ is called loose if for each connected component of $\Lambda$, there is an open subset $U\subset N$ satisfying $(\Lambda\cap U,U)$ is contactomorphic to $(\Lambda_{0}\times Z_{\rho},C\times V_{\rho})$.
\end{defn}

\begin{defn}[Definition 11.29, \cite{cieliebakeliashberg12}]
    An elementary $2n$-dimensional Weinstein cobordism $(W,\omega, X, \phi)$ is called flexible if the attaching spheres of all index $n$ handles form a loose Legendrian link in $\partial_{-}W$. A Weinstein cobordism or manifold is called flexible if it can be decomposed into elementary Weinstein cobordisms. 
\end{defn}

\begin{example}
\begin{enumerate}
    \item A subcritical handle attaching is a flexible Weinstein cobordism [Remark 11.30, \cite{cieliebakeliashberg12}].
    \item Let $(W, \omega)$ be a convex symplectic manifold of dimension $2n\geq 4$ of Morse type at most $n+1$. Then its 1-stabilization $(W,\omega)\times (\R^{2},\omega_{st})$ is flexible Weinstein. [Theorem 1.1, \cite{eliashberg.ogawa.yoshiyasu21}]
\end{enumerate}
\end{example}

Casals-Murphy developed a recipe to construct a Legendrian front projection of Weinstein Lefschetz (bi)fibration of affine varieties: given Weinstein Lefschetz fibration structure with fiber $(F,\lambda, \phi)$ is a plumbing of vanishing spheres along tree, one apply sequences of Seidel's half-Dehn twist on matching paths, draw a diagram of Legendrian front projection of the Legendrian lifts of the vanishing cycles to the contact manifold $\partial(F\times D^{2}, \lambda+\lambda_{standard})$, and simplify the diagram by Reidemeister moves and Legendrian handleslides [Recipe 3.3, \cite{casalsmurphy19}]. Using the recipe, if the Legendrian front diagram contains a zig-zag, then the Legendrian submanifold is loose and the Weinstein manifold is flexible. For example, they explicitly apply the recipe to the Koras-Russell cubic $\mathfrak{C}:=\{(x,y,z,w)\in \C^{4}:x+x^{2}y+w^{3}+z^{2}=0\}$, which is known to be diffeomorphic to $\C^{3}$ but not algebraically isomorphic to it. They proved that indeed $\mathfrak{C}$ is Weinstein-equivalent to $\C^{3}$ by the h-principle of the loose Legendrian. 

Acu-(Capovilla-Searle)-Gadbled-Marinkovi\'c-Murphy-Starkston-Wu launched an algorithm for constructing Weinstein handlebodies for complements of some toric divisors in toric surfaces from their moment polytope data. We refer \cite{7people.20} and the references therein. 

\subsubsection{Symplectic structure of Weighted Blow-up of Symplectic Orbifolds}
Symplectic structure of blow-up of $\C^{n}$ at the origin is constructed as removing the ball $\{z\in \C^{n}:||z||^{2}<\epsilon \}$ and collapsing the fibers of the Hopf fibration of $\{z\in \C^{n}:||z||^{2}=\epsilon \}$ by McDuff in \cite{mcduff91}. The symplectic cut is a construction of a symplectic structure on the reduced spaces of symplectic manifolds with Hamiltonian circle action, which is a generalization of the symplectic blow-up, by Lerman \cite{lerman95}. Godinho explained symplectic structure of weighted blow-up \cite{godinho99}, \cite{godinho01}.

\subsection{Symplectic Cohomology} (\cite{ritter10}, \cite{mclean12}, \cite{mclean-ritter18})
Hamiltonian Floer cohomology of a symplectic manifold $M$ with a Hamiltonian function on it is a Morse cohomology on the space of loops on $M$. The corresponding Morse function is the action functional on the loop space, which is defined by integration of symplectic form over a surface bounding the loop and evaluation of the Hamiltonian function on the loop. The critical loops are the perioid orbits for Hamiltonian vector fields and the gradient flow trajectories are the Floer cylinders which satisfy elliptic PDE, called the Floer equation, connecting two critical loops. Symplectic cohomology is the direct limit of the Hamiltonian Floer cohomology under the system of admissible Hamiltonian functions on it by the Viterbo transfer maps. Exactness of Liouville domain may not be preserved under the symplectic blow-ups or resolution. For non-exact convex symplectic manifolds with weak monotonicity assumptions, the action functional is multivalued but the differential map is well-defined and the maximum principle still holds, therefore symplectic cohomology is well-defined. 

\subsubsection{Symplectic Cohomology of Liouville Domains} \label{def2}
Let $M$ be a Liouville domain, $\widehat{M}$ be its completion. A family of Hamiltonians $H:S^{1}\times \widehat{M}\to \R$ is called to be admissible if $H(t,x)=\lambda r_{M}(x)$ near infinity, where $r_{M}$ is the cylindrical coordinate of $\widehat{M}$ and $\lambda$ is some positive constant. We choose positive constant $\lambda$ away from the set of lengths of Reeb orbits of $\partial M$, $\theta_{M}$ (The Reeb vector field $R$ on $\partial M$ is defined by $\iota_{R}d\theta_{M}=0$ and $\theta_{M}(R)=1$). Let $J_{t}$ be an $S^{1}$ family $\omega$-compatible almost complex structure on $\widehat{M}$, (i.e., family of metrics, $g_{t}(u,v)=\omega(u,J_{t}v)$), which is convex (or, of contact type on the collar; $J_{t}^{*}\theta=e^{r}dr \iff J_{t}\partial_{r}=R$). For each $t\in S^{1}$, we define the Hamiltonian vector field $X_{H_{t}}$ by $\omega(X_{H_{t}},\cdot)=-dH_{t}$, where $\omega$ is the symplectic form on $M$. A $1$-periodic Hamiltonian orbit is a map $x:S^{1}\to \widehat{M}$ with $\dot{x}(t)=X_{H_{t}}(x(t))$. We say a $1$-periodic Hamiltomian orbit $x$ is non-degenerate if the linearized return map of $x$ has no eigenvalue equal to 1. By [Lemma $2.2$, \cite{mclean12}], near small neighborhood $U$ of $1$-periodic Hamiltonian orbits, we can perturb $H$ by a $C^{\infty}$ small to $\tilde{H}$ so that all the $1$-periodic Hamiltonian orbits are non-degenerate and $\tilde{H}=H$ outside $U$, (i.e., $\tilde{H}$ is still admissible). Choose a trivialization of the canonical bundle of $M$ and then get a canonical trivialization of $TM|_{x}$. Under the trivialization along the orbit, the linearlization of Hamiltonian flow become a smooth path of symplectic matrices. We embed such a path of symplectic matrices to a path of Lagrangians in the product symplectic vector space. We can define the Conley-Zehnder index of the $1$-periodic Hamiltonian orbit $x$, denote $\mu_{CZ}(x)$, by the Maslov index (roughly speaking, the winding number) of the path of Lagrangians with respect to the diagonal Lagrangian in the product symplectic vector space \cite{robbin.salamon92}. Then we grade the Floer cohomology by the index $\mu(x)=\text{dim}_{\R}M-\mu _{CZ}(x)$. The choice of index is based on the fact that it agrees with the Morse index of the critical points $x$ when $H$ is a $C^{2}$-small Morse Hamiltonian and that the degree of the identity element and of pants product is zero, therefore $SH^{*}(M)$ becomes a graded ring. For a $1$-periodic orbit, we define the action functional by,
$$\mathcal{A}_{H}(x):=-\int_{S^{1}} x^{*}\theta+\int_{0}^{1}H(x(t))dt=-\int_{\Sigma^{2}}u^{*}d\theta++\int_{0}^{1}H(x(t))dt.$$
Let $\mathcal{L}\widehat{M}=C^{\infty}(S^{1},\widehat{M})$ be the space of free loops in $\widehat{M}$. The differential of $\mathcal{A}_{H}$ at $x\in \mathcal{L}\widehat{M}$ in the direction $\eta\in T_{x}\mathcal{L}\widehat{M}=C^{\infty}(S^{1},x^{*}T\widehat{M})$ is,
$$d\mathcal{A}_{H}\cdot \eta=-\int_{0}^{1}\omega(\eta, \dot{x}-X_{H}).$$
So, the critical points of the action functional $\mathcal{A}_{H}$ are the $1$-periodic Hamiltonian orbits. By,
$$\int_{0}^{1}g_{t}(\eta, (\nabla \mathcal{A}_{H})_{x})=(d\mathcal{A})_{x}(\eta)=-\int_{0}^{1}\omega(\eta, \dot{x}-X_{H_{t}}),$$
The gradient vector of $\mathcal{A}_{H}$ with respect to $L^{2}$-metric, $\int_{0}^{1}g_{t}(\cdot,\cdot)dt$, is $(\nabla^{g_{t}} \mathcal{A}_{H})_{x}=J_{t}(\dot{x}-X_{H})$. The negative gradient flow is the map $u:\R\times S^{1}\to M$ satisfying, 
$$\partial_{s}u=-\nabla^{g_{t}} \mathcal{A}_{H_{t}}(u)\iff \partial_{s}u+J_{t}(\partial_{t}u-X_{H_{t}})=0, \text{ for }(s,t)\in \R\times S^{1}.$$

The action functional $\mathcal{A}_{H_{t}}(u(s,t))$ decreases in $s$ along the negative gradient flow, called Floer cylinder, since $\partial_{s}(\mathcal{A}_{H_{t}}(u(s,t)))=d\mathcal{A}_{H_{t}}\cdot\partial_{s}u=-\int_{0}^{1}\omega(\partial_{s}u, \partial_{t}u-X_{H_{t}})dt=-\int_{0}^{1}|\partial_{s}u|_{g_{t}}^{2}dt<0.$

Let us define Hamiltonian Floer cohomology as a Morse cohomology. Choose a coefficient field $\K$. Define the Floer chain complex for a pair $(H, J)$ of an admissible Hamiltonian $H\in C^{\infty}(\widehat{M},\R)$ and convex almost complex structure $J$, 
\footnote{
\begin{itemize}
    \item An almost complex structure $J$ on a Liouville domain $(M,\theta_{M})$ is called convex if there is a function $f:M\to \R$ such that $\partial M$ is a regular level set of $f$, $f$ has its maximum on $\partial M$ and $\theta_{M}\circ J=df$. Every Liouville domain $(\underline{M},\theta_{\underline{M}})$ has a convex $d\theta_{\underline{M}}$-compatible almost complex structure \label{lem1} [Lemma $2.1$, \cite{mclean14}].
    \item An almost complex structure on an symplectic manifold $(M,\omega)$, $J:TM\to TM$ with $J^{2}=-id$, is called $\omega$-tame if $\omega(v,Jv)>0$ for $v\in TM$, $\omega$-compatible if $\omega(J\cdot,J\cdot)=\omega(\cdot,\cdot)$ and $\omega(v,Jv)>0$. Both the space of $\omega$-tamed almost complex structures and the space of $\omega$-compatible almost complex structures are contractible. But the tameness is an open condition.
\end{itemize}}

$$CF_{d<}^{k}(H,J):=\bigoplus _{x\in Crit(\mathcal{A}_{H})}\K\cdot x=\bigoplus \{\K x:x\in \mathcal{L}\widehat{M},\dot{x}(t)=X_{H}(x(t)), \text{nondeg}, \mathcal{A}_{H}(x)> d, \mu(x)=k\}.$$

Define the differential map $\partial: CF_{d<}^{k}(H,J)\to CF_{d<}^{k+1}(H,J)$ by counting the negative gradient flow of $\mathcal{A}_{H}$, i.e., the Floer cylinders connecting two Hamiltonian periodic orbits. Denote 
$\mathcal{M}_{\R_{\text{param}}}(x_{-},x_{+}):=\{u:\R\times S^{1}\to M:\partial_{s}u+J_{t}(\partial_{t}u-X_{H_{t}})=0, \lim_{s\to \pm \infty}u(s,t)=x_{\pm}(t)\}$, $\mathcal{M}(x_{-},x_{+}):=\mathcal{M}_{\R_{\text{param}}}(x_{-},x_{+})/\R$.

The energy of a Floer cylinder $u\in \mathcal{M}_{\R_{\text{param}}}(x_{-},x_{+})$ is defined by
$$E(u):=\int |\partial_{s}u|^{2}dsdt=\int \omega(\partial_{s}u,\partial_{t}u-X_{H})dsdt=-\int\partial(\mathcal{A}_{H}(u))ds=\mathcal{A}_{H}(x_{-})-\mathcal{A}_{H}(x_{+}).$$

On the collar near the boundary of $M$, $H=h(e^{r})$, $X_{H}=h'(e^{r})R$ and $A_{H}(x)=-e^{r}h'(e^{r})+h(e^{r})$. By applying the maximum principle on the function $e^{r}\circ u$ on the collar, if the limiting Hamiltonian periodic orbits $x_{\pm}$ sit inside a compact subset $M\cup_{\partial M}([0,R]\times \partial M)$, then all the Floer trajectoris connecting them sit in there. Therefore, $\mathcal{M}(x_{-},x_{+})$ can be compactified to $\overline{\mathcal{M}(x_{-},x_{+})}$ by the broken Floer cylinders. After perturbing $(H_{t},J_{t})$, we get $\overline{\mathcal{M}(x_{-},x_{+})}$ a smooth manifold with dimension $\mu(x_{-})-\mu(x_{+})$. If $\mu(x_{-})-\mu(x_{+})=1$, $\overline{\mathcal{M}(x_{-},x_{+})}$ is a compact zero dimensional manifold, so we can count the Floer cylinders, up to orientation.

$$\partial: CF_{d<}^{k}(H,J)\to CF_{d<}^{k+1}(H,J)\text{ by }\partial (x_{-}):=\sum_{\mu(x_{-})-\mu({x_{+})=1}}\sharp \overline{\mathcal{M}(x_{-},x_{+})} x_{+}.$$

$\partial^{2}=0$ is followed by considering compactifying broken trajectories in ${\mathcal{M}(x_{-},x_{+})}$ for $\mu (x_{-})-\mu (x_{+})=2$.

\begin{remark}
$CF_{d<}^{k}(H,J)$ is independent of the choice of $J$ but its boundary operator does. The cohomology $HF_{d<}^{k}(H,J)$ depends on the choice of $H$ but not on $J$ up to canonical isomorphism.
\end{remark}

Let us explain the action filtration. Increasing $d$ gives a subcomplex because the action functional decreases along Floer trajectories and define the quotient complex,
$$CF_{(c,d]}^{k}(H,J):=CF_{c<}^{k}(H,J)/CF_{d<}^{k}(H,J)$$ 
For $a<a'<b$, $CF_{(a',b]}^{k}(H,J)\hookrightarrow CF_{(a,b]}^{k}(H,J)$ and, for $a<b'<b$, $CF_{(a,b]}^{k}(H,J)\to CF_{(a,b']}^{k}(H,J)$. Therefore, given $-\infty\leq a\leq b\leq c<\infty$, there is a short exact sequence,
$$0\to CF^{k}_{(b,c]}(H,J)\to CF^{k}_{(a,c]}(H,J)\to CF^{k}_{(a,b]}(H,J)\to 0, $$
$$0\to CF^{k}_{(-\epsilon,\epsilon]}(H,J)\to CF^{k}_{(-\infty,\epsilon]}(H,J)\to CF^{k}_{(-\infty,-\epsilon]}(H,J)\to 0.$$ 
Define $HF_{(c,d]}^{k}(H,J)$ as the cohomology of the chain complex $CF_{(c,d]}^{k}(H,J)$.

Let us relate two Hamiltonian Floer cohomologies of $(M,H_{1})$ and $(M,H_{2})$ with two non-degenerate admissible Hamiltonians $H_{1}<H_{2}$. Define the Viterbo's continuatin map,
$$C:CF_{d<}^{k}(H_{1},J_{1})\to CF_{d<}^{k}(H_{d},J_{2})\text{ by }\partial (x_{-}):=\sum_{\mu(x_{-})-\mu({x_{+})=0}}\sharp \mathcal{P}(x_{-},x_{+}) x_{+},$$
where $\mathcal{P}(x_{-},x_{+})$ is the set of solutions to the parametrized Floer equations connecting $H_{1}$ and $H_{2}$ in the following way: Let $K_{s}$ be a smooth non-degenerating family of admissible Hamiltonians equal to $H_{1}$ for $s\ll 0$ and $H_{2}\gg 0$ and $J_{s}$ a smooth family of admissible almost complex structures joining $J_{1}$ and $J_{2}$. Define 
$$\mathcal{P}(x_{-},x_{+}):=\{u: \partial_{s}u+J_{s,t}(u(s,t))\partial_{t}u=\nabla^{g_{t}}K_{s,t}, \lim_{s\to\pm\infty}u(s,\cdot)=x_{\pm}\}.$$

The action functional $\mathcal{A}_{H_{s}}(u(s,\cdot))$ along $u\in \mathcal{P}(x_{-},x_{+})$, 
$$\partial_{s}(\mathcal{A}_{H_{s}}(u(s,\cdot))):=-\int_{0}^{1}|\partial_{s}u|^{2}dt+\int_{0}^{1}(\partial_{s}H_{s})(u)dt,$$
The energy is 
$$E(u)=\int|\partial_{s}u|^{2}ds\wedge dt=\mathcal{A}_{H_{1}}(x_{-})-\mathcal{A}_{H_{+}}(x_{+})+\int (\partial_{s}H_{s})(u)ds\wedge dt.$$
The action functional decreases if $\partial_{s}H_{s}\leq0$ and the energy is bounded if $\partial_{s}H_{s}\leq0$ outside of a compact set in $\widehat{M}$. Therefore, by the maximum principle, we can compactify $\mathcal{P}(x_{-},x_{+})$.

We define the symplectic cohomology of $\widehat{M}$ by the direct limit of $HF_{d<}^{k}(H_{I},J_{I})$ under the Viterbo's continuation maps on admissible Hamiltonians $(H_{I},<)$ such that $H|_{\text{interior }M}<0$ and the slopes at cylindrical ends go to $\infty$ as the cylindrical coordinate $r$ goes to $\infty$.
$$SH^{*}(M):=\lim_{\longrightarrow}HF^{*}(H).$$
The induced long exact sequence commutes with the Viterbo's continuation maps, so, after taking direct limits, we get, for small $\epsilon>0$,
$$\begin{tikzcd} [cramped, sep=small]
\cdots \to SH^{*-1}_{(-\epsilon, \infty)}(M)\arrow[r] & SH^{*-1}_{(\epsilon, \infty)}(M)\arrow[r,"\delta"] & SH^{*}_{(-\epsilon, \epsilon]}(M)\arrow[r]\arrow[d,"\cong"] & SH^{*}_{(-\epsilon, \infty)}(M)\to \cdots\\
                                                                                                &         & H^{*}(M)
\end{tikzcd}$$
Let us denote it by, 
$$\begin{tikzcd} [cramped, sep=small]
\cdots \to SH^{*-1}(M)\arrow[r] & SH^{*-1}_{+}(M)\arrow[r,"\delta"] & H^{*}(M)\arrow[r,"\iota"] & SH^{*}(M)\to \cdots.
\end{tikzcd}
$$
where, $\delta: SH^{*}_{+}(M)\to H^{*}(M)$ is the connecting map.

\subsubsection{Symplectic Cohomology of Symplectic Convex Lefschetz Fibrations}
Inspired by Oancea's K\"unneth formula for symplectic cohomology \cite{oancea06}, McLean defined a Lefschetz-admissible Hamiltonian on a symplectic convex Lefschetz fibration $M$ [Definition 2.21, \cite{mclean09}] and showed that Lefschetz-admissible symplectic cohomology is isomorphic to the (original) symplectic cohomology [Theorem 2.2, 2.4, \cite{mclean09}] and also isomorphic to Monodromy-Floer cohomology of the fiber [Theorem 1.2, 1.3, \cite{mclean.monodromy12}]. The chain complexes of Lefschetz-admissible Hamiltonian Floer cohomology is generated by 
\begin{enumerate}
\item Critical points of Morse function on $M$, and
\item Two copies of fixed points of iterates of the monodromy map around a large circle on the base. 
\end{enumerate}
One of main results in \cite{mclean09} is isomorphism of symplectic cohomology of convex Lefschetz fibration and that of certain subfibration.

\begin{lemma} [Theorem 2.25, \cite{mclean09}] \label{lefSHisom}
Let $M'$ be a convex symplectic Lefschetz fibration with fiber $F'$. Suppose that $M$ is a subfibration of $M$ with fiber $F$ over the same base satisfying the following.
\begin{enumerate}
    \item The support of all the monodromy maps of $M'$ are contained in the interior of $M$.
    \item For any holomorphic curves $u$ in $F'$ with boundary $\partial u\subset F$, $u\subset F$.
\end{enumerate}
Then $SH^{*}_{lef}(M')\cong SH^{*}_{lef}(M)$. Therefore, $SH^{*}(M')\cong SH^{*}(M)$.
\end{lemma}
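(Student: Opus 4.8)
The plan is to build an isomorphism at the level of the Lefschetz-admissible Floer complexes, by arranging that after suitable choices the complexes for $M$ and $M'$ have the same generators and the same differential, and then to pass to cohomology and invoke the already-established identifications $SH^{*}_{lef}\cong SH^{*}$ (Theorems 2.2, 2.4 of \cite{mclean09}) to conclude. Conceptually the content is that the Lefschetz-admissible cohomology is computed by the fiber monodromy (Theorems 1.2, 1.3 of \cite{mclean.monodromy12}), and the two monodromies $\phi'$ on $F'$ and $\phi=\phi'|_{F}$ on $F$ should have the same fixed-point Floer data because $\phi'$ is supported in $\text{int}(F)$.

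First I would fix the data. Choose a Lefschetz-admissible Hamiltonian $H'$ on $M'$ whose fiberwise part is concentrated where the monodromy is nontrivial, together with a convex almost complex structure $J'$ for which the projection $p':M'\to B$ is holomorphic and for which the fiber $F\subset F'$ (equivalently the subfibration $M\subset M'$) is a $J'$-holomorphic subset. By hypothesis (1) all monodromy maps of $M'$ are supported in $\text{int}(M)$, so I can arrange that $H'$ restricts to a Lefschetz-admissible Hamiltonian $H=H'|_{M}$ and that $J=J'|_{M}$ is convex on $M$. Recall that the generators of the Lefschetz-admissible complex are the Morse critical points together with two copies of the fixed points of the iterates of the monodromy around a large loop in $B$. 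Since $\phi'$ agrees with $\phi$ on $\text{int}(M)$ and equals the identity on $F'\setminus F$, every nontrivial fixed point of every iterate lies in $M$, while the region $F'\setminus F$ contributes only its Morse-theoretic (cohomological) part, accounted for identically in both complexes. This gives a natural bijection between the generators of the two complexes.

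The crucial step is to match the differentials: I must show that every Floer/continuation solution between two generators supported in $M$ is itself contained in $M$, so that no trajectory escapes into $M'\setminus M$ and the two moduli spaces coincide. This is exactly where hypothesis (2) enters. Projecting a solution onto the fiber direction and using that its asymptotics and boundary lie in $F$, the confinement statement --- any holomorphic curve in $F'$ with boundary in $F$ remains in $F$ --- forces the fiber component into $F$; the maximum principle in the base direction, together with the compact convex structure near $\partial_{h}M$ (whose product form near the horizontal boundary prevents escape through the corner), confines the horizontal behavior. Combining these shows the whole solution lies in $M$, so the differentials of $CF_{lef}(M')$ and $CF_{lef}(M)$ agree and the complexes are isomorphic.

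I expect the \emph{main obstacle} to be precisely this no-escape argument. The delicate point is to set up $J'$ and $H'$ so that the fiber component of a Floer-type solution is genuinely a (perturbed) holomorphic curve in $F'$ with boundary in $F$, so that hypothesis (2) literally applies, and to control the coupling between the horizontal and vertical components without allowing leakage through $\partial_{h}M$ or the corner. Once confinement is established, the chain-level isomorphism $CF_{lef}(M')\cong CF_{lef}(M)$ yields $SH^{*}_{lef}(M')\cong SH^{*}_{lef}(M)$, and the final assertion $SH^{*}(M')\cong SH^{*}(M)$ follows immediately from the isomorphism $SH^{*}_{lef}\cong SH^{*}$.
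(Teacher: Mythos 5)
First, a point of reference: the paper does not prove this lemma at all --- it is quoted verbatim as Theorem 2.25 of \cite{mclean09}, so there is no in-paper argument to compare yours against. Measured against what the cited proof actually has to accomplish, your overall engine is the right one: the two hypotheses are indeed used exactly where you place them (hypothesis (1) to identify the monodromy fixed-point generators, hypothesis (2) together with a maximum principle in the base direction to confine Floer solutions to the subfibration so the differentials agree), and you correctly flag the no-escape argument as the delicate step.

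However, there is a genuine gap in the step where you claim ``a natural bijection between the generators of the two complexes'' yielding a chain-level isomorphism $CF_{lef}(M')\cong CF_{lef}(M)$. The Lefschetz-admissible complex contains, besides the monodromy fixed points, the critical points of a Morse function on the total space, and these record $H^{*}(M')$ and $H^{*}(M)$ respectively. In the intended applications of this lemma (the Kaliman modification, Theorem \ref{construction1}) these cohomologies are \emph{different} --- that discrepancy is the entire source of the nontrivial $\ell(M)$ --- so the two complexes cannot have the same generators, and your assertion that $F'\setminus F$ ``contributes only its Morse-theoretic part, accounted for identically in both complexes'' is false. Relatedly, the restriction $H'|_{M}$ of a Lefschetz-admissible Hamiltonian on $M'$ is not Lefschetz-admissible for $M$, since admissibility is linearity with respect to the cylindrical coordinate of the completion $\widehat{F}$, not $\widehat{F'}$. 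The correct argument must therefore be run at the level of the two direct systems: one compares cofinal families via transfer/continuation maps (or identifies both limits with the monodromy Floer cohomology of the common monodromy data, as in \cite{mclean.monodromy12}), using your confinement argument to show the comparison map is an isomorphism on the non-constant-orbit part and the large-slope limit to absorb the mismatch coming from the constant orbits. Your write-up proves a statement that is strictly stronger than the lemma and is contradicted by the examples the lemma is used for, so the generator-matching step needs to be replaced rather than merely tightened.
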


\subsubsection{Symplectic Cohomology of Non-Exact Convex Symplectic manifolds}
Many K\"ahler manifolds including the blow-up of $\C P^{1}$ at a point, and the crepant resolutions of simple isolated singularities, have non-exact symplectic structure. When open non-exact symplectic manifolds have convex contact boundary condition, symplectic cohomology still can be defined by Ritter \cite{ritter10}. Admissible Hamiltonians are radial at infinity and almost-complex structure are of contact type on the cylindrical ends. Using one-to-one correspondence between Hamiltonian orbits and Reeb orbits, we have the maximum principle on the radial coordinate of the image of Floer cylinder so that the moduli space of Floer cylinders with limiting orbits can be compactified. Weak-monotonicity condition\footnote{$M$ is a $2n$-dimensional symplectic manifold such that for every $A \in \pi_{2}(M)$, $3-n\leq c_{1}(A)<0$ implies $\omega(A)\leq 0$. Equivalently, one of the following holds 
\begin{itemize}
    \item $\omega=\lambda\cdot c_{1}(A)$ for every $A\in\pi_{2}(M)$ for some $\lambda \geq 0$.
    \item $c_{1}(A)=0$ for every $A\in \pi_{2}(M)$.
    \item The minimal Chern number $N\geq 0$ defined by $c_{1}(\pi_{2}(M))=N\Z$ is greater than or equal to $n-2$.
\end{itemize}} is needed to get rid of bubbling when we compactify the moduli spaces of Floer trajectories. Even though the action functional is multi-valued, the cut-offed radial filtration works and gives long exact sequence for non-exact convex symplectic manifolds [Theorem 6.2, \cite{mclean-ritter18}].

\begin{remark} Abouzaid-Blumberg constructed Floer homology with coefficients in Morava's K-theories \cite{abouzaid.blumberg.morava21}. Using it, they proved the Arnold conjecture in characteristic $p$; the rank of the cohomology of a closed symplectic manifold with coefficients in a field of characteristic $p$ is bounded above by the number of non-degenerate Hamiltonian periodic orbits. Extending our symplectic criteria of uniruledness in the setting of Morava K-theory seems interesting, see \cite{balwe.hogadi.sawant21} as well. The relation between fixed points of the Frobenius map and the monodromy Floer cohomology of projective varieties would be also interesting.
\end{remark}

\subsection{Morse-Bott Symplectic Cohomology}
Morse-Bott approach to the Floer theory was developed by Fukaya \cite{fukaya96}, Frauenfelder \cite{frauenfelder03}, Bourgeois-Oancea \cite{bourgeois.oancea09}, and Diogo-Lisi\cite{diogolisi18m}. The idea relating Floer trajectories to $J$-holomorphic curves followed by Morse trajectories was developed by Piunikhin-Salamon-Schwarz \cite{pss96} and Oh-Zhu \cite{oh.zhu11}, \cite{oh.zhu12}. Let us review Bourgeois-Oancea's correspondence theorem between symplectic cohomology defined in the previous section and Morse-Bott symplectic cohomology where the differential maps are defined by counting cascades. 

Let $H:M\to \R$ be a time-independent Hamiltonian on a symplectic manifold $(M,\omega)$ satisfying that $H|_{\underline{M}}$ is $C^{2}$-small and, for the cylindrical coordinate $r$ in the cylindrical end of $M$, $H(\cdot,r)=h(r)=\lambda\cdot e^{r}+b$, for $\lambda, b\in \mathbb{R}$, $\lambda \notin \text{Spec}(X_{H})$, $t>t_{0}$, otherwise $h''-h'>0$. The set of Hamiltonian $1$-periodic orbits are constant orbits, corresponding to critical points of $H|_{\text{int}(\underline{M})}$ and non-isolated, Morse-Bott nondegenerate families $C$ of orbits. We can perturb time-independent Hamiltonian by perfect Morse functions $f_{C}:C\to \R$ on each family $C$ of orbits so that $C$ perturbed into nondegenerate orbits corresponding to critial points of $f_{C}$. Let us construct such a time-dependent Hamiltonian $H_{\delta}: S^{1}\times M \to \R$. For small $\delta>0$, define $K_{\delta}:S^{1}\times M \to \R$ by $K(\theta,\cdot):=H(\cdot)+\delta\cdot\sum_{C}\rho_{C}\cdot f_{C}$, where $\{\rho_{C}\}$ is a smooth cutoff function supported in a small neighborhood of $C$. Then $K_{\delta}$ is Hamiltonian whose non-constant Morse-Bott families $G$ of orbits are of dimension 1. Denote $g_{G}:G\to \mathbb{R}$ the perfect Morse function on $G$ with exactly one maximum and one minimum. Then we perturb $K_{\delta}$ using $g$ to get $H_{\delta}$. Choose the maximal positive integer $t_{G}$ so that $G(\theta+1/t_{G})=G(\theta)$ for all $\theta\in S^{1}$. Choose a symplectic trivialization $\psi_{G}:=(\psi_{G,1},\psi_{G,2}):\text{nbhd}(G)\to V\subset S^{1}\times \mathbb{R}^{2n-1}$ of the open neighborhood of $G$ so that $\psi_{G,1}(G(\theta))=t_{G}\cdot \theta$. Define $H_{\delta}:S^{1}\times M\to \mathbb{R}$ by $H_{\delta}(\theta, \cdot):=K+\delta\cdot \sum_{G} \rho_{G}(\psi_{G}(\cdot))\cdot g_{G}(\psi_{G,1}(\cdot)-t_{G}\cdot\theta)$, where $\rho_{G}:S^{1}\times \mathbb{R}^{2n-1}\to [0,1]$ is a smooth cutoff function supported on a small neighborhood of $S^{1}\times \{0\}$ inside $S^{1}\times \mathbb{R}^{2n-1}$ and $\rho|_{S^{1}\times \{0\}}\equiv 1$ [Equation (25), \cite{bourgeois.oancea09}].
Let us define some moduli spaces as follows. Let $J$ be a generic time-dependent almost complex structure on $W$. Given $p_{-}\in \text{Crit}(f_{C^{-}})$, $p_{+}\in \text{Crit}(f_{C^{+}})$, $x_{-},x_{+}\in \text{Crit}(H)$, $A\in H_{2}(M;\mathbb{Z})$, define 
\begin{itemize}
    \item $\mathcal{M}^{A}(p_{-},p_{+},;H_{\delta},J)$: the moduli space of Floer trajectories for the pair $(H_{\delta}, J)$ with negative asymptote $p$ and positive asymptote $q$ having the same homology class with $A$ modulo reparmetrization in $s$.
    \item $\mathcal{M}^{A}(x_{-},C_{+},;H,J)$: the moduli space of Floer trajectories for the pair $(H, J)$ with negative asymptote $x_{-}$ and positive asymptote in $C_{+}$, having the same homology class with $A$ modulo reparmetrization in $s$. 
    \item $\mathcal{M}^{A}(C_{-}, C_{+};H,J)$: the moduli space of Floer trajectories for the pair $(H,J)$ with negative asymptote in $C_{-}$ and positive asymptote in $C_{+}$, having the same homology class with $A$ modulo reparametrization.
    \item Define evaluation maps $\overline{ev}:\mathcal{M}^{A}(C_{-},C_{+};H,J)\to C_{-}$ by $\overline{ev}([u]):=\lim_{s\to -\infty}u(s,\cdot)$,\\ $\underline{ev}:\mathcal{M}^{A}(C_{-},C_{+};H,J)\to C_{+}$ by $\underline{ev}([u]):=\lim_{s\to \infty}u(s,\cdot)$, and 
    let $\phi_{s}^{f_{C}}$ denote the gradient flow of $f_{C}$. 
    \item $\mathcal{M}^{A}_{m}(p_{-},p_{+};H,\{f_{x}\},J)=W^{u}(p_{-})\times _{\overline{ev}}(\mathcal{M}^{A_{1}}(C_{-},C_{1})\times \R^{+})_{\phi_{s}^{f_{C_{1}}}\circ \underline{ev}}\times  _{\overline{ev}} (\mathcal{M}^{A_{2}}(C_{1},C_{2})\times \R^{+})\times \cdots _{\phi_{s}^{f_{C_{m-1}}}}\circ \underline{ev}\times _{\overline{ev}}  \mathcal{M}^{A_{m}}(C_{m-1},C_{+})_{\underline{ev}}\times W^{s}(p_{+})$: the moduli space of Floer trajectories for the pair $(H,J)$ with intermediate gradient fragments with $\sum_{i}A_{i}=A$.
    \item $\mathcal{M}^{A}(p_{-},p_{+};H,\{f_{x}\},J):=\cup_{m\geq 0}\mathcal{M}^{A}_{m}(p_{-},p_{+};H,\{f_{x}\},J)$.
    \item $\mathcal{M}^{A}_{]0, \delta_{0}[}(p_{-},q_{+};H,\{f_{\gamma}\},J):=\bigcup_{0<\delta<\delta_{0}}\{\delta\}\times \mathcal{M}^{A}(p_{-}, q_{+};H_{\delta},J)$, where $q_{+}$ is either $p_{+}$ or $x_{+}$.
    \item A sequence $[u]\in \mathcal{M}^{A}(x^{-},x^{+};H,J)$ is called to converge to the broken trajectories $([u_{k}],[u_{k-1}]$, $\cdots$, $[u_{1}])$, $[u_{i}]\in \mathcal{M}^{A_{i}}(x_{i}^{-},x_{i}^{+};H,J)$, $\sum_{i}A_{i}=A$, if there exist sequences $s_{i}\in \R, 1\geq i\geq k$, such that $s_{i}\cdot u(s,\cdot)=u(s+s_{i},\cdot)$ converges uniformly on compact sets to $u_{i}$, \cite{bourgeois.oancea09}.
\end{itemize}

\begin{lemma} [Theorem 3.7, \cite{bourgeois.oancea09}] \label{correspondence}
\begin{enumerate}
    \item Any sequence $[v_{\delta_{n}}]\in \mathcal{M}(x^{-}_{p},x^{+}_{p};H_{\delta_{n}},J)$, $\delta_{n}\to 0$, converges to an element $[\boldsymbol{\nu}]$ of $\overline{\mathcal{M}(p_{-},q_{+};H,\{f_{x}\},J)}$.
    \item Any element of $\overline{\mathcal{M}(p_{-},q_{+};H,\{f_{x}\},J)}$ can be obtained as such a limit in a unique way. Moreover, if $dim \mathcal{M}(p_{-},q_{+};H,\{f_{x}\},J)=0$, then the intermediate gradient fragments in $[\boldsymbol{\nu}]$ are non-constant.
    \item There is a bijective correspondence between elements of $\mathcal{M}(p_{-},p_{+};H_{\delta_{n}},J)$ and elements of $\overline{\mathcal{M}(p_{-},q_{+};H,\{f_{x}\},J)}$ if the moduli spaces have dimension zero: More precisely quoting,\\
    
    Let $H\in \mathcal{H}'$ be fixed, and let $\alpha:=\lim_{t\to \infty}e^{-t}H(p,t)$ be the maximal slope of $H$. Let $J\in \mathcal{J}_{reg}(H)$, and let $\{f_{x}\}\in \mathcal{F}_{reg}(H,J)$. There exists $\delta_{1}:=\delta_{1}(H,J)\in ]0,\delta_{0}[$ such that for any $p_{-}\in \text{Crit}(f_{C_{-}})$, $p_{+}\in \text{Crit}(f_{C_{+}})$, and $x_{+}\in \text{Crit}(H)$, with $\mu(p_{+})-\mu(p_{-})=1$ or $\mu(x_{+})-\mu(p_{-})=1$, and any $A\in H_{2}(W;\Z)$, the following hold ($q_{+}$ means $p_{+}$ or $x_{+}$): 
    \begin{enumerate}
        \item $J$ is regular for $\mathcal{M}^{A}(p_{-},q_{+};H_{\delta},J)$ for all $\delta\in ]0,\delta_{1}[$;
        \item The moduli space $\mathcal{M}^{A}_{]0,\delta_{1}[}(p_{-},q_{+};H,\{f_{x}\},J)$ is a one-dimensional manifold having a finite number of components that are graphs over $]0,\delta_{1}[$;
        \item There is a bijective correspondence between points in $\mathcal{M}^{A}(p_{-},q_{+};H,\{f_{x}\},J)$ and connected components of $\mathcal{M}^{A}_{]0,\delta_{1}[}(p_{-},q_{+},H,\{f_{x}\},J)$.
    \end{enumerate}
\end{enumerate}
\end{lemma}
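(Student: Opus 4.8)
The plan is to establish this correspondence as an \emph{adiabatic limit} result, following the standard compactness-plus-gluing architecture of Morse-Bott Floer theory but adapted to the cascade description. Everything hinges on understanding how a genuine Floer trajectory for the perturbed Hamiltonian $H_{\delta}$ behaves near a Morse-Bott family $C$ of orbits as $\delta \to 0$: away from the families it should converge to an honest Floer cylinder for the unperturbed $H$, while near each family it should drift slowly, on a time scale of order $1/\delta$, along the negative gradient flow of the auxiliary Morse function $f_C$. The three parts of the statement are then the compactness direction, the gluing direction together with uniqueness and non-constancy, and the packaging into a $\delta$-parametrized bijection.

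For part (1) I would start from uniform energy bounds. Since $H_{\delta_n} \to H$ in $C^\infty_{loc}$ and the action drop $\mathcal{A}_{H_{\delta_n}}(x_-) - \mathcal{A}_{H_{\delta_n}}(x_+)$ stays bounded, the energies $E(v_{\delta_n})$ are uniformly bounded, and the maximum principle on the radial coordinate confines all trajectories to a fixed compact subset of $M$. Gromov-Floer compactness then yields, after passing to a subsequence, convergence to a broken configuration of Floer cylinders for the limiting data $(H,J)$. The delicate point is the behavior near a family $C$, where the linearized operator acquires a one-dimensional cokernel along $C$ generated by $f_C$, so the trajectory lingers. I would rescale the $s$-variable by $\delta_n$ and, using exponential convergence of $v_{\delta_n}$ to $C$ in the normal directions together with the explicit perturbation $H_\delta = K + \delta \sum_G \rho_G\, g_G$, show that the projection of $v_{\delta_n}$ onto $C$ converges to a (possibly broken) negative gradient trajectory of $f_C$. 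This produces exactly an element of $\overline{\mathcal{M}(p_-, q_+; H, \{f_x\}, J)}$.

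Part (2) is the gluing direction. Given a cascade, i.e.\ Floer cylinders for $H$ interspersed with gradient fragments of the $f_C$, I would pre-glue the pieces with gluing length $\sim 1/\delta$ to form an approximate solution, then solve the nonlinear Floer equation for $H_\delta$ by a Newton-type iteration. The heart of the matter is uniform invertibility of the linearized operator: one works in weighted Sobolev spaces whose exponential weights straddle the small spectral gap, of size $O(\delta)$, created by the perturbation, and one must bound the right inverse independently of $\delta$. Uniqueness of the limit then follows because gluing is a local diffeomorphism onto a neighborhood of the cascade inside the $H_\delta$-moduli spaces, so the limit extracted in part (1) is unique. Non-constancy of the intermediate gradient fragments in the zero-dimensional case is a dimension count: a constant fragment would let two adjacent Floer pieces concatenate into a single trajectory, which under the index constraint $\mu(q_+) - \mu(p_-) = 1$ and transversality for generic $J \in \mathcal{J}_{reg}(H)$ and $\{f_x\} \in \mathcal{F}_{reg}(H,J)$ contradicts regularity.

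Finally, part (3) assembles compactness and gluing into the stated bijection. Transversality for generic $(J, \{f_x\})$ makes the relevant moduli spaces smooth and cut out regularly, and the implicit function theorem applied to the $\delta$-parametrized problem shows that $\mathcal{M}^A_{]0,\delta_1[}(p_-, q_+; H, \{f_x\}, J)$ is a one-manifold whose components are graphs over $]0,\delta_1[$ for $\delta_1 = \delta_1(H,J)$ small enough; reading off the endpoint at $\delta \to 0$ (the cascade) against the value at fixed small $\delta$ (an honest $H_\delta$-trajectory) gives the correspondence. The main obstacle throughout is the adiabatic analysis itself: controlling the interplay between the fast Floer dynamics transverse to each family and the slow Morse drift along it, and in particular securing the $\delta$-uniform Fredholm estimates in the weighted norms that drive both the rescaling in the compactness step and the gluing step. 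This is precisely where the choice of weights relative to the $O(\delta)$ spectral gap is decisive, and I would expect the bulk of the technical work to concentrate there.
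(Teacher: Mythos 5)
The paper does not prove this lemma at all: it is quoted verbatim, with attribution, as Theorem~3.7 of Bourgeois--Oancea \cite{bourgeois.oancea09}, and is used as a black box in the proof of the main theorem. So there is no in-paper argument to compare against; the relevant comparison is with the original proof in the cited reference, and your outline does reproduce its architecture faithfully --- uniform energy bounds plus Gromov--Floer compactness away from the Morse--Bott families, an adiabatic rescaling of the cylinder coordinate by $1/\delta$ near each family to extract the gradient fragments, a Newton--Picard gluing of cascades with $\delta$-uniform right inverses in weighted Sobolev spaces, and the parametrized implicit function theorem to realize the moduli spaces $\mathcal{M}^{A}_{]0,\delta_{1}[}$ as graphs over $]0,\delta_{1}[$. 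Two points of your sketch are loose. First, the Morse--Bott degeneracy is a nontrivial \emph{kernel} of the asymptotic operator, equal to the tangent space of the orbit family, not a ``cokernel generated by $f_{C}$''; the auxiliary Morse functions enter only to break that kernel. Second, your argument for non-constancy of the intermediate gradient fragments is not quite right: a zero-length fragment does not let two adjacent Floer cylinders concatenate into a single smooth trajectory (they remain a broken configuration). The correct reason is that the lengths of the gradient fragments are coordinates on the cascade moduli space, with length zero cutting out a codimension-one boundary stratum, so for a regular zero-dimensional moduli space such configurations generically do not occur. Neither issue changes the overall strategy, but if you intend this as a self-contained proof rather than a summary of \cite{bourgeois.oancea09}, the $\delta$-uniform Fredholm and quadratic estimates you defer are precisely where the substance lies.
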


For simplicity, assume that $c_{1}(TW)=0$ and refer \cite{bourgeois.oancea09} for definitions in more general setting. Define the symplectic chain group $CF^{*}(H)$ by the free $\mathbb{Z}$-module generated by Hamiltonian 1-periodic orbits $x$, with grading $|x|=\mu_{CZ}$. Define the differential map $\partial: CF^{*}(H)\to CF^{*+1}(H)$ by 
$$\partial x^{-}:=\sum_{\mu_{CZ}(x^{+})-\mu_{CZ}(x^{-})=1,}\sum_{A\in H_{2}(M;\mathbb{Z}) }\sharp \mathcal{M}^{A}(x^{-},x^{+};H,J)\cdot x^{+}.$$

Since $\partial^{2}=0$, define $HF^{*}(H,J):=H^{*}(CF^{*}(H),\partial)$ and $SH^{*}(M):=\varinjlim_{H}HF^{*}(H)$.
Define the Morse-Bott Hamiltonian Floer chain groups by
$$CB^{*}(H):=\bigoplus \mathbb{Z}<\text{Crit}(f_{C})>\oplus \mathbb{Z}<\text{Crit}_{\text{nondeg}}(H)>.$$
The grading is defined using the symplectic shear axiom for the Robbin-Salamon index \cite{bourgeois.oancea09}. There are several conventions to define symplectic (co)homology and we compare them in Appendix \ref{several conventions} below. Define the Morse-Bott Hamiltonian Floer differential $\partial: CB^{*}(H)\to CB^{*+1}(H)$ by,
\begin{itemize}
    \item $\partial x_{-}:=\sum_{x_{+}\in\text{Crit}(H), |x_{+}|-|x_{-}|=1} \sharp \mathcal{M}^{0}(x_{-},x_{+};H,\{f_{\gamma}\},J)\cdot x_{+}$, for $x_{+}\in \text{Crit}(H)$.
    \item $\partial p_{-}:=\sum_{p_{+}\in \text{Crit}(f_{C_{+}}), |p_{+}|-|p_{-}|=1}\sum_{A\in H_{2}(M;\mathbb{Z})}\sharp \mathcal{M}^{A}(p_{-},p_{+};H,\{f_{\lambda}\},J)\cdot p_{+}$,\\
    $+\sum_{x_{+}\in\text{Crit}(H), |\gamma_{x_{+}}|-|p_{-}|=1}\sum_{A\in H_{2}(M;\mathbb{Z})}\sharp \mathcal{M}^{A}(p_{-},x_{+};H,\{f_{\gamma}\},J)\cdot x_{+}$, for $p_{-}\in \text{Crit}(f_{C_{-}})$.
\end{itemize}
where, all the counting is considered with a sign of the corresponding connected component.
By definition, $CF^{*}(H_{\delta})\cong CB^{*}(H), \delta\in ]0,\delta_{1}[$ as free $\mathbb{Z}$-modules.
And, by the correspondence theorem \ref{correspondence} [Theorem 3.7, \cite{bourgeois.oancea09}], the differentials counts the same. Therefore, $H^{*}(BC^{*}(H),\partial)=SH^{*}(H_{\delta},J)$. In this paper, we take Hamiltonian to be constantly zero on the interior of the Liouville domain $\underline{M}$, so that the Morse-Bott family of orbits is codimension $0$, Bourgeois-Oancea's argument on Morse-Bott moduli spaces still applies. We note that the part of Floer cylinder which degenerates to Morse trajectory are on the descending manifold of the critical point under the Morse flow, the the degenerating locus is under the boundary of $M$. 

\subsection{Twisted/Bulk Deformed Symplectic Cohomology}
We can extend the coefficient ring of the symplectic cohomology. Let us recall the definitions by Ritter [Section 4, \cite{ritter10}]. Let $ev:\mathcal{L}M\times S^{1}\to \mathcal{L}M$ be the evaluation map. Define $\tau:=\pi\circ ev^{*}:H^{2}(M;\R)\rightarrow H^{2}(\mathcal{L}M\times S^{1};\R)\to H^{1}(\mathcal{L}M;\R)$. For $\xi\in H^{2}(M;\R)$, a smooth path $u$ in $\mathcal{L}M$, $\tau\xi(u)=\int \xi(\partial_{s}u,\partial_{t}u)ds\wedge dt$. Let $\beta=\tau(\xi)$ be a singular cocycle representing a class in $H^{1}(\mathcal{L}M;\R)$ for $\xi\in H^{2}(M;\R)$. The twisted symplectic chain complex for $(M,H)$ with twisted coefficients in $\Lambda$ is the $\Lambda$-module generated by the periodic orbits of $X_{H}$, the differential $\partial$ is twisted by $t^{-\int_{[u]}\beta}=t^{-\int \eta(\partial_{s}u,\partial_{t}u)ds\wedge dt}$, where $u$ is a Floer cylinder connecting two generators, 
\begin{align*}
    \partial x_{-}:&=\sum_{u\in \mathcal{M}(x_{-},x_{+};H,J)}\epsilon (u)t^{\mathcal{E}_{H}(u)-\int_{[u]}\beta}x_{+}\\
    &=\sum_{u\in \mathcal{M}(x_{-},x_{+};H,J)}\epsilon(u)t^{\int_{\R\times\R/\Z}u^{*}\omega}t^{\int_{0}^{1}(H(x_{+}(t))-H(x_{-}(t)))dt}\cdot t^{-\int \eta(\partial_{s}u,\partial_{t}u)ds\wedge dt}x_{+},
\end{align*}

All continuation maps are also twisted by $t^{-\int_{[u]}\beta}=t^{-\int \eta(\partial_{s}u,\partial_{t}u)ds\wedge dt}$. Denote $\beta=\tau(\xi)$-twisted symplectic cohomology of $M$ by $SH^{*}(M;\underline{\Lambda}_{\beta})$.

\subsection{Computing Symplectic Cohomology}

Symplectic cohomology of an affine variety $X\setminus D$, where $X$ is a projective variety and $D$ is simple normal crossing divisors, can be computed in several ways: For admissible pairs $(X,D)$, spectral sequences which converges to symplectic cohomology was developed by Seidel \cite{seidelbiased}, McLean \cite{mcleancomputation} and Ganatra-Pomerleano \cite{ganatrapomerleano}. Ritter computed symplectic cohomology of some non-exact monotone convex symplectic manifold with Hamiltonian circle action that generate Reeb flow on the boundary using Seidel representation \cite{ritter14}, \cite{ritter16}. Using the Legendrian surgery exact sequence by Bourgeois-Ekholm-Eliashberg \cite{BEE12}, symplectic cohomology of Weinstein manifold, constructed from Legendrian surgery on a subcritical Weinstein manifold, is isomorphic to Legendrian contact homology. For any $4$-dimensional Weinstein manifold, symplectic cohomology can be computed combinatorially by Ekholm-Ng \cite{ekholm.ng15}. Mayer-Vietoris Sequence is developed by Cieliebak-Oancea for Liouville cobordisms and by Varolgunes for compact submanifolds of a closed symplectic manifold. Symplectic cohomology is also known to be isomorphic to Hochschild cohomology of wrapped Fukaya category. Various mirror symmetries predict that (wrapped) Fukaya categories are equivalent to derived category of coherent sheaves, possibly with superpotentials(the Landau-Ginzburg model). The Gross-Siebert program explains how to construct the mirrors by tropicalization. Pascaleff computed symplectic cohomology of log Calabi-Yau surfaces using tropicalization \cite{pascaleff}. Lekili-Ueda computed symplectic cohomlogy of Milnor fibers of ADE singularities by computing algebraic side of homological mirror symmetry \cite{lekili.ueda.20}.

\begin{example}[\cite{mcleancomputation}, See also Proposition 11.1, \cite{diogilisi18s}] Let $X$ be a projective variety with simple normal crossing divisor $D:=\{D_{j}\}_{j\in S}$. Choose a meromorphic section $\kappa$ of the canonical bundle of $X$ which is non-vanishing along $X\setminus D$. Define the discrepancy $a_{i}$ of $D_{i}$ as $a_{i}:=\text{ord}_{D_{i}}\kappa^{-1}(0)-\text{ord}_{D_{i}}\kappa^{-1}(\infty)$. Choose an ample line bundle $L$ on $X$ and a holomorphic section $s$ of $L$ which is non-vanishing along $X\setminus D$. Define a winding number $w_{i}$ of $D_{i}$ as $w_{i}:=-\text{ord}_{D_{i}}s^{-1}(0)$. Choose $ND_{I}$ a small tubular neighborhood of $D_{I}$ for $I\subset S$ so that $\overset{\circ}{N}D_{I}:=ND_{I}\setminus \cup_{j\in S}D_{j}$ is a bundle over $D_{I}\setminus \cup_{j\in S-I}D_{j}$ with fiber of a product of punctured disks. Using the Morse-Bott spectral sequence, a spectral sequence $(E_{r}^{p,q},d_{r}^{p,q}:=E_{r}^{p,q}\to E_{r}^{p+r,q-r+1})$ with $E^{1}$-page,\\
$$E_{1}^{p,q}:=\bigoplus_{(k_{i}\in \N^{S}:\sum_{i}k_{i}w_{i}=-p)} H^{p+q-2(\sum_{i}k_{i}(a_{i}+1))}(\overset{\circ}{N}D_{I_{(k_{i})}}), I_{(k_{i})}:=\{i\in S: k_{i}\neq 0\},$$
converges to $SH^{*}(X\setminus D)$.

Let $C_{(1,k)}$ be the complement of bidegree $(1,k)$-hypersufaces $D_{(1,k)}$ in $\P^{1}\times \P^{1}$. $C_{(1,1)}=\P^{1}\times \P^{1}\setminus D_{(1,1)}=T^{*}\P^{1}$. $\overset{\circ}{N}D_{(1,1)}=\R P^{3}$, considering the Hopf fibraion over $\P^{1}$ in $\P^{1}\times \P^{1}$. Using $a=\frac{-2}{1}+\frac{-2}{1}=-4$, $w=-2$, 
$$H^{*}(\R P^{3}) := 
\begin{cases}
\Z,&  \text{for } *=3,\\
\Z/2,&\text{for } *=2,\\
0  ,& \text{for } *=1,\\
\Z ,& \text{for } *=0,
\end{cases} \quad\quad H^{*}(C_{(1,k)})=
\begin{cases}
\Z  ,& \text{for } *=2,\\
\Z/2,& \text{for } *=1,\\
\Z  ,& \text{for } *=0,
\end{cases}$$

we get the following $E^{1}$-page, $E^{\infty}$-page,
\begin{center} \begin{tikzpicture}
  \matrix (m) [matrix of math nodes,
    nodes in empty cells,nodes={minimum width=3.5ex,
    minimum height=3.5ex,outer sep=-9pt},
    column sep=1ex,row sep=1ex]{
          8     &  \cdot& \cdot& \cdot&  \Z   \quad \quad&  8 & \cdot & \cdot& \cdot & \Z \\
          7     &  \cdot& \cdot& \cdot& \cdot \quad \quad&  7 & \cdot & \cdot& \cdot & \cdot \\
          6     &  \cdot& \cdot&  \Z  &  \Z   \quad \quad&  6 & \cdot & \cdot& \cdot & \cdot \\
          5     &  \cdot& \cdot& \cdot&  \Z   \quad \quad&  5 & \cdot & \cdot& \cdot & \cdot \\ 
          4     & \Z    & \Z   &  \Z  & \cdot \quad \quad&  4 & \cdot & \cdot& \Z    & \cdot \\
          3     & \Z    & \cdot&  \Z  & \cdot \quad \quad&  3 & \Z    & \cdot& \Z    & \cdot \\
          2     & \Z/2  & \Z/2 & \cdot& \cdot \quad \quad&  2 & \cdot & \cdot& \cdot & \cdot \\
          1     &  \cdot& \Z   & \cdot& \cdot \quad \quad&  1 & \cdot & \Z   & \cdot & \cdot \\
          0     & \Z    & \cdot& \cdot& \cdot \quad \quad&  0 & \Z    & \cdot& \cdot & \cdot \\
    \quad\strut &  0    & 1    &  2   &  3    \quad \quad&    & 0     & 1    & 2     & 3 \strut \\};
\end{tikzpicture} \end{center}
Therefore, we get $\ell(C_{(1,1)})=2$. However, $C_{(1,k)}$ is $\C$-uniruled for all $k\geq 1$.
\end{example}

\subsubsection{Relative Symplectic Cohomology of Compact Subsets of a Completed Liouville Domain and a Mayer-Vietoris Sequence}

We adapt two definitions of symplectic cohomology of a pair $(K\subset X)$ of a projective variety $X$ and its compact subset $K$ in [Definition 2.64, Definition 3.3 in \cite{mclean18}, \cite{varolgunes19}] to a pair $(K\subset M)$ of a completion $M$ of a Liouville domain $\underline{M}$ and a compact subset $K$ of $M$ following \cite{cieliebakhandle}, \cite{mclean18}, \cite{varolgunes19}. Varolgunes' Mayer-Vietoris property on relative symplectic cohomology still works. There are some aspects that our setting is simpler because we are given a Liouville domain with contact cylinder(exact symplectic form) so we are not worried about the index-bounded contact cylinder for a pair $(K\subset X)$ of a projective variety $X$ and its compact subset $K$. And the symplectic form is exact compatible with the contact boundary.

Denote $\text{Cube}^{n}:=\{(x_{1},\cdots,x_{n})|x_{j}\in [0,1]\}$, the standard unit cube. For $0\leq k\leq n$, a $k$-dimensional face of $\text{Cube}^{n}$ is a subset of $\text{Cube}^{n}$ having that $n-k$ of the coordinates are either $0$ or $1$. We call $0$-dimensional faces, vertices. A vertex of a face is called the initial(resp. terminal) vertex if it has the maximum number of $0$'s (resp. $1$'s) in its coordinates and denote $\nu_{s(F)}$ (resp. $\nu_{t(F)}$). Two faces $F'$ and $F''$ are adjacent if $\nu_{s(F'')}=\nu_{t(F')}$ and denote $F''<F'$. Let us summarize some notations as follows.
\begin{itemize}
    \item The faces of $Cube^{n}$ are in one-to-one correspondence with the set of $n$-tuples of elements of a set $\{0,1,-\}$. Denote $\mu(F)$ the $n$-tuple corresponding to a face $F$. 
    \item Given two adjacent faces $F'>F''$, choose the smallest face $F$ that contains both $F'$ and $F''$. Denote $v(F',F)$ the subtuple of $\mu(\nu_{t(F')})-\mu(\nu_{s(F')})$.
    \item For tuples $w,v$ of elements of a set $A$, $\#(w,v)$ is defined to be the number of subtuples of $v$ that are equal to $w$.
    \item $n(F):=\#(0-,\mu (F))+\#(0,\mu(F))$.
\end{itemize}

\begin{defn}[Definition 2.1.1,\cite{varolgunes19}]
An $n$-cube of chain complexes over a commutative ring $R$ consists of following data,
\begin{enumerate}
    \item To each vertex $\nu$ of $\text{Cube}^{n}$, we assign a $\Z/2$-graded $R$-module $C^{\nu}$.
    \item To each $k$-dimensional face $F$, we assign maps $f_{F}:C^{\nu_{s(F)}}\to C^{\nu_{t(F)}}$ from its initial vertex to its terminal vertex, of degree $dim(F)+1$ modulo $2$.
    \item For any boundary $F''<F'$ of a face $F$, $\sum_{F\prime\prime<F', \text{boundary }F}(-1)^{*_{F',F}}f_{F''}f_{F'}=0$, where $*_{F',F}=n(1,v(F',F))+n(01,v(F',F))$. 
\end{enumerate}
\end{defn} 
When the signs $(-1)^{*_{F',F}}$ are all positive, we call it $n$-cubes with positive signs. There is a canonical way to make an $n$-cube with positive signs [Lemma 2.1.5,\cite{varolgunes19}]. The cone of an $n$-cube with positive signs $(\{\mathcal{C}^{\nu}\},\{f_{F}\})$ in direction $i$ is an $(n-1)$-cube with positive signs constructed as:
\begin{itemize}
    \item For each vertex $w$ of Cube$^{n-1}, \mathcal{C}^{w}:=\mathcal{C}^{(w,i,0)}[1]\oplus \mathcal{C}^{(w,i,1)}$, where $(w,i,a)$ denotes the $n$-tuple with $a$ inserted as the $i$th entry to $w$, for example, $((0,2,2),3,1)=(0,2,1,2)$.
    \item For each face $F$, the map $f_{F}:\mathcal{C}^{s(F)}\to \mathcal{C}^{t(F)}$ is given by the matrix 
    $$\begin{pmatrix}
f_{(\mu(F),i,0)} &  0\\
f_{(\mu(F),i,-)} &  f_{(\mu(F),i,1)}
\end{pmatrix}.$$
\end{itemize}
The cone $cone^{i}(\mathcal{C})$ of an $n$-cube $C$ in direction $i$ is the cone construction of $n$-cube with positive signs conjugated with canonical maps to a general $n$-cubes. A map between two $n$-cubes $\mathcal{C}\to \mathcal{C}'$ is a filling of the partially defined $(n+1)$-cube with the $n$-dimensional faces $\{x_{n+1}=0\}=\mathcal{C}$, $\{x_{n+1}=1\}=\mathcal{C'}$. A homotopy of two maps $f_{0}, f_{1}:\mathcal{C}\to \mathcal{C}'$ of $n$-cubes is a filling of the partially defined $(n+2)$-cube where the faces $\{x_{n+1}=0\}=(f_{0}:\mathcal{C}\to \mathcal{C}')$, $\{x_{n+1}=1\}=(f_{1}:\mathcal{C}\to \mathcal{C}')$ and the faces $\{x_{n+2}=0\}$ and $\{x_{n+2}=1\}$ are the identity maps for the given $n-cubes$.
\begin{equation}
    \begin{tikzcd}
  \mathcal{C} \arrow[r,"f_{0}"]  & \mathcal{C}'  \\
  \mathcal{C} \arrow[r,"f_{1}"] & \mathcal{C}', \end{tikzcd} \hspace{40pt}%
  \begin{tikzcd}
  \mathcal{C} \arrow[r,"f_{0}"] \arrow[rd] \arrow[d,"id"] & \mathcal{C}' \arrow[d,"id"] \\
  \mathcal{C} \arrow[r,"f_{1}"] & \mathcal{C}'. \end{tikzcd}
\end{equation}

An $n$-ray is an infinite sequence of $n$-cubes $\{\mathcal{D_{i}}\}_{i\in \N}$ satisfying that $\mathcal{D}_{i}, \mathcal{D}_{i+1}$ glued in the $n$-th direction for all $i$, presented as a sequence of $(n-1)$-cubes \begin{tikzcd}
\mathcal{C}=C_{1}\arrow[r,"f_{1}"] & C_{2}\arrow[r,"f_{2}"]&\cdots
\end{tikzcd} so that $\mathcal{D}_{i}:\mathcal{C}_{i}\to \mathcal{C}_{i+1}$ is a map of $(n-1)$-cubes. Given a $1$-ray \begin{tikzcd}
\mathcal{C}=C_{1}\arrow[r,"f_{1}"] & C_{2}\arrow[r,"f_{2}"]&\cdots
\end{tikzcd}, the (mapping) telescope $\text{tel}(\mathcal{C})$ of $\mathcal{C}$ is the complex $\big(\bigoplus_{i\in \N}C_{i}[1]\oplus C_{i},\delta:=(x[-1]-dx+f_{i}(x),dx)\big)$, where $x[-1]$ denotes the copy of $x$ in $C_{i}$. The telescope $\text{tel}(\mathcal{C})$ of an $n$-ray $\mathcal{C}$ is an $(n-1)$-cube constructed as follows.
\begin{itemize}
    \item At each vertex $w$ of Cube$^{n-1}$ of tel$(\mathcal{C})$, we assign the $R$-module $\bigoplus_{i\in \N}(\text{cone}^{n}(\mathcal{D}_{i}))^{w}=\bigoplus_{i\in \N}\mathcal{C}_{i}^{w}[1]\oplus \mathcal{C}_{i}^{w}$, where $\text{cone}^{n}(\mathcal{D}_{i})$ is the cone in the last direction of the $n$-cube $\mathcal{D}_{i}:\mathcal{C}_{i}\to \mathcal{C}_{i+1}$. 
    \item The induced maps are defined, for each face $F$ of Cube$^{n-1}$,
 $$\delta_{F}(x)=\begin{cases*}
 f_{F}^{i}(x) & \text{if $x\in \mathcal{C}^{s(F)}_{i}$},\\
 f_{F}^{i}(x)+x[-1] & \text{if $x\in \mathcal{C}^{w}_{i}[1]$ and $F$ is of zero-dimension}, \\
 f_{F}^{i}(x) &\text{if $x\in \mathcal{C}^{w}_{i}[1]$ and $F$ is of positive-dimension}.
 \end{cases*}$$
\end{itemize}

An $n$-cube of admissible Hamiltonians is a smooth map $H:\text{Cube}^{n}\to C^{\infty}(M\times S^{1},\R)$ which is locally constant near the vertices and non-degenerate at the vertices. An $n$-cube family of admissible Hamiltonians is called to be monotone if the Hamiltonians are non-decreasing along all of the flow lines of $f$. By the energy inequality, a monotone $n$-cube of admissible Hamiltonians gives an $n$-cube defined over the ring $\Lambda_{\geq 0}$ [Definition 3.2.4, \cite{varolgunes19}]. Any two acceleration data for $K\subset M$ are homotopic because any partially defined Hamiltonian-Floer-homotopic (with monotonicity) $n$-cube has a filling to an Hamiltonian-Floer-homotopic $n$-cube, [Proposition 3.2.18, \cite{varolgunes19}, Lemma 3.5, \cite{cieliebakhandle}]. Here, an $n$-cube is Hamiltonian-Floer-homotopic $n$-cube means that if for any two points $x$ and $y$ in Cube$^{n}$ where Hamiltonians are defined and that there exists a possibly broken negative gradient flow line of $f$ from $x$ to $y$, then $H|_{x}\leq H|_{y}$.

\begin{defn}
Let $(K\subset M)$ be a pair of a completion $M$ of a Liouville domain $\underline{M}$ and a compact subset $K$ of $M$. We can assume that $K\subset \underline{M}$. A Hamiltonian $H_{K\subset M}$ on $M$ is $(K\subset M)$-admissible if 
\begin{enumerate}
    \item $H_{K\subset M}|_{K}<0$ and $C^{2}$-small.
    \item $H_{K\subset M}\equiv c$ outside of a compact set whose interior containing $K$ ($c\in \mathbb{R}_{>0}$).
\end{enumerate}
\end{defn}

\begin{remark}
We refer p. 511-512 around the Figure 12 in \cite{mclean18}.
\end{remark}
Topological energy of $u:\R\times S^{1}\to (M,\omega, H:\R\times S^{1}\times M\to \R)$ is defined by,
$$\mathcal{E}_{H}(u):=\int u^{*}\omega+\int \partial_{s}(H(s,t,u(s,t))).$$

It defines the complex of Hamiltonian Floer cohomology as follows,
$$CF^{*}(H,J):=\oplus_{x\in\mathcal{P}(H)}\mathbb{Z}\cdot x,  \partial x_{-}:=\sum_{\mu(x_{-})-\mu({x_{+})=1}}\sharp \overline{\mathcal{M}(x_{-},x_{+})} x_{+}.$$

For two $(K\subset M)$-admissible Hamiltonians $H, \widetilde{H}$, define $H\leq \widetilde{H}$ if $H(x)\leq \widetilde{H}(x)$ for any $x\in M$. We consider a cofinal family of $(K\subset M)$-admissible Hamiltonians $\{H_{K\subset M, c_{i}},\leq\}_{i\in\N}$ converging to $\overline{H}_{K}$, defined by $\overline{H}_{K}|_{K}\equiv 0$, $\overline{H}_{K}|_{M\setminus K}\equiv \infty$. An acceleration data for $K$ is a cofinal family of $(K\subset M)$-admissible Hamiltonians on a pair with a choice of interpolating Hamiltonians $\{H_{K\subset M, c_{s}}\}_{s\in [i,i+1]}$ for all $i$. Define a filtration $\mathcal{F}_{\bullet}$ on $(CF^{*}(\mathcal{H}_{K\subset M}),\delta)$ by the action, where $\mathcal{F}_{a}(CF^{*}(\mathcal{H}_{K\subset M}),\delta)$ is the subcomplex generated by orbits with the action greater than $a$ ($\partial \mathcal{F}_{a}\subset \mathcal{F}_{a}$). Denote that $\mathcal{F}_{a, b}:=\mathcal{F}_{a}/\mathcal{F}_{b}$. Then for $a_{1}<a_{2}<b_{1}<b_{2}$, $c_{1}<c_{2}$, we have a Hamiltonian-Floer-homotopic $3$-cube, a $3$-cube where Hamiltonians are mononotonous along the arrows of the following diagram with the connecting 1-parameter family of Hamiltonians.

$$\begin{tikzcd}[back line/.style={densely dotted}, row sep=1em, column sep=1em]
& \mathcal{F}_{a_{1},b_{2}}CF^{*}(H_{K\subset M,c_{1}}) \ar{dl} \ar{rr} \ar[back line]{dd} 
  & & \mathcal{F}_{a_{1},b_{2}}CF^{*}(H_{K\subset M,c_{2}}) \ar{dd} \ar{dl} \\
 \mathcal{F}_{a_{1},b_{1}}CF^{*}(H_{K\subset M,c_{1}}) \ar[crossing over]{rr} \ar{dd}
  & &  \mathcal{F}_{a_{1},b_{1}}CF^{*}(H_{K\subset M,c_{2}}) \\
&  \mathcal{F}_{a_{2},b_{2}}CF^{*}(H_{K\subset M,c_{1}}) \ar[back line]{rr} \ar[back line]{dl} 
  & &  \mathcal{F}_{a_{2},b_{2}}CF^{*}(H_{K\subset M,c_{2}}) \ar{dl} \\
 \mathcal{F}_{a_{2},b_{1}}CF^{*}(H_{K\subset M,c_{1}})  \ar{rr} & &  \mathcal{F}_{a_{2},b_{1}}CF^{*}(H_{K\subset M,c_{2}})  \ar[crossing over, leftarrow]{uu}
\end{tikzcd}$$

We also assume that for any two non-degenerate Hamiltonians $f$ and $g$ in the cofinal family, $\overline{\{f<g\}}\cap \overline{\{f>g\}}=\emptyset$ so that max$\{f,g\}$ and min$\{f,g\}$ are smooth [Proposition 4.1.1,\cite{varolgunes19}]. From the cofinal family of $(K\subset M)$-admissible Hamiltonians, we define relative symplectic cohomology of a pair $(K,M)$ as follows.

\begin{defn} Let $M$ be the completion of a Liouville domain $\underline{M}$, and $K$ be a compact subset of $\underline{M}$. Let $a,b\in [-\infty,\infty]$ with $a<b$. 
\begin{align*}
SC^{*}_{a,b}(K\subset M;\mathcal{H}_{K\subset M},\mathcal{J}):&=\varinjlim_{\substack{a'\searrow a}}\circ \varprojlim_{\substack{b'\nearrow b}} \circ \text{tel}_{c}[\mathcal{F}_{a', b'}CF^{\bullet}(\mathcal{H}_{K\subset M, c})],\\
&=\varinjlim_{\substack{a'\searrow a }}\circ \varprojlim_{\substack{b'\nearrow b}} \circ \varinjlim_{c\nearrow \infty}[\mathcal{F}_{a', b'}CF^{\bullet}(\mathcal{H}_{K\subset M, c})],
\end{align*}
where $\varinjlim_{\substack{a}}\circ \varprojlim_{\substack{b}}$ is completion with respect to action filtration, where $\text{tel}_{c}$ denotes telescope of a $3$-ray in the direction of the height $c$ of admissible Hamiltonians, with choices of continuation map data. For a finite number $a\in \R$, taking direct limit $\varinjlim_{\substack{a\leq}}$ does not necessary. 
$$SC^{*}_{a,b}(K\subset M;\mathcal{H}_{K\subset M},\mathcal{J}):=\varprojlim_{\substack{b'\nearrow b}} \circ \varinjlim_{c\nearrow \infty}[\mathcal{F}_{a, b'}CF^{\bullet}(\mathcal{H}_{K\subset M, c})].$$
\end{defn}

By \cite{varolgunes19}, we can choose a homotopy interpolating two cofinal Hamiltonian data. 
\begin{lemma} \cite{varolgunes19}
For any two cofinal family of Hamiltonians $(\mathcal{H}_{K\subset M},\mathcal{J}), (\tilde{\mathcal{H}}_{K\subset M},\widetilde{\mathcal{J}})$, $SC^{*}_{a,b}(K\subset M;\mathcal{H}_{K\subset M},\mathcal{J})$ and $SC^{*}_{a,b}(K\subset M;\tilde{\mathcal{H}}_{K\subset M},\widetilde{\mathcal{J}})$ are quasi-isomorphic. 
\end{lemma}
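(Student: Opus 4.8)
The plan is to produce a chain-level quasi-isomorphism by interpolating between the two pieces of acceleration data and then exploiting the fact, built into the telescope construction, that homotopy-commutative continuation diagrams become honest chain maps. First I would interpolate. Each cofinal family $(\mathcal{H}_{K\subset M},\mathcal{J})$ and $(\tilde{\mathcal{H}}_{K\subset M},\widetilde{\mathcal{J}})$ is presented through the telescope $\text{tel}_{c}$ in the height direction $c\nearrow\infty$, together with its interpolating/continuation data. I want a monotone homotopy between them, and this is exactly where the filling property enters: by [Proposition 3.2.18, \cite{varolgunes19}] (see also [Lemma 3.5, \cite{cieliebakhandle}]) any partially defined Hamiltonian-Floer-homotopic $n$-cube admits a filling to a Hamiltonian-Floer-homotopic $n$-cube, so the partially defined cube whose two opposite faces carry the given acceleration data can be completed to a monotone interpolating family. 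Because the interpolation is monotone, the energy inequality guarantees it is defined over $\Lambda_{\geq 0}$ and, crucially, respects the action windows: a monotone continuation shifts the topological energy in the correct direction, hence induces maps on the filtered quotients $\mathcal{F}_{a',b'}CF^{\bullet}$.

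Next I would promote this interpolating data to a map of the telescope rays. The whole point of using $\text{tel}_{c}$ rather than a naive direct limit is that continuation maps only commute with the structure maps of the ray up to homotopy, whereas the telescope differential records those homotopies; consequently a map of rays induces an honest chain map $\text{tel}_{c}[\mathcal{F}_{a',b'}CF^{\bullet}(\mathcal{H}_{K\subset M,c})]\to \text{tel}_{c}[\mathcal{F}_{a',b'}CF^{\bullet}(\tilde{\mathcal{H}}_{K\subset M,c})]$. Applying the completion functor $\varinjlim_{a'\searrow a}\circ\varprojlim_{b'\nearrow b}$, which is functorial, then yields a chain map $SC^{*}_{a,b}(K\subset M;\mathcal{H}_{K\subset M},\mathcal{J})\to SC^{*}_{a,b}(K\subset M;\tilde{\mathcal{H}}_{K\subset M},\widetilde{\mathcal{J}})$.

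To see this map is a quasi-isomorphism I would build the reverse interpolating data in the same way and show the two composites are homotopic to the identity. This requires a homotopy of homotopies, namely a filling of a one-dimension-higher cube whose relevant faces are the two concatenated continuations and the constant family; such a filling again exists by [Proposition 3.2.18, \cite{varolgunes19}]. The standard Floer-theoretic fact that concatenating two monotone homotopies is homotopic to the homotopy induced by the concatenated Hamiltonian family then shows each composite is chain-homotopic to the continuation of a constant interpolation, which is the identity on telescopes. Since all of these homotopies are themselves monotone, they respect the action filtration and survive the completion.

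The step I expect to be the main obstacle is the control of the completion with respect to the action filtration: one must check that the interpolating continuations and the homotopies between composites are uniformly compatible with the inverse limit $\varprojlim_{b'\nearrow b}$ and the direct limit $\varinjlim_{a'\searrow a}$, so that the chain homotopies descend to the completed complexes and the induced map remains a quasi-isomorphism after completion. In Varolgunes' framework this is arranged by working throughout with the genericity condition $\overline{\{f<g\}}\cap\overline{\{f>g\}}=\emptyset$, which keeps every max/min interpolation smooth and monotone; once that is in place, the invariance becomes a formal consequence of the functoriality of the cone, telescope, and completion operations established in [\cite{varolgunes19}].
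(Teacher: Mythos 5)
The paper itself gives no proof of this lemma: it is quoted directly from \cite{varolgunes19}, preceded only by the remark that one can choose a homotopy interpolating two cofinal Hamiltonian data. Your reconstruction follows the strategy of that reference --- interpolate the acceleration data, promote to a map of rays and hence an honest chain map of telescopes, complete, and check the composites are homotopic to the identity via a filling one dimension higher --- so in outline it is the intended argument.

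There is, however, one concrete gap in your first step. You ask for a monotone interpolation with the two given acceleration data as opposite faces of a partially defined cube and invoke the filling property to complete it. But the filling lemma [Proposition 3.2.18, \cite{varolgunes19}] only applies to partially defined cubes that are already Hamiltonian-Floer-homotopic, i.e.\ non-decreasing along the flow lines of the cube's Morse function; for two arbitrary cofinal families neither $H_{i}\leq \tilde{H}_{i}$ nor the reverse need hold at any stage, so the cube you want to fill is not monotone and the direct interpolation does not exist. The standard repair is mutual cofinality: both families converge to the same $\overline{H}_{K}$, so one extracts reindexings $n(i), m(i)$ with $H_{i}\leq \tilde{H}_{n(i)}\leq H_{m(i)}\leq \cdots$ and builds a zig-zag of monotone maps of rays whose composites are homotopic to the structure maps of the original rays, hence become quasi-isomorphisms after taking telescopes and completing; this is exactly the device used in the proof of Lemma \ref{layers of Hamiltonian} in this paper. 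A smaller correction: the survival of the quasi-isomorphism under $\varprojlim_{b'\nearrow b}$ is not what the genericity condition $\overline{\{f<g\}}\cap\overline{\{f>g\}}=\emptyset$ is for (that condition makes $\max/\min$ of Hamiltonians smooth for the Mayer--Vietoris argument); one instead needs the action-filtration towers to satisfy a Mittag--Leffler-type condition (e.g.\ surjective structure maps on the filtered quotients) so that a degreewise quasi-isomorphism of towers induces a quasi-isomorphism of the inverse limits.
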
 

Therefore, we have well-defined cohomology,
$$SH^{*}_{a,b}(K\subset M):=H^{*}\Big(\varinjlim_{\substack{a'\searrow a }}\circ \varprojlim_{\substack{b'\nearrow b}} \circ \varinjlim_{c\nearrow \infty}[\mathcal{F}_{a, b'}CF^{\bullet}(\mathcal{H}_{K\subset M, c})]\Big).$$

Similar to the usual symplectic cohomology, we have the following lemma.
\begin{lemma} \label{relativelemma1}
Let $M$ be the completion of a Liouville domain $\underline{M}$. Let $SH^{*}(\underline{M}\subset M)$ denote $SH^{*}_{-\epsilon,\infty}(\underline{M}\subset M)$ and $SH^{*}_{+}(\underline{M}\subset M)$ denote $SH^{*}_{\epsilon,\infty}(\underline{M}\subset M)$. Then
\begin{enumerate}
    \item $SH^{*}_{-\epsilon,\epsilon}(\underline{M}\subset M)\cong H^{*}(M)$.
    \item $\cdots\to SH^{*}_{+}(\underline{M}\subset M)\xrightarrow{\hat{\delta}} H^{*}(M)\to SH^{*}(\underline{M}\subset M)\to SH^{*}_{+}(\underline{M}\subset M)\to \cdots.$
    \item $SH^{*}(\underline{M}\subset M)=\widehat{SH^{*}}(M)$, where $\widehat{SH^{*}}(M)$ is cohomology of action-completed $(a,b)$-truncated symplectic chains, \cite{venkatesh18}.
\end{enumerate}
\end{lemma}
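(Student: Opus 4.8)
The plan is to transplant the action-filtration derivation of the usual triangle from Subsection \ref{def2} into Varolgunes' telescope-and-completion model for $SH^{*}_{a,b}(\underline{M}\subset M)$, and to check that the two limits $\varinjlim_{a'\searrow a}$ and $\varprojlim_{b'\nearrow b}$ appearing in the definition do not destroy the resulting long exact sequence. All three parts rest on a single observation: for a cofinal family of $(\underline{M}\subset M)$-admissible Hamiltonians, the orbits of small action are exactly the constant orbits in $\text{int}\,\underline{M}$, i.e. the critical points of the $C^{2}$-small Morse function there.

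\textbf{Part (1).} For part (1) I would isolate the window $(-\epsilon,\epsilon]$. When $H$ is $C^{2}$-small and Morse on the interior, every non-constant (Reeb-type) orbit has action bounded away from $0$, so for $\epsilon$ small the generators with action in $(-\epsilon,\epsilon]$ are precisely the critical points of $H|_{\text{int}\,\underline{M}}$. By the Morse--Bott correspondence (Lemma \ref{correspondence}) together with the observation of the Morse--Bott subsection that the degenerating Floer cylinders lie on descending manifolds of the Morse flow, the differential on this window is the Morse differential, so $\mathcal{F}_{-\epsilon,\epsilon}CF^{*}$ computes $HM^{*}(\underline{M})\cong H^{*}(M)$, using that $\underline{M}\hookrightarrow M$ is a homotopy equivalence. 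Since this window contains no non-constant orbit and the relevant trajectories stay in the interior, the continuation maps of the cofinal family restrict to the identity there, so the telescope in the height direction $c$ and both action limits are trivial on it and $SH^{*}_{-\epsilon,\epsilon}(\underline{M}\subset M)\cong H^{*}(M)$.

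\textbf{Part (2) and the main obstacle.} For part (2), at the level of a single acceleration datum I would use the tautological short exact sequence of action-truncated complexes
$$0\to \mathcal{F}_{-\epsilon,\epsilon}CF^{*}\to \mathcal{F}_{-\epsilon,\infty}CF^{*}\to \mathcal{F}_{\epsilon,\infty}CF^{*}\to 0,$$
which refines the window $(-\epsilon,\infty)$ into its low and high parts exactly as in Subsection \ref{def2}. It is compatible with continuation maps, hence with the telescope $\text{tel}_{c}$ (a homotopy colimit, which is exact) and with the filtered colimit $\varinjlim_{a'\searrow -\epsilon}$. The one non-exact operation is the action completion $\varprojlim_{b'\nearrow\infty}$; I would therefore establish the long exact sequence for each finite window $(-\epsilon,b']$, where the truncated complex is finitely generated for a fixed Hamiltonian, and then pass to the limit through the Milnor exact sequence relating $\varprojlim$ and $\varprojlim^{1}$, checking that the relevant inverse system is Mittag-Leffler so that the $\varprojlim^{1}$ contributions vanish. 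The snake lemma then produces the sequence, and substituting part (1) rewrites the middle term as $H^{*}(M)$, with $\hat{\delta}$ the connecting homomorphism. This completion step is where I expect the real difficulty: unlike for the uncompleted $SH^{*}(M)$, exactness across $\varprojlim_{b'}$ is not automatic, and it is exactly this phenomenon that lets Venkatesh's completed theory differ from the naive direct limit.

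\textbf{Part (3).} For part (3) I would compare definitions directly. The family of $(\underline{M}\subset M)$-admissible Hamiltonians---$C^{2}$-small on $\underline{M}$ and equal to the constant $c\to\infty$ outside---is cofinal among the increasing-slope admissible Hamiltonians computing $SH^{*}(M)$, and carries the same action filtration; by the homotopy-uniqueness of acceleration data \cite{varolgunes19} the output is independent of these choices. Venkatesh's $\widehat{SH^{*}}(M)$ is by definition the cohomology of the action-completed truncated chains, that is $H^{*}\big(\varinjlim_{a'}\varprojlim_{b'}\text{tel}_{c}[\mathcal{F}_{a',b'}CF^{*}]\big)$ with $(a,b)=(-\epsilon,\infty)$, which is the same chain-level object that defines $SH^{*}(\underline{M}\subset M)=SH^{*}_{-\epsilon,\infty}(\underline{M}\subset M)$ \cite{venkatesh18}; matching the order in which the telescope and the completion are formed then yields the identification.
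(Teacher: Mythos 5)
The paper gives no proof of this lemma at all --- it is introduced only with ``Similar to the usual symplectic cohomology, we have the following lemma'' --- so there is nothing to compare against except the intended analogy with the action-filtration triangle of the non-relative theory, and your proposal correctly supplies exactly that argument: the small-action window is the Morse complex of the $C^{2}$-small Hamiltonian, the short exact sequence of truncations induces the triangle, and cofinality of the $(\underline{M}\subset M)$-admissible family identifies the result with Venkatesh's completed theory. The one step you flag but do not complete, namely that the $\varprojlim^{1}$ term in the Milnor sequence for $\varprojlim_{b'\nearrow\infty}$ vanishes (e.g.\ via degreewise finite-dimensionality of $\mathcal{F}_{a,b'}CF^{*}$ coming from discreteness of the action spectrum in the exact setting), is genuinely the only point requiring care and should be written out for a complete proof.
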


By the connecting map $\widehat{\delta}$ in the long exact sequence \ref{relativelemma1} (2), we can define 
$\widehat{\ell}(M)$ as well. 

\begin{prop} \cite{varolgunes19} \label{mv}
Let $K_{1},K_{2}$ be compact subsets of a completion $M$ of a Liouville domain $\underline{M}$ satisfying that $\partial(K_{1}\cap \underline{M})\cap \partial(K_{2}\cap \underline{M})\neq \emptyset$.
The Mayer-Vietoris sequence for such subsets $K_{1},K_{2}$ of $M$ holds,
$$\lra{\delta} SH^{*}(K_{1}\cup K_{2}\subset M)\to SH^{*}(K_{1}\subset M)\oplus SH^{*}(K_{2}\subset M)\to SH^{*}(K_{1}\cap K_{2}\subset M) \to.$$
\end{prop}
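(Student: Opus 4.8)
The plan is to adapt Varolgunes' construction \cite{varolgunes19} to the present exact/contact setting, where, as noted above, the index-boundedness subtleties that appear for general compact subsets of closed symplectic manifolds disappear because $\underline{M}$ comes with an exact symplectic form and a contact cylindrical end. The entire argument reduces to a chain-level short exact sequence of Floer complexes, followed by a careful passage through the limits that define $SH^*(K\subset M)$.

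First I would fix a cofinal family of $(K_1\subset M)$-admissible Hamiltonians $\{H_1\}$ and a cofinal family of $(K_2\subset M)$-admissible Hamiltonians $\{H_2\}$, arranging, using the hypothesis $\partial(K_1\cap\underline M)\cap\partial(K_2\cap\underline M)\neq\emptyset$ together with Proposition 4.1.1 of \cite{varolgunes19}, that $\overline{\{H_1<H_2\}}\cap\overline{\{H_1>H_2\}}=\emptyset$, so that $\min(H_1,H_2)$ and $\max(H_1,H_2)$ are smooth. One then checks that $\min(H_1,H_2)$ is $(K_1\cup K_2\subset M)$-admissible and $\max(H_1,H_2)$ is $(K_1\cap K_2\subset M)$-admissible, and that these four families are mutually cofinal and monotone-compatible along the inclusions $\min\le H_i\le\max$. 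This produces a commuting square of acceleration data, hence a $2$-cube of action-filtered Floer complexes with continuation maps.

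The geometric heart is the chain-level claim that, at each height, the sequence
$$0\to CF^*(\min(H_1,H_2))\to CF^*(H_1)\oplus CF^*(H_2)\to CF^*(\max(H_1,H_2))\to 0$$
is short exact. Every $1$-periodic orbit of $\min(H_1,H_2)$ is an orbit of whichever Hamiltonian is smaller there, and likewise every orbit of $\max(H_1,H_2)$ is an orbit of whichever is larger, so the generating sets split and $CF^*(H_1)\oplus CF^*(H_2)\cong CF^*(\min)\oplus CF^*(\max)$ as graded modules. The emptiness of the overlap $\overline{\{H_1<H_2\}}\cap\overline{\{H_1>H_2\}}$, together with the maximum principle confining the relevant Floer cylinders, guarantees that these are genuine sub- and quotient-complexes for the Floer differentials. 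Equivalently, the $2$-cube is homotopy cocartesian, i.e. its iterated cone is acyclic.

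Finally I would propagate this acyclicity through the operations defining relative symplectic cohomology: the telescope $\mathrm{tel}_c$ over heights, the action truncation $\mathcal F_{a',b'}$, the completion $\varprojlim_{b'\nearrow b}$, and the colimit $\varinjlim_{a'\searrow a}$. Telescopes, truncations, and direct limits are exact and commute with cones, so they preserve the exact triangle. I expect the main obstacle to be the completion $\varprojlim_{b'\nearrow b}$, since inverse limits are not exact in general and one must rule out a $\varprojlim^1$ obstruction. The resolution, exactly as in \cite{varolgunes19}, is that the action-truncation maps $\mathcal F_{a',b''}\to\mathcal F_{a',b'}$ are degreewise surjective, so the relevant inverse system of chain groups is Mittag-Leffler; hence $\varprojlim^1$ vanishes and the completed cone is still the mapping cone of the completed maps. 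Assembling the resulting three-term exact sequences into a long exact sequence yields the claimed Mayer-Vietoris triangle, with the connecting map $\delta$ induced by the cone construction.
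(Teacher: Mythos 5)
Your overall architecture is the same as the paper's: both arguments form the $2$-cube of cofinal families $\min\{\mathcal H_1,\mathcal H_2\}$, $\mathcal H_1$, $\mathcal H_2$, $\max\{\mathcal H_1,\mathcal H_2\}$ (smoothness of $\min$ and $\max$ being arranged exactly as you say), assert that this cube is acyclic, and then check that acyclicity survives the telescope, action truncation, completion and colimit defining $SH^{*}(K\subset M)$. The paper is terse at precisely the points where you supply detail, and your Mittag--Leffler remark addressing $\varprojlim^{1}$ is a genuine improvement on the paper's bare assertion that the completed cube ``is still acyclic.''

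However, your justification of the acyclicity itself --- the claim that $0\to CF^{*}(\min)\to CF^{*}(H_1)\oplus CF^{*}(H_2)\to CF^{*}(\max)\to 0$ is a short exact sequence of chain complexes --- does not hold, and this is the heart of the matter. The maps in the cube are Floer continuation maps for the monotone homotopies $\min\le H_i\le\max$, not inclusions of generators; and even for the differential of $CF^{*}(\min)$ alone, the splitting of the generating set into orbits lying in the region where $\min=H_1$ and orbits lying where $\min=H_2$ is not respected: Floer cylinders may travel between the two regions, producing cross terms, so neither piece is a subcomplex. What is actually true --- and this is the content of Varolgunes' key lemma, on which both you and the paper are implicitly leaning --- is that any such crossing trajectory must pass through the region separating $K_1\setminus K_2$ from $K_2\setminus K_1$ and therefore carries a definite positive amount of topological energy; hence the cross terms shift the action filtration by a uniform positive amount, the cube is acyclic only ``up to torsion,'' and it becomes genuinely acyclic after the action completion $\varprojlim_{b'\nearrow b}$. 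This is exactly why the completion appears in the definition of $SH^{*}(K\subset M)$ and why the Mayer--Vietoris property need not hold for the uncompleted complexes. To repair the write-up, replace the short-exact-sequence claim by this energy estimate (or an explicit citation of the corresponding acyclicity lemma in \cite{varolgunes19}); the remainder of your argument then goes through as written.
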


\begin{proof}
Exactly the same idea in \cite{varolgunes19} works. Since $\partial(K_{1}\cap \underline{M})\cap \partial(K_{2}\cap \underline{M})\neq \emptyset$, the slices of any $3$-ray that is compatible with any acceleration datum for $(K_{1}\subset M)$, $(K_{2}\subset M)$, $(K_{1}\cap K_{2}\subset M)$, and $(K_{1}\cup K_{2}\subset M)$, we get an acyclic $2$-cube of the cofinal families $\mathcal{H}_{1}$, $\mathcal{H}_{2}$, $max\{\mathcal{H}_{1},\mathcal{H}_{2}\}$, and $min\{\mathcal{H}_{1},\mathcal{H}_{2}\}$ respectively, as follows. Here, $n$-cube $\mathcal{C}$ is called acyclic if $\text{Cone}^{n-1}\mathcal{C}$ is an acyclic chain complex.  
$$\begin{tikzcd} [cramped, sep=small]
  C_{00} \arrow[r] \arrow[rd] \arrow[d] & C_{10} \arrow[d] \\
  C_{01} \arrow[r] & C_{11}. \end{tikzcd}$$
  If a $2$-cube is acyclic, then the map $C_{00}\to Cone(C_{10}\oplus C_{01}\to C_{11})$ is a quasi-isomorphism. Therefore, it induces an exact sequence 
  $$\cdots \to H^{*}(C_{00})\to H^{*}(C_{10})\oplus H^{*}(C_{01})\to H^{*}(C_{11})\to H^{*+1}(C_{00}) \to \cdots.$$
  Hence, the acyclic $2$-cube
  $$\begin{tikzcd} [cramped, sep=small]
  CF^{*}(min\{\mathcal{H}_{1}, \mathcal{H}_{2}\}) \arrow[r] \arrow[rd] \arrow[d] & CF^{*}(\mathcal{H}_{1}) \arrow[d] \\
  CF^{*}(\mathcal{H}_{2}) \arrow[r] & CF^{*}(max\{\mathcal{H}_{1},\mathcal{H}_{2}\}),  \end{tikzcd}$$
  gives
  $$\begin{tikzcd} 
  \varprojlim_{\substack{b'\nearrow \infty}} \circ \varinjlim_{c\nearrow \infty}[\mathcal{F}_{\epsilon, b'}CF^{*}(min\{\mathcal{H}_{1}, \mathcal{H}_{2}\})] \arrow[r] \arrow[rd] \arrow[d] & \varprojlim_{\substack{b'\nearrow \infty}} \circ \varinjlim_{c\nearrow \infty}[\mathcal{F}_{\epsilon, b'}CF^{*}(\mathcal{H}_{1})] \arrow[d] \\
  \varprojlim_{\substack{b'\nearrow \infty}} \circ \varinjlim_{c\nearrow \infty}[\mathcal{F}_{\epsilon, b'}CF^{*}(\mathcal{H}_{2})] \arrow[r] & \varprojlim_{\substack{b'\nearrow \infty}} \circ \varinjlim_{c\nearrow \infty}[\mathcal{F}_{\epsilon, b'}CF^{*}(max\{\mathcal{H}_{1},\mathcal{H}_{2}\})], \end{tikzcd}$$
  that is still an acyclic $2$-cube. Hence, it induces the Mayer-Vietoris sequence.
  \end{proof}

Given a Liouville domain $(M,d\theta)$, the skeleton or the core of the Liouville structure is the attractor of the negative flow of the Liouville vector field $Z$, which is a compact set \cite{cieliebakeliashberg12}. For example, consider ribbon graphs of certain punctured Riemann surfaces. The following theorem is a direct corollary of Proposition 5.32 in \cite{mclean18}. We also refer Theorem D in \cite{borman.sheridan.varolgunes21}.

\begin{prop} \label{Liouvilleflow}
For any compact subset $K$ of a Liouville domain $(\underline{M},d\theta)$ containing skeleton of the Liouville flow, the relative symplectic cohomology $SH^{*}(K\subset M)$ is isomorphic to $SH^{*}(\underline{M}\subset M)$.
\end{prop}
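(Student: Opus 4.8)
The plan is to deduce the statement from the Liouville-flow invariance of relative symplectic cohomology, Proposition~5.32 in \cite{mclean18}, by a squeezing argument. Let $\psi_{s}:M\to M$, $s\in\R$, be the complete Liouville flow on the completion, so that $\psi_{s}^{*}(d\theta)=e^{s}\,d\theta$ and $\psi_{s}(\underline{M})\subseteq\underline{M}$ for $s\le 0$. The skeleton is the maximal invariant set of the negative flow, hence $\psi_{s}(\mathrm{Skel}(\underline{M}))=\mathrm{Skel}(\underline{M})$ for all $s$, and the domains $\psi_{-T}(\underline{M})$ shrink onto the skeleton as $T\to\infty$. Since $K$ contains the skeleton, for $T$ large enough we obtain the nesting $\psi_{-T}(\underline{M})\subseteq K\subseteq\underline{M}$. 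First I would record the restriction (Viterbo-type) maps that relative symplectic cohomology assigns to inclusions of compact subsets,
$$SH^{*}(\underline{M}\subset M)\xrightarrow{\ a\ }SH^{*}(K\subset M)\xrightarrow{\ b\ }SH^{*}(\psi_{-T}(\underline{M})\subset M),$$
which are induced on cofinal families by the monotone continuation maps of the defining $3$-rays, and which are functorial, so that $b\circ a$ equals the restriction $\rho_{\underline{M},\psi_{-T}(\underline{M})}$ directly.

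Next I would invoke Proposition~5.32 of \cite{mclean18}: for any compact set containing the skeleton, restriction to a negative Liouville-flow copy of it is an isomorphism. Applied to $\underline{M}$ this says $b\circ a=\rho_{\underline{M},\psi_{-T}(\underline{M})}$ is an isomorphism, whence $a$ is injective and $b$ is surjective. Since $\mathrm{Skel}(\underline{M})\subseteq\psi_{-T}(K)$ as well, the same proposition applied to $K$ gives that $\rho_{K,\psi_{-T}(K)}$ is an isomorphism; using $\psi_{-T}(K)\subseteq\psi_{-T}(\underline{M})\subseteq K$ this map factors as $\rho_{\psi_{-T}(\underline{M}),\psi_{-T}(K)}\circ b$, so $b$ is also injective and hence an isomorphism. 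Therefore $a=b^{-1}\circ(b\circ a)$ is an isomorphism, which is exactly $SH^{*}(\underline{M}\subset M)\cong SH^{*}(K\subset M)$.

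The main obstacle is the technical content packaged into Proposition~5.32, namely that the isomorphism induced by $\psi_{-T}$ is compatible both with the restriction maps and with the interleaved action-completion $\varprojlim_{b'}\circ\varinjlim_{c}$ built into the definition of $SH^{*}(K\subset M)$; once this is cited the remaining sandwich is formal. Two points I would verify explicitly are that McLean's hypotheses transfer to our setting — here they are automatic, since a Liouville domain already carries an exact symplectic form with a genuine contact cylinder, so the index-boundedness concerns present for a general pair $(K\subset X)$ do not arise — and that a general compact $K\supseteq\mathrm{Skel}(\underline{M})$ (rather than a smooth Liouville subdomain) may be absorbed by some $\psi_{-T}(\underline{M})$; this is where one uses that $K$ contains the skeleton together with the compactness of the nested family $\{\psi_{-T}(\underline{M})\}$ to realise the nesting above, enlarging $K$ within its neighbourhood basis if necessary without altering $SH^{*}(K\subset M)$.
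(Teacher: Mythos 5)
Your proposal is correct and follows essentially the same route as the paper: both use the negative Liouville flow to squeeze $\phi^{-Z_\theta}_{T}(\underline{M})$ inside $K$ inside $\underline{M}$ and cite Proposition 5.32 of \cite{mclean18} for invariance under the flow, noting that exactness frees one from the index-bounded contact cylinder issues. Your write-up is in fact slightly more careful than the paper's, since you explicitly close the sandwich by applying the flow invariance to $K$ itself to show the second restriction map is an isomorphism, a step the paper leaves implicit in its chain of maps.
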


\proof
The same idea of the proof of lemmas for proposition 5.32 in \cite{mclean18} works for the completion of a Liouville domain, where symplectic form is exact and where we are free from rearranging index-bounded contact cylinders. Let $\phi^{-Z_{\theta}}_{t}:\underline{M}\to \underline{M}$ be the time $t$ flow of the Liouville vector field $-Z_{\theta}$ with opposite direction. Since $K$ contains the skeleta of $\phi^{-Z_{\theta}}_{t}$, there exists $T>0$ such that $\phi^{-Z_{\theta}}_{t}(\underline{M})\subset K$ for all $t>T$. By independence of symplectomorphism and continuation maps, we get the isomorphism,
\begin{center}$SH^{*}(\underline{M}\subset M)\hookrightarrow SH^{*}(K\subset M)\hookrightarrow SH^{*}(\phi^{-Z_{\theta}}_{t}(\underline{M})\subset M)\cong SH^{*}(\underline{M}\subset M).$ \qedsymbol \end{center}

\subsubsection{Relative Symplectic Cohomology with respect to Symplectic Structures}

Let $X$ be a projective variety with effective ample divisors $D_{1}, D_{2}$. Choose an effective ample divisor $D$ whose support is exactly $D_{1}\cup D_{2}$ and which is linearly equivalent to a large multiple $m$ of $D_{1}$. Let $s$ be a section of a line bundle $\mathcal{O}_{X}(D)$ with $s^{-1}(0)=D$ and $\omega_{D}:=dd^{c}(-log||s||)$ be the induced exact symplectic form on $X\setminus D$. Then $\omega_{D}=m\cdot\omega_{D_{1}}$. Since the set of critical point of $-log||s||$ is a compact subset of $X\setminus (D_{1}\cup D_{2})$ by Lemma 4.3 in \cite{seidelbiased}, $([X\setminus (D_{1}\cup D_{2})]^{\wedge}_{\omega_{D}},\omega_{D})$ is a finite-type Weinstein manifold, where $[X\setminus (D_{1}\cup D_{2})]^{\wedge}_{\omega_{D}}$ is the union of $X\setminus (D_{1}\cup D_{2})$ and the trajectory of the Liouville flow of $\omega_{D}$. Scaling by $\frac{1}{m}$, $([X\setminus (D_{1}\cup D_{2})]^{\wedge}_{\omega_{D_{1}}},\omega_{D_{1}})$ is a finite-type Weinstein manifold.

Let $K$ be a compact subset of $X\setminus D_{1}$. We define a relative symplectic cohomology $SH^{*}_{D_{1}}(K\subset X\setminus (D_{1}\cup D_{2}))$ of a pair $(K\subset [X\setminus (D_{1}\cup D_{2})]^{\wedge}_{\omega_{D_{1}}})$ with respect to $\omega_{D_{1}}|_{[X\setminus (D_{1}\cup D_{2})]^{\wedge}_{\omega_{D_{1}}}}$ by a cofinal family of $(K\subset [X\setminus (D_{1}\cup D_{2})]^{\wedge}_{\omega_{D_{1}}})$-admissible Hamiltonians converging to $\overline{H_{K}}$, where
\begin{enumerate}
    \item $H|_{K}<0$ and $C^{2}$-small.
    \item $H=$ constant outside of a compact set whose interior containing $K$
    \item $\overline{H}_{K}|_{K}\equiv 0$, and $\overline{H}_{K}|_{[X\setminus (D_{1}\cup D_{2})]^{\wedge}_{\omega_{D_{1}}}\setminus K}\equiv \infty$
\end{enumerate}

\begin{prop} \label{samecompletion}
Let $X$ be a projective variety with effective ample divisors $D_{1}, D_{2}$. Let $K$ be a compact subset of $X\setminus(D_{1}\cup D_{2})$. Then the following relative symplectic cohomologies are isomorphic,
$$SH^{*}_{D_{1}\cup D_{2}}(K\subset X\setminus (D_{1}\cup D_{2}))\cong SH^{*}_{D_{1}}(K\subset X\setminus (D_{1}\cup D_{2})).$$
\end{prop}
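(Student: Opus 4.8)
The plan is to exploit the relation $\omega_{D}=m\cdot\omega_{D_{1}}$ recorded just above: the two relative symplectic cohomologies are computed for two exact symplectic forms on the \emph{same} underlying manifold that differ only by the overall positive scale $m$, and relative symplectic cohomology is invariant under such a rescaling. First I would verify that the two completions coincide as manifolds. Writing the Liouville form as $\theta_{D}=-d^{c}(-\log\|s\|)$ and choosing the metric on $\mathcal{O}_{X}(D)$ and the section $s$ compatibly with $D\sim mD_{1}$, one has $\theta_{D}=m\,\theta_{D_{1}}$, so the Liouville vector fields, defined by $\iota_{Z_{D}}\omega_{D}=\theta_{D}$ and $\iota_{Z_{D_{1}}}\omega_{D_{1}}=\theta_{D_{1}}$, satisfy $Z_{D}=Z_{D_{1}}$. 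Hence the trajectories of the Liouville flow are identical and $[X\setminus(D_{1}\cup D_{2})]^{\wedge}_{\omega_{D}}=[X\setminus(D_{1}\cup D_{2})]^{\wedge}_{\omega_{D_{1}}}=:W$ as smooth manifolds, carrying the two forms $\omega_{D}=m\,\omega_{D_{1}}$.

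Next I would set up a chain-level identification of the Floer data. Given a $(K\subset W)$-admissible Hamiltonian $H$ and a compatible almost complex structure $J$ for $\omega_{D_{1}}$, I claim $(mH,J)$ is the corresponding datum for $\omega_{D}$. Indeed $J$ is $\omega_{D}$-compatible iff it is $\omega_{D_{1}}$-compatible since $m>0$, and the Hamiltonian vector fields satisfy $X^{\omega_{D}}_{mH}=X^{\omega_{D_{1}}}_{H}$, because $m\,\omega_{D_{1}}(X^{\omega_{D}}_{mH},\cdot)=-d(mH)$ reduces to $\omega_{D_{1}}(X^{\omega_{D}}_{mH},\cdot)=-dH$. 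Thus the two data have the same $1$-periodic orbits and the same Floer equation, hence identical moduli spaces and differentials; the only change is the action, $\mathcal{A}^{\omega_{D}}_{mH}=m\,\mathcal{A}^{\omega_{D_{1}}}_{H}$. Therefore the action-truncated complexes are identified by an isomorphism that rescales the filtration, $\mathcal{F}_{a,b}CF^{*}(H)\cong\mathcal{F}_{ma,mb}CF^{*}(mH)$, the underlying complexes being literally equal.

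Then I would transport the entire defining diagram of relative symplectic cohomology through this identification. If $\{H_{c}\}$ is a cofinal family of $(K\subset W)$-admissible Hamiltonians for $\omega_{D_{1}}$ converging to $\overline{H}_{K}$, then $\{mH_{c}\}$ is admissible for $\omega_{D}$ and converges to the same limit $\overline{H}_{K}$ (since $m\cdot 0=0$ and $m\cdot\infty=\infty$), so it is cofinal. Applying the chain isomorphism of the previous step term by term, and using that multiplication by $m$ is an order-preserving homeomorphism of $\R$, the rescaling commutes with the telescope in $c$, the completion $\varprojlim_{b'\nearrow b}$, and the direct limit $\varinjlim_{a'\searrow a}$, after the reindexing $a'\mapsto ma'$, $b'\mapsto mb'$. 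This yields $SH^{*}_{a,b,D_{1}}(K)\cong SH^{*}_{ma,mb,D_{1}\cup D_{2}}(K)$. For the window defining the stated objects, namely $a=-\epsilon$ and $b=\infty$, the scaled window has endpoints $-m\epsilon$ and $\infty$; since $m\epsilon$ is again an arbitrarily small positive constant separating the low-action part that computes $H^{*}(W)$, the resulting cohomology is unchanged, giving the asserted isomorphism.

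The main obstacle is bookkeeping rather than geometry: one must check that the action rescaling, an order-preserving bijection of $\R$, genuinely passes through the entire tower of limits and through the action-completion with its Novikov-type coefficients, so that the chain isomorphism of the second step descends to the limit objects defining $SH^{*}_{a,b}$. Granting this compatibility with telescope, inverse limit and direct limit — which holds because scaling preserves the order and hence the filtration poset — the remaining points, namely admissibility and cofinality of $\{mH_{c}\}$ (the $C^{2}$-small condition near $K$ being arranged by a harmless shrinking of the family), compatibility of the chosen $J$, and independence of the answer from the small constant $\epsilon$, are routine.
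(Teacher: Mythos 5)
Your argument collapses at its very first step: the claim that $\theta_{D}=m\,\theta_{D_{1}}$, hence that the Liouville vector fields and the completions coincide, is false. It is true that $\omega_{D}=m\cdot\omega_{D_{1}}$ as $2$-forms (both are curvature forms of compatible metrics on $\mathcal{O}_{X}(D)\cong\mathcal{O}_{X}(D_{1})^{\otimes m}$), but the primitives are built from the \emph{sections}, not just the line bundle: $s$ must vanish on all of $D_{1}\cup D_{2}$ while $s_{1}^{\otimes m}$ vanishes only on $D_{1}$, so no choice of metric or section makes $-\log\|s\|$ equal to $-m\log\|s_{1}\|$. Their difference is $-\log|s/s_{1}^{\otimes m}|$ for a nonconstant meromorphic function whose divisor is supported on $D_{2}$ (and parts of $D_{1}$); this is pluriharmonic on $X\setminus(D_{1}\cup D_{2})$, so it does not affect $\omega$, but $\theta_{D}-m\theta_{D_{1}}=d^{c}\log|s/s_{1}^{\otimes m}|$ is a closed, non-exact $1$-form with nonzero periods around loops linking $D_{2}$. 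Consequently the two Liouville vector fields differ in an essential way near $D_{2}$: the $\theta_{D}$-structure has a genuine cylindrical end wrapping around $D_{2}$, with Reeb orbits linking $D_{2}$ (these are exactly the orbits that feed the winding-number spectral sequence of the next proposition), while the $\theta_{D_{1}}$-structure sees nothing there. The two ambient Weinstein manifolds are therefore not the same manifold with rescaled data, and everything downstream in your proof — identical admissible Hamiltonians and almost complex structures, identical moduli spaces, the purely bookkeeping action rescaling — has no foundation. Indeed, if the two structures were literally proportional the proposition would be contentless; its entire point is that the relative invariant of the fixed compact set $K$ does not see the change of Liouville structure at infinity.

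The paper's proof supplies exactly the geometric input you are missing: it interpolates between the two plurisubharmonic potentials $f=-\log\|s\|$ and $f_{1}=-\log\|s_{1}\|$ (through $f+\rho_{1}f_{1}$ for a partition of unity leaving a neighborhood of $K$ untouched), shows this gives a finite-type Weinstein homotopy with uniformly compact skeleta, upgrades it via a diffeotopy $h_{s}$ to an exact symplectomorphism of the completions fixing $K$, and only then applies Varolgunes' relabeling isomorphism for relative symplectic cohomology. If you want to repair your write-up, you must replace the rescaling identification with some such deformation-invariance argument for the structure at infinity; the scaling by $m$ is the one part of the comparison that really is harmless, and it is not where the content lies.
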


\begin{proof} of \ref{samecompletion} We get the isomorphism after interpolating symplectic cohomologies of different K\"aher structures on $X$ inducing exact symplectic structures on the complement of ample divisors in \cite{mclean18} and by relabeling isomorphism \cite{varolgunes19}. Denote $(M,\omega_{M}):=(X\setminus (D_{1}\cup D_{2}),\omega_{D_{1}\cup D_{2}})$, $(W,\omega_{W}):=\big([X\setminus (D_{1}\cup D_{2})]^{\wedge}_{\omega_{D_{1}}},\omega_{D_{1}}|_{[X\setminus (D_{1}\cup D_{2})]^{\wedge}_{\omega_{D_{1}}}}\big)$.

Consider two pluri-subharmonic functions $f:=-log||s||$, $f_{1}:=-log||s_{1}||$ on $M$, where $s, s_{1}$ are sections of line bundles $\mathcal{O}_{X}(D)$, $\mathcal{O}_{X}(D_{1})$ with $s^{-1}(0)=D$, $s_{1}^{-1}(0)=D_{1}$, respectively. Let $U_{1}$ be a small neighborhood of $K$ and $U_{2}:=M\setminus K$. Consider a smooth partition of unity $(\rho_{1}, \rho_{2})$ subordinate to $(U_{1},U_{2})$. Since admissible Hamiltonians are $\mathcal{C}^{2}$-small and approach to zero on $K$,
$$SH^{*}_{\omega_{D_{1}\cup D_{2}}}(K\subset M)=SH^{*}_{dd^{c}f}(K\subset M)\cong SH^{*}_{dd^{c}(f+\rho_{1}\cdot f_{1})}(K\subset M).$$ 

Two exhausting plurisubharmonic functions $f_{0}:=f+\rho_{1}\cdot f_{1}$, $f_{1}$ on a complex manifold $(M,J)$ give a finite type (noncomplete) Weinstein homotopy $(M,dd^{c}f_{s},f_{s})_{s\in [0,1]}$ between $(M,dd^{c}f_{0}, f_{0})$, $(M, dd^{c}f_{1},f_{1})$ satisfying that $\overline{\cup_{s\in [0,1]}\text{Skeleton}(M,dd^{c}f_{s})}$ is compact, by the idea of proof of Proposition 11.22 in \cite{cieliebakeliashberg12}, and \cite{eliashberg.gromov91}. By the idea of proof of Proposition 11.8 in \cite{cieliebakeliashberg12}, there exists a diffeotopy $h_{s}:M\to M$ so that $h_{0}=Id_{M}$, $h_{s}^{*}df_{s}-df_{0}$ is exact for all $s\in [0,1]$ and that $h_{s}^{*}df_{s}=df_{0}$ outside a compact set. After completion under the Liouville flow, we get an exact symplectomorphism $\phi:(M,dd^{c}(f+\rho_{1}\cdot f_{1}))\to (W, \omega_{W})$ between two finite type Weinstein manifolds with $\phi(K)= K$. The relabeling all the choices by the symplectomorphism $\phi$ gives an isomorphism of relative symplectic cohomology, $SH^{*}_{dd^{c}(f+\rho_{1}\cdot f_{1})}(K\subset M)\cong SH^{*}_{\omega_{D_{1}}}(K\subset W)$, [Proposition 3.3.3(3), \cite{varolgunes19}]. In sum, $$SH^{*}_{\omega_{D_{1}\cup D_{2}}}(K\subset M)=SH^{*}_{\omega_{D_{1}}}(K\subset W).$$
\end{proof}

\subsubsection{Spectral Sequence Relating Different Choices of Polarization}
Let us recall a spectral sequence associated to a filtered complex. Let $(C^{*}_{\lambda},F^{\bullet})$ is a filtered complex of differential graded module over $\Lambda$. There is a spectral sequence $(E_{r}=\oplus E_{r}^{p,q}, d_{r}^{p,q}:E_{r}^{p,q}\to E_{r}^{p+r,q-r+1})$ whose $E_{0}$-page consists of an associated graded ring $(gr^{p}(C^{p+q}),d_{0}^{p,q}=gr^{p}(d^{p+q}))$ and whose $E_{1}$-page consists of $(E_{1}^{p,q}=H^{p+q}(gr^{p}(C^{*})))$, where $gr^{p}(C^{*}):=F^{p}(C^{*})$ [Lemma 12.24.2, \cite{stacksproj}]. If the filtration is finite (i.e., there exist $n,m$ such that $F^{n}C^{*}=C^{*}$ and $F^{m}C^{*}=0$), then the spectral sequence associated to $(C^{*},F^{\bullet})$ converges to $H^{*}(C^{*})$, 
[Lemma 12.24.11(3), \cite{stacksproj}].
Using the filtration by the winding numbers, we can construct a spectral sequence explaining how relative symplectic cohomology changes when we delete an ample divisor. 

\begin{prop} \label{spectral1}
Let $X$ be a projective variety with ample divisors $D_{1}, D_{2}$. Let $K$ be a compact subset of $X\setminus D_{1}$ containing skeletons of both $(X\setminus D_{1}, \omega_{D_{1}})$ and $(X\setminus (D_{1}\cup D_{2}), \omega_{D_{1}\cup D_{2}})$. Then there exists a spectral sequence converging to $SH^{*}_{D_{1}}(K\subset X\setminus D_{1})$ whose $E_{1}$-page is $SH^{*}_{D_{1}}(K\setminus \mathbb{D}_{2}\subset X\setminus (D_{1}\cup D_{2}))$, where $\mathbb{D}_{2}$ is a small tubular neighborhood of $D_{2}$.
\end{prop}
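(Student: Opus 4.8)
The plan is to delete the divisor $D_{2}':=D_{2}\cap (X\setminus D_{1})$ from the affine variety $M_{1}:=X\setminus D_{1}$ by means of a winding-number filtration, exactly in the spirit of the spectral sequences of \cite{mcleancomputation}, \cite{ganatrapomerleano}, but now in the relative, action-completed, telescopic setting. First I would normalize the geometry so that everything is computed with a single symplectic form: by Proposition \ref{samecompletion} the relative symplectic cohomology of any compact subset of $X\setminus(D_{1}\cup D_{2})$ computed with $\omega_{D_{1}\cup D_{2}}$ agrees with the one computed with $\omega_{D_{1}}$, so I work throughout on $([X\setminus D_{1}]^{\wedge}_{\omega_{D_{1}}},\omega_{D_{1}})$. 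Applying the structure theorem [Theorem 5.20, \cite{mclean12}] to the pair $(M_{1},D_{2}')$ produces a tubular neighborhood $\mathbb{D}_{2}$ of $D_{2}'$ that fibers over $D_{2}'$ with punctured-disk fibers, on which $\theta$ restricts to a radial form. I then choose the cofinal family of $(K\subset M_{1})$-admissible Hamiltonians to be radial in the disk direction near $D_{2}'$, so that every nonconstant Hamiltonian $1$-periodic orbit acquires a well-defined winding number $w\ge 0$ recording how many times it wraps the normal circle of $D_{2}'$; orbits supported in the region away from $D_{2}'$ carry $w=0$.

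Next I would build the winding-number filtration. Because the almost complex structure can be taken integrable near $D_{2}'$ with $D_{2}'$ an almost complex submanifold, every Floer cylinder, and every continuation cylinder appearing in the telescope, meets $D_{2}'$ with non-negative local intersection number by positivity of intersections. Consequently the differential and the monotone continuation maps cannot decrease the winding number of their asymptotes, so the subspaces $F^{p}$ spanned by orbits of winding number at least $p$ form a decreasing filtration $F^{p}\supseteq F^{p+1}$ by subcomplexes. This filtration is defined simultaneously on each action-truncated, fixed-slope piece $\mathcal{F}_{\epsilon,b'}CF^{\bullet}(\mathcal{H}_{K\subset M_{1},c})$ and is preserved by the structure maps, hence descends to the telescope and to the action-completion defining $SH^{*}_{D_{1}}(K\subset X\setminus D_{1})$. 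For a fixed slope $c$ the radial Hamiltonian forces $w\le w(c)$, so the filtration is finite on each piece.

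Then I identify the $E_{1}$-page. The associated graded differential counts precisely those cylinders that preserve the winding number, i.e. those with zero intersection with $D_{2}'$, which by positivity are exactly the trajectories contained in $M_{1}\setminus D_{2}'=X\setminus(D_{1}\cup D_{2})$; summing over all winding levels recovers the full Floer data for the complement, where $D_{2}'$ has become an end. Matching the region $K$ with the region obtained after excising the tubular neighborhood, these data compute $SH^{*}_{D_{1}}(K\setminus \mathbb{D}_{2}\subset X\setminus(D_{1}\cup D_{2}))$; here I use Proposition \ref{Liouvilleflow} together with the hypothesis that $K$ contains both skeleta to guarantee that the relative invariant is insensitive to the precise compact set. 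Invoking the spectral sequence of a filtered complex [Lemma 12.24.2, \cite{stacksproj}] for the filtered telescope then yields a spectral sequence with $E_{1}\cong SH^{*}_{D_{1}}(K\setminus \mathbb{D}_{2}\subset X\setminus(D_{1}\cup D_{2}))$.

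The hard part will be convergence. On each fixed-slope, action-truncated piece the filtration is finite, so [Lemma 12.24.11(3), \cite{stacksproj}] gives convergence there; the genuine difficulty is that passing to $SH^{*}_{D_{1}}(K\subset X\setminus D_{1})$ requires commuting the spectral sequence with the telescope, with the inverse limit $\varprojlim_{b'}$ completing in action, and with the direct limit $\varinjlim_{c}$ over slopes, across which the number of occupied winding levels grows without bound. I would control this by verifying that the filtration is exhaustive and bounded below by $w=0$, that it is complete with respect to the action-completion, and that for each total degree the pages stabilize once the slope is large enough, so that the relevant $E_{r}^{p,q}$ are eventually independent of $c$. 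This uniform boundedness below is what forces the completed and direct-limited spectral sequence to converge to the filtered cohomology of the limit complex rather than to an associated-graded phantom, and it is the step where Varolgunes' acceleration-data formalism and the energy--action estimates must be combined most carefully.
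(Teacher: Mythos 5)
Your proposal is correct and follows essentially the same route as the paper: a winding-number filtration around $D_{2}$ on the relative Floer complex, positivity of intersections (via Hamiltonians/almost complex structures adapted near the divisor) to show the differential is filtered, identification of the associated graded with the Floer data of the complement to get the $E_{1}$-page, and finiteness of the filtration coming from exactness of the symplectic form to get convergence. The only cosmetic difference is that the paper passes from $K$ to $K\setminus\mathbb{D}_{2}$ at the outset via stable displaceability of the divisor neighborhood (Proposition \ref{stablydisplconj}) rather than via the skeleton-containment argument, and it dispatches the convergence-versus-limits issue more tersely than you do.
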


\begin{proof} of \ref{spectral1}
By the stably-displaceability of a divisor $D_{2}$ and the vanishing result in \cite{mclean18}, we can choose a small neighborhood $\mathbb{D}_{2}$ of $D_{2}$ so that  $SH^{*}_{D_{1}\cup D_{2}}(K\subset X\setminus D_{1})\cong SH^{*}_{D_{1}\cup D_{2}}(K\setminus \mathbb{D}_{2}\subset X\setminus D_{1})$. Therefore, we can consider $K$ as a compact subset of $X\setminus (D_{1}\cup D_{2})$.

Define a filtration $W^{\bullet}$ on $(SC^{*}(K\setminus \mathbb{D}_{2}\subset X\setminus D_{1}),\delta_{K\subset X\setminus D_{1}})$ (resp., $(SC^{*}(K\setminus \mathbb{D}_{2}\subset X\setminus (D_{1}\cup D_{2}))$, $\delta_{K\subset X\setminus (D_{1}\cup D_{2})})$) by the winding number of orbits along $D_{2}$ \footnote{Since $D_{2}$ is effective ample, there exists $f\in \mathcal{O}(X\setminus D_{1})$ so that $D_{2}\setminus D_{1} =f^{-1}(0)$. Define $ev_{f}:X\setminus D_{1}\to \mathbb{C}$ by evaluating $f$. The winding number of orbits is defined by the winding number of the image of each orbit on $\mathbb{C}$ under the map $ev_{f}$ along the origin or the intersection number of $D_{2}\D_{1}$ with a holomorphic capping of an orbit.} is greater than or equal to $w$. Since our admissible Hamiltonians are constant near the divisors, Floer trajectories are holomorphic near the divisors. Since holomorphic curves are positively intersect with any ample divisors, the winding number of an orbit along $D_{2}$ is strictly increasing under the differential map if the connecting Floer trajectory intersect with $D_{2}$, otherwise it preserve the winding number.\footnote{In \cite{borman.sheridan.varolgunes21}, the integrated maximum principle in \cite{abouzaid.seidel10} was used to prove the non-negativity of a Floer trajectory (perturbed $J$-holomorphic curve) and a divisor $D_{2}$ ($J$-holomorphic hypersurface) and their special Hamiltonian is constant on the divisors [Proposition 3.9, \cite{borman.sheridan.varolgunes21}; Section 2.9, \cite{wendl15}].} Therefore, $E_{1}$-page of the spectral sequence associated to $(SC^{*}_{D_{1}}(K\setminus \mathbb{D}_{2}\subset X\setminus D_{1}),\delta_{K\subset X\setminus D_{1}})$ isomorphic to $\bigoplus_{n}W^{n}(SH^{*}(K\setminus \mathbb{D}_{2}\subset X\setminus 
(D_{1}\cup D_{2})))\cong SH^{*}(K\setminus \mathbb{D}_{2} \subset X\setminus (D_{1}\cap D_{2}))$. Remark that the Floer trajectories of $(SC^{*}(K\setminus \mathbb{D}_{2}\subset X\setminus D_{1}),\delta_{K\subset X\setminus D_{1}})$ are allowed to pass through $D_{2}$, on the other hand, those of $(SC^{*}(K\setminus \mathbb{D}_{2}\subset X\setminus (D_{1}\cup D_{2})),\delta_{K\subset X\setminus (D_{1}\cup D_{2})})$ are not.

Since the symplectic structure on the completion of a Liouville domain is exact, the spectral sequence associated to $(C^{*},W^{\bullet})$ is finite. Hence the spectral sequence converges to $H^{*}(SC^{*}(K\setminus \mathbb{D}_{2}\subset X\setminus D_{1}),\delta_{K\subset X\setminus D_{1}})=SH^{*}(K\setminus \mathbb{D}_{2}\subset X\setminus D_{1})\cong SH^{*}(K\subset X\setminus D_{1})$ whose $E_{1}$-page isomorphic to $\bigoplus_{n}W^{n}(SH^{*}(K\setminus \mathbb{D}_{2}\subset X\setminus 
(D_{1}\cup D_{2})))\cong SH^{*}(K\setminus \mathbb{D}_{2} \subset X\setminus (D_{1}\cap D_{2}))$.
\end{proof}

\begin{remark}
When $D_{1}=\emptyset$, then the statement of proposition \ref{spectral1} recover that of the Theorem C in \cite{borman.sheridan.varolgunes21} where a spectral sequence associated to the filtered complex $\big(SC^{*},\partial, F^{*}\big)$ converging to $H^{*}(SC^{*},\partial)=SH^{*}_{D}\big(\underline{X\setminus D}\subset X\big)\cong QH^{*}(X,\Lambda)$ whose $E_{1}$-page consists of $SH^{*}_{D}\big(\underline{X\setminus D}\subset X\setminus D\big)$. But our proof is not enough to prove Theorem C so we refer \cite{borman.sheridan.varolgunes21}.
\end{remark}

Combining propositions \ref{Liouvilleflow}, \ref{samecompletion} and \ref{spectral1}, 
$$SH^{*}_{D_{1}\cup D_{2}}\big(K\setminus \mathbb{D}_{2}\subset X\setminus (D_{1}\cup D_{2})\big)\cong SH^{*}_{D_{1}}(K\setminus \mathbb{D}_{2}\subset X\setminus (D_{1}\cup D_{2}))\Rightarrow SH^{*}_{D_{1}}(K\subset X\setminus D_{1}),$$ 
$$\Big(\text{or }SH^{*}_{D_{1}\cup D_{2}}\big(K\setminus \mathbb{D}_{2}\subset X\setminus (D_{1}\cup D_{2})\big)\Rightarrow SH^{*}_{D_{1}\cup D_{2}}\big(K\setminus \mathbb{D}_{2}\subset X\setminus D_{1}\big)\cong SH^{*}_{D_{1}}(K\setminus \mathbb{D}_{2}\subset X\setminus D_{1}).\Big)$$

\begin{theorem} \label{spectralseqconj}
Let $X$ be a projective variety with effective ample divisors $D_{1}, D_{2}$. Let $K$ be a compact subset of $X\setminus D_{1}$. Then there exists a spectral sequence converging to $SH^{*}_{D_{1}}(K\subset X\setminus D_{1})$ whose $E_{1}$-page is $SH^{*}_{D_{1}\cup D_{2}}\big(K\setminus \mathbb{D}_{2}\subset X\setminus (D_{1}\cup D_{2})\big)$, where $\mathbb{D}_{2}$ is a small tubular neighborhood of $D_{2}$.
\end{theorem}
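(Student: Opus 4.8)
The plan is to run the winding-number filtration of Proposition \ref{spectral1} on the chain complex computing $SH^{*}_{D_{1}}(K\subset X\setminus D_{1})$ and then to reinterpret the resulting $E_{1}$-page through the change-of-polarization isomorphism of Proposition \ref{samecompletion}. First I would remove a neighborhood of $D_{2}$: with respect to $\omega_{D_{1}}$ the divisor $D_{2}$ is a compact symplectic submanifold of $X\setminus D_{1}$, so it is stably displaceable and the vanishing result of \cite{mclean18} applies to any small tubular neighborhood $\mathbb{D}_{2}$. Feeding this vanishing into the Mayer--Vietoris sequence of Proposition \ref{mv} for a cover of $K$ by $K\setminus \mathbb{D}_{2}$ and a displaceable neighborhood of $K\cap D_{2}$ yields $SH^{*}_{D_{1}}(K\subset X\setminus D_{1})\cong SH^{*}_{D_{1}}(K\setminus \mathbb{D}_{2}\subset X\setminus D_{1})$, so it is enough to produce the spectral sequence on the right-hand side.

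Next I would filter $SC^{*}_{D_{1}}(K\setminus \mathbb{D}_{2}\subset X\setminus D_{1})$ by the winding number of generators about $D_{2}$, exactly as in the proof of Proposition \ref{spectral1}. Since the admissible Hamiltonians are constant near $D_{2}$, the connecting Floer trajectories are honestly $J$-holomorphic there, and positivity of intersection with the ample $J$-holomorphic hypersurface $D_{2}$ -- controlled by the integrated maximum principle of \cite{abouzaid.seidel10} as in \cite{borman.sheridan.varolgunes21} -- forces the winding number to be non-decreasing under the differential and to increase strictly along any trajectory meeting $D_{2}$. The associated graded differential therefore counts only trajectories disjoint from $D_{2}$, so its cohomology is $SH^{*}_{D_{1}}(K\setminus \mathbb{D}_{2}\subset X\setminus(D_{1}\cup D_{2}))$, giving the $E_{1}$-page. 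Exactness of the symplectic form on the completion makes the winding filtration finite, so the spectral sequence converges to $SH^{*}_{D_{1}}(K\setminus \mathbb{D}_{2}\subset X\setminus D_{1})\cong SH^{*}_{D_{1}}(K\subset X\setminus D_{1})$.

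Finally I would rewrite the $E_{1}$-page in terms of the $\omega_{D_{1}\cup D_{2}}$-completion. Applying Proposition \ref{samecompletion} to the compact set $K\setminus \mathbb{D}_{2}$, which sits inside $X\setminus(D_{1}\cup D_{2})$, gives $SH^{*}_{D_{1}}(K\setminus \mathbb{D}_{2}\subset X\setminus(D_{1}\cup D_{2}))\cong SH^{*}_{D_{1}\cup D_{2}}(K\setminus \mathbb{D}_{2}\subset X\setminus(D_{1}\cup D_{2}))$, and substituting this into the $E_{1}$-page yields the asserted form. I would emphasize that Proposition \ref{samecompletion} imposes no hypothesis on the compact set and that neither the displaceability reduction nor the finiteness of the winding filtration uses a skeleton assumption; thus the construction runs for an arbitrary compact $K$, and the skeleton hypothesis of Proposition \ref{spectral1} (which there only serves to stabilize the limit through Proposition \ref{Liouvilleflow}) may be dropped.

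The step I expect to be the main obstacle is the identification of the associated graded with $SH^{*}_{D_{1}}(K\setminus \mathbb{D}_{2}\subset X\setminus(D_{1}\cup D_{2}))$ together with the subsequent relabeling. One must check that the symplectomorphism furnished by Proposition \ref{samecompletion} can be taken to fix $K\setminus \mathbb{D}_{2}$ and to carry one acceleration datum to the other, and that the positivity-of-intersection estimate persists after the generic Hamiltonian and almost-complex perturbations needed for transversality near $D_{2}$. This is exactly the point at which \cite{borman.sheridan.varolgunes21} is indispensable in the general projective setting; in the present exact setting of a completed Liouville domain the estimate is cleaner, as there is no index-bounded contact cylinder to rearrange and the Liouville form restricts exactly to the contact boundary.
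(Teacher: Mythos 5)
Your proposal is correct and follows essentially the same route as the paper, which obtains the theorem by combining the displaceability/Mayer--Vietoris reduction to $K\setminus\mathbb{D}_{2}$, the winding-number filtration of Proposition \ref{spectral1}, and the change-of-polarization isomorphism of Proposition \ref{samecompletion}. Your observation that the skeleton hypothesis of Proposition \ref{spectral1} plays no role in the filtration argument itself is consistent with the paper, whose statement of the theorem likewise imposes only compactness on $K$.
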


The following corollary of theorem \ref{spectralseqconj} will be used to prove theorem \ref{cylindrical}.

\begin{corollary} \label{spectralseqconj1}
Let $M$ be an affine variety and $Y\subset M$ a smooth hypersurface. Choose a small tubular neighborhood $\mathbb{Y}$ of $Y\cap \underline{M}$ inside $\underline{M}$. Then there exists a spectral sequence converging to  $SH^{*}(\mathbb{Y}^{c}\subset M)\cong SH^{*} \big(\underline{M\setminus Y}\subset M\big)$ whose $E_{1}$-page is $SH^{*}(\mathbb{Y}^{c}\subset M\setminus Y)\cong SH^{*}\big(\underline{M\setminus Y}\subset M\setminus Y\big)$.
\end{corollary}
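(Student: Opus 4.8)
The plan is to realize this corollary as a direct specialization of Theorem \ref{spectralseqconj}, with essentially all the Floer-theoretic content supplied by that theorem; the genuine work is the dictionary between its projective/divisorial hypotheses and the intrinsic affine data $(M,Y)$. First I would fix a smooth projective compactification $X$ of $M$ together with an effective ample divisor $D_1$ with $M = X\setminus D_1$ (such a pair exists by Hironaka resolution, as recalled before the sandwich theorem). The hypersurface $Y\subset M$ has a Zariski closure $\overline{Y}\subset X$; since $D_1$ is ample, the divisor $D_2 := \overline{Y} + N D_1$ is effective and ample for $N\gg 0$, and because $N D_1$ is supported on $D_1 = X\setminus M$ we have $D_2\cap M = \overline{Y}\cap M = Y$. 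Thus $X\setminus(D_1\cup D_2) = M\setminus Y$, and a small tubular neighborhood $\mathbb{D}_2$ of $D_2$ meets $M$ in a tubular neighborhood of $Y$, which I take to be $\mathbb{Y}$.

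Next I would set $K := \mathbb{Y}^c = \underline{M}\setminus\mathbb{Y}$, a compact subset of $M$ disjoint from $\mathbb{Y}$, so that $K\setminus\mathbb{D}_2 = K = \mathbb{Y}^c$. Feeding these $X, D_1, D_2, K$ into Theorem \ref{spectralseqconj} immediately produces a spectral sequence converging to $SH^*_{D_1}(\mathbb{Y}^c\subset X\setminus D_1) = SH^*_{D_1}(\mathbb{Y}^c\subset M)$ whose $E_1$-page is $SH^*_{D_1\cup D_2}(\mathbb{Y}^c\subset X\setminus(D_1\cup D_2)) = SH^*_{D_1\cup D_2}(\mathbb{Y}^c\subset M\setminus Y)$. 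Since $\omega_{D_1}$ is, by construction, the standard exact form on the affine variety $M$ and $\omega_{D_1\cup D_2}$ is the standard exact form on the affine variety $M\setminus Y$, the subscripts may be dropped: the sequence converges to $SH^*(\mathbb{Y}^c\subset M)$ with $E_1$-page $SH^*(\mathbb{Y}^c\subset M\setminus Y)$.

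It then remains to identify $\mathbb{Y}^c$ with the Liouville domain $\underline{M\setminus Y}$ in each ambient completion. Here I would invoke Proposition \ref{Liouvilleflow}: removing $\mathbb{Y}$ from $\underline{M}$ yields a compact Liouville subdomain whose completion is symplectomorphic to $M\setminus Y$ and which contains the skeleton of the Liouville flow of $M\setminus Y$ once $\mathbb{Y}$ is chosen small enough, since that skeleton recedes from $Y$ (which sits at infinity for the $\omega_{D_1\cup D_2}$-structure). Consequently $SH^*(\mathbb{Y}^c\subset M\setminus Y)\cong SH^*(\underline{M\setminus Y}\subset M\setminus Y)$, and the same skeleton-containment statement, read inside the ambient completion $M$, gives $SH^*(\mathbb{Y}^c\subset M)\cong SH^*(\underline{M\setminus Y}\subset M)$, matching the two isomorphisms in the statement.

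The main obstacle I anticipate is precisely this last geometric verification: one must check that for sufficiently small $\mathbb{Y}$ the complement $\mathbb{Y}^c$ simultaneously is a genuine Liouville-domain model for $M\setminus Y$ and contains the core of the relevant Liouville flow, so that Proposition \ref{Liouvilleflow} applies with the correct ambient structure. This is where the compatibility near $Y$ of the two symplectic structures $\omega_{D_1}$ and $\omega_{D_1\cup D_2}$ must be controlled, and where, if needed, I would insert an application of Proposition \ref{samecompletion} to pass between the ambient-$M$ and intrinsic-$(M\setminus Y)$ completions before comparing skeleta. No new Floer-theoretic input beyond Theorem \ref{spectralseqconj} should be required.
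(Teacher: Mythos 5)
Your proposal is correct and follows essentially the same route the paper intends: the corollary is stated as a direct specialization of Theorem \ref{spectralseqconj}, and your dictionary (compactify $M=X\setminus D_1$, take $D_2=\overline{Y}+ND_1$ ample with $X\setminus(D_1\cup D_2)=M\setminus Y$, set $K=\mathbb{Y}^c$, then identify $\mathbb{Y}^c$ with $\underline{M\setminus Y}$ via the skeleton-containment argument of Proposition \ref{Liouvilleflow} and Proposition \ref{samecompletion}) is exactly the translation the paper leaves implicit. Your explicit flagging of the ampleness trick and of the skeleton verification fills in details the paper omits, but introduces no new method.
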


\subsection{Vanishing or Invariance Theorems of Symplectic Cohomology}\label{h-principle} 
In this section, we review some results on vanishing theorems and invariance theorems of symplectic cohomology, which we use when we apply the main theorems. First of all, a non-trivial fact is that $SH^{*}(\C^{n})=0$ for all $n$, [\cite{floer.hofer.wysocki94}, Chapter 3, \cite{oancea04}] by computing Conley-Zehnder indices and taking the direct limit. 

\subsubsection{Symplectic Cohomology under Attaching Weinstein Handles}
As Morse theory let us understand topology of smooth manifolds and construct them by attaching handles of critical points, pseudo-convex function theory and h-principle let us understand symplectic topology of Liouville domains and construct some of them by attaching Weinstein handles, \cite{eliashberg90}, \cite{gromov86}. In  \cite{gromov85}, Gromov developed the pseudo-holomorphic curve theory and showed that a Liouville domain $\underline{M}$ whose boundary $\partial \underline{M}$ is contactomorpic to the standard contact structure on $S^{3}$ is isomorphic to $(\R^{4},\omega_{\text{standard}})$ as Liouville domains. The behavior of symplectic cohomology under Weinstein handle attaching was studied by Cieliebak in \cite{cieliebakhandle}. An important fact is that subcritical handle attaching does not change the symplectic cohomology before and after. By attaching subcritical handles and the h-cobordism, Seidel-Smith showed that Liouville domains of dimension $2n\geq 4$, with sphere boundary with the standard contact structure, should have vanishing symplectic cohomology in \cite{seidelbiased}. Moreover, Seidel-Smith and McLean found exotic Stein manifolds. Casals-Murphy detected flexibility of affine varieties which leads vanishing symplectic cohomology by detecting a zig-zag on  Legendrian front projection \cite{casalsmurphy19}. 

Let us recall the Weinstein handle attachment as follows \cite{weinsteinhandle}.

\begin{itemize}
    \item A $1$-form $\alpha$ on a manifold $E$ of dimension $2n+1$ is called to be contact if $\alpha\wedge (d\alpha)^{n}\neq 0$. A hyperplane field $\xi$ of the tangent bundle $TE$ is called to be a contact structure if it is a kernel of some contact $1$-form (by definition, it is maximally non-integrable).
    \item A submanifold $L$ of a contact manifold $(E,\xi)$ is called to be isotropic if $T_{x}L\subset \xi_{x}$ for all $x\in L$ (i.e., $j^{*}_{L}\alpha=0 \iff j^{*}_{L}d\alpha=0 \iff T_{x}L\subset (\xi_{x},d\alpha|_{\xi_{x}})$ isotropic). A maximual isotropic submanifold is called a Legendrian submanifold. 
\end{itemize}

Let $(E,\xi)$ be a contact manifold and $j:S^{k-1}\to E$ be a isotropic sphere. Denote $(TS^{k-1})^{\perp}$ a sub-bundle of $(\xi|_{L}, d\alpha|_{\xi})$ that is orthogonal to $TS^{k-1}$ with respect to $d\alpha|_{\xi}$. $TS^{k-1}\subset (TS^{k-1})^{\perp}$ (isotropic) and the quotient bundle $(TS^{k-1})^{\perp}/TS^{k-1}\cong \C^{n-k}$ has a structure of the conformal symplectic normal bundle of $S^{k}$ induced from $d\alpha$. The standard chart is $\R^{2n}=T^{*}\R^{k}\times \C^{n-k}$ with the symplectic form $\omega=\sum_{i=0}^{k}dp_{i}\wedge dq_{i}+\sum_{j=0}^{n-k}dx_{j}\wedge dy_{j}$ and with the Liouville vector field $Z_{k}=\sum_{i=1}^{k}(2p_{i}\frac{\partial}{\partial p_{i}}-q_{i}\frac{\partial}{\partial q_{i}})+\frac{1}{2}(\sum_{j=1}^{n-k}x_{j}\frac{\partial}{\partial x_{j}}+y_{j}\frac{\partial}{\partial y_{j}})$ and $Z_{k}$ is a gradient vector field for the Morse function $f_{k}:=\sum(p_{i}^{2}-\frac{1}{2}q_{i}^{2})+\sum(\frac{1}{4}x_{j}^{2}+\frac{1}{4}y_{j}^{2})$ with respect to the standard Euclidean metric on $\R^{n}$. Define a $1$-form $\alpha_{k}:=\iota_{Z_{k}}\omega=\sum_{i}(2p_{i}dp_{i}+q_{i}dq_{i})+\frac{1}{2}\sum_{j}(x_{j}dx_{j}-y_{j}dy_{j})$. Along the flow of $Z_{k}$, the stable(resp. unstable) subspace is $W^{s}_{k}=\{q_{1}=\cdots=q_{k}=0\}\cong \R^{2n-k}$ (resp. $W^{u}_{k}=\{p_{i}=0,x_{j}=y_{j}=0\text{ for all} i,j\}\cong \R^{k}$). Since the form $\alpha_{i}$ pulls back to $0$ on $W^{s}_{k}$ along the flow, a sphere $S^{st}_{k}:=W^{s}_{k}\cap \{f_{k}=1\}$ is an isotropic submanifold of $\{f_{k}=1,\alpha_{k}\}$, call it the ascending sphere of index $k$ (resp. $S^{ut}_{k}:=W^{u}_{k}\cap \{f_{k}=-1\}$ is called the descending sphere). A standard Weinstein $k$ handle in $\R^{2n}$ is a bounded neighborhood of the descending sphere in $\{f_{k=-1}\}$ with a connectic manifold diffeomorphic to $S^{2n-k-1}\times D^{k}$. It is known that \textit{every Weinstein domain $M^{2n}$, $n\geq 3$, can be Weinstein homotoped to a subcritical domain $M_{subcrit}$ with handles attached to the Legendrian (may not be loose) link $\Lambda_{1}\sqcup \cdots \sqcup \Lambda_{k-1}\sqcup \Lambda_{k}\subset \partial M_{subcrit}$ such that $\Lambda_{1}\sqcup \cdots \sqcup \Lambda_{k-1}$ is a loose link and $\Lambda_{k}$ is a loose Legendrian where linking information is symplectically nontrivial} \cite{lazarev20}. 

Cieliebak defined relative symplectic cohomology of a pair $W\subset (M,\omega)$ of a codimension $0$ submanifold $W$ of $(M,\omega)$ with $\omega$-convex boundary $\partial W$, using Hamiltonians that are $C^{2}$-small Morse functions in the interior of $\underline{W}$ and increasing sharply near $\partial \underline{W}$ \cite{cieliebakhandle}. Let $M_{\Supset k}$ be a Liouville domain obtained by attaching a Weinstein $k$-handle to $\partial \underline{M}$. Considering that  the only generator of the complex of relative symplectic cohomology of a pair $M\subset (M_{\Supset k},\omega)$ is a critical point of the $k$-handle and that it disappears under the transfer functor in the direct limit (the Conley-Zehnder index can be increased under the transfer functor), he showed the following invariance property of symplectic cohomology under subcritical handle attachment.

\begin{prop} [Theorem 1.11(1), \cite{cieliebakhandle}]
Let $(\underline{M},d\theta_{\underline{M}})$ be a Liouville domain of dimension $2n$, and $S\subset \partial \underline{M}$ be a contact-isotropic(i.e.,$\theta_{\underline{M}}|_{S}$=0) sphere of dimension $k-1<n-1$. (The symplectic normal bundle of $S$ in $\underline{M}$ is trivial.) Let $\underline{M}_{\Supset k}:=\underline{M}\cup_{S}H_{k}$ be a Liouville domain obtained by attaching a Weinstein $k$-handle $H_{k}$ to $\partial \underline{M}$ near $S$. Then $SH^{*}(M_{\Supset k})\cong SH^{*}(M)$.
\end{prop}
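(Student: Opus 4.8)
The plan is to show that the Viterbo transfer map is an isomorphism by bounding the difference between the two theories with a relative symplectic cohomology group supported on the attached handle, and then proving that this group vanishes. The codimension-$0$ Liouville embedding $\underline{M}\hookrightarrow\underline{M}_{\Supset k}$ with convex boundary induces a transfer map $SH^{*}(M_{\Supset k})\to SH^{*}(M)$, so it suffices to prove this map is an isomorphism.

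First I would fit the transfer map into Cieliebak's long exact sequence for the pair $\underline{M}\subset M_{\Supset k}$,
$$\cdots\to SH^{*}(\underline{M}\subset M_{\Supset k})\to SH^{*}(M_{\Supset k})\to SH^{*}(M)\to SH^{*+1}(\underline{M}\subset M_{\Supset k})\to\cdots,$$
where $SH^{*}(\underline{M}\subset M_{\Supset k})$ is the relative symplectic cohomology reviewed in the preceding subsection, computed from Hamiltonians that are $C^{2}$-small Morse on the interior of $\underline{M}$ and rise sharply across a collar of $\partial\underline{M}$ before becoming linear at infinity. With this exact sequence in hand, the entire statement reduces to the vanishing $SH^{*}(\underline{M}\subset M_{\Supset k})=0$.

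To compute this relative group I would use the explicit handle model. Since $S$ is contact-isotropic of dimension $k-1<n-1$ with trivial symplectic normal bundle, the handle $H_{k}$ is the standard Weinstein $k$-handle in $T^{*}\R^{k}\times\C^{n-k}$, whose unique interior critical point is the index-$k$ critical point of the Morse function $f_{k}$ from the handle construction above. Choosing the defining cofinal family so that the relevant orbits are localized in the handle region, the relative complex has a single chain-level generator, namely this critical point. The key computation, following Cieliebak, is that its Conley--Zehnder index is raised without bound by the continuation maps of the direct system defining $SH^{*}(\underline{M}\subset M_{\Supset k})$; in the grading $\mu=\dim_{\R}M-\mu_{CZ}$ the generator is thereby pushed out of every fixed degree, and the cohomology vanishes in the direct limit. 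The subcriticality hypothesis $k-1<n-1$ is exactly what guarantees this escape, since it controls the linearized return map of the handle orbit and the Morse--Bott families over the belt region.

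The main obstacle is justifying that the single handle generator truly escapes rather than pairing nontrivially with orbits coming from $\underline{M}$ or from the Reeb flow on $\partial\underline{M}_{\Supset k}$. This requires confining the relevant Floer cylinders to a compact neighborhood of the handle via the maximum principle on the radial coordinate, a transversality argument ensuring that after a small perturbation the relative complex is generated only by the handle critical point, and a careful Robbin--Salamon index computation verifying the monotone growth of $\mu_{CZ}$ under the slope-increasing continuation maps. Once these are in place, the long exact sequence forces the transfer map to be an isomorphism, giving $SH^{*}(M_{\Supset k})\cong SH^{*}(M)$.
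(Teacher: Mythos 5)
Your proposal is correct and follows essentially the same route the paper takes (itself a summary of Cieliebak's argument): reduce to the vanishing of the relative symplectic cohomology of the pair $\underline{M}\subset M_{\Supset k}$, observe that its complex is generated only by the critical point of the attached handle, and use the subcriticality $k-1<n-1$ to show this generator's Conley--Zehnder index escapes to infinity under the slope-increasing continuation maps, so it dies in the direct limit. The long exact sequence then forces the transfer map to be an isomorphism, exactly as in the paper's sketch.
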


A direct corollary is that for a subcritical Weinstein manifold $M$, $SH^{*}(M)=0$.

\begin{prop} [Corollary 6.5, \cite{seidelbiased}] Let $\underline{M}$ be a Liouville domain of dimension $2n\geq 4$, such that $\partial \underline{M}$ is contact isomorphic to the standard contact structure on $S^{2n-1}$. Then $SH^{*}(M)=0$.
\end{prop}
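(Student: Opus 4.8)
The plan is to deduce this from the two facts just established: that attaching a subcritical Weinstein handle leaves $SH^{*}$ unchanged (the previous proposition), and that $SH^{*}(\C^{n})=0$. Since $SH^{*}(M)$ is a unital ring, it suffices to show that its unit $\iota([M])$ vanishes, because a zero unit forces the entire ring to vanish. The model filling to compare against is the round ball $B^{2n}\subset\C^{n}$: it is built from a single index-$0$ handle, hence is subcritical, and its completion is symplectomorphic to $(\C^{n},\omega_{\std})$, so $SH^{*}(B^{2n})=SH^{*}(\C^{n})=0$. The whole problem is therefore to relate the given filling $\underline{M}$ of $(S^{2n-1},\xi_{\std})$ to this subcritical model without changing symplectic cohomology.

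First I would put a Weinstein structure on $\underline{M}$ and use its handle decomposition, which begins with a ball and attaches handles of index at most $n$. By the previous proposition every subcritical handle (index $<n$) may be ignored, as each preserves $SH^{*}$; so $SH^{*}(M)$ is determined by the ball together with whatever critical ($n$-)handles occur. The standard-boundary hypothesis is what eliminates these: by the Eliashberg--Floer--McDuff theorem a symplectically aspherical filling of $(S^{2n-1},\xi_{\std})$ is diffeomorphic to $B^{2n}$, so $\underline{M}$ carries no topology that a critical handle could be creating. For $2n\geq 6$ the $h$-cobordism theorem lets the critical handles be cancelled; realizing the cancellation through Weinstein homotopies and $SH^{*}$-neutral subcritical moves identifies $\underline{M}$, up to Liouville deformation, with a subcritical domain, whence $SH^{*}(M)=SH^{*}(B^{2n})=SH^{*}(\C^{n})=0$. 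The case $2n=4$ is covered directly by Gromov's theorem \cite{gromov85}: $\underline{M}$ is then Liouville isomorphic to $(\R^{4},\omega_{\std})$, so $SH^{*}(M)=SH^{*}(\C^{2})=0$.

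The hard part is the middle step: turning the smooth simplification into a genuinely symplectic one. Smoothly the critical handles disappear by $h$-cobordism, but one must perform this cancellation within the class of Weinstein (Liouville) structures, so that each intermediate move is either a Weinstein homotopy or a subcritical handle attachment controlled by the invariance proposition. This is exactly the content of the argument of Seidel--Smith \cite{seidelbiased} together with Cieliebak's handle calculus \cite{cieliebakhandle}; once it is in place, the ring-with-unit packaging makes the conclusion immediate, since a subcritical filling has vanishing unit in $SH^{*}$ and hence $SH^{*}(M)=0$.
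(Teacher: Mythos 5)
The paper itself offers no proof here: the proposition is quoted from [Corollary 6.5, \cite{seidelbiased}], with the surrounding text attributing the argument to Seidel--Smith ``by attaching subcritical handles and the h-cobordism.'' Your ingredient list (Cieliebak's subcritical invariance, $SH^{*}(\C^{n})=0$, Eliashberg--Floer--McDuff, the $h$-cobordism theorem, Gromov in dimension $4$, and the unitality of $SH^{*}$) is the right one, and your treatment of $2n=4$ agrees with what the paper itself asserts about Gromov's theorem. But the decisive step for $2n\geq 6$ --- ``realizing the cancellation through Weinstein homotopies and $SH^{*}$-neutral subcritical moves identifies $\underline{M}$, up to Liouville deformation, with a subcritical domain'' --- is a genuine gap, not a hard step that can be outsourced to the references. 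Cieliebak's theorem says that \emph{attaching} a subcritical handle preserves $SH^{*}$; it provides no mechanism for \emph{cancelling} the critical (index-$n$) handles of $\underline{M}$. Such cancellation is obstructed in the Weinstein category even when it is available smoothly: this is precisely the rigidity phenomenon behind exotic Weinstein structures, and handle cancellation is only known under flexibility hypotheses in the sense of Murphy, which you have not established (and which would beg the question, since flexible domains already have vanishing $SH^{*}$). Worse, if your step were correct it would show that every Weinstein filling of $(S^{2n-1},\xi_{\std})$ is Liouville deformation equivalent to the standard ball --- a well-known open problem strictly stronger than the vanishing of $SH^{*}$, and not what Seidel--Smith prove. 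Note also that your first move, ``put a Weinstein structure on $\underline{M}$,'' is not free: the proposition is stated for Liouville domains, which need not admit compatible Weinstein structures.

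The argument the citation points to runs in the opposite direction. Rather than simplifying $\underline{M}$ from within, one exploits the hypothesis that $\partial\underline{M}$ is the \emph{standard} contact sphere to attach Weinstein handles to the outside of $\underline{M}$ exactly as one would attach them to $\partial B^{2n}$ (this requires no structure on the interior of $\underline{M}$), producing a larger domain $N\supset\underline{M}$ whose smooth type is controlled by Eliashberg--Floer--McDuff plus the $h$-cobordism theorem and for which $SH^{*}(N)=0$ can be established by subcriticality. The conclusion $SH^{*}(M)=0$ then follows from the piece of your write-up that does survive intact: the Viterbo transfer $SH^{*}(N)\to SH^{*}(M)$ is a map of unital rings, so a vanishing source forces the unit, and hence the whole ring, of the target to vanish. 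If you want to salvage your outline, replace the cancellation step by this enlargement-plus-transfer step.
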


Let $\underline{M}$ be a Liouville domain. Let $\Lambda:=\cup_{i=1}^{k} \Lambda_{i}$ be a disjoint union of Legendrian spheres in the boundary $\partial \underline{M}$ and denote $M_{\Supset \Lambda}$ a convex Weinstein domain on which symplectic cohomology is well-defined, obtained by attaching Weinstein-handles to $\underline{M}$ along $\Lambda$. In the case that $\underline{M}$ is a Liouville subdomain of $\underline{M}'$ of dimension $2n$ and a Weinstein cobordism $(\mathfrak{S}:=\overline{\underline{M}'}\setminus \text{int}(\underline{M}), f:\mathfrak{S}\to \R)$ contains only Morse critical points $p_{1},\cdots, p_{k}$ of index $n$, we consider the Legendrian spheres $\Lambda_{j}$'s are the intersection of the stable manifold $L_{j}$ of $p_{j}$ and the boundary $\partial \underline{M}'$ of $\underline{M}'$. Then $M'\cong M_{\Supset \Lambda}$. \par
Let $LHA(\Lambda)$ be the Legendrian homology differential graded algebra(DGA), whose complex is generated by Reeb chords connecting $\Lambda_{j}$'s. Let $LHO(\Lambda)$ be the subalgebra of cyclically compaosable monomials. Let $LH^{H_{0}}_{+}$ be a homology of a mapping cylinder of the reduced Legendrian DGA $LHO^{+}(\Lambda)$ of $LHO(\Lambda)$. Denote $LH^{H_{0}}(\Lambda)$ the full Legendrian homology DGA whose complex consists of $LHO^{+}(\Lambda)$ and the Legendrian spheres $\Lambda$ [Section 4, \cite{BEE12}].

\begin{lemma}[Theorem 5.4, 5.6, \cite{BEE12}]
There are exact sequences,
$$\begin{tikzcd} [cramped, sep=small]
\cdots \to LH^{H_{0}}_{+}(\Lambda)\arrow[r] & SH^{*}_{+}(M_{\Supset \Lambda})\arrow[r] & SH^{*}_{+}(M) \arrow[r] & LH^{H_{o}}_{+}(\Lambda)[1]\to \cdots.
\end{tikzcd}
$$
$$\begin{tikzcd} [cramped, sep=small]
\cdots \to LH^{H_{0}}(\Lambda)\arrow[r] & SH^{*}(M_{\Supset \Lambda})\arrow[r] & SH^{*}(M)\arrow[r] & LH^{H_{o}}(\Lambda)[1]\to \cdots.
\end{tikzcd}
$$
\end{lemma}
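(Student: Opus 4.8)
The plan is to prove both exact sequences simultaneously via a neck-stretching degeneration in symplectic field theory, following the strategy of \cite{BEE12}. First I would realize the surgered domain as $M_{\Supset \Lambda} = \underline{M} \cup_{\Lambda} (\coprod_{j} H^{j})$, where each $H^{j}$ is a Weinstein $n$-handle attached along the Legendrian sphere $\Lambda_{j} \subset \partial \underline{M}$. After completing and choosing a radial admissible Hamiltonian whose slope tends to infinity, the $1$-periodic orbits of positive action split into two geometric types: orbits localized over a copy of $\partial \underline{M}$ pushed slightly inward, and orbits forced to travel through the handle region. The action filtration isolates the positive subcomplex computing $SH^{*}_{+}$, and the whole construction is compatible with the full complex computing $SH^{*}$ by adjoining the constant orbits representing $H^{*}(M_{\Supset\Lambda})$.

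The core geometric input is a neck-stretch along the separating contact hypersurface $\partial \underline{M} \subset M_{\Supset \Lambda}$. As the neck length tends to infinity, SFT compactness degenerates every cylinder contributing to the differential into a holomorphic building with levels in the symplectization $\R \times \partial \underline{M}$, in the completed filling $\widehat{\underline{M}}$, and in the symplectization of the surgered boundary $\partial M_{\Supset \Lambda}$. The key computation, which is precisely the surgery formula of \cite{BEE12}, is that the Reeb dynamics of $\partial M_{\Supset\Lambda}$ near the attaching region are governed by the Reeb chords of $\Lambda$: the new generators created by the handles are in bijection with cyclically composable words of Reeb chords, that is, with generators of the reduced cyclic Legendrian algebra $LHO^{+}(\Lambda)$, and the curves with boundary on the Lagrangian cocores realize the Legendrian differential. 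This identifies the complex generated by handle orbits with the mapping cone computing $LH^{H_{0}}_{+}(\Lambda)$, while the complementary piece recovers $SH^{*}_{+}(M)$.

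With this in hand, I would organize the generators of the complex computing $SH^{*}_{+}(M_{\Supset \Lambda})$ by a two-step filtration recording the number of passages through the handle region (equivalently, the intersection count with the Lagrangian cocores), so that the induced differential is upper-triangular. The associated splitting exhibits $SH^{*}_{+}(M_{\Supset\Lambda})$ as fitting into the cone of a chain map between $SH^{*}_{+}(M)$ and a complex computing $LH^{H_{0}}_{+}(\Lambda)$, assembled from the anchored curves that connect handle orbits to orbits in $\widehat{\underline{M}}$; the long exact sequence of that cone is exactly the first displayed sequence. The second (full) sequence follows by running the same filtration on the non-positive complex, where the index-$n$ handles now contribute the cocore generators of $H^{*}(M_{\Supset\Lambda})$ and the Legendrian spheres $\Lambda$ themselves enter, upgrading $LHO^{+}(\Lambda)$ to the full algebra $LH^{H_{0}}(\Lambda)$; a five-lemma comparison with the tautological sequences $H^{*}(-) \to SH^{*}(-) \to SH^{*}_{+}(-)$ for $M$ and $M_{\Supset\Lambda}$ then patches the two sequences together.

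The hard part will be the analytic foundations of the degeneration: establishing transversality for the moduli of anchored holomorphic buildings with boundary on the cocores, together with the compactness and gluing needed to ensure that no curves are lost or created in the neck-stretch limit. These are the technical heart of \cite{BEE12}, and in the present generality, where $\Lambda$ need not bound an exact filling and the handles are critical, they require either domain-dependent almost complex structures with automatic regularity near the handle or an abstract perturbation scheme. Proving that the resulting counts assemble into the genuine Legendrian DGA differential, rather than a perturbation of it, is the step demanding the most care.
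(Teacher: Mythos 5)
The paper gives no proof of this lemma: it is quoted as Theorems 5.4 and 5.6 of \cite{BEE12}, so there is nothing internal to compare your argument against. Your sketch is a faithful outline of the strategy of that reference (neck-stretching along $\partial\underline{M}$, identification of the new Reeb orbits near the handles with cyclically composable words of Reeb chords of $\Lambda$, the cocore-intersection filtration making the differential upper-triangular, and the resulting mapping cone giving the two long exact sequences), and you correctly flag that the transversality, compactness and gluing foundations for the anchored buildings are the delicate point — these are the technical heart of \cite{BEE12} and its successors, not something the present paper supplies or needs to supply.
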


\begin{lemma}[\cite{ekholm.etnyre.sullivan07}, \cite{murphy12}]
The Legendrian contact homology algebra of a loose Legendrian vanishes.
\end{lemma}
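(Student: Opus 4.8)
The plan is to show that the DGA $(LHA(\Lambda),\partial)$ is acyclic, i.e.\ that $1$ is a boundary, which forces the homology algebra to be the zero ring. The strategy splits into a geometric half (produce a distinguished Reeb chord whose differential equals $1$ up to higher-order terms) and an algebraic half (deduce acyclicity from an action filtration). First I would localize the problem using the definition of looseness: each component of $\Lambda$ admits an open set $U\subset N$ with $(\Lambda\cap U,U)$ contactomorphic to the stabilization model $(\Lambda_{0}\times Z_{\rho},\,C\times V_{\rho})$, where $\Lambda_{0}$ is the zig-zag front arc. Since $LHA(\Lambda)$ is generated by the Reeb chords of $\Lambda$ and $\partial$ counts rigid (index $1$) punctured pseudo-holomorphic disks with one positive puncture, the contribution of a chord contained in the loose chart can be analyzed entirely inside the model. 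The zig-zag front of $\Lambda_{0}$ carries a single short Reeb chord $c$ sitting over the middle cusp pair; in $\Lambda_{0}\times Z_{\rho}$ this chord persists as a chord $c$ of grading $1$.

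Next I would compute $\partial c$. The key geometric input, following Ekholm--Etnyre--Sullivan and Murphy, is that the thin region bounded by the two strands of the zig-zag is a single, transversally cut out, rigid holomorphic disk with positive puncture at $c$ and \emph{no} negative punctures, contributing $\pm 1$ to $\partial c$; every other rigid disk with positive corner at $c$ necessarily has negative corners, hence contributes words of strictly larger action. Thus $\partial c = 1 + v$ with $v$ a sum of monomials of strictly positive action. Looseness is exactly what guarantees that the chart may be taken thin and wide enough (after a Legendrian isotopy shrinking the zig-zag via the rescalings $\psi_{\delta}$) that this boundary disk does not escape the model and no extraneous disk interferes.

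The algebraic step is then the following. Filter $LHA(\Lambda)$ by the action of Reeb chords. Because $\partial$ is action-decreasing, the relation $\partial c = 1 + v$ with $v$ of positive action permits a tame change of generators eliminating $v$, bringing the relation to $\partial c = 1$; equivalently one constructs a contracting homotopy order by order in the filtration. Hence $[1]=0$ in $H_{*}(LHA(\Lambda),\partial)$, and since the homology is a unital algebra with vanishing unit, it is the zero algebra. The same argument applies verbatim to the cyclic and reduced variants $LHO(\Lambda)$ and $LHO^{+}(\Lambda)$ entering the surgery sequence, which is why attaching handles along a loose link leaves $SH^{*}$ unchanged.

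The main obstacle is the disk count in the middle paragraph: rigorously identifying the stabilization bigon as the unique rigid disk through $c$ with empty negative end, together with the transversality and compactness needed to exclude broken or escaping configurations. This is precisely the content of the cited analysis of Ekholm--Etnyre--Sullivan and Murphy; once it is in place, the accompanying algebraic acyclicity lemma is routine given the action filtration.
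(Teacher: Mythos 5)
The paper offers no proof of this lemma — it is quoted with citations to Ekholm--Etnyre--Sullivan and Murphy — and your sketch is exactly the standard argument from those references: locate the stabilization chord $c$ inside a loose chart, identify the thin bigon of the zig-zag as the unique rigid disk with positive puncture at $c$ and no negative punctures, conclude $\partial c = 1 + v$, and deduce that the unital homology algebra vanishes because $[1]=0$. Two points deserve tightening. First, the words appearing in $v$ have total action strictly \emph{less} than $\mathfrak{a}(c)$, not larger — the differential decreases action, since a rigid disk with positive puncture at $c$ and negative punctures $b_1,\dots,b_k$ has energy $\mathfrak{a}(c)-\sum_j\mathfrak{a}(b_j)>0$; what you mean is that they are nonconstant, i.e.\ of positive action. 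Second, the passage from $\partial c = 1+v$ with $v\neq 0$ to acyclicity is not automatic: in the non-completed free DGA the element $1+v$ is not invertible, $v$ need not be exact, and a tame change of generators eliminating $v$ is therefore not available in general. The standard way to close this — which the shrinking isotopy $\psi_{\delta}$ you already invoke is designed for — is to make $c$ the chord of strictly minimal action; then every nonempty word has action at least $\mathfrak{a}(c)$ while every word in $\partial c$ has action strictly less, forcing $v=0$ and $\partial c = \pm 1$ on the nose, after which acyclicity is immediate from the Leibniz rule via $x = \pm\,\partial(cx)$ for any cycle $x$. With those two adjustments your outline coincides with the cited proof; the genuinely hard content (transversality, the monotonicity estimate confining all rigid disks through $c$ to the chart, and the Bott perturbation of the chords of $\Lambda_{0}\times Z_{\rho}$) is, as you say, exactly what the references supply.
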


\begin{prop} [Theorem 3.2, \cite{murphysiegel18}] \label{flexible}
Let $M$ be a flexible Weinstein domain, then, for any closed two-form on $M$, the twisted symplectic cohomology $SH^{*}(M, \Lambda_{2})=0$.
\end{prop}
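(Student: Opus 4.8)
The plan is to reduce the vanishing to two already-known untwisted vanishing statements --- the vanishing of symplectic cohomology of subcritical domains and the vanishing of the contact homology of loose Legendrians --- and then to promote both of these, together with the Bourgeois--Ekholm--Eliashberg surgery exact sequence, to the $\Lambda_{2}$-twisted setting. First I would use the definition of flexibility together with the Cieliebak--Eliashberg handle calculus to present $M$, up to Weinstein homotopy, as $M=(M_{sub})_{\Supset \Lambda}$, where $M_{sub}$ is a subcritical Weinstein domain (all handles of index $<n$) and $\Lambda\subset \partial M_{sub}$ is the loose Legendrian link formed by the attaching spheres of the index-$n$ handles. The reordering of handles so that all subcritical handles precede the critical ones, and the preservation of looseness of $\Lambda$ under this reordering, is precisely the content of the flexible handle calculus, which I would invoke rather than reprove.

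Next I would establish the two twisted inputs. For the subcritical part, the twisted analogue of the invariance proposition [Theorem 1.11(1), \cite{cieliebakhandle}] --- that attaching a subcritical handle does not change twisted symplectic cohomology --- lets me iterate down to the $0$-handles $\bigsqcup \C^{n}$; since $H^{2}(\C^{n})=0$ every closed two-form there is exact, the twisting is trivial, and $SH^{*}(\C^{n};\Lambda_{2})=SH^{*}(\C^{n})=0$, so $SH^{*}(M_{sub};\Lambda_{2})=0$. (Alternatively, Cieliebak's splitting $M_{sub}\simeq W'\times \C$ together with a twisted analogue of Oancea's K\"unneth formula and $SH^{*}(\C)=0$ gives the same conclusion, using that twisted $SH$ depends only on the cohomology class of the two-form and that $H^{2}(W'\times \C)=H^{2}(W')$.) For the loose Legendrian, the vanishing of its untwisted Legendrian contact homology DGA is the cited Lemma [\cite{ekholm.etnyre.sullivan07}, \cite{murphy12}]; I would promote it to $LH^{H_{0}}(\Lambda;\Lambda_{2})=0$ by observing that Murphy's triviality is produced inside a single loose chart whose size --- and hence the $\beta$-area of the disks realizing the algebraic triviality --- can be taken arbitrarily small, so the triviality survives the Novikov/area weighting.

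With these two vanishings in hand, I would feed them into the $\Lambda_{2}$-twisted surgery exact sequence
$$\cdots \to LH^{H_{0}}(\Lambda;\Lambda_{2})\to SH^{*}(M;\Lambda_{2})\to SH^{*}(M_{sub};\Lambda_{2})\to LH^{H_{0}}(\Lambda;\Lambda_{2})[1]\to \cdots,$$
the twisted refinement of the Bourgeois--Ekholm--Eliashberg sequence [Theorem 5.6, \cite{BEE12}]. Since both the left and the right neighbour of $SH^{*}(M;\Lambda_{2})$ vanish, exactness forces $SH^{*}(M;\Lambda_{2})=0$, which is the claim.

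The main obstacle is the twisted refinement itself: the surgery exact sequence and the loose-Legendrian vanishing are stated and proved with untwisted coefficients, so I must check that the twisting --- weighting each counted (perturbed) holomorphic curve by its $\beta$-area --- is compatible with every map in the argument. This is plausible because $\beta$-area is additive under SFT-type breaking and gluing, so the chain homotopies and the moduli-space identifications underlying \cite{BEE12} should carry over verbatim over the Novikov ring; the genuinely delicate point is ensuring that the stabilization producing the unit as a boundary in the loose-Legendrian DGA remains valid with the area weighting, which I would handle by the small-loose-chart argument above. Everything else --- the handle reordering and the subcritical invariance --- is formal once its untwisted version is granted.
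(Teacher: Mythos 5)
The paper gives no proof of this proposition---it is quoted verbatim as Theorem 3.2 of Murphy--Siegel---but the surrounding subsection assembles exactly the ingredients you use (the Bourgeois--Ekholm--Eliashberg surgery exact sequences, the vanishing of the Legendrian contact homology of a loose Legendrian, and Cieliebak's subcritical handle invariance), and your decomposition $M=(M_{sub})_{\Supset\Lambda}$ followed by the twisted surgery triangle is essentially the proof given in the cited source. One small remark: you do not need the loose chart (or the $\beta$-area of the destabilizing disk) to be small for the twisted Legendrian DGA to stay acyclic, since the relation $\partial x=t^{a}\cdot 1+\cdots$ has Novikov weight $t^{a}$, a unit in the Novikov field for any $a$; the genuinely nontrivial input, as you correctly flag, is the twisted form of the surgery exact sequence, which is precisely what Murphy--Siegel establish.
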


The following example is from Casals-Murphy's recipe on the Legendrian front projection of Weinstein Lefschetz bifibration of affine varieties in \cite{casalsmurphy19}. Define
$$L_{a,b}^{n}:=\{(x,y,z_{1},\cdots,z_{n-1}:x^{a}y^{b}+\sum_{i=1}^{n-1}z_{i}^{2}=1\}\subset \C^{n+1},$$
where $a,b\in \N$ are two coprime integers with $1\leq a<b$. By [Theorem 1.1., \cite{casalsmurphy19}], $L_{1,b}^{n}$ are flexible for all $b\geq 2$, therefore, $SH^{*}(L_{1,b}^{n})=0$. In particular, $L_{1,b}^{3}$ are diffeomorphic to $S^{3}\times \R^{3}$, so we can see $\ell(L_{1,b}^{3})=0$ as well. Inspired by it, we expect that finding flexible Weinstein cobordisms by Casals-Murphy's algorithm on the Lefschetz fibration of an affine variety $M$ would lead to compute $\ell(M)$ as well as to find uniruled subvarieties of $M$.

\subsubsection{Vanishing Symplectic Cohomology from $\C^{*}$-action} \label{ritter}
When a convex weak-monotone symplectic manifold has Hamiltonian circle action which generates Hamiltonian/Reeb orbits near at the infinity, the circle action make the Conley-Zehnder index of Hamiltonian orbits goes to infinity as the slope of Hamiltonian goes to infinity. As a direct limit of Hamiltonian Floer cohomology under the direct system by the increasing slope of Hamiltonian, symplectic cohomology vanishes. Let us review on Ritter's vanishing result here.

\begin{prop}
\begin{enumerate}
    \item Let $M$ be the total space of the line bundle $\mathcal{O}_{\P^{m}}(-n)$ over $\P^{m}$. Then,
\begin{itemize}
    \item for $n>2m$, $SH^{*}(M)=0$,
    \item for $2+m\leq n \leq 2m$, the weak monotonicity condition does not hold.
    \item for $n=1+m$, $SH^{*}(M)=0$,
    \item for $1+\frac{m}{2}\leq n<1+m$, $SH^{*}(M)\neq 0$ has rank a multiple of $1+m-n$,
    \item for $1\leq n<1+\frac{m}{2}$, $SH^{*}(M)=\Lambda[\omega_{Q}]/(\omega^{1+m-n}_{Q}-(-n)^{n}t)$ [Theorem 5, \cite{ritter14}].
\end{itemize}
    \item If $c_{1}(TM)(\pi_{2}(M))=0$, then $SH^{*}(M)=0$ [Theorem 6, \cite{ritter14}].
    \item Let $M$ be a negative line bundle over $B$ with some conditions we refer to \cite{ritter14}. Define the minimal Chern number $c_{min}$ of a manifold $M$ to be the number satisfying $c_{1}(TM)(\pi_{2}(M))=c_{min}\Z$. If the minimal Chern number of $M$ is greater or equal to the rank of $H^{*}(B)$, then $SH^{*}(M)=0$ [Corollary 7, \cite{ritter14}].
    \item $SH^{*}(M)=0$ if and only if $\pi^{*}_{M}c_{1}(L)$ is nilpoitent in the quantum cohomology $QH^{*}(M)$ of $M$. In particular, the quantum cup product reduces to the ordinary cup product, then $SH^{*}(M)=0$ [Corollary 2, \cite{ritter14}].
\item Let $E\to B$ be any complex vector bundle, $L\to B$ a negative line bundle and $M_{k}$ be the total space of $E\otimes L^{\otimes k}\to B$. Then for $k\gg 0$, $SH^{*}(M_{k})=0$ [Theorem 10, \cite{ritter14}].    
\end{enumerate}
\end{prop}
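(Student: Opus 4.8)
The plan is to treat all six items uniformly through Ritter's extension of the Seidel representation to the symplectic cohomology of negative line bundles, the detailed analysis of which is carried out in \cite{ritter14}; what follows is the shape of that argument. The common geometric input is that if $M$ is the total space of a negative line bundle $L\to B$, then the fibrewise rotation is a Hamiltonian $S^{1}$-action whose moment map is, up to normalization, the radial coordinate, so its generating Hamiltonian is radial at infinity and the orbits at the cylindrical end are exactly the Reeb orbits. This is the situation described in subsection \ref{ritter}: the weak-monotonicity condition recorded in the footnote there guarantees that bubbling is controlled and $SH^{*}(M)$ is defined, and the $S^{1}$-action induces a distinguished element of symplectic cohomology via the Seidel representation.

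The heart of the argument is item (4), from which the others follow by computation. First I would establish that the acceleration map $H^{*}(M)=QH^{*}(M)\to SH^{*}(M)$ is a unital ring homomorphism and that, along the direct system of increasing slopes, the continuation maps are realized as quantum multiplication by the Seidel element $\psi$ associated to the fibrewise rotation. Because quantum multiplication by $\psi$ raises the Conley--Zehnder grading by the Maslov index of the orbit, passing to the direct limit exhibits $SH^{*}(M)$ as the localization $QH^{*}(M)[\psi^{-1}]$. Consequently $SH^{*}(M)=0$ if and only if $\psi$ acts nilpotently; the second step is the Gromov--Witten computation identifying the leading term of $\psi$ with $\pi^{*}c_{1}(L)$, which yields the stated criterion that $SH^{*}(M)=0$ exactly when $\pi^{*}c_{1}(L)$ is nilpotent in $QH^{*}(M)$, and in particular whenever the quantum product degenerates to the cup product.

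With item (4) in hand, the remaining items are numerical consequences. For item (1) one computes $QH^{*}$ of $\mathcal{O}_{\P^{m}}(-n)$ as a deformation of $H^{*}(\P^{m})\cong\Lambda[\omega_{Q}]/(\omega_{Q}^{m+1})$, tracks the class $\pi^{*}c_{1}(L)$, and reads off the thresholds: nilpotency, hence vanishing, for $n>2m$ and for $n=1+m$; genuinely non-nilpotent behaviour producing the rank statement for $1+\tfrac{m}{2}\le n<1+m$; and the explicit presentation $\Lambda[\omega_{Q}]/(\omega_{Q}^{1+m-n}-(-n)^{n}t)$ for $1\le n<1+\tfrac{m}{2}$. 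The range $2+m\le n\le 2m$ is excluded precisely because the weak-monotonicity condition in the footnote of subsection \ref{ritter} fails, so $SH^{*}$ is not even defined by these methods. Item (2) follows since $c_{1}(TM)(\pi_{2}(M))=0$ forces the grading shift to push every class off to infinity under $\psi$-multiplication, so the limit is zero; item (3) is a degree count showing that a minimal Chern number at least $\operatorname{rank}H^{*}(B)$ forces $\pi^{*}c_{1}(L)$ to be nilpotent; and item (5) follows because tensoring with $L^{\otimes k}$ makes the bundle sufficiently negative that the threshold of item (1)/(4) is met for $k\gg 0$.

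The main obstacle is the second step of the heart argument, namely the identification of the Seidel element together with the localization isomorphism $SH^{*}(M)\cong QH^{*}(M)[\psi^{-1}]$. This requires (i) a maximum-principle and energy argument valid under only weak monotonicity, in order to define $SH^{*}$ and compactify moduli of Floer cylinders without uncontrolled sphere bubbling---this is exactly what obstructs the range $2+m\le n\le 2m$---and (ii) a delicate enumerative computation identifying the continuation and Seidel maps with quantum multiplication whose leading coefficient is $\pi^{*}c_{1}(L)$. Establishing (ii) is the genuinely hard input and is the content of Ritter's Gromov--Witten computations in \cite{ritter14}; once the localization picture is in place, everything else is formal algebra and cohomology bookkeeping.
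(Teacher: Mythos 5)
Your proposal is correct and follows essentially the same route as the paper, which states this proposition purely as a survey of Ritter's results in \cite{ritter14} with no proof beyond the citation and the one-paragraph heuristic preceding it (the Hamiltonian circle action pushes Conley--Zehnder indices to infinity along the direct system). Your sketch --- continuation maps realized as quantum multiplication by the Seidel element of the fibrewise rotation, the direct limit becoming the localization $QH^{*}(M)[\psi^{-1}]$, the identification of the leading term of $\psi$ with $\pi^{*}c_{1}(L)$, and the remaining items as numerical consequences --- is an accurate outline of Ritter's actual argument and is consistent with everything the paper says.
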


\begin{example}[Example p.7, \cite{ritter14}]
Let $M$ be the total space of the canonical bundle over Fano variety, then $SH^{*}(M)=0$.
\end{example}

\begin{prop}[Theorem p.3, \cite{ritter10}] Let $M$ be an almost locally Euclidean space, i.e. a minimal resolution of ADE singularities on $\C^{2}$, which is the quotient $\C^{2}/\Gamma$ of $\C^{2}$ by a finite subgroup $\Gamma\in SU(2)$ or plumbings of cotangent bundles $T^{*}\P^{1}$ according to ADE Dynkin diagrams(It is known to be a non-compact simply-connected hyperk\"ahler $4$-manifold). Then for generic $\beta\in H^{2}(M)$, $SH^{*}(M,d\theta;\underline{\Lambda}_{\tau\beta})=0$.
\end{prop}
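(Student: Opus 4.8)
The plan is to show that for generic $\beta$ the twist makes the connecting map in the long exact sequence of the excerpt an isomorphism in every degree, which by a purely homological pinch forces the vanishing. Concretely, in
$$\cdots \to SH^{*-1}(M;\underline{\Lambda}_{\tau\beta}) \xrightarrow{a} SH^{*-1}_{+}(M;\underline{\Lambda}_{\tau\beta}) \xrightarrow{\delta} H^{*}(M;\underline{\Lambda}_{\tau\beta}) \xrightarrow{\iota} SH^{*}(M;\underline{\Lambda}_{\tau\beta}) \to \cdots,$$
if $\delta$ is an isomorphism at each degree then surjectivity gives $\iota=0$, injectivity (one degree up) gives $a=0$ out of $SH^{*}$, and exactness at $SH^{*}$ then yields $SH^{*}=\operatorname{image}(\iota)=\ker(a)$, i.e. $SH^{*}=0$. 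Note first that twisting leaves the classical factor unchanged, $H^{*}(M;\underline{\Lambda}_{\tau\beta})\cong H^{*}(M)\otimes\Lambda$, since this summand is $SH^{*}_{(-\epsilon,\epsilon]}$ and on that action window the contributing Floer cylinders are Morse flow lines of zero $\omega$-area, so the twist factor $t^{-\int u^{*}\beta}$ is $1$. For the ALE space one has $H^{0}(M)=\Lambda$, $H^{2}(M)\cong\Lambda^{r}$ with $r$ the number of exceptional $(-2)$-curves $\Sigma_{1},\dots,\Sigma_{r}$ (a basis of $H_{2}(M)$), and $H^{k}(M)=0$ otherwise.

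The first step is to describe $SH^{*}_{+}$. The contact boundary is a spherical space form $S^{3}/\Gamma$, the Reeb flow is the quotient of the Hopf flow, hence periodic, and with respect to the exact Kähler form $\omega=d\theta$ used to compute $SH^{*}$ the exceptional curves $\Sigma_{i}$ are Lagrangian. The closed Reeb orbits therefore occur in clean Morse--Bott families over $S^{3}/\Gamma$ and its Seifert base, and after a small Morse--Bott perturbation the generators of $SH^{*}_{+}$ are organized by period together with a perfect Morse function on each family, exactly as in the Morse--Bott computations reviewed above (the $A_{1}$ case is the example $C_{(1,1)}=T^{*}\P^{1}$ with $\ell=2$) via the correspondence of Lemma \ref{correspondence}. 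I would record the resulting description of $SH^{*}_{+}$ and of the \emph{untwisted} $\delta$, whose failure to be surjective is precisely what keeps the ordinary $SH^{*}(M)$ nonzero.

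The heart of the matter is the twist. Weighting each connecting cylinder $u$ by $t^{-\int u^{*}\beta}$ is equivalent, in the deformation framework of \cite{ritter10}, to replacing $\omega$ by the non-exact form $\omega+s\beta$; for generic $\beta$ the pairings $\langle\beta,[\Sigma_{i}]\rangle$ are all nonzero, so the hitherto Lagrangian spheres $\Sigma_{i}$ acquire nonzero area and become genuinely symplectic. The cylinders realizing $\delta$ sweep homology classes detected by the $[\Sigma_{i}]$, so over $\Lambda$ the matrix of $\delta$ has entries that are monomials $t^{\langle\beta,A\rangle}$ with controlled valuations, and I would prove that for generic $\beta$ this matrix is invertible over $\Lambda$ because its minimal-valuation terms do not cancel. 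The hyperkähler structure supplies the mechanism: the periodic Reeb flow is generated by a Hamiltonian $S^{1}$-action on $M$, and the associated Seidel-type automorphism both pins down the swept classes as (multiples of) the exceptional classes and guarantees that the integer counts entering the matrix are nonzero.

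The main obstacle will be this last computation, namely showing the twisted $\delta$ is an isomorphism rather than merely that the relevant areas are nonzero. Two points need care. First, transversality and coherent orientations for the Morse--Bott moduli of connecting cylinders must be arranged so that each matrix entry is a nonzero integer; the $S^{1}$-symmetry identifies these counts but also demands equivariant bookkeeping. Second, genericity of $\beta$ must be invoked to exclude accidental cancellations between Floer trajectories of equal $\beta$-area: this is exactly why the conclusion is asserted only for generic $\beta$, and it is the step at which the argument would collapse for resonant classes, in particular for $\beta=0$, where $SH^{*}(M)\neq 0$.
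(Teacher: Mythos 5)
Your homological reduction is correct but it is an exact reformulation rather than a simplification: from the long exact sequence, $SH^{*}(M;\underline{\Lambda}_{\tau\beta})=0$ holds \emph{if and only if} the twisted connecting map $\delta$ is an isomorphism in every degree, so ``prove $\delta$ is an isomorphism'' carries the entire content of the theorem. All of the difficulty is concentrated in the step you defer, and for that step you offer only a heuristic. Two concrete gaps: first, at chain level $SH^{*}_{+}$ has infinitely many Morse--Bott families of Reeb orbits (one for each multiple of the basic orbit on $S^{3}/\Gamma$), so before $\delta$ can be ``a matrix with monomial entries $t^{\langle\beta,A\rangle}$'' you must show that the twisted internal differential of the positive-action complex already collapses $SH^{*}_{+}(M;\underline{\Lambda}_{\tau\beta})$ to a finite-rank module matching $H^{*}(M)\otimes\Lambda$ degree by degree; that collapse is itself the generic-$\beta$ phenomenon and is nowhere argued. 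Second, genericity needs to be quantified: one must avoid all the hyperplanes $\langle\beta,A\rangle=0$ for $A$ ranging over the full root lattice of $(-2)$-sphere classes, not merely over the simple classes $[\Sigma_{i}]$, and the claimed non-cancellation of minimal-valuation terms in the matrix of $\delta$ is asserted, not proved.

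For comparison, the paper does not prove this proposition at all --- it is quoted from \cite{ritter10} --- and Ritter's argument is organized along a different axis: he establishes an invariance of Novikov-twisted symplectic cohomology under deformations of the symplectic form whose cohomology class moves in the direction of the twisting class, and then exploits the hyperk\"ahler family of K\"ahler forms on the ALE space (hyperk\"ahler rotation) to realize such a deformation, reducing the vanishing to a model computation; for the basic block $T^{*}S^{2}$ this is the twisted Viterbo isomorphism together with the vanishing of the Novikov homology of the free loop space $\mathcal{L}S^{2}$ for a nontrivially transgressed class. Your instincts about which geometric structures matter (the periodic Reeb flow at infinity, the hyperk\"ahler family, the exceptional classes acquiring nonzero Novikov weight) are sound, and your sanity check that the argument must fail at $\beta=0$ is the right one, but as written the proposal is a plan whose decisive step is acknowledged rather than executed.
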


\subsubsection{Invariance from Convex Symplectic Lefschetz fibration}
Understanding the convex symplectic Lefschetz fibration structure on Kaliman modification leads the fact that symplectic cohomology is invariant under Kaliman modification. The theorem of invariance will be used to prove theorem \ref{construction1} below.

\begin{prop}[Theorem 2.31, \cite{mclean09}] \label{marklef}
Let $M$ be an smooth affine variety of dim$_{\C}M\geq 3$ with compactification $X$ with ample divisor $D$. Let $Z$ be an irreducible divisor in $X$ and $p\in Z\cap M$ a smooth point. Assume $c_{1}\big(\mathcal{B}l_{p}M\setminus \widetilde{(Z\cap M)}\big)=c_{1}(M\setminus Z)=0$. Then $SH^{*}\big(\mathcal{B}l_{p}M\setminus \widetilde{(Z\cap M)}\big)\cong SH^{*}(M\setminus Z)$.
\end{prop}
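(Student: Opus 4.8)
The plan is to deduce the isomorphism directly from the Lefschetz-fibration invariance Lemma \ref{lefSHisom}, using that a Kaliman modification realizes $M\setminus Z$ as a subfibration of $\mathcal{B}l_p M\setminus\widetilde{(Z\cap M)}$ into which no singular behaviour leaks. First I would place myself in the setting of [Lemma 8.7, \cite{mclean09}]: writing $M^{\vee}:=\mathcal{B}l_p M\setminus\widetilde{(Z\cap M)}$ and $M^{\sqcup}:=M\setminus Z$, and using that $D$ is ample (so that $D\cup Z$ supports an ample divisor and the required holomorphic sections exist), that lemma produces algebraic Lefschetz fibrations $p^{\vee}:M^{\vee}\to\C$ and $p^{\sqcup}:M^{\sqcup}\to\C$ for which $p^{\sqcup}$ is a subfibration of $p^{\vee}$ over a common base and, decisively, all critical points of $p^{\vee}$ already lie in $M^{\sqcup}$. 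Applying Lemma \ref{sympl lefs} to each then equips both with convex symplectic Lefschetz structures computing their symplectic cohomology through the Lefschetz-admissible model.

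It then remains to verify the two hypotheses of Lemma \ref{lefSHisom} for the inclusion $M^{\sqcup}\hookrightarrow M^{\vee}$. For hypothesis (1), since every critical point of $p^{\vee}$ sits in $M^{\sqcup}$, every vanishing cycle and hence every parallel-transport monodromy of $p^{\vee}$ is supported in the interior of the subfibration; the extra locus $M^{\vee}\setminus M^{\sqcup}$, which is the part of the exceptional divisor $E$ of the blow-up surviving after deleting $\widetilde{(Z\cap M)}$, carries no critical fibres and only trivial monodromy. For hypothesis (2), I would show that any $J$-holomorphic curve $u$ in the larger fibre $F^{\vee}$ with $\partial u\subset F^{\sqcup}$ is confined to $F^{\sqcup}$: the complementary region $F^{\vee}\setminus F^{\sqcup}$ is the trace of $E$ in the fibre, and with respect to the convex structure $\theta'$ produced by Lemma \ref{sympl lefs} the relevant plurisubharmonic function increases there, so a maximum principle (equivalently, positivity of intersection of $u$ with the deleted divisor) prevents $u$ from entering it. Granting both, Lemma \ref{lefSHisom} yields $SH^{*}(M^{\vee})\cong SH^{*}(M^{\sqcup})$, which is the claimed isomorphism; the hypothesis $c_{1}(M^{\vee})=c_{1}(M\setminus Z)=0$ enters only to pin down a $\Z$-grading, so that the isomorphism is one of graded groups.

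The hard part will be hypothesis (2), the curve-confinement condition. Unlike the monodromy condition, which is an immediate consequence of the placement of the critical points, confinement requires a genuine local study of the complex and symplectic geometry of the fibres near the blown-up point $p$ and the deleted proper transform $\widetilde{(Z\cap M)}$, together with a maximum-principle argument adapted to $\theta'$; this is precisely the geometric input that distinguishes a \emph{true} subfibration in McLean's sense from a mere fibred inclusion, and it is where the hypotheses $\dim_{\C}M\geq 3$ and the smoothness of the blow-up centre $p\in Z$ are used.
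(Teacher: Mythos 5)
Your proposal is correct and follows exactly the route the paper intends: it quotes this statement from McLean and proves it (there and implicitly here) by combining Lemma 8.7 of \cite{mclean09} (the Kaliman modification $M^{\vee}$ carries an algebraic Lefschetz fibration with $M\setminus Z$ as a subfibration containing all critical points), Lemma \ref{sympl lefs} to upgrade these to convex symplectic Lefschetz fibrations, and the subfibration invariance Lemma \ref{lefSHisom}, with the $c_1=0$ hypothesis used only for the $\Z$-grading. You correctly identify the curve-confinement hypothesis of Lemma \ref{lefSHisom} as the genuinely geometric step, and your maximum-principle sketch for it is the argument McLean actually runs.
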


\subsubsection{Vanishing Symplectic Cohomology from Stably-Displaceability}

The same idea of vanishing result on relative symplectic cohomology for stably-displaceable compact subsets of a closed symplectic manifold in [Theorem 5.12, \cite{mclean18}, \cite{varolgunes19}] works for compact subsets of a completed Liouville domain. 
A subset $K$ of a symplectic manifold $(M,\omega)$ is called to be Hamiltonian displaceable if there is a Hamiltonain symplectomorphism $\phi$ satisfying $\phi(K)\cap K=\emptyset$. A subset $K\subset M$ is called to be stably displaceable if $K\times S^{1}\subset (M\times T^{*}S^{1},\omega+d\sigma \wedge d\tau)$ is Hamiltonian displaceable, where $(\sigma,\tau)\in \R\times S^{1}\cong T^{*}S^{1}$.

\begin{prop} [\cite{mclean18}]\label{invariance3}
If a compact subset $\mathbb{Y}\subset M$ is stably-displaceable by Hamiltonians on $M\times T^{*}S^{1}$, then $SH^{*}(\mathbb{Y}\subset M)=0$.
\end{prop}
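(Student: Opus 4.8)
The plan is to pass to the stabilization, prove vanishing there by a displacement-energy estimate, and descend by a K\"unneth comparison. Set $(\tilde{M},\tilde{\omega}):=(M\times T^{*}S^{1},\,\omega+d\sigma\wedge d\tau)$, which is again the completion of a Liouville domain (namely $\underline{M}$ times the unit codisc bundle of $S^{1}$), and let $\tilde{\mathbb{Y}}:=\mathbb{Y}\times S^{1}$ with $S^{1}$ the zero-section. By hypothesis $\tilde{\mathbb{Y}}$ is Hamiltonian displaceable in $\tilde{M}$: there is a compactly supported $G_{t}$ with time-one flow $\phi_{G}$ satisfying $\phi_{G}(\tilde{\mathbb{Y}})\cap\tilde{\mathbb{Y}}=\emptyset$ and finite displacement energy $e:=\int_{0}^{1}\big(\sup_{\tilde{M}}G_{t}-\inf_{\tilde{M}}G_{t}\big)dt$. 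Thus stable displaceability has become honest displaceability on $\tilde{M}$, and $SH^{*}(\tilde{\mathbb{Y}}\subset\tilde{M})$ is defined by the same machinery.

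I would first prove $SH^{*}(\tilde{\mathbb{Y}}\subset\tilde{M})=0$, following \cite{mclean18} and \cite{varolgunes19}. Relative symplectic cohomology is a unital ring over the Novikov ring that is complete with respect to the action (equivalently $t$-adic) filtration, so it is enough to show its unit is infinitely divisible by the Novikov parameter $t$. For admissible Hamiltonians $H_{c_{1}}\le H_{c_{2}}$ in a cofinal family converging to $\overline{H}_{\tilde{\mathbb{Y}}}$, I would insert the displacing isotopy $G$ into the continuation data and run the standard displacement-energy estimate for Floer cohomology, adapted to the relative setting of \cite{varolgunes19}: the resulting continuation map is chain homotopic to one whose image lies in the subcomplex with action lowered by at least $e$. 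As the slopes $c\to\infty$ the surviving $1$-periodic orbits localize near $\tilde{\mathbb{Y}}$ and the shift iterates along the telescope, so the unit becomes divisible by $t^{N}$ for every $N$ and hence vanishes in the completion; being unital, the whole group vanishes.

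To descend, I would use a K\"unneth comparison in the spirit of Oancea \cite{oancea06}, adapted to the relative theory: choosing the cofinal Hamiltonians on $\tilde{M}$ as sums of $(\mathbb{Y}\subset M)$-admissible Hamiltonians on $M$ and fixed ones on $T^{*}S^{1}$, the product $1$-periodic orbits generate the product complex and yield $SH^{*}(\tilde{\mathbb{Y}}\subset\tilde{M})\cong SH^{*}(\mathbb{Y}\subset M)\otimes SH^{*}(S^{1}\subset T^{*}S^{1})$. Since the zero-section is the skeleton of $T^{*}S^{1}$, Proposition \ref{Liouvilleflow} identifies the second factor with $SH^{*}(T^{*}S^{1})\neq 0$; working over the Novikov field, $A\otimes B=0$ with $B\neq 0$ forces $A=0$, so $SH^{*}(\mathbb{Y}\subset M)=0$.

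The geometric mechanism, that continuation through a displacing isotopy costs action at least $e$, is standard; the real work is bookkeeping. The main obstacle I expect is \emph{uniformity}: one must verify that the action shift by $e$ is uniform across the cofinal family and survives the nested limits $\varinjlim_{a'}\varprojlim_{b'}\varinjlim_{c}$ and the telescope of \cite{varolgunes19}, and that the K\"unneth comparison holds at chain level, including the orbits coming from the $T^{*}S^{1}$-factor. A subsidiary point is the confinement of Floer trajectories in the noncompact completed product $\tilde{M}$; here the exactness and contact-type boundary of the Liouville completion supply the maximum principle and, as noted in the relative-cohomology subsection above, remove the need to rearrange index-bounded contact cylinders that complicate the closed-manifold arguments of \cite{mclean18}.
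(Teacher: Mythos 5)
Your overall architecture coincides with the paper's: stabilize to $M\times T^{*}S^{1}$, prove vanishing of $SH^{*}(\mathbb{Y}\times S^{1}\subset M\times T^{*}S^{1})$, and descend by a K\"unneth argument. But the mechanism you use for the vanishing step is genuinely different. The paper's proof is qualitative and avoids all energy estimates: it inserts the displacing Hamiltonian $\mathcal{K}_{t}$ into the acceleration data, observes that the composed flow $\phi_{\mathcal{K}_{t}}\circ\phi_{\mathcal{H}_{t}}$ has \emph{no} one-periodic orbits at all (so the relevant complexes are literally zero), and then transfers this vanishing back to the original acceleration data via two general invariance lemmas from \cite{varolgunes19} (invariance under adding a fixed compactly supported Hamiltonian to the acceleration data after tensoring with the Novikov field, and invariance under relabeling by a Hamiltonian diffeomorphism). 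Your route instead runs the quantitative displacement-energy estimate in the style of Chekanov/Usher, adapted to the relative setting; this is closer to the torsion-exponent arguments of Tonkonog--Varolgunes than to the proof reproduced here. Both work, and yours has the advantage of producing a quantitative bound, at the cost of the uniformity bookkeeping you correctly identify. Your K\"unneth step is also slightly stronger than what the paper uses: the paper only invokes the injection $SH^{*}(\mathbb{Y}\subset M)\hookrightarrow SH^{*}(\mathbb{Y}\times S^{1}\subset M\times T^{*}S^{1})$, whereas you assert a full tensor decomposition and must additionally justify $SH^{*}(S^{1}\subset T^{*}S^{1})\neq 0$ (which does hold, via Proposition \ref{Liouvilleflow} and non-displaceability of the zero-section, but is an extra input).

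One point in your vanishing step is stated incorrectly and should be repaired: you claim it suffices to show the unit is \emph{infinitely divisible} by $t$ and that this forces it to vanish in the completion. Divisibility does not give vanishing: even in a complete, separated module, $1=t^{N}x_{N}$ for all $N$ with varying $x_{N}$ does not imply $1=0$, and homology of a completed complex need not itself be separated. The correct algebraic output of the displacement-energy estimate is that the unit is $t$-\emph{torsion}, i.e. $t^{e}\cdot 1=0$ in $SH^{*}(\mathbb{Y}\times S^{1}\subset M\times T^{*}S^{1};\Lambda_{\geq 0})$ where $e$ is the displacement energy; since $t$ is invertible over the Novikov field, this kills the unit and hence the whole unital ring. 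With that substitution (and the uniformity checks you already flag), your argument goes through.
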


\begin{proof} [Proof of Proposition \ref{invariance3}] 
Let $\mathcal{K}_{t},c$ be a Hamiltonian on $M\times T^{*}S^{1}$ which displaces a small neighborhood of $V$ with height $c$. Let $\mathcal{H}_{t},c$ be an admissible time-dependent Hamiltonian with height $c$. Denote $\phi_{\mathcal{K}_{t}}$ (resp. $\phi_{\mathcal{H}_{t}}$) the one-parameter family of the Hamiltonian vector field $X_{\mathcal{K}_{t}}$ (resp. $X_{\mathcal{H}_{t}}$) of $\mathcal{K}_{t}$ (resp. $\mathcal{H}_{t}$). We need the following lemmas that we provide proofs below.

\begin{lemma}[Proposition 3.3.2, \cite{varolgunes19}] \label{layers of Hamiltonian}
 There exist an isomorphism of relative symplectic cohomologies after tensoring with the Novikov field,
$$H^{*}\Big(\varinjlim_{\substack{a'\searrow -\infty }}\circ \varprojlim_{\substack{b'\nearrow \infty}} \circ \text{tel}_{c}[\mathcal{F}_{a', b'}CF^{\bullet}(\mathcal{K}_{c}+\mathcal{H}_{t}\circ \phi_{\mathcal{K}_{t}, c})]\Big)\cong H^{*}\Big(\varinjlim_{\substack{a'\searrow -\infty }}\circ\varprojlim_{\substack{b'\nearrow \infty}} \circ \text{tel}_{c}[\mathcal{F}_{a', b'}CF^{\bullet}(\mathcal{H}_{t}\circ \phi_{\mathcal{K}_{t}, c})])\Big).$$
\end{lemma}

\begin{lemma} [Proposition 3.3.3, \cite{varolgunes19}] \label{invariance4}
For any time-dependent Hamiltonian diffeomorphism $\phi_{\mathcal{K}_{t}}$ on $M$,
$$H^{*}\Big(\varinjlim_{\substack{a'\searrow -\infty }}\circ \varprojlim_{\substack{b'\nearrow \infty}} \circ \text{tel}_{c}[\mathcal{F}_{a', b'}CF^{\bullet}(\mathcal{H}_{t}\circ \phi_{\mathcal{K}_{t}, c})]\Big)\cong H^{*}\Big(\varinjlim_{\substack{a'\searrow -\infty }}\circ \varprojlim_{\substack{b'\nearrow \infty}} \circ \text{tel}_{c}[\mathcal{F}_{a', b'}CF^{\bullet}(\mathcal{H}_{t, c})])\Big).$$
\end{lemma}

Since $\mathcal{K}_{t}$ stably-displaces small neighborhood of $V$, there is no fixed point of $\phi_{\mathcal{K}_{t}+\mathcal{H}_{t}\circ \phi_{\mathcal{K}_{t}}^{-1}}=\phi_{\mathcal{K}_{t}}\circ \phi_{\mathcal{H}_{t}}$. Therefore,
$H^{*}\Big(\varinjlim_{\substack{a'\searrow -\infty }}\circ\varprojlim_{\substack{b'\nearrow \infty}} \circ \text{tel}_{c}[\mathcal{F}_{a', b'}CF^{\bullet}(\mathcal{K}_{c}+\mathcal{H}_{t}\circ \phi_{\mathcal{K}_{t}, c})]\Big)=0.$
By lemmas \ref{layers of Hamiltonian}, \ref{invariance4},
\begin{align*} 
SH^{*}(K\times S^{1}\subset M\times T^{*}S^{1})&= H^{*}\Big(\varinjlim_{\substack{a'\searrow -\infty }}\circ\varprojlim_{\substack{b'\nearrow \infty}} \circ \text{tel}_{c}[\mathcal{F}_{a', b'}CF^{\bullet}(\mathcal{H}_{t, c})]\Big)  \\ 
&\cong H^{*}\Big(\varinjlim_{\substack{a'\searrow -\infty }}\circ\varprojlim_{\substack{b'\nearrow \infty}} \circ \text{tel}_{c}[\mathcal{F}_{a', b'}CF^{\bullet}(\mathcal{H}_{t}\circ \phi_{\mathcal{K}_{t}, c})]\Big) \\
&\cong H^{*}\Big(\varinjlim_{\substack{a'\searrow -\infty }}\circ\varprojlim_{\substack{b'\nearrow \infty}} \circ \text{tel}_{c}[\mathcal{F}_{a', b'}CF^{\bullet}(\mathcal{K}_{c}+\mathcal{H}_{t}\circ \phi_{\mathcal{K}_{t}, c})]\Big) =0.
\end{align*}

By the K\"unneth formula (Section 4.3, \cite{varolgunesthesis}), $SH^{*}(K\subset M)\subset SH^{*}(K\times S^{1}\subset M\times T^{*}S^{1})=0$.
\end{proof}

\begin{proof} [Proof of Lemma \ref{layers of Hamiltonian}]
Let us explain the idea of [Proposition 3.3.2, \cite{varolgunes19}] for readers' convenience. $\mathcal{K}+\mathcal{H}\circ \phi_{\mathcal{K}}^{-1}$ and $\mathcal{H}\circ \phi_{\mathcal{K}}^{-1}$ are two acceleration data for $K$ satisfying that $\mathcal{K}+\mathcal{H}\circ \phi_{\mathcal{K}}^{-1}\leq \mathcal{H}\circ \phi_{\mathcal{K}}^{-1}$ (We can assume that $\mathcal{K}\leq 0$). Denote $\mathcal{H}^{\vee}:=\mathcal{K}+\mathcal{H}\circ \phi_{\mathcal{K}}^{-1}=\{H^{\vee}_{\bullet}\}$ and $\mathcal{H}^{\blor}:=\mathcal{H}\circ \phi_{\mathcal{K}}^{-1}=\{H^{\blor}_{\bullet}\}$. Such two acceleration data give a map of $1$-rays
$$\begin{tikzcd} [cramped, sep=small]
  CF^{*}(H^{\vee}_{1}) \arrow[r] \arrow[rd] \arrow[d] & CF^{*}(H^{\vee}_{2}) \arrow[r] \arrow[rd] \arrow[d] & CF^{*}(H^{\vee}_{3}) \arrow[d]\arrow[r] & \cdots\\
  CF^{*}(H^{\blor}_{1}) \arrow[r] & CF^{*}(H^{\blor}_{2}) \arrow[r] & CF^{*}(H^{\blor}_{3})\arrow[r] & \cdots . \end{tikzcd}$$
  And we have an induced map, 
$$H^{*}\Big(\varinjlim_{\substack{a'\searrow -\infty }}\circ\varprojlim_{\substack{b'\nearrow \infty}} \circ \text{tel}_{c}[\mathcal{F}_{a', b'}CF^{\bullet}(\mathcal{H}^{\vee})]\Big) \to H^{*}\Big(\varinjlim_{\substack{a'\searrow -\infty }}\circ\varprojlim_{\substack{b'\nearrow \infty}} \circ \text{tel}_{c}[\mathcal{F}_{a', b'}CF^{\bullet}(\mathcal{H}^{\blor})]\Big).$$
Since $\big(\lim\mathcal{H}^{\vee}\big)^{-1}(\infty)=\big(\lim\mathcal{H}^{\blor}\big)^{-1}(\infty)$, we can choose infinite strictly monotone sequence $n(i)$ and $m(i)$ of positive integer in the indexing set of sequences so that $H^{\vee}_{i}\leq H^{\blor}_{i}\leq H^{\vee}_{n(i)}\leq H^{\blor}_{m(i)}$. We have maps of $1$-rays for each action bound $a',b'$,
$$\begin{tikzcd} [cramped, sep=small]
  \mathcal{F}_{a', b'}CF^{*}(H^{\vee}_{1}) \arrow[r] \arrow[rd] \arrow[d] & \mathcal{F}_{a', b'}CF^{*}(H^{\vee}_{2}) \arrow[r] \arrow[rd] \arrow[d] & \mathcal{F}_{a', b'}CF^{*}(H^{\vee}_{3}) \arrow[d]\arrow[r] & \cdots\\
  \mathcal{F}_{a', b'}CF^{*}(H^{\blor}_{1}) \arrow[r] \arrow[rd] \arrow[d] & \mathcal{F}_{a', b'}CF^{*}(H^{\blor}_{2}) \arrow[r] \arrow[rd] \arrow[d] & \mathcal{F}_{a', b'}CF^{*}(H^{\blor}_{3})\arrow[r]\arrow[d] & \cdots\\
  \mathcal{F}_{a', b'}CF^{*}(H^{\vee}_{n_(1)}) \arrow[r] \arrow[rd] \arrow[d] & \mathcal{F}_{a', b'}CF^{*}(H^{\vee}_{n(2)}) \arrow[r] \arrow[rd] \arrow[d] & \mathcal{F}_{a', b'}CF^{*}(H^{\vee}_{n(3)}) \arrow[d]\arrow[r] & \cdots\\
  \mathcal{F}_{a', b'}CF^{*}(H^{\blor}_{m(1)}) \arrow[r] & \mathcal{F}_{a', b'}CF^{*}(H^{\blor}_{m(2)}) \arrow[r] & \mathcal{F}_{a', b'}CF^{*}(H^{\blor}_{m(3)})\arrow[r] & \cdots, \end{tikzcd}$$
so we have the induced canonical map, $H^{*}\Big(\varinjlim_{\substack{a'\searrow -\infty }}\circ\varprojlim_{\substack{b'\nearrow \infty}} \circ \text{tel}_{c}[\mathcal{F}_{a', b'}CF^{\bullet}(\mathcal{H}^{\vee})]\Big) \to$\\ $\to H^{*}\Big(\varinjlim_{\substack{a'\searrow -\infty }}\circ\varprojlim_{\substack{b'\nearrow \infty}} \circ \text{tel}_{c}[\mathcal{F}_{a', b'}CF^{\bullet}(\mathcal{H}^{\blor})]\Big)\to H^{*}\Big(\varinjlim_{\substack{a'\searrow -\infty }}\circ\varprojlim_{\substack{b'\nearrow \infty}} \circ \text{tel}_{c}[\mathcal{F}_{a', b'}CF^{\bullet}(\mathcal{H}^{\vee})]\Big).$ Therefore, we get an isomorphism.
\end{proof}
\begin{proof}[Proof of Lemma \ref{invariance4}]
The isomorphism is defined by relabeling all choices by the Hamiltonian diffeomorphism preserving the structure of being cylindrical at infinity [Proposition 3.3.3.(3), \cite{varolgunes19}]. 
\end{proof}

McLean proved that any compact subvariety of a K\"ahler manifold of positive codimension is stably displaceable by h-principle [Proposition 6.20, Corollary 6.21, \cite{mclean18}]. By proposition \ref{invariance3} and \ref{mv}, we have the following.
\begin{prop} \cite{mclean18} \cite{varolgunesthesis} \label{stablydisplconj}
Let $M$ be an affine variety and $Y\subset M$ a hypersurface. We can choose a small tubular neighborhood $\mathbb{Y}$ of $Y\cap \underline{M}$ inside $\underline{M}$ that is stably displaceable and a tubular neighborhood $\mathbb{Y}^{c}$ of $\underline{M}\setminus Y$ satisfying that $\partial \mathbb{Y}\cap \partial \mathbb{Y}^{c}=\emptyset$ and $Y\cap \mathbb{Y}^{c}=\emptyset$. Therefore, $SH^{*}(\mathbb{Y}\subset M)=0$ and $SH^{*}(\mathbb{Y}^{c}\subset M)\cong SH^{*}(\underline{M}\subset M)$
\end{prop}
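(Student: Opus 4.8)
The plan is to obtain the two neighborhoods and then read off the two conclusions from the vanishing criterion (Proposition \ref{invariance3}) together with the Mayer--Vietoris sequence (Proposition \ref{mv}). First I would establish stable displaceability. Since $Y\subset M$ is a hypersurface, $Y\cap\underline{M}$ is a compact piece of a subvariety of positive (complex) codimension in the K\"ahler manifold $M$, so by McLean's h-principle result [Proposition 6.20, Corollary 6.21, \cite{mclean18}] there is a Hamiltonian $\mathcal{K}$ on $M\times T^{*}S^{1}$ whose time-one flow $\phi$ satisfies $\phi\big((Y\cap\underline{M})\times S^{1}\big)\cap\big((Y\cap\underline{M})\times S^{1}\big)=\emptyset$. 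Because disjointness of the two compact sets $\phi(K\times S^{1})$ and $K\times S^{1}$ is an open condition, the same $\phi$ still displaces $\mathbb{Y}\times S^{1}$ once $\mathbb{Y}$ is chosen to be a sufficiently small closed tubular neighborhood of $Y\cap\underline{M}$ inside $\underline{M}$. This makes $\mathbb{Y}$ stably displaceable, and Proposition \ref{invariance3} then gives $SH^{*}(\mathbb{Y}\subset M)=0$ directly.

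Next I would fix the geometry so that $\{\mathbb{Y},\mathbb{Y}^{c}\}$ forms a Mayer--Vietoris pair. Taking $\mathbb{Y}$ to be a closed tube of radius $\varepsilon$ about $Y\cap\underline{M}$, I would let $\mathbb{Y}^{c}$ be the closure of $\underline{M}$ with an even thinner tube of radius $\varepsilon'<\varepsilon$ about $Y$ deleted; then $Y\cap\mathbb{Y}^{c}=\emptyset$, $\partial\mathbb{Y}\cap\partial\mathbb{Y}^{c}=\emptyset$, $\mathbb{Y}\cup\mathbb{Y}^{c}=\underline{M}$, and $\mathbb{Y}\cap\mathbb{Y}^{c}$ is the annular region $\{\varepsilon'\le\mathrm{dist}(\cdot,Y)\le\varepsilon\}$, which is contained in $\mathbb{Y}$. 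Since any compact subset of a stably displaceable set is displaced by the same stabilized Hamiltonian, $\mathbb{Y}\cap\mathbb{Y}^{c}$ is stably displaceable, whence $SH^{*}(\mathbb{Y}\cap\mathbb{Y}^{c}\subset M)=0$ again by Proposition \ref{invariance3}. Feeding both vanishing facts into the Mayer--Vietoris sequence of Proposition \ref{mv} applied to $K_{1}=\mathbb{Y}$, $K_{2}=\mathbb{Y}^{c}$ (the disjoint boundaries $\partial\mathbb{Y}\cap\partial\mathbb{Y}^{c}=\emptyset$ being exactly the smoothness-of-$\max/\min$ hypothesis used in the proof of that proposition), and using $SH^{*}(\mathbb{Y}\cup\mathbb{Y}^{c}\subset M)=SH^{*}(\underline{M}\subset M)$, the exact sequence $\cdots\to SH^{*}(\mathbb{Y}\cap\mathbb{Y}^{c}\subset M)\to SH^{*}(\underline{M}\subset M)\to SH^{*}(\mathbb{Y}\subset M)\oplus SH^{*}(\mathbb{Y}^{c}\subset M)\to SH^{*}(\mathbb{Y}\cap\mathbb{Y}^{c}\subset M)\to\cdots$ collapses to the isomorphism $SH^{*}(\underline{M}\subset M)\cong SH^{*}(\mathbb{Y}^{c}\subset M)$.

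The main obstacle I anticipate is the first step, namely promoting stable displaceability from the subvariety $Y\cap\underline{M}$ to an honest open neighborhood $\mathbb{Y}$, since McLean's statement concerns the subvariety itself. The point to verify carefully is that the displacing isotopy on $M\times T^{*}S^{1}$ is supported in a compact region and that $(Y\cap\underline{M})\times S^{1}$ is compact, so the displacement persists on a uniform neighborhood; one must also check that this neighborhood can be taken inside $\underline{M}$ and made compatible with the radius choices $\varepsilon'<\varepsilon$ of the Mayer--Vietoris pair. A secondary bookkeeping issue is confirming that the telescope and completion operations defining $SH^{*}(-\subset M)$ commute with the Mayer--Vietoris cone exactly as in the proof of Proposition \ref{mv}, so that the annular overlap genuinely contributes a vanishing term in every degree.
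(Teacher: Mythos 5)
Your proposal is correct and follows essentially the same route the paper takes (the paper gives no separate proof, deriving the statement from McLean's stable displaceability of positive-codimension subvarieties together with Propositions \ref{invariance3} and \ref{mv}, exactly as spelled out again in the proof of Theorem \ref{cylindrical}). Your explicit handling of the open-condition argument promoting displaceability from $Y\cap\underline{M}$ to the tube $\mathbb{Y}$, and of the stably displaceable annular overlap $\mathbb{Y}\cap\mathbb{Y}^{c}$, fills in details the paper leaves implicit.
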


\section{Proof of the Main Theorem} \label{section3}

The main theorem of this paper is the following. Let $M$ be a smooth affine variety of complex dimension $n$ together with a trivialization of some power of its canonical bundle (viewed as a complex vector bundle, rather than a holomorphic bundle). Let $X$ be a smooth projective variety compactifying $M$ with an ample\footnote{Symplectic cohomology of a complement of a nef divisor could be defined but the intersection number with some curve would be zero.} divisor $D$.

\begin{theorem}[\ref{main theorem0}] \label{main theorem00}
If a cohomology class $[\mho]\in H^{m}(M)$ be in the image of the map $\delta$ for $m=2k$ or $2k+1$ for some $k\in \N$. Then there exists a subvariety $\Xi_{\mho}\subset M$ of dimension at least $n-k$ satisfying the following properties:
\begin{enumerate}
    \item $\Xi_{\mho}$ is $\C$-uniruled. In other words, for each $p\in \Xi_{\mho}$, there exists a non-constant algebraic map $\overline{v}_{p}:\C\to \Xi_{\mho}$ whose image contains $p$.
    \item For any exhausting finite type Morse function $f$ on $X\setminus D$, $\Xi_{\mho}$ set-theoretically intersect with a unstable submanifold $\mho_{f}$ of $f$ representing $[\mho]\in H^{m}(M)\cong HM^{m}(M,f)$.
\end{enumerate}
\end{theorem}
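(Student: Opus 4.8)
The plan is to read off the chain-level meaning of $\delta$, convert the Floer cylinder realizing $[\mho]=\delta(\gamma)$ into a genuine algebraic rational curve, and then organize such curves into an algebraic family whose evaluation image is $\Xi_{\mho}$.

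First I would fix a Morse--Bott admissible Hamiltonian $H$ that vanishes on the interior of $\underline{M}$ and is radial in the cylindrical end, perturbed by Morse data as in the Morse--Bott subsection, so that the constant orbits correspond to critical points of a chosen exhausting finite-type Morse function $f$ (representing $H^{*}(M)\cong HM^{*}(M,f)$) and the non-constant orbits correspond to Reeb orbits (representing $SH^{*}_{+}$). Since $0\neq\gamma\in SH^{m-1}_{+}(M)$ with $\delta(\gamma)=[\mho]$, the connecting map produces at the chain level a rigid (index zero modulo $\R$) Floer cylinder $u$ asymptotic at one end to a non-constant orbit representing $\gamma$ and at the other end to a critical point $p$ representing $[\mho]$. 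Applying Bourgeois--Oancea's correspondence theorem (Lemma \ref{correspondence}) as the perturbation tends to zero, $u$ degenerates to a cascade: Floer trajectories for $H$ joined by gradient fragments of $f$. Because $X_{H}\equiv 0$ on the interior, each such trajectory is there a genuine $J$-holomorphic curve, and the non-triviality of $\delta(\gamma)$ forces at least one component to be a non-constant $J$-holomorphic plane/cylinder, asymptotic at its puncture to the Reeb orbit underlying $\gamma$, whose finite end is glued by a gradient fragment to $p$ and therefore meets the submanifold $\mho_{f}=W^{u}(p)$ of real dimension $2n-m$.

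Next I would use the sandwich inclusions $\underline{M}\hookrightarrow M\hookrightarrow\overline{M}$ (subsection \ref{Liouville domain corr to affine}) to regard this component as a finite-energy $J$-holomorphic curve inside $M=X\setminus D$ with a single puncture and uniformly bounded energy. Choosing $J$ to agree with the integrable complex structure near a point $q\in\mho_{f}$ and applying McLean's degeneration-to-the-normal-cone argument \cite{mclean14}, the curve deforms to a non-constant algebraic map $\overline{v}_{q}:\C\to M$ (a rational curve in $X$ meeting $D$ in a single point) whose image contains $q$; this can be carried out over a full-dimensional subset of points $q$ of $\mho_{f}$. Then I would assemble these curves: the uniform energy bound bounds the degree of their closures in $X$, so by Grothendieck's construction of the Hilbert (or Quot) scheme they are parametrized by a quasi-projective variety $\mathcal{R}$, and the image of the evaluation map on the universal curve is a constructible set whose closure is a subvariety $\Xi_{\mho}\subset M$. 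By construction the family contains a curve through every point of a full-dimensional part of $\mho_{f}$, so $\overline{\mho_{f}}$, of real dimension $2n-m$, is contained in $\Xi_{\mho}$; a complex subvariety containing a real $(2n-m)$-dimensional submanifold has complex dimension at least $\lceil(2n-m)/2\rceil=n-\lfloor m/2\rfloor=n-k$. This gives the dimension bound, property (1) holds because a rational curve $\overline{v}_{p}:\C\to\Xi_{\mho}$ passes through each general $p\in\Xi_{\mho}$ by construction, and property (2) is immediate; since $[\mho]$ is a fixed class one reruns the same construction with the critical points of any exhausting finite-type $f$ to obtain intersection with each $\mho_{f}$.

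The hard part will be twofold. First, one must establish that the distinguished cascade component is non-constant with the correct single-puncture asymptotics and a uniform energy bound; this is where the transversality in Lemma \ref{correspondence}, the vanishing of $X_{H}$ on the interior, and the compatibility of the $\mu$-grading (forcing the curve to meet a cycle of real dimension exactly $2n-m$) must be combined carefully. Second, one must promote the pointwise, $J$-dependent curves of McLean's construction to a single algebraic family of uniformly bounded degree over $\mho_{f}$ and control the dimension of the evaluation image. I expect the interface between the real Morse-theoretic cycle $\mho_{f}$ and the algebraic family parametrized by $\mathcal{R}$ --- in particular, guaranteeing that the swept subvariety $\Xi_{\mho}$ genuinely contains a full-dimensional piece of $\mho_{f}$ --- to be the central technical obstacle.
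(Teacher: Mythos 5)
Your proposal follows the paper's proof essentially step for step: the Bourgeois--Oancea correspondence (Lemma \ref{correspondence}) turns the chain-level connecting map into a cascade whose Floer component is genuinely holomorphic on $\text{int}\,\underline{M}$ because $X_{H}$ vanishes there, the sandwich inclusions plus McLean's degeneration to the normal cone (Proposition \ref{JtoAlg}) produce the algebraic map $\C\to M$ meeting $\overline{\mho_{f}}$, and Grothendieck's Hilbert-scheme construction (Proposition \ref{family}) assembles the curves into $\Xi_{\mho}$. The one place you diverge is the dimension count: you get $\dim\Xi_{\mho}\geq n-k$ by arguing that a full-dimensional piece of the real $(2n-m)$-dimensional cycle $\mho_{f}$ is \emph{contained} in $\Xi_{\mho}$, whereas the paper invokes Thom transversality of $\overline{\Xi}$ with the Whitney strata of $\overline{\mho_{f}}$ for generic $(f,g)$; note that a bare ``nonempty transverse intersection'' bound would only yield $\dim_{\C}\geq k$, so your containment formulation is in fact the reading that delivers $n-k$ (and is forced by the corollaries, e.g.\ $SH^{*}(M)=0\Rightarrow M$ itself is uniruled). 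The obstacle you flag --- upgrading the single intersection point guaranteed by the $(1,\mathcal{E},[\mho])$-uniruledness definition to a curve through every generic point of $\mho_{f}$ --- is real, and is exactly the step the paper's own proof also leaves compressed.
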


\subsection{$J$-holomorphic $(k,\mathcal{E}, [\mho])$-uniruledness}
In this subsection, as a generalization of the definition for a Liouville domain $\underline{M}$ being $(k,\mathcal{E})$-uniruled in \cite{mclean14}, we will introduce a definition for a Liouville domain $\underline{M}$ to be $(k,\mathcal{E},[\mho])$-uniruled, roughly saying, $\underline{M}$ is $(k,\mathcal{E})$-uniruled anchored at a unstable submanifold of any Morse function, representing a cohomology class $[\mho]$. Denote $\Omega$ a stable submanifold representing the Lefschetz dual of $[\mho]$, and $\Omega \hookrightarrow M$, a $r$-pseudocycle. Denote $[\Omega]$ the corresponding integral homology class by [Theorem 1.1 \cite{zinger08}].

\begin{defn}[Definition 2.2, \cite{mclean14}]
Let $k>0$ be an integer, and let $\mathcal{E}>0$ be a real number. We say that a Liouville domain $\underline{M}$ is $(k, \mathcal{E})$-uniruled, if
\begin{itemize}
    \item for every convex $d\theta_{\underline{M}}$-compatible almost complex structure $J$ on $\underline{M}$ and every point $x\in$ int$(\underline{M})$ of the interior of $\underline{M}$, where $J$ is integrable on a neighborhood of $x$, there is a proper $J$-holomorphic map $u:\Sigma\to$ int$(\underline{M})$ from a genus $0$ Riemann surface $\Sigma$, with the image of $u$, denote im$(u)$, contains $x$, 
    \item the rank of $H_{1}(\Sigma,\Q)$, rk$H_{1}(\Sigma,\Q)\leq k-1$, and 
    \item the energy of $u$, $\int_{S}u^{*}d\theta_{\underline{M}}\leq \mathcal{E}$.
\end{itemize}
\end{defn}

 We generalize the notion of $(k,\mathcal{E})$-uniruledness to $(k,\mathcal{E}, [\mho])$-uniruledness that has cohomology constraints.

\begin{defn} \label{cohomology intersection uniruled}
Let $k>0$ be an integer, $\mathcal{E}>0$ be a real number, $[\mho]\in H^{k}(\underline{M})$. A Liouville domain $\underline{M}$ is $(k, \mathcal{E}, [\mho])$-ruled, if
\begin{itemize}
    \item for every convex tamed almost complex structure $J$ on $\underline{M}$, and for every Morse function $f:\underline{M}\to \R$ with $\partial \underline{M}$ as its highest regular level set and for a unstable submanifold $\mho_{f}$ of $f$ representing $[\mho]$, there is a proper $J$-holomorphic map $u:S\to\text{int}(\underline{M})$, where $S$ is a genus $0$ Riemann surface, with $\overline{\text{image}(u)}\cap \mho_{f} \neq \emptyset$,
    \item $\text{rk}H_{1}(S,\Q)\leq k-1$, and
    \item the energy of $u$ is at most $\mathcal{E}$, $\int_{S} u^{*}\omega\leq \mathcal{E}$. 
\end{itemize}
\end{defn}

For a $J$-holomorphic map $u:S\to \text{int}(M)$, we denote $[u]\cap\mho\neq \emptyset$ if for any Morse function $f:M\to \R$ and its unstable submanifold $\mho_{f}$ representing $\mho$, $\overline{\text{image}(u)}\cap \mho_{f} \neq \emptyset$. 

\begin{lemma}\label{symplectic invariance of uniruled}
$[$Lemma $3.2$, Proposition $3.1$, \cite{mclean14}$]$
Let $N$, $L$ be Liouville domains such that $N$ is a codimension $0$ symplectic submanifold of $L$ with the inclusion map $\iota:N\to L$ is a symplectic embedding and a homotopy equivalence. If $L$ is $(k, \mathcal{E}, [\mho])$-uniruled, then the submanifold $N$ is also $(k, \mathcal{E}, \iota ^{*} [\mho])$-uniruled.
\end{lemma}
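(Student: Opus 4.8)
The plan is to adapt McLean's confinement argument [Lemma $3.2$, \cite{mclean14}] so that it respects the additional cohomological anchoring data. Fix a convex tamed almost complex structure $J$ on $N$, a Morse function $f_{N}:N\to\R$ with $\partial N$ as its highest regular level set, and a unstable submanifold $\mho_{f_{N}}$ representing $\iota^{*}[\mho]\in H^{k}(N)$. First I would extend these data to $L$. Because $\iota:N\hookrightarrow L$ is a codimension $0$ symplectic embedding with convex boundary, the collar of $\partial N$ in $L$ carries a Liouville coordinate $r$, and the space of convex tamed almost complex structures is nonempty and contractible; so I can extend $J$ to a convex tamed $\widetilde{J}$ on $L$ agreeing with $J$ on $N$ and making $r$ plurisubharmonic on the collar $L\setminus\text{int}(N)$. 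Simultaneously I would extend $f_{N}$ to a Morse function $f_{L}:L\to\R$ that is strictly increasing along the Liouville flow on $L\setminus\text{int}(N)$ and has no critical points there; then $f_{L}$ and $f_{N}$ share the same critical points, the downward gradient flow keeps every unstable manifold inside $N$, and so $\mho_{f_{L}}:=\mho_{f_{N}}\subset N$. Since $\iota$ is a homotopy equivalence, $\iota^{*}:H^{*}(L)\xrightarrow{\cong}H^{*}(N)$ identifies $HM^{*}(L,f_{L})$ with $HM^{*}(N,f_{N})$, and under this identification $\mho_{f_{L}}$ represents $[\mho]$.

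Next I would invoke the hypothesis that $L$ is $(k,\mathcal{E},[\mho])$-uniruled, applied to the data $(\widetilde{J},f_{L},\mho_{f_{L}})$. This produces a proper $\widetilde{J}$-holomorphic map $u:S\to\text{int}(L)$ from a genus $0$ Riemann surface with $\mathrm{rk}\,H_{1}(S,\Q)\le k-1$, energy $\int_{S}u^{*}d\theta_{L}\le\mathcal{E}$, and $\overline{\text{image}(u)}\cap\mho_{f_{L}}\neq\emptyset$.

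The hard part will be the confinement step: showing $\text{image}(u)\subset\text{int}(N)$. Since $\overline{\text{image}(u)}$ already meets $\mho_{f_{L}}\subset N$, the curve enters $N$; I would then argue that it cannot escape into the collar $L\setminus\text{int}(N)$. The function $r\circ u$ is subharmonic on $u^{-1}(L\setminus\text{int}(N))$ because $\widetilde{J}$ is convex and $r$ is plurisubharmonic there, so by the maximum principle $r\circ u$ attains no interior maximum on the open collar; properness of $u$ then forces every component of $u^{-1}(L\setminus\text{int}(N))$ to be empty. This is precisely where the convexity of the extension $\widetilde{J}$ near $\partial N$ and the absence of critical values of $r$ are essential, and it is the only place the argument could fail if the extension were chosen carelessly.

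Finally I would transfer everything back to $N$. On $N$ we have $\widetilde{J}=J$, so the confined map $u:S\to\text{int}(N)$ is $J$-holomorphic; its genus, its first Betti number, and (since $\iota^{*}d\theta_{L}=d\theta_{N}$) its energy are unchanged; and $\overline{\text{image}(u)}\cap\mho_{f_{N}}=\overline{\text{image}(u)}\cap\mho_{f_{L}}\neq\emptyset$. As $(J,f_{N},\mho_{f_{N}})$ were arbitrary, Definition \ref{cohomology intersection uniruled} is verified and $N$ is $(k,\mathcal{E},\iota^{*}[\mho])$-uniruled.
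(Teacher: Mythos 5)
Your overall architecture (extend $J$ and the Morse function from $N$ to $L$, invoke the hypothesis on $L$, bring the curve back to $N$) matches the paper's, and your treatment of the Morse data --- extending $f_{N}$ to $h$ on $L$ with $h\ge \max f_{N}$ outside $N$ and no critical points there, so that $\mho_{h}=\mho_{f_{N}}\subset N$ represents $[\mho]$ under $\iota^{*}:H^{*}(L)\cong H^{*}(N)$ --- is essentially what the paper does. The confinement step, however, is a genuine error rather than a delicate point. A witness for $(k,\mathcal{E},[\mho])$-uniruledness of $L$ is a \emph{proper} map $u:S\to \text{int}(L)$ from a genus $0$ surface which is necessarily noncompact (a compact genus $0$ curve in the exact manifold $(L,d\theta_{L})$ has zero energy, hence is constant). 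Properness then forces $u$ to leave every compact subset of $\text{int}(L)$, in particular the compact domain $N$; so $\text{image}(u)\subset\text{int}(N)$ is impossible, and no choice of the extension $\widetilde{J}$ can rescue it. The maximum principle only says that $r\circ u$ has no interior maximum on $u^{-1}(L\setminus\text{int}(N))$; this is entirely consistent with the curve traversing the collar on its way out to $\partial L$, because the ends of $S$ supply the boundary along which the maximum is attained. Concluding that $u^{-1}(L\setminus\text{int}(N))$ is empty does not follow.

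What the paper (following Lemma $3.2$ and Proposition $3.1$ of \cite{mclean14}) does instead is \emph{restrict} the curve: the witness in $N$ is $u':=u|_{u^{-1}(\text{int}(N))}$. Properness of $u'$ into $\text{int}(N)$ and the energy bound $\int (u')^{*}d\theta\le\mathcal{E}$ are immediate; the real content is that $u'$ still satisfies $\mathrm{rk}\,H_{1}\le k-1$, i.e.\ that cutting $S$ along the preimage of a level set $\partial N_{R}$ slightly inside $\partial N$ does not create extra first homology. This is where the maximum principle genuinely enters: it shows that every irreducible component of $\overline{u^{-1}(N_{R})}$ is noncompact, from which one deduces that $H_{1}(u^{-1}(\text{int}(N));\Q)\to H_{1}(S;\Q)$ is injective, so the rank bound is inherited from $S$. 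The anchoring condition then comes for free: since $\mho_{h}=\mho_{f_{N}}$ lies in $N$, the point of $\overline{\text{image}(u)}\cap\mho_{h}$ already lies in $\overline{\text{image}(u')}$. You should replace your confinement paragraph with this restriction-plus-$H_{1}$-injectivity argument; the rest of your proof can stand.
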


\proof $(k, \mathcal{E})$-uniruledness in \cite{mclean14} will be reviewed for reader's convenience. Since $\iota:(N,\theta_{N})\hookrightarrow (L,\theta_{L})$ is a symplectic embedding, $\theta_{N}-\theta_{L}|_{N}$ is a closed $1$-form so $[(\theta_{N}-\theta_{L})|_{N}]\in H^{1}(N;\R)$. Since $\iota$ is a homotopy equivalence, $\iota^{*}:H^{1}(L;\R)\to H^{1}(N;\R)$ is an isomorphism. So there is a closed $1$-form $\eta$ on $L$ so that $\iota^{*}\eta=[\theta_{N}-\theta_{L}|_{N}]$. Define $\theta':=\theta_{L}+\eta$, then $[\theta'|_{N}-\theta_{N}]=0$ and $d\theta'=d\theta_{L}$. We can choose a convex almost complex structures $J_{N},J_{L}$ on $N,L$, respectively by Lemma \ref{lem1}. Since the space of all almost complex structures compatible with a symplectic form is contractible, we can choose a compatible almost complex structure $J'$ on $L$ such that $J'=J_{L}$ near $\partial L$ and $J'|_{N}=J_{N}$. \par
We can choose a collar neighborhood $(1-\epsilon, 1]\times \partial N$ of $\partial N$ inside $N$ such that $d\theta_{N}\circ J_{N}=dr$ where $r$ is a coordinate of $(1-\epsilon, 1]$. For $R\in (1-\epsilon, 1)$, define $N_{R}:=N\setminus (R,1]\times\partial N$. Assume that $L$ is $(k,\mathcal{E})$-uniruled, then there is a proper $J$-holomorphic map $u:\Sigma\to$ Int$(L)$ passing through $p$ of finite energy at most $\mathcal{E}$. Show that $H_{1}(u^{-1}(\text{int}(N)))\to H_{1}(\Sigma)$ is injective. Since $H_{1}(u^{-1}(\text{int}(N))$ is the direct limit of $H_{1}(u^{-1}(N_{R}))$ as $R\to 1$, it is enough to show that $H_{1}(u^{-1}(N_{R}))\to H_{1}(\Sigma)$ is injective. For generic $R$, $\partial N_{R}$ is transverse to $u$ and $S_{R}:=u^{-1}(N_{R})$ is an arithmetic genus $0$ compact nodal Riemann surface with boundary. By the maximum principle, every irreducible component $S_{1}', \cdots, S_{l}'$ of closure $\bar{S_{R}}$ of $S_{R}$ is noncompact. Suppose that $|H_{1}(u^{-1}(\text{int}(N)))|>k-1$ then the genus of $\Sigma$ should be greater than 1, contradiction. So $|H_{1}(u^{-1}(\text{int}(N)))|\geq k-1$. $N$ is $(k,\mathcal{E})$-uniruled. \par
Let us show that $N$ is $(k, \mathcal{E}, \iota^{*}[\mho])$-uniruled. Let $f$ be a Morse function on $N$ with $\partial N$ as the highest regular level set of it. Choose a Morse function $h$ on $M$ satisfying the following.
\begin{itemize}
    \item $h|_{N}=f$,
    \item $h|_{M\setminus N}\geq \text{ max }(f)$, and
    \item $\partial M$ is the highest regular level set of $h$.
\end{itemize}
Then the descending manifolds of $f$ and $h$ representing $[\mho]$ and $\iota^{*}[\mho]$ respectively are the same. 
\qed

\begin{lemma} $[$Theorem $2.3$, \cite{mclean14}$]$ 
    Let $(N,\theta_{N})$, $(L,\theta_{L})$ be two Liouville domains so that the completions $\widehat{N}$, $\widehat{L}$ are symplectomorphic, under $\phi:\widehat{N}\to\widehat{L}$. If $N$ is $(k, \mathcal{E}, [\mho])$-uniruled, then $\exists \mathcal{E} '>0$ so that $L$ is $(k, \mathcal{E}', \phi^{*}[\mho])$-uniruled.
\end{lemma}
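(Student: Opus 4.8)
The plan is to reduce everything to the codimension-$0$ invariance already established in the preceding lemma, by placing $N$ and $L$ inside a single completed symplectic manifold and running a sandwich argument with respect to the Liouville flows. First I would transport $L$ into $W := \widehat{N}$: set $L' := \phi^{-1}(L)$ and equip it with the primitive $\theta_{L'} := \phi^{*}\theta_{L}$. Since $\phi$ is a symplectomorphism, $\phi^{*}\omega_{L} = \omega_{N}$, so $\theta_{L'}$ is a primitive of $\omega_{N}$ on $L'$ and its Liouville vector field is the $\phi$-pullback of that of $L$; hence $(L',\theta_{L'})$ is a Liouville domain, $\phi|_{L'}\colon L'\to L$ is a strict Liouville isomorphism, and (because the $L$-Liouville flow exhausts $\widehat{L}$) the $L'$-Liouville flow exhausts $W$, so the completion of $L'$ is exactly $W$. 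Because $(k,\mathcal{E},[\mho])$-uniruledness is transported verbatim by a strict Liouville isomorphism (push the convex almost complex structure and the $J$-holomorphic maps forward by $\phi|_{L'}$, which preserves energy and carries the relevant cohomology class), it suffices to prove that $L'$ is $(k,\mathcal{E}',\,\cdot\,)$-uniruled for some $\mathcal{E}'$.

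Next I would enlarge $N$ until it swallows $L'$. Let $\psi_{t}$ be the Liouville flow of $(N,\theta_{N})$ and put $N_{\lambda} := \psi_{\log\lambda}(N)$ for $\lambda\ge 1$. Each $N_{\lambda}$ is a Liouville domain, $\psi_{\log\lambda}$ identifies it with $(N,\lambda\omega_{N})$, and since rescaling the symplectic form by $\lambda$ leaves the space of compatible convex $J$ and the $J$-holomorphic maps unchanged while multiplying energies by $\lambda$, the domain $N_{\lambda}$ is $(k,\lambda\mathcal{E},(\psi_{\log\lambda})_{*}[\mho])$-uniruled. The family $\{N_{\lambda}\}_{\lambda\ge 1}$ exhausts $W$, and $L'$ is compact, so I can fix $\lambda$ with $L'\subset\operatorname{int}(N_{\lambda})$.

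The inclusion $\iota\colon L'\hookrightarrow N_{\lambda}$ is then a codimension-$0$ symplectic embedding, and it is a homotopy equivalence: both $L'$ and $N_{\lambda}$ are deformation retracts of $W$ via their respective Liouville flows, so both inclusions into $W$ are homotopy equivalences, and two-out-of-three applied to $L'\hookrightarrow N_{\lambda}\hookrightarrow W$ gives the claim. Applying the preceding lemma with $N_{\lambda}$ in the role of the larger domain shows that $L'$ is $(k,\lambda\mathcal{E},\iota^{*}(\psi_{\log\lambda})_{*}[\mho])$-uniruled, and transporting back by the strict Liouville isomorphism $\phi|_{L'}$ yields that $L$ is $(k,\mathcal{E}',\phi^{*}[\mho])$-uniruled with $\mathcal{E}' := \lambda\mathcal{E}$.

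The main obstacle I expect is bookkeeping rather than geometry: one must check (i) that $L'$ really has completion $W$, so that its Liouville flow provides a genuine deformation retraction of $W$ onto it --- this is where exhaustion by the conjugated flow $\phi^{-1}\circ\psi^{L}_{t}\circ\phi$, for $\psi^{L}_{t}$ the Liouville flow of $L$, is used; and (ii) that the cohomology class survives the chain of identifications $H^{*}(N)\cong H^{*}(W)\cong H^{*}(N_{\lambda})\to H^{*}(L')\cong H^{*}(L)$ and agrees with the class denoted $\phi^{*}[\mho]$ in the statement. Both reduce to the fact that every domain in sight is a deformation retract of the common completion $W$ and that all the identifying maps are homotopy inverses of these inclusions, so the resulting diagram of cohomology restriction maps commutes and the class is tracked unambiguously.
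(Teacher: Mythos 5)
Your proposal is correct and follows essentially the same route as the paper: pull $L$ back to $\phi^{-1}(L)\subset\widehat{N}$, push $N$ out along its Liouville flow until it contains $\phi^{-1}(L)$ (picking up the energy rescaling factor $e^{T}=\lambda$), apply the codimension-$0$ invariance lemma to the inclusion, and transport back through $\phi$. The only cosmetic difference is that the paper first invokes Lemma 1.1 of \cite{BEE12} to assume $\phi$ is exact, whereas you rely on the preceding lemma's built-in handling of primitives differing by a closed form; both are fine.
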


\proof By Lemma 1.1 in \cite{BEE12}, we can assume that $\phi$ is an exact syplectomorphism.  $\phi^{*}\theta_{L}=\theta_{N}+df$ for some function $f$. Let $\Phi_{t}:\widehat{N}\to\widehat{N}$ be the time $t$ flow of the $X_{\theta_{N}}$, where $\iota_{X_{\theta_{N}}}d\theta_{N}=d\theta_{N}$. Near infinity, $N$ has the cylindrical coordinate $(r,\theta)$ so that the vector field $X_{\theta_{N}}=r\frac{\partial}{\partial r}$. So there exists $T>0$ with $\phi^{-1}(L)\subset \Phi_{T}(N)$ as a codimension $0$ exact submanifold. Since the Liouville domain $\Phi_{T}(N)$ is $(k, e^{T}\mathcal{E}, \Phi_{T}^{*}[\mho])$-uniruled, $\phi^{-1}(L)$ is $(k, e^{T}\mathcal{E}, \iota ^{*} \Phi_{T}^{*}[\mho])$-uniruled by Lemma \ref{symplectic invariance of uniruled}. $L$ is also $(k, \mathcal{E}', \phi^{*}([\mho]))$-uniruled. 
\qed

\begin{corollary}\label{Liouville deform} $[$Corollary $3.3$, \cite{mclean14}$]$ 
Let $N$, $L$ be two Liouville deformation equivalent Liouville domains under $\phi$. If $N$ is $(k, \mathcal{E}, [\mho])$-uniruled, then $\exists \mathcal{E} '>0$ so that $L$ is $(k, \mathcal{E}', \phi^{*}([\mho]))$-uniruled.
\end{corollary}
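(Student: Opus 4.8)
The plan is to reduce this corollary to the immediately preceding lemma [Theorem $2.3$, \cite{mclean14}] by exhibiting a symplectomorphism of completions. Recall that $N$ and $L$ being Liouville deformation equivalent means there is a smooth one-parameter family of Liouville structures $(W,\theta_{t})_{t\in[0,1]}$ on a fixed compact manifold with boundary $W$ such that $(W,\theta_{0})$ is isomorphic to $N$ and $(W,\theta_{1})$ to $L$, each $\theta_{t}$ being a Liouville form (so $d\theta_{t}$ is symplectic and its dual Liouville field $Z_{t}$ is outward transverse along $\partial W$). The map $\phi$ in the statement is the resulting identification of completions, which I first need to produce and to recognize as a symplectomorphism.

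First I would construct a symplectomorphism $\widehat{\phi}:\widehat{N}\to\widehat{L}$ by a parametrized Moser argument on the completions. On each completion the symplectic form $\omega_{t}=d\theta_{t}$ is exact, so the family $\omega_{t}$ is cohomologically constant; differentiating gives $\dot{\omega}_{t}=d\dot{\theta}_{t}$, and one solves $\iota_{X_{t}}\omega_{t}=-\dot{\theta}_{t}$ for a time-dependent vector field $X_{t}$. Because all the forms are of contact type near infinity, $X_{t}$ can be arranged to be complete and to preserve the cylindrical ends, so its time-one flow $\widehat{\phi}$ is a well-defined symplectomorphism of completions; this is exactly the statement that Liouville-homotopic Liouville domains have symplectomorphic completions \cite{cieliebakeliashberg12}. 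After a further exact perturbation as in Lemma $1.1$ of \cite{BEE12}, I may take $\widehat{\phi}$ to be an exact symplectomorphism with $\widehat{\phi}^{*}\theta_{L}-\theta_{N}$ exact, and I identify the $\phi$ of the statement with $\widehat{\phi}$. With this in hand the conclusion is immediate from the preceding lemma: since $N$ is $(k,\mathcal{E},[\mho])$-uniruled and $\widehat{N}\cong\widehat{L}$ under $\widehat{\phi}$, there is $\mathcal{E}'>0$ such that $L$ is $(k,\mathcal{E}',\widehat{\phi}^{*}[\mho])$-uniruled, which is the desired statement with $\phi=\widehat{\phi}$.

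I expect the main obstacle to lie not in the final invocation but in making the Moser construction rigorous on the non-compact completion: one must verify that the generating vector field $X_{t}$ is complete and has controlled growth on the cylindrical ends, so that the flow exists for all $t\in[0,1]$ and genuinely carries $\widehat{N}$ onto $\widehat{L}$, and that the exactness can be upgraded so as to match the hypotheses of the preceding lemma. A secondary bookkeeping point is that the $\phi^{*}$ appearing in the statement really is the pullback on $H^{*}$ induced by the constructed symplectomorphism; this holds because $\widehat{\phi}$ restricts to a homotopy equivalence of the underlying domains, so that the unstable submanifolds representing $[\mho]$ and $\phi^{*}[\mho]$ correspond exactly as in the proof of Lemma \ref{symplectic invariance of uniruled}.
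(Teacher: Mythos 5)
Your proposal is correct and follows the same route as the paper, whose entire proof is the observation that the completions $\widehat{N}$ and $\widehat{L}$ are symplectomorphic, after which the preceding lemma applies verbatim. The Moser-type construction you sketch is just the standard justification of that one-line step, so there is nothing substantively different here.
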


\proof Their completions $\widehat{N},\widehat{L}$ are symplectomorphic. 
\qed \vspace{2mm}

\subsection{$J$-holomorphic $(k,\mathcal{E}, [\mho])$-uniruled $\underline{M}$ $\Rightarrow$ Such a $\overline{M}$}
The sandwich theorem \ref{sandwich} says that for any affine variety $M$, we can always find two Liouville equivalent Liouville domains $\underline{M}, \overline{M}$ with $\underline{M}\hookrightarrow M\hookrightarrow \overline{M}$, where two inclusions are exact symplectic embeddings of a codimension $0$ submanifold and a homotopy equivalence. As a corollary of corollary \ref{Liouville deform}, 

\begin{prop}
If $\underline{M}$ is $J$-holomorphic $(k,\mathcal{E}, [V])$-uniruled, then so is $\overline{M}$.
\end{prop}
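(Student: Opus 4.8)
The plan is to read this off directly from Corollary \ref{Liouville deform}, with the sandwich theorem supplying the deformation equivalence that does the real work. The first thing to notice is that the monotonicity statement in Lemma \ref{symplectic invariance of uniruled} runs in the opposite direction from what is needed: it transports uniruledness from an ambient domain to a codimension $0$ subdomain, i.e.\ from $\overline{M}$ down to $\underline{M}$, whereas here we must pass from $\underline{M}$ up to $\overline{M}$. The ingredient that reverses the direction is that the two domains have symplectomorphic completions, not merely that $\underline{M}$ embeds into $\overline{M}$ as a codimension $0$ exact submanifold.

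Concretely, I would first invoke the sandwich theorem \ref{sandwich}: for the affine variety $M$ the two Liouville domains in the chain $\underline{M}\hookrightarrow M\hookrightarrow\overline{M}$ are Liouville deformation equivalent, so there is a Liouville deformation equivalence $\phi$ relating $\underline{M}$ and $\overline{M}$ (explicitly, $(\overline{M},\nu\cdot\theta)$ is Liouville deformation equivalent to the associated Liouville domain $\underline{M}$ of $M$). I would then apply Corollary \ref{Liouville deform} with $N=\underline{M}$ and $L=\overline{M}$: since by hypothesis $\underline{M}$ is $J$-holomorphic $(k,\mathcal{E},[V])$-uniruled, the corollary produces an $\mathcal{E}'>0$ for which $\overline{M}$ is $(k,\mathcal{E}',\phi^{*}[V])$-uniruled.

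It remains only to identify the data on the two sides. A Liouville deformation equivalence is in particular a homotopy equivalence, so $\phi^{*}:H^{*}(\overline{M})\to H^{*}(\underline{M})$ is an isomorphism and the class on $\overline{M}$ corresponding to $[V]$ pulls back to $[V]$; under this identification $\overline{M}$ is $(k,\mathcal{E}',[V])$-uniruled. The sole caveat is the possible enlargement of the energy bound to some $\mathcal{E}'\geq\mathcal{E}$, which already appears in Corollary \ref{Liouville deform} and originates from exhausting $\overline{M}$ by the Liouville flow of $\underline{M}$. I do not expect any genuine obstacle: the whole argument amounts to checking that the hypotheses of Corollary \ref{Liouville deform} are met verbatim by the sandwich construction and to keeping track of the pulled-back cohomology class and the change in energy.
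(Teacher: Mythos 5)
Your proposal is correct and matches the paper's own argument, which presents this proposition precisely as a corollary of Corollary \ref{Liouville deform} combined with the sandwich theorem's assertion that $\underline{M}$ and $\overline{M}$ are Liouville deformation equivalent. Your additional observations --- that Lemma \ref{symplectic invariance of uniruled} alone runs in the wrong direction, that the cohomology classes are identified via the pullback isomorphism, and that the energy bound may grow to some $\mathcal{E}'$ --- are all consistent with how the paper sets this up.
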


The following lemma is used to prove the number of boundary components of rational curve in $F\setminus E$ is not increasing.

\begin{lemma}\label{no critical points} $[$Lemma $4.5$, \cite{mclean14}$]$
Let $J$ be an almost complex structure on $X$ which agrees with the standard complex structure on $X$ near $D$. Let $u:S\to X$ be a $J$-holomorphic map where $S$ is a compact nodal Riemann surface so that no component of $S$ maps into $D$ entirely. Let $f:M\to \R$ be a plusisubharmonic function associated to an ample line bundle $\mathcal{L}$ on $X$ where a section $s$ of $\mathcal{L}$ have $s^{-1}(0)=D$. Define $u':=u|_{S\cap u^{-1}(M)}$. Then there is a small neighborhood $ND$ of $D$ such that $f\circ u'$ has no critical points on $(f\circ u')^{-1}(ND\cap M)$.
\end{lemma}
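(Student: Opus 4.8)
The plan is to work locally near each point where the curve meets $D$ and to show that the differential of the pulled-back function $f\circ u'$ blows up there, forcing it to be nonzero on a small enough neighborhood. First I would observe that $u^{-1}(D)$ is a finite set: since $J$ agrees with the standard integrable structure on a fixed neighborhood $ND_0$ of $D$, the map $u$ is genuinely holomorphic on $u^{-1}(ND_0)$, so on each irreducible component $C$ of $S$ the set $u^{-1}(D)\cap C$ is the zero locus of the pulled-back section $s\circ u|_C$; by hypothesis no component maps entirely into $D$, hence this zero locus is discrete, and finite by compactness of $S$. It therefore suffices to produce, around each $p_0\in u^{-1}(D)$, a punctured coordinate disk on which $f\circ u'$ has no critical points, and then to take $ND\subset ND_0$ small enough that $(f\circ u')^{-1}(ND\cap M)$ is contained in the union of these disks.

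Next I would compute in local holomorphic coordinates $(z_1,\dots,z_n)$ on $X$ centered at $u(p_0)$ in which $D=\{z_1\cdots z_k=0\}$ and $s=g\,z_1^{a_1}\cdots z_k^{a_k}$, with $g$ a nonvanishing holomorphic function and $a_i\in\N_{>0}$ the vanishing orders. For a smooth Hermitian metric one then has $f=-\tfrac12\log(|g|^2 h)-\sum_{i=1}^k a_i\log|z_i|$ with $h>0$ smooth. Writing $\zeta$ for a local coordinate on $S$ at $p_0$ (treating each branch separately if $p_0$ is a node) and $w_i:=z_i\circ u$, holomorphicity of $u$ near $D$ gives $w_i(\zeta)=(\zeta-\zeta_0)^{\ell_i}g_i(\zeta)$ with $g_i(\zeta_0)\neq 0$ and $\ell_i\geq 0$ the local intersection multiplicity with $D_i$; at least one $\ell_i\geq 1$ since $u(p_0)\in D$. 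Because the domain of $u'$ maps into $M=X\setminus D$, each $w_i$ is nonvanishing on the punctured disk, so $\log|w_i|$ is harmonic there and $d\log|w_i|=\Re(dw_i/w_i)=\ell_i\,\Re\frac{d\zeta}{\zeta-\zeta_0}+O(1)$.

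Consequently $d(f\circ u')=-\big(\sum_i a_i\ell_i\big)\,\Re\frac{d\zeta}{\zeta-\zeta_0}-\tfrac12\,d\log((|g|^2h)\circ u)+O(1)$, and since the coefficient $\sum_i a_i\ell_i$ is a positive integer while the smooth terms stay bounded on the compact disk, $|d(f\circ u')|\geq (\sum_i a_i\ell_i)/|\zeta-\zeta_0|-C\to\infty$ as $\zeta\to\zeta_0$. Hence $d(f\circ u')\neq 0$ on a sufficiently small punctured disk about each $p_0$, and choosing $ND$ small enough that $u^{-1}(\overline{ND})$ lies inside the union of these disks — possible because away from the finitely many $p_0$ the image of $u'$ stays a definite distance from $D$ — completes the argument. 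The main obstacle I anticipate is purely bookkeeping: making the choice of $ND$ uniform over all the points of $u^{-1}(D)$ and over the possibly ramified local behavior of $u$, and checking that the bounded error terms, in particular $d\log((|g|^2 h)\circ u)$, are controlled independently of how the curve approaches $D$. The harmonic dominant term, however, makes the estimate robust and insensitive to branch points of $u'$.
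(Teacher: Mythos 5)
Your argument is correct and is essentially the computation underlying the cited result: the paper itself gives no proof here beyond deferring to Lemma 4.5 of \cite{mclean14}, and your local analysis — discreteness of $u^{-1}(D)$ from holomorphicity of $u$ near $D$, the expansion $f\circ u' = -\sum_i a_i\ell_i\log|\zeta-\zeta_0| + (\text{bounded smooth})$, and the observation that the $1/|\zeta-\zeta_0|$ blow-up of the differential of the logarithmic term dominates all bounded contributions since $\sum_i a_i\ell_i>0$ — is exactly the mechanism McLean uses. The only point worth tightening is the discreteness step: $s\circ u|_C$ is holomorphic only on $u^{-1}(ND_0)\cap C$, so to rule out its vanishing on an open subset of a component one should invoke the identity theorem (or unique continuation for $J$-curves in the $J$-complex hypersurface $D$) to upgrade "maps an open set into $D$" to "maps the whole component into $D$", contradicting the hypothesis.
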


\proof of Lemma \ref{no critical points}
We refer \cite{mclean14}. 
\qed

\subsection{$J$-holomorphic $(k,\mathcal{E}, [\mho])$-uniruled $\overline{M}$ $\Rightarrow$ Polynomial maps $(\P^{1}\setminus\Gamma) \to M$}
Using degeneration to the normal cone, McLean showed that $J$-holomorphic $(k,\mathcal{E})$-uniruledness implies algebaic $k$-uniruledness in \cite{mclean14}. In this section, we will generalize Theorem 2.5 in \cite{mclean14} for $(1,\mathcal{E}, [\mho])$-uniruledness to get algebraic stratified-uniruledness of an affine variety $M$. Let us state the main propositions as follows.

\begin{prop} \cite{mclean14} \label{JtoAlg}
If $\underline{M}\subset M\subset \overline{M}$ is $(1,\mathcal{E},[\mho])$-uniruled, then there exists a non-constant polynomial map $\bar{v}:\C\to M$ with $\overline{\text{image}(\bar{v})}\cap \overline{\mho}\neq \emptyset$ where $\overline{\mho}$ is the closure of $\mho$ in $M$.
\end{prop}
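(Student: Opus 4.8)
The plan is to run McLean's degeneration-to-the-normal-cone argument (the proof of Theorem 2.5 in \cite{mclean14}) while carrying the cohomological marking $\mho_f$ through every step. Since $(1,\mathcal{E},[\mho])$-uniruledness quantifies over \emph{all} convex tamed almost complex structures, I would first fix one Morse function $f$ on $\overline{M}$ with $\partial\overline{M}$ as its top regular level set, together with an unstable submanifold $\mho_f$ representing $[\mho]$, and then choose a sequence $J_n$ of almost complex structures on $X$ that are integrable, agree with the ambient complex structure on $\overline{M}$, are standard near $D$, and stretch the collar of $\partial\overline{M}$ toward $D$ as $n\to\infty$. For each $n$, Definition \ref{cohomology intersection uniruled} supplies a proper $J_n$-holomorphic map $u_n:S_n\to\text{int}(\overline{M})$ with $S_n$ a once-punctured sphere (so $\text{rk}H_1(S_n,\Q)=0$), energy $\int_{S_n}u_n^{*}\omega\le\mathcal{E}$, and $\overline{\text{image}(u_n)}\cap\mho_f\neq\emptyset$. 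Because each $J_n$ is integrable, each $u_n$ is honestly holomorphic into $M\subset X$.

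The second step is to pass to the limit and extract an algebraic curve, exactly as in \cite{mclean14}. The form $\omega=-dd^{c}(-\log\|s\|)$ extends across $D$ to a smooth closed form on $X$ representing a positive multiple of $c_1(\mathcal{L})$, so the uniform energy bound controls the intersection number $\mathcal{L}\cdot\overline{\text{image}(u_n)}$ and hence the degree. By Gromov compactness in the degeneration to the normal cone of $D$, the $u_n$ converge to a holomorphic building whose top level lies in $M$ and whose lower levels map into a neighborhood of the projectivized normal cone over $D$. Lemma \ref{no critical points} ensures that $f\circ u_n$ has no critical points near $D$, which is what keeps the number of boundary components from increasing in the limit; combined with the genus-zero, single-puncture constraint this forces the surviving top-level component to be a single non-constant algebraic rational curve meeting $D$ in exactly one point. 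Removing that point yields the desired non-constant polynomial map $\bar{v}:\C\to M$.

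The new content is to check that the marking survives. I would arrange $f$ so that $\mho_f$ is a compact cell contained in a fixed compact subset of $M$ that is disjoint from $D$ and from the stretched collar region. Each $u_n$ meets $\mho_f$ at a point $p_n$; after passing to a subsequence $p_n\to p\in\overline{\mho_f}=\overline{\mho}$ by compactness. Since the lower levels of the limiting building project into a neighborhood of $D$, which is disjoint from $\mho_f$, and since $p$ is bounded away from $D$, the point $p$ cannot escape onto a lower level and must lie on the non-constant top-level component, i.e. on $C:=\overline{\text{image}(\bar{v})}$ taken in $X$. As $M$ is open in $X$, the closure of $\bar{v}(\C)$ in $M$ equals $C\cap M$, which contains $p$; hence $\overline{\text{image}(\bar{v})}\cap\overline{\mho}\neq\emptyset$, as claimed.

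The main obstacle lies entirely in the second and third steps: guaranteeing that the broken limit contains a genuinely non-constant algebraic component inside $M$ (rather than collapsing into $D$ or producing only ghost bubbles), and that this component, rather than a lower level, carries the intersection with $\mho_f$. The first is McLean's, resolved by Lemma \ref{no critical points} together with the normal-cone degeneration that prevents the boundary-component count from growing; the second is the genuinely new point, resolved by the compact support of $\mho_f$ away from $D$ and the continuity of evaluation under Gromov convergence. I expect no analytic difficulty beyond \cite{mclean14} once $\mho_f$ is chosen disjoint from $D$, since the integrability of the $J_n$ makes all curves honestly holomorphic and keeps the limit algebraic.
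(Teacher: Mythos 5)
Your proposal follows essentially the same route as the paper: the paper's proof of Proposition \ref{JtoAlg} (via Proposition \ref{make holomorphic}) is precisely McLean's degeneration-to-the-normal-cone argument with the marking carried along, using the convergence $\mho_{f_{x_{i}}}\to\mho_{f_{0}}\subset F$ together with the fact that $F\subset\text{int}(N_{1})$ while the limit curve's boundary is forced into $N_{1}^{c}$, so that the intersection point lands on a closed component $K\subset F$ of the limit, which is then automatically algebraic and meets $E$ in at most one point by Lemma \ref{no critical points}. The only place your sketch is looser than the paper is the compactness step: rather than an SFT-style building limit, the paper restricts the curves to the carved-out compact region $N_{2}$ and applies Fish's target-local Gromov compactness to $u_{x_{i}}|_{u_{x_{i}}^{-1}(N_{2})}$, which is what handles the $u_{x_{i}}$ being proper maps into open targets with no a priori boundary control.
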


More generally,
\begin{prop}\label{make holomorphic}
$[$Lemma $4.6$, Theorem $2.5$, \cite{mclean14}$]$
If $\underline{M}\subset M\subset \overline{M}$ is $(k,\mathcal{E},[\mho])$-uniruled, then there exists a non-constant polynomial map $\bar{v}:\P^{1}\setminus 
\Gamma \to M$ with $\overline{\text{image}(\bar{v})}\cap \overline{\mho}\neq \emptyset$, where $\Gamma$ is the set of at most $k$ distinct points in $\P^{1}$.
\end{prop}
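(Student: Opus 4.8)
The plan is to run McLean's degeneration-to-the-normal-cone argument from [Theorem 2.5, \cite{mclean14}] essentially verbatim for the algebraicity and the puncture count, and then to append one compactness bookkeeping step that carries the cohomological constraint $\overline{\text{image}(u)}\cap\mho_f\neq\emptyset$ through the limit. (The case $k=1$ is Proposition \ref{JtoAlg}, so the real content is the general $k$ together with the constraint.) First I would set up the compactification $M=X\setminus D$ with the plurisubharmonic function $f$ associated to the ample line bundle $\mathcal{L}$, $s^{-1}(0)=D$, as in the sandwich theorem \ref{sandwich}, and fix a Morse function together with a representative $\mho_f\subset\text{int}(\overline{M})$ of $[\mho]$. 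Invoking $(k,\mathcal{E},[\mho])$-uniruledness of $\overline{M}$ for a sequence $J_n$ of convex tamed almost complex structures that agree with the integrable standard structure near $D$ and stretch the neck along the boundary of a tubular neighborhood of $D$, I obtain genus-$0$ curves $u_n:S_n\to\text{int}(\overline{M})$ with $\text{rk}\,H_1(S_n,\Q)\leq k-1$, energy at most $\mathcal{E}$, and $\overline{\text{image}(u_n)}\cap\mho_f\neq\emptyset$.

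Next I would apply SFT/Gromov compactness to extract a limit holomorphic building. Lemma \ref{no critical points} guarantees that $f\circ u_n$ has no critical points near $D$, so the ends of the curves are cylindrical and the intersections with $D$ stay under control; the limit is a stable nodal genus-$0$ curve whose top-level components are $J_{\mathrm{std}}$-holomorphic, hence algebraic by Chow/GAGA. Restricting an algebraic component that meets $M$ and is not contained in $D$, and normalizing it to $\P^1$, yields a non-constant polynomial map $\bar v:\P^1\setminus\Gamma\to M$; the bound $|\Gamma|\leq k$ follows from Lemma 4.6 of \cite{mclean14} together with the genus-$0$ and $\text{rk}\,H_1\leq k-1$ hypotheses, exactly as in that reference.

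Finally, the genuinely new step is to propagate the cohomology intersection. I would pick $p_n\in\overline{\text{image}(u_n)}\cap\mho_f$; since $\overline{M}$ is compact, after passing to a subsequence $p_n\to p_\infty$ with $p_\infty\in\overline{\mho_f}\subseteq\overline{\mho}$, and by Gromov convergence $p_\infty$ lies in the image of the limit building. Because $\mho_f$ sits in the interior, away from $D$, the point $p_\infty$ cannot lie on a component mapping into $D$; tracing through any constant ghost components via the nodes (each of which maps to a single point shared with an adjacent component), $p_\infty$ must lie on a non-constant algebraic component, whose restriction to $M$ is the desired $\bar v$, so that $\overline{\text{image}(\bar v)}\cap\overline{\mho}\neq\emptyset$.

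The main obstacle is precisely this last step: ensuring that the marked intersection point survives the limit and lands on a non-constant component inside $M$, rather than being absorbed into a node, a constant ghost bubble, or a component escaping to the divisor. This is controlled by the two geometric facts that $\overline{\mho}$ is compact and contained in the interior $M$ (hence disjoint from $D$) and that the limit stable map is connected of genus $0$; everything else is a direct adaptation of \cite{mclean14}, where the neck-stretching forces integrability of the limit and the maximum principle of Lemma \ref{no critical points} pins down the number of punctures.
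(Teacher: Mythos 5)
There is a genuine gap, and it sits exactly at the point you flag as routine: the production of an \emph{algebraic} curve. Your curves $u_n:S_n\to\text{int}(\overline{M})$ are proper maps with free boundary escaping through $\partial\overline{M}$, a contact hypersurface sitting deep inside $M=X\setminus D$; they are neither closed curves in $X$ nor asymptotically cylindrical maps, and they never go near $D$. Consequently neither ordinary Gromov compactness nor SFT neck-stretching along $\partial(\text{nbhd}(D))$ applies to them (there is no a priori bound on the number of boundary components of $u_n^{-1}$ of a compact region, which is precisely the difficulty Fish's target-local compactness is designed to handle). Worse, even if you could extract a limit, its components would be proper holomorphic curves in the \emph{affine} variety $M$ for the integrable structure, and ``algebraic by Chow/GAGA'' is false for such objects: Chow's theorem applies to closed analytic subvarieties of projective space, and a holomorphic disk in $\C^n$ is not algebraic. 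Nothing in your setup forces the limit curve to close up into a rational curve in $X$.

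The paper's proof (following McLean) is structured specifically to avoid this. One degenerates the \emph{target} to the normal cone: a family $M_{x_i}\cong M$ over $x_i\to 0$ whose central fiber is $F\cup E$ with $F\cong X$ projective and $F\setminus(F\cap E)\cong M$. Applying Fish's target-local Gromov compactness to $u_{x_i}|_{u_{x_i}^{-1}(N_2)}$ produces a nodal limit $v''$ whose boundary is pushed into $N_1^c$ while $F\subset\text{int}(N_1)$; since $\mho_{f_{x_i}}\to\mho_{f_0}\subset F$, the component $K$ meeting the cycle is a \emph{closed} rational curve inside the projective variety $F$, hence algebraic for free. The puncture bound $|v^{-1}(F\cap E)|\le k$ then comes from Lemma \ref{no critical points} together with the count of boundary circles of $(f\circ u_{x_i})^{-1}((-\infty,C])$, and restricting to $F\setminus(F\cap E)\cong M$ gives $\bar v:\P^1\setminus\Gamma\to M$. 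Your final step — tracking the intersection point $p_n\in\overline{\text{image}(u_n)}\cap\mho_f$ into the limit and arguing it lands on a non-constant component away from the divisor — is the right idea and matches what the paper does, but it is built on a compactness statement and an algebraicity claim that do not hold in your framework; you need the change of target, not a change of almost complex structure on a fixed target.
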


The proof in \cite{mclean14} will be provided for reader's convenience. The two main ingredients are deformation to the normal cone and Fish's target-local Gromov compactness results on a sequence of $J-$holomorphic disks with boundary in a family of symplectic manifolds. 

Let $v:\C \cong \P^{1}\setminus\{\infty\}\rightarrow M$ is a non-constant proper $J$-holomorphic map of finite energy with $[v:\C\to M]\cap [\mho]\neq \emptyset$. 

\subsubsection{McLean's Degeneration to the Normal Cone}
We review the degeneration-to-the-normal-cone construction, \cite{mclean14}.
The projective variety $X$ can be embedded into $\P^{N}$ so that $X\setminus M=X\cap \P^{N-1}$ is an effective ample divisor of $X$. Define,
\begin{itemize}
    \item $\Delta:=\{\infty\}\times \P^{N}+\P^{1}\times\P^{N-1}$, an ample divisor of $\P^{1}\times\P^{N}$.
    \item $Bl:Bl_{\{0\}\times\P^{N-1}}\P^{1}\times\P^{N}\to\P^{1}\times\P^{N}$, the blow up of $\P^{1}\times\P^{N}$ along $\{0\}\times\P^{N-1}$.
    \item $\widetilde{\Delta}:=Bl^{-1}(\Delta)$.
    \item $E:=Bl^{-1}(\{0\}\times\P^{N-1})$.
    \item $\pi:=pr_{1}\circ Bl:Bl_{\{0\}\times\P^{N-1}}\P^{1}\times\P^{N}\to \P^{1}$.
    \item $\Delta_{\infty}:=d\cdot \widetilde{\Delta}+(d-1)\cdot\pi^{-1}(\infty)$.
\end{itemize}
The fiber $\pi^{-1}(0)=F+E$ is linearly equivalent to $\pi^{-1}(\infty)$. For $d\gg 0$, $d\cdot \widetilde{\Delta}+(d-1)\cdot E=d\cdot \widetilde{\Delta}+(d-1)\cdot\pi^{-1}(\infty)-(d-1)\cdot F$ is ample in $Bl_{\{0\}\times\P^{N-1}}\P^{1}\times\P^{N}$.  The associated line bundle $\mathcal{L}_{d\cdot  \widetilde{\Delta}+(d-1)\cdot E}$ on $Bl_{\{0\}\times\P^{N-1}}\P^{1}\times\P^{N}$ admits a metric $||\cdot||$ where the curvature form is a positive $(1,1)$-form inducing a symplectic form on $X$.
 \begin{itemize}
     \item Let $s$ be a meromorphic section of it with $s^{-1}(0)-s^{-1}(\infty)=d\cdot \widetilde{\Delta}+(d-1)\cdot E$.
     \item $\phi:=(-log||s||)^{-1}$ satisfying $\phi|_{\text{supp}(-(d-1)\cdot F)=F}=-\infty$, $\phi|_{\text{supp}\Delta_{\infty}}=\infty$.
     \item $N_{c}:=(-log||s||)^{-1}((-\infty,c])\cup F$, a compact submanifold of $Bl_{\{0\}\times\P^{N-1}}\P^{1}\times\P^{N}\setminus \text{supp}(\Delta_{\infty})$ for generic $c\gg 1$.
     \item $\widetilde{\P^{1}\times X}:=Bl^{-1}(\P^{1}\times X)$
     \item $\pi_{X}:=\pi|_{\widetilde{\P^{1}\times X}}$
     \item $M_{x}:=\pi_{X}^{-1}(x)\setminus \text{supp}(\Delta_{\infty})$
 \end{itemize}

\begin{figure}
    \centering
    \includegraphics[width=7cm, height=5cm]{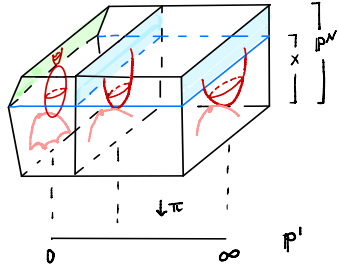}
    \caption{Degeneration to the normal cone}
    \label{fig:normal cone}
\end{figure} 

Then $M_{x}$ is isomorphic to $M$ when $x\neq 0$ and $(M_{x}, -dd^{c}log||s||)$'s are symplectomorphic. Note that $\widetilde{\P^{1}\times X}\setminus (\text{supp}(\Delta_{\infty})\cup E)\cong \C\times M$.

Choose a sequence of numbers $x_{i}\in \C\setminus \{0\}$ conversing to $0$ as $i\to \infty$. Choose a sequence of Morse functions $f_{x_{i}}$ on $M_{x_{i}}$ for each $i$, satisfying that all the Morse critical points are contained in $N_{c}\cup M_{x_{i}}$ and  $C^{0}$-converging to a Morse function $f_{0}$ on $F\in M_{0}$ (Choose a Morse function $f$ on $M$ and pull it back to $\widetilde{\P^{1}\times X}\setminus ((\text{supp}(\Delta_{\infty})\cup E))$ then restrict to each fiber over $x_{1}$). Then, since $M_{x}$ are isomorphic to $M$, for a given cohomology class $[\mho]\in H^{m}(M)$, we have a sequence of unstable submanifolds $\mho_{f_{x_{i}}}$ of $f_{x_{i}}$ converging to $\mho_{f_{o}}$ representing $[\mho]$. For each $i$, choose a Liouville domain $N_{x_{i}}$ which is an exact codimension $0$ symplectic submanifold of $M_{x_{i}}$ containing $N_{c}\cap M_{x_{i}}$ and the embedding $N_{x_{i}}\hookrightarrow M_{x_{i}}$ is a homotopy equivalence. Since $M_{x_{i}}$ is $(k, \lambda, \mho)$-uniruled, $N_{x_{i}}$ is $(k, \lambda', \mho)$-uniruled for some $\lambda'>0$ by lemma \ref{symplectic invariance of uniruled}.

For each $i$, there is a proper $J$-holomorphic map $u_{x_{i}}:S_{x_{i}}\to int(N_{x_{i}})$, where $S_{x_{i}}$ is at most $(k-1)$-punctured sphere, $int(N_{x_{i}})$ is an interior of $N_{x_{i}}$ and $u_{x_{i}}$ has energy at most $\lambda'$. $u_{x_{i}}|_{u_{x_{i}}^{-1}(int(N_{c}\cap M_{x_{i}}))}$'s are properly embedded holomorphic curves inside the interior of the compact manifold $N_{c}$ because almost complex structures become integrable in $int(N_{c}\cap M_{x_{i}})$.

Let $Q:=Bl_{\{0\}\times \P^{N-1}}\P^{1}\times P^{N}\setminus \text{supp}(\Delta_{\infty})$ be a quasi-projective variety. Then $N_{x_{i}}\subset \pi|_{Q}^{-1}(x_{i})$ and $u_{x_{i}}:S_{x_{i}}\to int(N_{x_{i}})\subset \pi|_{Q}^{-1}(x_{i})$. Let $V:=Q\cap \widetilde{\P^{1}\times X}$ be a closed variety. Choose real compact codimension $0$ submanifolds with boundaries $N_{1}, N_{2}$ of $N_{c}$ with $F\subset \text{int}(N_{1})\subset N_{1}\subset\text{int}(N_{2})\subset N_{2}\subset N_{c}$. After perturbing, we can assume that $\partial N_{1}$, $\partial N_{2}$ are transverse to $U_{x_{i}}$ for all $i$.

\subsubsection{Fish's Target-local Gromov Compactness}
To get a rational curve on an affine variety from a sequence of $J$-holomorphic disks in the family of Liouville domains, Fish's generalized Gromov compactness result will be used. Sequences of $J$-holomorphic curves with an unbounded number of free boundaries in families of degenerating target manifolds without uniform boundedness of energy still have a convergent subsequence of sub-$J$-holomorphic curves when we carve the target out near the boundary of target properly and restrict $J$-holomorphic curves to the preimage of carved target (target-local) \cite{fishcpt11}.
\proof of Proposition \ref{make holomorphic} By the Gromov-Fish compactness theorem \cite{fishcpt11} to $N_{2}$ and $u_{x_{i}}|_{u_{x}^{-1}(N_{2})}$, we have a sequence of compact subcurves $\underline{S_{i}}\subset S_{x_{i}}$ satisfying 
\begin{enumerate}
    \item $u_{x_{i}}(\partial \underline{S_{i}})\subset N^{c}_{1}$.
    \item There is a compact surface $\underline{S}$ with boundary and a sequence of diffeomorphisms $a_{i}:\underline{S}\to\underline{S_{i}}$ such that $u_{x_{i}}\circ a_{i}$ is $C^{0}$-converges to some continuous map $v':\underline{S}\to \pi^{-1}(0)=F\cup E$ which is smooth away from some union of curves $\Gamma$ in int$\underline{S}$ (decoration in the symplectic field theory) and $u'_{x_{i}}\circ a_{i}$ $C^{\infty}_{\text{loc}}$-converges to $v'$ outside of $\Gamma$.
    \item The map $v'$ is equal to $v''\circ \psi$ where $\psi :\underline{S}\to S$ is a continous surjection to a nodal Riemann surface $S$ with boundary and a diffeomorphism onto its image away from $\Gamma$ and $v''$ is a holomorphic map from $S$. The map $\psi$ sends the curves $\Gamma$ to the nodes of $S$ and the map $v''$ sends $\partial S$ to the complement of $N_{1}$.    
\end{enumerate}

Since $v''(\partial S)\subset N^{c}_{1}$, $F\subset \text{int}(N_{1})$, $v''(S)\cap \mho_{f_{0}}\neq \emptyset$ and $\mho_{f_{x_{i}}}$ converges to $\mho_{f_{0}}\subset F$, there is an irreducible component $K\cong \P^{1}$ of $S$ where $v''$ maps $K$ to $F$. Choose $v=v''|_{K}:\P^{1}\to F$. Then $v$ is an holomorphic map from $\P^{1}$ to a projective space $F$, so it is an algebraic map satisfying $v(\P^{1})\cap [\mho_{f_{0}}]\neq \emptyset$.
Show that $|v^{-1}(F\cap E)|\leq k$. By Lemma \ref{no critical points}, a exhausting plurisubharmonic function $f$ $v''(K)$ is smooth outside of a compact set. So we can choose $C\gg 1$ so that $f^{-1}(C)$ is transverse to $v$ and $(f\circ v)^{-1}(C)$ is a disjoint union of $m$ circles and $(f\circ v)^{-1}((-\infty,C])$ is connected for $C\gg 1$. Since the maps $u'_{x_{i}}\circ a_{i}$ $C^{\infty}_{\text{loc}}$-converges to $v'$ outside of $\Gamma$, $u_{x_{i}}\circ a_{i}$ is $C^{0}$-converges to some continuous map $v'$, the connected component $S'_{i}$ if $(f\circ u_{x_{i}})^{-1}((-\infty,C])$ passing through $\mho_{x_{i}}$ has $m$ boundary components for $i\gg 1$. By Lemma \ref{symplectic invariance of uniruled}, $S'_{i}$ has at most $k$ boundary circles for $i\gg 1$. Therefore, $v^{-1}(E)$ is a union of at most $k$ points. Therefore, we get a non-constant algebraic map $\bar{v}:\C\to M$ with $[\bar{v}:\C\to M]\cap [\mho]\neq \emptyset$.
\qed

\subsection{Family of Rational Curves $\Rightarrow$ Algebraic Variety}
We will use Grothendieck's construction on Hilbert schemes to get a scheme parametrizing a family of rational curves. Grothendieck's philosophy was to identify a scheme $X$ with the functor represented by $X$, from the category of commutative rings to the category of sets, defined by $R\mapsto Hom(SpecR,X)$ \cite{grothendieck61}, \cite{kollar96}, \cite{lurie}. Let us recall some theorems in \cite{kollar96} without proof.

\begin{defn}[Definition I.1.1, \cite{kollar96}]
    Let $S$ be a scheme and $F$ a contravariant functor $F:\{schemes/S\}\to\{sets\}$. A pair $(X(F),U(F))$ of a scheme $X(F)/S$ and an element $U(F)\in F(X(F))$, called the universal element, represents $F$ if for every $S$-scheme $Z$, $\text{Hom}_{S}(Z,X(F))\to F(Z/S)$, defined by $g\mapsto g^{*}U(F)$, is an isomorphism.
 \end{defn}

\begin{defn}[Definition I.1.3, \cite{kollar96}]]
    Let $X/S$ be a scheme. The Hilbert functor $\mathcal{H}ilb(X/S):\{schemes/S\}\to\{sets\}$ is defined by
    $$\mathcal{H}ilb(X/S)(Z):=\left\{
    \begin{array}{c}
         \text{Subschemes}\\
         V\in X\times_{S}Z\\
         \text{proper,flat/Z}
    \end{array}\right\}$$
\end{defn}

\begin{prop}[Theorem I.1.4, \cite{kollar96}]
Let $X/S$ be a projective scheme, $\mathcal{O}(1)$ a relatively ample line bundle and $P$ a polynomial. The functor $\mathcal{H}ilb_{P}(X/S)$ is represented by a morphism $\text{Univ}_{P}(X/S)\xrightarrow{u}\text{Hilb}_{P}(X/S)$ where $\text{Univ}_{P}(X/S)\subset X\times_{S}\text{Hilb}_{P}(X/S)$.
\end{prop}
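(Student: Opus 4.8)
The plan is to deduce representability from the corresponding statement for projective space together with a boundedness input, following Grothendieck's original strategy as presented in \cite{kollar96}. First I would reduce to the universal case $X=\P^{n}_{S}$: the relatively ample sheaf $\mathcal{O}(1)$ furnishes, for some $n$, a closed $S$-embedding $X\hookrightarrow \P^{n}_{S}$, and a flat family of subschemes of $X\times_{S}Z$ with Hilbert polynomial $P$ is the same datum as a flat family of subschemes of $\P^{n}_{Z}$ that happens to be contained in $X\times_{S}Z$. Since this containment is a closed condition, it suffices to represent $\mathcal{H}ilb_{P}(\P^{n}_{S}/S)$ and afterwards cut out the closed subfunctor of families supported in $X$.

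The central ingredient is a uniform Castelnuovo--Mumford regularity bound: there is an integer $m_{0}=m_{0}(n,P)$ such that the ideal sheaf $\mathcal{I}_{V}$ of every subscheme $V\subset \P^{n}_{k}$ (over any residue field, at any point of $S$) with Hilbert polynomial $P$ is $m_{0}$-regular. Consequently, for each $m\geq m_{0}$ one gets $H^{i}(\mathcal{I}_{V}(m))=0$ for $i>0$, the twist $\mathcal{I}_{V}(m)$ is globally generated, and $h^{0}(\mathcal{I}_{V}(m))=\binom{n+m}{n}-P(m)=:Q(m)$ is constant across the family. This uniformity is exactly what lets me package all subschemes with the given $P$ into a single finite-dimensional linear problem.

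Using this, I would embed the Hilbert functor into a Grassmannian. Fixing one $m\geq m_{0}$ and writing $W:=H^{0}(\P^{n},\mathcal{O}(m))$, the assignment $V\mapsto H^{0}(\mathcal{I}_{V}(m))\subset W$ defines a natural transformation from $\mathcal{H}ilb_{P}(\P^{n}_{S}/S)$ to the Grassmannian functor $\text{Gr}(Q(m),W)$, which is representable by a projective $S$-scheme; by flatness and cohomology-and-base-change the chosen subspace varies in a flat family, so the transformation is well defined on all test schemes. The remaining work is to see that the image is a closed subfunctor: one imposes that the tautological rank-$Q(m)$ subbundle $\mathcal{S}\subset W\otimes\mathcal{O}$ generate, under the multiplication maps $\mathcal{S}\otimes H^{0}(\mathcal{O}(k))\to H^{0}(\mathcal{O}(m+k))\otimes\mathcal{O}$, a subsheaf of the correct corank $P(m+k)$ for all $k\geq 0$. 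Each such rank condition is closed, a Noetherian argument shows finitely many suffice, and over the resulting closed subscheme $\text{Hilb}_{P}(\P^{n}_{S}/S)$ one builds $\text{Univ}_{P}(\P^{n}_{S}/S)\subset \P^{n}\times_{S}\text{Hilb}_{P}(\P^{n}_{S}/S)$ from the tautological quotient, verifies it is flat with Hilbert polynomial $P$, and checks the universal property against $g\mapsto g^{*}U(F)$.

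The main obstacle is the regularity step: producing the single bound $m_{0}(n,P)$ is what makes the Grassmannian embedding possible and is the genuinely non-formal part of the argument, so I would invoke the boundedness results of \cite{kollar96} rather than reprove them. The translation between closedness of the rank conditions and the flattening of the family, and the verification that the constructed universal family is flat of the prescribed polynomial, are the technical checks I expect to absorb most of the remaining effort, but they become routine once the uniform regularity is in hand.
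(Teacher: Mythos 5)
The paper does not actually prove this statement: it appears in a list explicitly introduced with ``Let us recall some theorems in \cite{kollar96} without proof,'' so there is no in-paper argument to compare against — the result is taken as a black box from [Theorem I.1.4, \cite{kollar96}]. Your outline reconstructs the standard Grothendieck proof as presented in Chapter I of \cite{kollar96}: reduction to $\P^{n}_{S}$, a uniform Castelnuovo--Mumford regularity bound $m_{0}(n,P)$, the embedding $V\mapsto H^{0}(\mathcal{I}_{V}(m))$ into a Grassmannian of $Q(m)$-dimensional subspaces of $H^{0}(\mathcal{O}(m))$, closedness of the multiplication-map rank conditions with a Noetherian argument to reduce to finitely many of them, and the construction and flatness check of the universal family. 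That is the correct strategy, and the steps you single out as the genuine content (the regularity bound and the flattening/closedness analysis) are indeed where the work lies. Two small points to tighten: a relatively ample $\mathcal{O}(1)$ need not be very ample, so the closed embedding $X\hookrightarrow\P^{n}_{S}$ is furnished by some power $\mathcal{O}(d)$, and the Hilbert polynomial must be rewritten accordingly (replace $P(m)$ by $P$ evaluated along the new polarization); and when you cut $\mathcal{H}ilb_{P}(X/S)$ out of $\mathcal{H}ilb_{P}(\P^{n}_{S}/S)$ you should justify that, for a $Z$-flat family $V\subset\P^{n}_{Z}$, the locus of $z\in Z$ with $V_{z}\subset X_{z}$ is closed — standard, but it is the step that makes ``contained in $X$'' a closed condition on the representing scheme rather than on individual fibers.
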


\begin{defn}
    Let $X/S$ and $Y/S$ be schemes. $\mathcal{H}om(X,Y)$ is the functor defined by\\
    $\mathcal{H}om_{S}(X,Y)(T):=\{T\text{-morphisms}:X\times_{S}T\to Y\times_{S}T\}$
\end{defn}
\begin{prop}[Theorem I.1.10, \cite{kollar96}]
Let $X/S$ and $Y/S$ be projective schemes over $S$. Assume that $X$ is flat over $S$. Then $\mathcal{H}om_{S}(X,Y)$ is represented by an open subscheme $Hom_{S}(X,Y)\subset Hilb(X\times_{S}Y/S)$.  
\end{prop}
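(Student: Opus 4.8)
The plan is to realize $\mathcal{H}om_S(X,Y)$ as the subfunctor of $\mathcal{H}ilb(X\times_S Y/S)$ consisting of ``graphs,'' and then to show that this subfunctor is cut out by an \emph{open} condition on the Hilbert scheme. First I would construct a natural transformation $\Phi\colon \mathcal{H}om_S(X,Y)\to \mathcal{H}ilb(X\times_S Y/S)$ as follows. For an $S$-scheme $T$ and a $T$-morphism $f\colon X\times_S T\to Y\times_S T$, form its graph $\Gamma_f\subset (X\times_S T)\times_T (Y\times_S T)=X\times_S Y\times_S T$. Since $X/S$ is flat and proper, $\Gamma_f$ is flat and proper over $T$ (the first projection $\Gamma_f\to X\times_S T$ is an isomorphism, and $X\times_S T\to T$ inherits flatness and properness from $X/S$), so $\Gamma_f\in \mathcal{H}ilb(X\times_S Y/S)(T)$. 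The assignment $f\mapsto \Gamma_f$ is manifestly natural in $T$.

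Next I would show that $\Phi$ is injective at each $T$-level and identify its image. Injectivity is immediate: $f$ is recovered from $\Gamma_f$ as the composite of the inverse of $\Gamma_f\xrightarrow{\sim} X\times_S T$ with the second projection to $Y\times_S T$. For the image, a subscheme $Z\subset X\times_S Y\times_S T$ that is flat and proper over $T$ lies in the image of $\Phi$ \emph{if and only if} the first projection $q\colon Z\to X\times_S T$ is an isomorphism; in that case $f:=\mathrm{pr}_{Y}\circ q^{-1}$ has graph $Z$. Thus $\mathcal{H}om_S(X,Y)$ is identified with the subfunctor of $\mathcal{H}ilb(X\times_S Y/S)$ whose $T$-points are those $Z$ for which $Z\to X\times_S T$ is an isomorphism.

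By the representability of the Hilbert functor already recorded above, there is a scheme $H:=\mathrm{Hilb}(X\times_S Y/S)$ carrying a universal subscheme $\mathrm{Univ}\subset X\times_S Y\times_S H$, flat and proper over $H$. Let $p\colon \mathrm{Univ}\to X\times_S H$ be the first projection. This is a proper morphism (as $Y/S$ is proper) between two schemes that are both flat and of finite presentation over $H$; here I use crucially that $X/S$ is flat, so that $X\times_S H\to H$ is flat. The decisive step is then the \emph{openness of the isomorphism locus}: the set
\[
U:=\{\,h\in H \;:\; p \text{ is an isomorphism on the fiber over } h\,\}
\]
is open in $H$, and over $U$ the morphism $p$ is genuinely an isomorphism. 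I expect this to be the main obstacle, and I would establish it through the fiberwise criterion for isomorphisms: for a proper morphism between schemes flat and finitely presented over a base, the locus where the induced map on fibers is an isomorphism is open, and there the morphism itself is an isomorphism. The ingredients are cohomology and base change, upper semicontinuity of fiber dimension, and Nakayama's lemma applied to the comparison map $\mathcal{O}_{X\times_S H}\to p_*\mathcal{O}_{\mathrm{Univ}}$; surjectivity of $p$ and the bijectivity of this sheaf map along one fiber then propagate to a neighborhood by the semicontinuity results.

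Finally I would set $\Hom_S(X,Y):=U$ and verify that it represents $\mathcal{H}om_S(X,Y)$. Given any $T$ and a morphism $g\colon T\to H$ classifying $Z=g^*\mathrm{Univ}$, the base change of $p$ along $g$ is precisely the projection $Z\to X\times_S T$; because the formation of $U$ commutes with base change (being an isomorphism is stable under, and detectable after, pullback), $g$ factors through $U$ exactly when $Z\to X\times_S T$ is an isomorphism, i.e. exactly when $Z$ lies in the image of $\Phi$. Combining this with the identification of the preceding paragraphs yields a natural bijection $\Hom_S(T,U)\cong \mathcal{H}om_S(X,Y)(T)$, so the open subscheme $U\subset H$ represents the Hom functor, as claimed.
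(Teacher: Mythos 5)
The paper does not prove this statement at all: it is recalled verbatim from Koll\'ar's book (Theorem I.1.10 of \cite{kollar96}), with the explicit remark that the surrounding results are quoted ``without proof.'' Your argument is the standard one from that reference --- identify $\mathcal{H}om_{S}(X,Y)$ with the subfunctor of graphs inside $\mathcal{H}ilb(X\times_{S}Y/S)$, then cut it out by the open condition that the first projection from the universal subscheme to $X\times_{S}H$ be an isomorphism, using flatness of $X/S$ to invoke the fiberwise isomorphism criterion --- and it is correct, so it matches the cited source's approach rather than anything original to this paper.
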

By choosing $S=Spec(\C)$, $X=\P^{1}$, $F$ a scheme over $Spec(\C)$,
\begin{corollary}
\begin{enumerate}
    \item A functor $\mathcal{H}om_{Spec(\C)}(\P^{1},F)=\left\{T\mapsto \left(\begin{tikzcd} [cramped, row sep=0.6em, column sep=1.5em]
\P^{1}\times T \arrow{dr}{pr} \arrow{rr}{f} &                & F\times T \arrow{dl}{pr}\\
                              & T
\end{tikzcd}\right)\right\}$\\
    is represented by an open subscheme $Hom(\P^{1},X)$ over $Spec(\C)$
    \item $Hom_{Spec(\C)}^{\text{nonconstant}}(\P^{1},F):=\{f|_{\P^{1}\times\{y\}}$ is non-constant $\forall y\in Y\}=Hom(\P^{1},F)\setminus [F]$, where $[F]\in Hom(\P^{1},F)\subset Hilb(\P^{1}\times_{\text{Spec}(\C)}F)\cong Hilb(F)$ is the point corresponding to $F$.
\end{enumerate}
\end{corollary}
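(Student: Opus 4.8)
The plan is to deduce both parts of the corollary by specializing the two representability results quoted immediately above (Kollár's Theorems I.1.4 and I.1.10) to the base $S=\mathrm{Spec}(\C)$ with $X=\P^{1}$ and $Y=F$, and then to carve out the open locus of fiberwise-nonconstant morphisms. The heart of the argument is not representability itself, which is inherited verbatim from the quoted theorems, but the identification of the nonconstant locus as the complement of a closed "constant-maps" sublocus.

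For part (1) I would first verify the hypotheses of the Proposition [Theorem I.1.10, \cite{kollar96}]: both $\P^{1}$ and $F$ are projective over $\mathrm{Spec}(\C)$ by assumption, and $\P^{1}$ is automatically flat over $\mathrm{Spec}(\C)$ since the base is a field. Hence $\mathcal{H}om_{\mathrm{Spec}(\C)}(\P^{1},F)$ is represented by an open subscheme $\mathrm{Hom}(\P^{1},F)\subset \mathrm{Hilb}(\P^{1}\times_{\mathrm{Spec}(\C)}F)$. The claimed description of its $T$-points as the commuting triangles $\P^{1}\times T\to F\times T$ over $T$ is then just the definition of $\mathcal{H}om$, so this part reduces to reading off the theorem with no new input (the ``$X$'' appearing in the statement should of course read ``$F$'').

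For part (2) I would introduce the natural constant-map morphism $\sigma\colon F\to \mathrm{Hom}(\P^{1},F)$, which on $T$-points sends $g\colon T\to F$ to the composite $\P^{1}\times T\xrightarrow{\mathrm{pr}} T\xrightarrow{g} F$; under the Hilbert-scheme description this sends $p$ to the graph $\P^{1}\times\{p\}\subset \P^{1}\times F$, which is the identification with a copy $[F]$ of $F$ referred to in the statement. The morphism $\sigma$ is injective on $T$-points, hence a monomorphism, and since $F$ is proper over $\C$ while $\mathrm{Hom}(\P^{1},F)$ is separated over $\C$ (being an open subscheme of a Hilbert scheme), $\sigma$ is proper; a proper monomorphism is a closed immersion, so the image $[F]$ is closed. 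The fiberwise-nonconstant locus $\mathrm{Hom}^{\mathrm{nonconstant}}(\P^{1},F)$ is exactly the complement of this image — a point $[f]$ is constant on the fiber over $y$ precisely when it lands in $\sigma(F)$ — and as the complement of a closed set it is an open subscheme, which is what is asserted.

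The step I expect to be the main obstacle is establishing that $\sigma$ is a closed immersion, i.e.\ that fiberwise constancy is a closed condition on the base $T$. The cleanest route is the properness-plus-monomorphism argument above; an alternative worth keeping in reserve is to observe that a morphism $\P^{1}\to F$ is constant if and only if its differential vanishes identically, which manifestly cuts out a closed subscheme of $\mathrm{Hom}(\P^{1},F)$. Once this is in place, everything else — flatness of $\P^{1}$, the functor identification, and openness of the complement — is formal given the quoted representability theorems.
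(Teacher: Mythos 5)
Your proposal is correct and follows essentially the same route as the paper, which presents this corollary as a direct specialization of the quoted Koll\'ar results (Theorems I.1.4 and I.1.10) with $S=\mathrm{Spec}(\C)$, $X=\P^{1}$, $Y=F$, and offers no further argument. The one point the paper leaves implicit --- that the constant-map locus $[F]\cong F$ is closed in $\mathrm{Hom}(\P^{1},F)$, so that its complement is an open subscheme --- is exactly what your proper-monomorphism argument supplies, and that argument is sound (with the harmless remark that $\mathrm{Hom}(\P^{1},F)$ is a countable disjoint union of quasi-projective components, each separated, and $\sigma$ lands in the component of the Hilbert polynomial of $\P^{1}\times\{\mathrm{pt}\}$).
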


\begin{prop}[Def.-Prop.II.2.11 \cite{kollar96}]
RatCurves$^{n}_{d}(X/S)$ is quasi projective over $S$ for every $d$. 
\end{prop}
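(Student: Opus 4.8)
The plan is to realize $\mathrm{RatCurves}^{n}_{d}(X/S)$ as a geometric quotient of a quasi-projective scheme by the reparametrization group $\Aut(\P^{1})=\mathrm{PGL}_{2}$ and then pass to a normalization, keeping track of quasi-projectivity at each stage. The starting point is the scheme $\Hom(\P^{1},X/S)$, which is quasi-projective over $S$ by the representability result (Theorem I.1.10 of \cite{kollar96}) quoted above, since it is an open subscheme of a relative Hilbert scheme of $\P^{1}\times_{S}X$. First I would fix the degree: the degree of a morphism $f\colon\P^{1}\to X_{s}$ with respect to a fixed relatively ample $\cO(1)$ is locally constant on $\Hom(\P^{1},X/S)$, so the degree-$d$ locus $\Hom_{d}(\P^{1},X/S)$ is a union of connected components, hence again quasi-projective over $S$.

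The second step is to pass to the locus on which $\mathrm{PGL}_{2}$ acts with trivial stabilizers. Let $\Hom^{\mathrm{bir}}_{d}(\P^{1},X/S)\subset\Hom_{d}(\P^{1},X/S)$ be the open subscheme of morphisms that are birational onto their image. On this locus the reparametrization action of $\mathrm{PGL}_{2}$ is free: if $f\circ\sigma=f$ with $f$ generically injective, then $\sigma=\id$. I would then form the geometric quotient $U_{d}:=\Hom^{\mathrm{bir}}_{d}(\P^{1},X/S)/\mathrm{PGL}_{2}$; concretely this is effected by pushing forward the image cycle, i.e.\ through the cycle map $\Hom^{\mathrm{bir}}_{d}(\P^{1},X/S)\to \mathrm{Chow}_{1,d}(X/S)$ whose fibers are exactly the $\mathrm{PGL}_{2}$-orbits. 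Since $\mathrm{Chow}_{1,d}(X/S)$ is projective over $S$ and the image of this map is a locally closed subscheme, $U_{d}$ is quasi-projective over $S$. Finally, $\mathrm{RatCurves}^{n}_{d}(X/S)$ is the normalization of $U_{d}$ (the superscript $n$ recording the normalization); normalization is a finite morphism, and a scheme finite over a quasi-projective $S$-scheme is itself quasi-projective over $S$. This yields the claim for every $d$.

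The hard part will be the third step: producing the geometric quotient by the positive-dimensional reductive group $\mathrm{PGL}_{2}$ and verifying that it is quasi-projective rather than merely an algebraic space. Two routes are available. One is geometric invariant theory: choose a $\mathrm{PGL}_{2}$-linearized ample line bundle on a suitable compactification and check that every point of $\Hom^{\mathrm{bir}}_{d}$ is stable (automatic here, the action being free and proper), so that the GIT quotient is a quasi-projective geometric quotient. The other, which is the one implicit in \cite{kollar96}, is to use the cycle map into the Chow variety: the delicate points there are the construction of relative Chow varieties over $S$ and the proof that the ``irreducible, reduced, rational'' locus is \emph{locally closed} in $\mathrm{Chow}_{1,d}(X/S)$, together with the fact that the induced map from the orbit space is an isomorphism onto its image up to normalization. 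Once either the GIT quotient or the local closedness in $\mathrm{Chow}$ is in hand, the remaining bookkeeping---flatness of the universal family $\mathrm{Univ}^{\,\mathrm{rc}}_{d}\to\mathrm{RatCurves}^{n}_{d}$ and compatibility with base change over $S$---is routine.
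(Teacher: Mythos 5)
The paper does not prove this statement: it is quoted verbatim from Koll\'ar's book (Def.--Prop.\ II.2.11 of \cite{kollar96}) in a subsection that opens with ``Let us recall some theorems in \cite{kollar96} without proof,'' so there is no in-paper argument to compare yours against. Judged on its own, your outline is a faithful reconstruction of Koll\'ar's actual construction --- pass from $\Hom_{d}(\P^{1},X/S)$ to the open locus $\Hom^{\mathrm{bir}}_{d}$ of maps birational onto their image, quotient the free $\mathrm{PGL}_{2}$-action via the cycle map to $\mathrm{Chow}_{1,d}(X/S)$, and normalize --- and you correctly isolate the genuinely delicate points (local closedness of the relevant locus in the Chow variety, and the identification of the normalized image with the orbit space). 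Two small imprecisions are worth fixing. First, $\Hom(\P^{1},X/S)$ itself is only a countable disjoint union of quasi-projective schemes; the right statement is that $\Hom_{d}$ is an open subscheme of a \emph{single} Hilbert scheme $\mathrm{Hilb}_{P}(\P^{1}\times_{S}X/S)$ (the Hilbert polynomial of the graph is determined by $d$), which is projective, so quasi-projectivity of $\Hom_{d}$ does not rely on the loose ``union of connected components'' argument. Second, in the GIT alternative, freeness and properness of the $\mathrm{PGL}_{2}$-action do not by themselves make every point stable for a chosen linearization; stability must be checked (or one must invoke a quotient-existence theorem for proper free actions on normal schemes, which is essentially what Koll\'ar's II.2.9 supplies). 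Neither point undermines the Chow-variety route you designate as the actual proof.
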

Since the evaluation map $ev_{0}:Hom_{Spec(\C)}^{\text{nonconstant}}(\P^{1},F)\to F$ is a morphism defined by
$$ev_{0}\left(\begin{tikzcd}[row sep=0.8em]
\P^{1}\times T \arrow{dr}{pr} \arrow{rr}{f} &                & F\times T \arrow{dl}{pr}\\
                              & T
\end{tikzcd}\right)=(f|_{\{0\}\times T}:T\to F)\in Hom(T,F)$$
is represented by a variety,

\begin{prop} \label{family}
The image of the evalutation map $ev_{0}:Hom_{Spec(\C)}^{\text{nonconstant}}(\P^{1},F)\to F$ is a countable union of subvarieties.
\end{prop}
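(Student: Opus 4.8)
The plan is to exploit the fact that, although $Hom(\P^1,F)$ is \emph{not} of finite type, it breaks up canonically into countably many finite-type pieces, on each of which Chevalley's constructibility theorem applies. First I would recall from the representability statements quoted above that $Hom(\P^1,F)$ is an open subscheme of the Hilbert scheme $Hilb\big(\P^1\times_{Spec(\C)}F\big)$. Fixing an ample line bundle on $\P^1\times F$ (say $\mathcal O(1)\boxtimes H$ for an ample $H$ on $F$), Grothendieck's theorem gives the decomposition $Hilb(\P^1\times F)=\bigsqcup_{P}Hilb_{P}(\P^1\times F)$ over Hilbert polynomials $P$, with each $Hilb_{P}$ projective over $\C$. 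Only countably many polynomials $P$ occur, so intersecting with the open subscheme $Hom(\P^1,F)$ produces a countable decomposition $Hom(\P^1,F)=\bigsqcup_{P}Hom_{P}(\P^1,F)$ into quasi-projective, hence finite-type, $\C$-schemes. Since the graph of a morphism $f\colon\P^1\to F$ has Hilbert polynomial recording the degree $\deg f^{*}H$, and the constant maps are exactly the degree-zero stratum, removing them leaves $Hom^{\mathrm{nonconstant}}(\P^1,F)=\bigsqcup_{P\neq P_{0}}Hom_{P}(\P^1,F)$, still a countable disjoint union of finite-type schemes.

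Next I would restrict the evaluation morphism to each stratum. For every occurring $P$, the map $ev_{0}|_{Hom_{P}}\colon Hom_{P}(\P^1,F)\to F$ is a morphism of finite-type $\C$-schemes, so by Chevalley's theorem on the image of a morphism of finite type between Noetherian schemes, $ev_{0}\big(Hom_{P}(\P^1,F)\big)$ is a constructible subset of $F$. A constructible subset of a Noetherian scheme is a finite union of locally closed subsets; passing to reduced structures and decomposing into irreducible components then exhibits it as a finite union of locally closed subvarieties of $F$ (one may take Zariski closures if closed subvarieties are preferred, at the cost of enlarging the set slightly).

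Finally, taking the union over all countably many Hilbert polynomials $P$ gives
$ev_{0}\big(Hom^{\mathrm{nonconstant}}(\P^1,F)\big)=\bigcup_{P}ev_{0}\big(Hom_{P}(\P^1,F)\big),$
which is a countable union of subvarieties of $F$, as asserted.

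The hard part — and the only genuine obstacle to a one-line invocation of Chevalley — is precisely that the full Hom scheme has infinitely many connected components and is therefore not of finite type, so Chevalley cannot be applied to it directly. The entire argument rests on first stratifying $Hom^{\mathrm{nonconstant}}(\P^1,F)$ into the finite-type pieces $Hom_{P}$ via the Hilbert-polynomial (equivalently, degree) decomposition; the countability of the set of Hilbert polynomials is exactly what converts the \emph{finitely many} subvarieties produced by Chevalley on each stratum into the countable union in the statement. This countability is also what the subsequent dimension-counting (extraction of a component of dimension at least $n-k$) will use.
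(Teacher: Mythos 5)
Your argument is correct and is essentially the same as the paper's (the paper gives no detailed proof, but its citations of the Hilbert-polynomial decomposition of the Hilbert scheme and of the quasi-projectivity of $\text{RatCurves}^{n}_{d}(X/S)$ for each fixed $d$ point to exactly the strategy you use: stratify $Hom^{\text{nonconstant}}(\P^{1},F)$ into countably many finite-type pieces indexed by degree/Hilbert polynomial and apply Chevalley's constructibility theorem on each). Your write-up in fact supplies more detail than the paper does.
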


\subsection{Proof of the Main Theorem}
We are ready to prove our main theorem. Let $M$ be an affine variety with the standard symplectic structure and a complex structure $J$. Suppose that $[\mho]\in H^{m}(M)$ be in the image of the map $\delta$, where $m=2k$ or $2k+1$ for some $k\in \N$.
$$\begin{tikzcd} [cramped, sep=small]
\cdots \to SH^{m-1}(M)\arrow[r] & SH^{m-1}_{+}(M)\arrow[r,"\delta"] & H^{m}(M)\arrow[r,"\iota"] & SH^{m}(M)\to \cdots.
\end{tikzcd}
$$

By the correspondence theorem of Bourgeois-Oancea's Morse-Bott symplectic cohomology [Theorem 3.7, \cite{bourgeois.oancea09}], there exist a open-dense subset of the set of admissible pairs $(H,J)$ so that there exist a cascade $(\nu, v) \in \mathcal{M}^{A}_{1}(\gamma,q_{\mho};H,\{f_{x}\},J)$, where $\nu$ is a Morse trajectory limiting to a critical point $q_{\mho}$ of $f$ and $v$ is a $J$-holomorphic curve. Since $J|_{\text{int}\underline{M}}$ is integrable, $v|_{\text{int}\underline{M}}$ is holomorphic. By the maximum principle, the number of boundary component of $v\cap \underline{M}$ is $1$. 

For any $J$ and any $f$, there exist a non-constant proper $J$-holomorphic map $v:\D\to \underline{M}$ of finite energy with image$(v:\D\to \underline{M})\cap \mho_{f}\neq \emptyset$ and $\partial\D\subset \partial \underline{M}$. Therefore, $\underline{M}$ is $(1,\mathcal{E}, [\alpha])$-uniruled. By proposition \ref{JtoAlg}, we get a polynomial map $\bar{v}:\C\to M$ or a rational curve $\bar{\bar{v}}: \P^{1}\to X:=M\cup D$ with $\bar{\bar{v}}^{-1}(D)={\infty}$ satisfying image$(\bar{\bar{v}}:\C\to X)\cap \overline{\mho^{\alpha}_{f}}\neq \emptyset$. Then by proposition \ref{family}, a family of all such rational curves(projective lines) forms a uniruled subvariety $\overline{\Xi}$ of $X$. By Thom transversality for generic pair $(f,g)$ of a Morse function on $X$ with metric $g$, $\overline{\Xi}$ intersect transversely with each Whitney strata of the closure of $\mho^{\alpha}_{f}$ by induction on the dimension. Therefore $\overline{\Xi}$ is of dimension at least $n-k$. Since each $\bar{\bar{v}}$ satisfies $\bar{\bar{v}}^{-1}(D)={\infty}$, $\Xi:=\overline{\Xi}\cap M$ is a $\C$-uniruled subvariety of $M$ of dimension at least $n-k$. We are done.
\qed

\section{Applications}
In this section, we explain the applications of two main theorems. Let $M$ be a smooth affine variety.
\subsection{Main Application: $\ell(M) \Rightarrow$ Uniruled Subvarieties}

\begin{corollary} [\ref{main theorem1}] \label{4.1}
If $\ell(M)=2k$ or $2k+1$ ($0\leq k <n$), then $M$ contains a $2(n-k)$-dimensional family of affine lines, where an affine line means the image of $\C$ in $M$ under a nonconstant rational map.
\end{corollary}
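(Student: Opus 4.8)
The plan is to read the corollary off from Theorem~\ref{main theorem0} together with the parametrizing construction carried out in its proof, so that no new analytic input is required. First I would unwind the definition of $\ell(M)$: the hypothesis $\ell(M)=2k$ or $2k+1$ says precisely that $m:=\ell(M)\in\{2k,2k+1\}$ is the minimal degree of a nonzero class lying in the image of the connecting map $\delta\colon SH^{*-1}_{+}(M)\to H^{*}(M)$. In particular there is a class $0\neq[\mho]\in H^{m}(M)$ with $[\mho]=\delta(\gamma)$ for some $0\neq\gamma\in SH^{m-1}_{+}(M)$, which is exactly the hypothesis needed to feed into the main theorem.

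Next I would apply Theorem~\ref{main theorem0} to this class $[\mho]$. Since $m=2k$ or $2k+1$, the theorem yields a subvariety $\Xi_{\mho}\subset M$ of complex dimension at least $n-k$ that is $\C$-uniruled, i.e. through every $p\in\Xi_{\mho}$ there passes a nonconstant algebraic map $\C\to\Xi_{\mho}$ whose image contains $p$. This already establishes the uniruled subvariety of dimension $n-k$.

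For the statement about the family of affine lines I would return to the construction inside the proof of Theorem~\ref{main theorem0}. There the subvariety $\overline{\Xi}$ with $\overline{\Xi}\cap M=\Xi_{\mho}$ is obtained as the image of an evaluation morphism $\ev_{0}\colon\Hom^{\mathrm{nonconstant}}(\P^{1},X)\to X$ on a quasi-projective parameter scheme of rational curves, via Grothendieck's $\Hom$/$\mathrm{Hilb}$ machinery (Proposition~\ref{family}). Each parametrized curve $\bar v\colon\P^{1}\to X$ satisfies $\bar v^{-1}(D)=\{\infty\}$ and hence restricts to an affine line $\C\to M$; the corresponding component $\mathcal{R}$ of the parameter scheme sweeps out $\Xi_{\mho}$. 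Because $\ev_{0}$ is dominant onto $\Xi_{\mho}$, the parameter space $\mathcal{R}$ has complex dimension at least $n-k$, i.e. real dimension $2(n-k)$, which gives the asserted $2(n-k)$-dimensional family of affine lines.

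The corollary carries no genuine obstacle beyond Theorem~\ref{main theorem0} itself: all the analytic content---the Bourgeois--Oancea cascade correspondence, McLean's degeneration to the normal cone, and Fish's target-local compactness---is already absorbed there. The one place I would treat with care is the dimension bookkeeping for the \emph{family}, as opposed to the swept subvariety $\Xi_{\mho}$: I must verify that $\ev_{0}$ dominates $\Xi_{\mho}$ with fibers of the expected dimension and keep track of the reparametrization freedom in $\Hom(\P^{1},X)$, so that the parameter space genuinely realizes the stated $2(n-k)$-dimensional family rather than merely a lower-dimensional one.
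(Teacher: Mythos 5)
Your proposal matches the paper's treatment: the corollary is stated there as an immediate consequence of Theorem~\ref{main theorem00}, obtained exactly as you describe by unwinding the definition of $\ell(M)$ to produce a minimal-degree class $[\mho]$ in the image of $\delta$ and feeding it into the main theorem, with the family of affine lines coming from the $\mathcal{H}om(\P^{1},X)$ parameter scheme and evaluation map already constructed in that proof. Your added care about the dimension of the parameter space versus the swept-out locus $\Xi_{\mho}$ (complex dimension $n-k$, hence real dimension $2(n-k)$) is a reasonable clarification of a point the paper leaves implicit, but it does not change the argument.
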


\begin{corollary} \label{cor1} If $\ell(M)=0$ or $1$, any projective variety $X$ compactifying $M$ is uniruled. 
\end{corollary}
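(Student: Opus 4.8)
The plan is to deduce this purely formally from the $\C$-uniruledness of $M$ itself, which the main theorem already provides, and then to push affine lines forward to rational curves in an arbitrary compactification. First I would unwind the hypothesis: by definition of $\ell(M)$, the assumption $\ell(M)\in\{0,1\}$ means there is a class $[\mho]\in H^{m}(M)$ with $m=\ell(M)\in\{0,1\}$ lying in the image of the connecting map $\delta\colon SH^{m-1}_{+}(M)\to H^{m}(M)$. Writing $m=2k$ or $m=2k+1$ forces $k=0$. Applying Theorem~\ref{main theorem0} (equivalently Corollary~\ref{4.1}) to this $[\mho]$ yields a $\C$-uniruled subvariety $\Xi_{\mho}\subset M$ of dimension at least $n-k=n$. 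Since $M$ is irreducible of dimension $n$, necessarily $\Xi_{\mho}=M$, so $M$ itself is $\C$-uniruled: for every $p\in M$ there is a nonconstant algebraic map $\overline{v}_{p}\colon\C\to M$ with $p\in\mathrm{image}(\overline{v}_{p})$.

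Next I would transfer this to an arbitrary projective compactification. Let $X$ be any projective variety containing $M$ as a dense Zariski-open subset. For a point $p\in M$, compose $\overline{v}_{p}$ with the inclusion $M\hookrightarrow X$ to obtain a morphism $\overline{v}_{p}\colon\C\cong\P^{1}\setminus\{\infty\}\to X$. Because $X$ is projective, hence proper, and $\P^{1}$ is a smooth curve, this morphism extends uniquely across $\infty$ to a morphism $\overline{\overline{v}}_{p}\colon\P^{1}\to X$, by the valuative criterion of properness applied to the local ring of $\P^{1}$ at $\infty$. The extension remains nonconstant (since $\overline{v}_{p}$ already is), and its image still contains $p$. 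Thus through every point of the dense open $M\subset X$ there passes a nonconstant rational curve $\P^{1}\to X$.

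Finally, since $M$ is dense in $X$, a generic point of $X$ lies in $M$, and the previous step exhibits a rational curve through it. Hence $X$ is uniruled, and this argument applies verbatim to every projective compactification $X$ of $M$, which is the assertion.

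Since all the symplectic and deformation-theoretic content is already carried by Theorem~\ref{main theorem0} and Corollary~\ref{4.1}, I expect no genuine obstacle in this step; the only points requiring care are algebro-geometric bookkeeping. The main one worth flagging is the identification that a $\C$-uniruled subvariety of dimension $\geq n$ inside the $n$-dimensional irreducible $M$ must equal $M$, together with the verification that the extension $\C\to X$ to $\overline{\overline{v}}_{p}\colon\P^{1}\to X$ stays nonconstant, so that uniruledness—a condition imposed only at the generic point of $X$—is genuinely witnessed by curves meeting the dense open $M$.
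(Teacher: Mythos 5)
Your proposal is correct and follows the same route the paper intends: the hypothesis forces $k=0$ in Theorem \ref{main theorem0}, so the uniruled subvariety has full dimension and $M$ itself is $\C$-uniruled, after which the affine lines extend to rational curves in any projective compactification by properness. The paper leaves this corollary as an immediate consequence of the main theorem, and your write-up supplies exactly the bookkeeping (identifying $\Xi_{\mho}=M$ and closing up $\C\to M\subset X$ to $\P^{1}\to X$) that the paper omits.
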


\begin{corollary}\cite{zhou19} \label{SH=0}
If $SH^{*}(M)=0$, then $\ell(M)=0$ and $M$ is $\C$-uniruled. 
\end{corollary}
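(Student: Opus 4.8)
The plan is to extract the numerical statement $\ell(M)=0$ directly from the long exact sequence preceding Theorem~\ref{main theorem00}, and then to invoke Corollary~\ref{4.1} with $k=0$ to conclude $\C$-uniruledness. All of the geometric content has already been absorbed into the main theorem, so the work here is purely homological bookkeeping.

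First I would use the hypothesis $SH^{*}(M)=0$ to see that the map $\iota\colon H^{*}(M)\to SH^{*}(M)$ vanishes identically, simply because its target is zero in every degree. Exactness of
\[
\cdots \to SH^{*-1}(M)\to SH^{*-1}_{+}(M)\xrightarrow{\;\delta\;} H^{*}(M)\xrightarrow{\;\iota\;} SH^{*}(M)\to\cdots
\]
then gives $\operatorname{im}(\delta)=\ker(\iota)=H^{*}(M)$; that is, $\delta$ is surjective onto $H^{*}(M)$ in every degree.

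Next I would pin down the minimal-degree nonzero class in the image of $\delta$. Since $M$ is a connected smooth affine variety, $H^{0}(M)\cong\K$ is nonzero and generated by the unit $1$. Surjectivity of $\delta$ provides $\gamma\in SH^{-1}_{+}(M)$ with $\delta(\gamma)=1$; as $1\neq 0$ while $\delta(0)=0$, this $\gamma$ is automatically nonzero, so $1=\delta(\gamma)$ is a nonzero class of degree $0$ in the image of $\delta$. Because the cohomology of an affine $n$-fold is supported in nonnegative degrees, $0$ is the smallest degree available, and hence $\ell(M)=0$. Applying Corollary~\ref{4.1} with $\ell(M)=0=2\cdot 0$ (so $k=0$) yields a uniruled subvariety of dimension $n-0=n$, which is all of $M$ (up to passing to its closure, or to a dense open subset); thus $M$ is $\C$-uniruled.

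There is essentially no obstacle of substance here: the only points needing care are that the preimage $\gamma$ is genuinely nonzero (guaranteed since $\delta(\gamma)=1\neq 0$ forces $\gamma\neq 0$) and that degree $0$ truly realizes the minimum, which rests on the connectedness of $M$ together with the nonnegativity of the grading on $H^{*}(M)$. In this way the statement becomes a formal shadow of Theorem~\ref{main theorem00} and Corollary~\ref{4.1}, and it recovers Theorem~5.4 of \cite{zhou19}.
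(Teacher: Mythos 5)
Your argument is correct and is exactly the route the paper intends (the corollary is stated without an explicit proof, as an immediate consequence of the long exact sequence and Corollary \ref{4.1}): vanishing of $SH^{*}(M)$ forces $\delta$ to be surjective, so the unit $1\in H^{0}(M)$ lies in $\operatorname{im}(\delta)$ with a necessarily nonzero preimage, giving $\ell(M)=0$ and hence $\C$-uniruledness via the $k=0$ case of the main theorem. The two points you flag --- nonvanishing of the preimage and minimality of degree $0$ --- are indeed the only things to check, and you handle both.
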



\begin{corollary}
If $SH^{*}(M,\underline{\Lambda}_{\tau \beta})=0$ for some $\beta\in H^{2}(M)$, then $M$ is $\C$-uniruled. 
\end{corollary}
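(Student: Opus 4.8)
The plan is to reduce the twisted vanishing hypothesis to showing $\ell(M; \underline{\Lambda}_{\tau\beta}) = 0$, and then to invoke the twisted form of the main theorem. First I would use the twisted long exact sequence [Lemma 8.1, \cite{ritter13}] recorded in the introduction,
$$\cdots \to SH^{*-1}(M; \underline{\Lambda}_{\tau\beta}) \to SH^{*-1}_{+}(M; \underline{\Lambda}_{\tau\beta}) \xrightarrow{\ \delta\ } H^{*}(M; \underline{\Lambda}_{\tau\beta}) \xrightarrow{\ \iota\ } SH^{*}(M; \underline{\Lambda}_{\tau\beta}) \to \cdots .$$
The hypothesis $SH^{*}(M; \underline{\Lambda}_{\tau\beta}) = 0$ makes $\iota$ the zero map in every degree, so by exactness $\delta$ is surjective.

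Next I would pin down the unit class. Because $\beta = \tau(\xi)$ integrates $\xi$ against $\partial_{s}u \wedge \partial_{t}u$, it vanishes on constant loops, so the twisting is trivial on the low-action part of the complex and $H^{*}(M; \underline{\Lambda}_{\tau\beta}) \cong H^{*}(M) \otimes_{\K} \Lambda$; in particular, since $M$ is a connected affine variety, $H^{0}(M; \underline{\Lambda}_{\tau\beta}) \cong \Lambda \neq 0$. By surjectivity of $\delta$ the unit $1 \in H^{0}(M; \underline{\Lambda}_{\tau\beta})$ equals $\delta(\gamma)$ for some $\gamma \in SH^{-1}_{+}(M; \underline{\Lambda}_{\tau\beta})$, and as $1 \neq 0$ necessarily $\gamma \neq 0$. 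Hence $\ell(M; \underline{\Lambda}_{\tau\beta}) = 0$, which is the case $m = 0$, $k = 0$ of the main theorem.

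I would then apply the twisted analogue of Theorem \ref{main theorem00}: it produces a $\C$-uniruled subvariety $\Xi_{\mho} \subset M$ of dimension at least $n - k = n$. Since $\dim_{\C} M = n$, this forces $\Xi_{\mho} = M$ (after taking closure), so $M$ is itself $\C$-uniruled, as claimed.

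The one point requiring genuine justification is that Theorem \ref{main theorem00} survives the twist, and this is where I expect the only real work to lie --- though I expect it to be essentially free. In the twisted complex the connecting map $\delta$ counts exactly the same Floer cylinders, now weighted by Novikov monomials $t^{\mathcal{E}_{H}(u) - \int_{[u]}\beta}$, so a nonzero identity $\delta(\gamma) = 1$ over the Novikov field $\Lambda$ still forces the existence of at least one honest Floer trajectory asymptotic to $\gamma$ and anchored on a cycle representing the unit. From that single cylinder the Bourgeois--Oancea correspondence, McLean's degeneration to the normal cone, and Fish's target-local Gromov compactness (Propositions \ref{JtoAlg} and \ref{make holomorphic}) apply verbatim, since none of these geometric arguments sees the coefficient field; the resulting $(1, \mathcal{E}, [\mho])$-uniruledness and the passage to an algebraic family of rational curves are therefore unchanged.
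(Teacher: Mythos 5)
Your proposal is correct and follows essentially the route the paper intends: the twisted long exact sequence forces $\delta$ to be surjective, the unit in $H^{0}(M;\underline{\Lambda}_{\tau\beta})\cong H^{0}(M)\otimes\Lambda$ lies in its image, so $\ell(M;\underline{\Lambda}_{\tau\beta})=0$ and the ($k=0$ case of the) main criterion applies. Your closing remark --- that a nonzero Novikov-weighted count still forces a nonempty moduli space of Floer cylinders, after which the Bourgeois--Oancea, degeneration-to-the-normal-cone, and target-local compactness arguments are coefficient-blind --- is exactly the point the paper leaves implicit, and you have justified it correctly.
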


\begin{example} 
\begin{enumerate}
\item $SH^{*}(\C^{n})\cong SH^{*}(\{\text{a point}\})=0$ and $\C^{n}$ is $\C$-uniruled.
\item By theorem \ref{flexible}, flexible Weinstein domains are $\C$-uniruled if they are affine varieties.
\item By the K\"unneth formula, $SH^{*}(\C\times M)=0$ and, indeed, $\C\times M$ is $\C$-uniruled.
\item (Example, p.1043, \cite{ritter14}) Let $X$ be a monotone closed symplectic manifold $(X, \omega)$, $c_{1}(X)=\lambda \omega$ for $\lambda >0$. Let $M$ be the total space of canonical line bundle $\mathcal{K}_{X}$ over $X$. Since $c_{1}(T_{M})=c_{1}(T_{X})+c_{1}(\mathcal{L})$ and $c_{1}(T_{X})=-c_{1}(\mathcal{K}_{X})$, $SH^{*}(M)=0$. Therefore, $M$ is $\C$-uniruled.
\item Let $X$ be a smooth projective variety. Assume that $D$ is a divisor of $X$ satisfying that $-\mathcal{K}_{X}-D$ is ample. Then $SH^{*}(\text{Tot}((\mathcal{K}_{X}+D)|_{D}\to D))=0$.
\end{enumerate}
\end{example}

\subsubsection{Uniruled Subvarieties for Pairs $(X,D)$}
Let $X$ be a projective variety, $D$ be an ample divisor, $K$ be a compact subset of $X\setminus D$. Let $SH^{*}_{D,a,b}(X)$, (resp., $SH^{*}_{D,a,b}(K\subset X)$) denote $SH^{*}_{a,b}(X\setminus D)$ (resp., $SH^{*}_{a,b}(K\subset X\setminus D)$). Then $\widehat{\ell}(X,D):=\widehat{\ell}(X\setminus D)$ detects uniruled subvarieties of $X$ with respect to given ample divisor. By the long exact sequence, 
$$\begin{tikzcd} [cramped, sep=small]
\cdots \to SH^{m-1}(X\setminus D)\arrow[r]\arrow[d,equal] & SH^{m-1}_{+}(X\setminus D)\arrow[r,"\delta_{D}"]\arrow[d,equal] & H^{m}(X\setminus D)\arrow[r,"\iota"]\arrow[d,equal] & SH^{m}(X\setminus D)\arrow[d,equal] \to \cdots.\\
\cdots \to SH^{m-1}_{D}(X) \arrow[r] & SH^{m-1}_{D,+}(X)\arrow[r,"\delta_{D}"] & H^{m}(X\setminus D)\arrow[r,"\iota"] & SH^{m}_{D}(X) \to \cdots.
\end{tikzcd}$$
Using the notation, we can also define a stronger measurement for the dimension of uniruled subvariety a projective variety $X$. $\ell(X):=$min$\{\widehat{\ell}(X,D):\forall \text{ ample divisor } D\}$.

As a corollary of the main theorem, we can detect uniruled subvarieties of a projective variety $X$ by considering all the ample divisors of $X$: $\bigcup_{D^{\text{ample}}}\bigcup_{\alpha\in \delta_{D}}\overline{\Xi}_{\alpha}$, where $\overline{\Xi}_{\alpha}$ is a uniruled subvariety of $X$ corresponding to $\alpha \in \delta_{D}$.

A projective variety is called to be rationally-connected if for a generic pair of two points $(x_{1},x_{2})\in X\times X$, there exist a rational curve $\P^{1}\to X$ passing through $x_{1}, x_{2}$. For example, every Fano variety is rationally-connected, \cite{kollar.miyaoka.mori92}. Rationally-connectedness of projective varieties is conjectured to be symplectic deformation invariant by Koll\'{a}r and it was proven in the case of dimension $3$, \cite{voisin08}, \cite{tian10}. When a projective variety $X$ has a rationally-connected ample divisor, we can apply $\ell(X, D)$ to detect rationally-connectedness as follow. 

\begin{corollary}
\begin{enumerate}
    \item Let $X$ be a projective variety. Suppose that $X$ has a rationally-connected ample divisor $D$ (each strata of $D$ is rationally-connected) and $\ell(X, D)=0$. Then, $X$ is rationally-connected.
    \item Let $X$ be a rationally-connected projective variety. Suppose that $X$ has rationally-connected ample divisor. Then $\ell(X, D)=0$.
    \item Let $X$ be a projective variety with a rationally-connected ample divisor $D$. If $\ell(X, D)=0$, then $X$ is rationally-connected.
\end{enumerate}
\end{corollary}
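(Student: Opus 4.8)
The plan is to treat the two implications separately, noting first that parts (1) and (3) are literally the same assertion, namely $\ell(X,D)=0\Rightarrow X$ rationally connected, while part (2) is its converse. Throughout I use the reduction that $\ell(X,D)=\widehat{\ell}(X\setminus D)=0$ means, by the long exact sequence of Lemma \ref{relativelemma1}(2), that the unit $1\in H^0(X\setminus D)$ lies in the image of $\widehat{\delta}$; this is exactly the hypothesis needed to run Theorem \ref{main theorem00} in its completed form with $m=0$, $k=0$ (using $\widehat{\delta}$ in place of $\delta$, as sanctioned by the remark following Lemma \ref{relativelemma1}).

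For the direction $\ell(X,D)=0\Rightarrow X$ rationally connected (parts (1) and (3)), I would first feed the degree-zero class into Theorem \ref{main theorem00}, obtaining a subvariety $\Xi\subset X\setminus D$ of dimension at least $n-0=n$; hence $\Xi=X\setminus D$ is $\C$-uniruled, and through a generic $x\in X$ there is a nonconstant $\overline{v}:\C\to X\setminus D$ whose closure is a rational curve $\P^1\to X$ meeting $D$ in the single point $\overline{v}(\infty)$. I would then promote this to rational chain connectedness: since $D$ is ample and (assuming $\dim X\ge 2$, the case $\dim X=1$ being trivial) connected by Lefschetz, and since each stratum of the simple normal crossing divisor $D$ is rationally connected, the divisor $D$ is itself rationally chain connected. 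Given two general points $x_1,x_2\in X\setminus D$, attach to each the rational curve $C_i\ni x_i$ meeting $D$ at a point $p_i$, join $p_1$ to $p_2$ by a chain of rational curves inside $D$, and concatenate; this exhibits $X$ as rationally chain connected. Finally, because $X$ is smooth and proper over $\C$, the Kollár--Miyaoka--Mori theorem that rational chain connectedness equals rational connectedness in characteristic zero \cite{kollar.miyaoka.mori92}, \cite{kollar96} upgrades this to the desired conclusion.

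For the converse (part (2)), $X$ rationally connected with $D$ rationally connected ample $\Rightarrow\ell(X,D)=0$, I would again reduce to the Floer-theoretic statement that $1\in\operatorname{im}(\widehat{\delta})$, i.e.\ that the unit is killed in the action-completed relative symplectic cohomology $SH^{*}_{D}(\underline{X\setminus D}\subset X\setminus D)$. The natural route passes through the comparison with quantum cohomology: by the Borman--Sheridan--Varolgunes package (Theorem C in \cite{borman.sheridan.varolgunes21}, the $D_1=\emptyset$ case recovered in the excerpt), this completed invariant is the $E_1$-page of a spectral sequence converging to $SH^{*}_{D}(\underline{X\setminus D}\subset X)\cong QH^{*}(X,\Lambda)$, and the fate of the unit is governed by the image of $1$ under the PSS/closed--open map into $QH^{*}(X,\Lambda)$ together with the quantum class of $D$. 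I would argue that the positivity supplied by rational connectedness of $X$ and of each stratum of $D$ forces the relevant class to be invertible (equivalently, forces the rational curves produced above to survive as quantum corrections killing the completed unit), whence $1\in\operatorname{im}(\widehat{\delta})$ and $\ell(X,D)=0$.

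The hard part is precisely this converse, and the delicacy is already visible in the excerpt's computation $C_{(1,1)}=\P^1\times\P^1\setminus D_{(1,1)}\cong T^{*}\P^1$: although $\P^1\times\P^1$ is rationally connected and the $(1,1)$-divisor is rationally connected and ample, the \emph{uncompleted} invariant has $\ell(T^{*}\P^1)=2\neq 0$. Thus part (2) is false for the ordinary $\ell$ and must be read for the completed invariant $\widehat{\ell}$ (and possibly after passing to twisted coefficients, where $SH^{*}(T^{*}\P^1,\Lambda_{\omega_I})=0$ is already known); its genuine content is that the quantum corrections of a rationally connected $X$ annihilate the loop-space classes surviving in the uncompleted theory. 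Making the invertibility criterion precise for a general SNC pair $(X,D)$, and verifying that rational connectedness rather than the stronger Fano condition already suffices to kill the completed unit, is where the real work lies; I would expect to need the full strength of \cite{borman.sheridan.varolgunes21} together with the existence of free rational curves on $X$, and I flag this as the main obstacle.
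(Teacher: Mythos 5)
The paper states this corollary without proof, so there is no official argument to compare against; judged on its own terms, your treatment of parts (1) and (3) is correct and is surely the intended one: $\ell(X,D)=0$ puts the degree-zero class in the image of $\delta_{D}$, Theorem \ref{main theorem00} with $m=k=0$ then makes $X\setminus D$ itself $\C$-uniruled, the closures of the resulting affine lines are rational curves meeting the (connected, by ampleness) divisor $D$, and concatenating two such curves with a chain of rational curves inside the rationally connected strata of $D$ exhibits rational chain connectedness of $X$ through general points, which upgrades to rational connectedness by Koll\'ar--Miyaoka--Mori/Campana since $X$ is smooth projective over $\C$. The one point worth making explicit is that when $D$ has several components or strata you need the incidence structure of the strata to be connected so that $p_{1}$ and $p_{2}$ can actually be joined inside $D$; ampleness together with the parenthetical hypothesis that every stratum is rationally connected covers this.

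Your assessment of part (2) is also the right one: it does not follow from anything proved in the paper, because Theorem \ref{main theorem00} only runs from Floer theory to geometry, and the paper's own computation $\ell(T^{*}\P^{1})=\ell\big(\P^{1}\times\P^{1}\setminus D_{(1,1)}\big)=2$ is a direct counterexample to the literal statement, since $\P^{1}\times\P^{1}$ is rationally connected and $D_{(1,1)}\cong\P^{1}$ is a rationally connected ample divisor. So either $\ell(X,D)$ in part (2) must be read as the completed invariant $\widehat{\ell}$ (or a twisted version, as with $SH^{*}(T^{*}\P^{1},\Lambda_{\omega_{I}})=0$), or the statement needs an additional hypothesis; in either case a proof would require converse-direction input such as the Borman--Sheridan--Varolgunes comparison with quantum cohomology, exactly as you say. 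You have not closed this gap, but the gap lies in the statement as much as in your proposal, and flagging it with a concrete counterexample drawn from the paper itself is the correct response.
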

However, having a rationally-connected ample divisor is a very strong condition and it is not known that every rationally-connected projective variety has a rationally-connected ample divisor which was conjectured by Fano. \footnote{The author learned the conjecture from Jason Starr}  

\subsection{Variance of $\ell(M)$ under Symplectic Surgeries}

\begin{theorem} \label{1h-attach}
Let $\underline{M}$ be the associated Liouville domain obtained by intersection of $M$ and a large $2n$-ball. Assume that $\underline{M}$ is of complex dimension equal to or bigger than $3$. Suppose that $\underline{M}$ is a connected Liouville domain with a Weinstein $1$-handle attached, then $M$ is $\C$-uniruled.
\end{theorem}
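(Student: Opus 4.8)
The plan is to reduce the statement to the numerical criterion $\ell(M)\le 1$ and then invoke Corollary~\ref{4.1} with $k=0$. Write $\underline{M}=\underline{N}_{\Supset 1}$, where $\underline{N}$ is the connected Liouville domain to which the Weinstein $1$-handle is attached along two points of $\partial\underline{N}$. Since $\dim_{\C}\underline{M}=n\ge 3$, the handle has index $1<n$ and is therefore subcritical, so by [Theorem~1.11(1),~\cite{cieliebakhandle}] the Viterbo transfer map $\mathrm{Vit}\colon SH^{*}(M)\to SH^{*}(N)$ associated to the codimension-$0$ inclusion $\underline{N}\hookrightarrow\underline{M}$ is an isomorphism.

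Next I would exploit the naturality of the tautological long exact sequence under Viterbo functoriality. Because the canonical map $\iota\colon H^{*}\to SH^{*}$ commutes with transfer and restriction, the square
\[
\begin{tikzcd}[cramped, sep=small]
H^{*}(M) \arrow[r,"\iota_M"] \arrow[d,"r"'] & SH^{*}(M) \arrow[d,"\mathrm{Vit}"] \\
H^{*}(N) \arrow[r,"\iota_N"'] & SH^{*}(N)
\end{tikzcd}
\]
commutes, where $r$ is the restriction on ordinary cohomology. Attaching a $1$-handle to the connected domain $\underline{N}$ along two points of one component has the homotopy type of adjoining an arc, so $\underline{M}\simeq\underline{N}\vee S^{1}$ and $H^{1}(M)\cong H^{1}(N)\oplus\Z$, with $r\colon H^{1}(M)\to H^{1}(N)$ the projection. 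I would then choose a generator $0\ne\beta\in\ker r\subset H^{1}(M)$ of the new summand and chase the diagram: $\mathrm{Vit}(\iota_M(\beta))=\iota_N(r(\beta))=0$, and since $\mathrm{Vit}$ is an isomorphism this forces $\iota_M(\beta)=0$. By exactness of $SH^{0}_{+}(M)\xrightarrow{\delta}H^{1}(M)\xrightarrow{\iota_M}SH^{1}(M)$, the class $\beta$ lies in the image of $\delta$, and $\beta\ne 0$ guarantees the corresponding $\gamma\in SH^{0}_{+}(M)$ is nonzero. Hence $\ell(M)\le\deg\beta=1$, i.e. $\ell(M)\in\{0,1\}$.

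To conclude I would apply Corollary~\ref{4.1} with $k=0$, which produces a $2n$-dimensional family of affine lines in $M$ passing through a generic point, so $M$ is $\C$-uniruled. The hard part, and the step most in need of care, is the second one: one must check that the invariance isomorphism of \cite{cieliebakhandle} is genuinely realised by the Viterbo transfer and that this transfer is compatible with the maps $\iota$, so that the ladder of long exact sequences really commutes. Equally essential is the identification of the $1$-handle as being of \emph{loop} type, i.e. attached to a single connected component of $\partial\underline{N}$, so that it contributes a rank-one kernel to $r$ in degree $1$; this is exactly the content read into the hypothesis that $\underline{M}$ is connected \emph{and obtained by attaching a $1$-handle to a connected domain}. (If the handle instead joined two components, $\underline{M}$ would be an end-connected sum $\underline{N}_{1}\#_{e}\underline{N}_{2}$, the restriction $r$ would be injective in every degree, and this argument would give no information — consistent with $\ell(\underline{N}_{1}\#_{e}\underline{N}_{2})=\min\{\ell(N_1),\ell(N_2)\}$.)
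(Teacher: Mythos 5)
Your argument is correct, but it is a genuinely different route from the paper's. The paper's proof works with the two sandwiching Liouville domains $\underline{M}\hookrightarrow M\hookrightarrow L$ of the sandwich lemma (which are Liouville deformation equivalent, so the outer vertical maps of its ladder are isomorphisms), and it kills the new degree-one class already at the level of $H^{1}(M)$ by asserting that ``the cohomology classes of affine varieties are in even degree,'' after which exactness places $[\alpha]$ in the image of $\delta$. You instead compare $\underline{M}=\underline{N}_{\Supset 1}$ with the pre-handle domain $\underline{N}$, invoke Cieliebak's subcritical invariance \cite{cieliebakhandle} to make the $SH$-vertical an isomorphism, and kill $\iota_{M}(\beta)$ because $\beta$ restricts to zero on $\underline{N}$. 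What your version buys is independence from any vanishing claim for odd cohomology of affine varieties --- a claim that fails in general (e.g.\ for $(\C^{*})^{n}$), and which sits uneasily with the fact that the paper's map $C^{M,L}$ is induced by a homotopy equivalence and so cannot annihilate a nonzero class; your mechanism uses only the topological fact that a loop-type $1$-handle contributes a summand killed by restriction to $\underline{N}$, together with the compatibility of the tautological exact sequence with the Viterbo transfer. The two points you flag as needing care --- that Cieliebak's isomorphism is realised by the transfer and that the ladder of long exact sequences commutes --- are exactly the load-bearing inputs, and both are standard (see \cite{cieliebakoancea18}). Your reading of the hypothesis as a loop-type handle also matches the paper's intent, since its proof explicitly assumes the handle creates a non-trivial class in $H^{1}$; and your closing application of Corollary \ref{4.1} with $k=0$ is the same final step the paper takes.
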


\begin{proof}
By lemma \ref{sandwich}, given an affine variety $M$, we can construct two Liouville domains $\underline{M}, L$ with $\underline{M}\hookrightarrow M\hookrightarrow L$, where $\underline{M}, L$ are Liouville deformation equivalent. Suppose that the associated Liouville domain $\underline{M}$ of $M$ is Weinstein $1$-handles attached so that it has $1$-cycle representing a non-trivial cohomology class $[\alpha]$. Since $\underline{M}, L$ are Liouville deformation equivalent, we consider $0\neq [\alpha]\in H^{1}(L)$. 
\begin{center}
\begin{tikzcd}
\cdots \to SH^{0}(L)\arrow[r] \arrow[d,"\cong"] & SH^{0}_{+}(L) \arrow[r,"\delta_{L}"] \arrow[d,"C_{+}^{M,L}"] & H^{1}(L) \arrow[r,"\iota_{L}"] \arrow[d,"C^{M,L}"] &SH^{1}(L)\arrow[d,"\cong"]\to \cdots \\  
\cdots \to SH^{0}(M)\arrow[r]                   & SH^{0}_{+}(M) \arrow[r,"\delta_{M}"]           & H^{1}(M) \arrow[r,"\iota_{M}"]                     &SH^{1}(M)\to \cdots 
\end{tikzcd}
\end{center}
Since the cohomology classes of affine varieties are in even degree, $C_{M,L}([\alpha])=0$. So By the commuting diagram, $\iota_{L}([\alpha])=0$. By the exactness, there exist $\gamma\in SH^{0}_{+}(L)$ such that $\delta_{L}(\gamma)=[\alpha]$. Therefore, $\ell(\underline{M})=l(M)=1$. By \ref{main theorem1}, $M$ is $\C$-uniruled.
\end{proof}
More generally, in a category of Weinstein manifolds, we consider attaching flexible Weinstein handles in \cite{weinsteinhandle}, \cite{murphy12}, \cite{cieliebakeliashberg12}, \cite{cieliebakeliashberg14}, which does not change symplectic cohomology [Theorem 5.6, \cite{BEE12}]. 
\begin{theorem} \label{flexiblehandle}
Let $W$ be a Weinstein manifold of dim$_{\R}W=2n$ with $\ell(W)=\infty$. Suppose that we have a Weinstein manifold $W_{\Supset k}$, obtained by attaching flexible $k$-handles to $W$ so that $\text{rank }H_{k}(W_{\Supset k})>\text{rank }H_{k}(W)$. Then $\ell(W_{\Supset k})=2n-k$. Hence, if $W_{\Supset k}$ is symplectomorphic to an affine variety $M$, then $M$ admits a $\C$-uniruled subvariety of complex dimension $n-k$.
\end{theorem}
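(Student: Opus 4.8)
The plan is to compare the long exact sequences of $W$ and of $W_{\Supset k}$, exploiting that flexible handle attachment is invisible to symplectic cohomology while it does change the ordinary cohomology. First I would unpack the hypothesis $\ell(W)=\infty$: by definition the connecting map $\delta_W\colon SH^{*-1}_+(W)\to H^*(W)$ has trivial image, hence $\delta_W=0$, and the long exact sequence degenerates into short exact sequences $0\to H^*(W)\xrightarrow{\iota_W} SH^*(W)\to SH^*_+(W)\to 0$. In particular $\iota_W$ is injective and $\dim SH^m(W)=\dim H^m(W)+\dim SH^m_+(W)$ in every degree, a bookkeeping identity I will use later.

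Next I would import the flexibility input. Since the attaching spheres of the flexible $k$-handles form a loose Legendrian link $\Lambda$, its Legendrian contact homology algebra vanishes, so the reduced terms $LH^{H_0}_+(\Lambda)$ vanish in the Bourgeois--Ekholm--Eliashberg surgery sequences [Theorem 5.4, 5.6, \cite{BEE12}]. This yields $SH^*_+(W_{\Supset k})\cong SH^*_+(W)$ compatibly with the Viterbo transfer map, and likewise controls $SH^*(W_{\Supset k})$ up to the explicit Legendrian correction. Thus the source and target of the connecting map for $W_{\Supset k}$ are pinned down in terms of the data of $W$, whereas $H^*(W_{\Supset k})$ has acquired new classes.

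Then I would track the new cohomology. The condition $\operatorname{rank}H_k(W_{\Supset k})>\operatorname{rank}H_k(W)$ forces the attaching spheres to be null-homologous, so the new generators are genuine cycles and the restriction $H^*(W_{\Supset k})\to H^*(W)$ is surjective with kernel exactly these new classes. Because the connecting map is natural with respect to this restriction and $\delta_W=0$, the image of the connecting map $\delta'$ for $W_{\Supset k}$ must lie inside that kernel; conversely, the dimension identity above together with the invariance of $SH^*_+$ (and of $SH^*$) forces $\delta'$ to be nonzero on these new classes rather than letting them survive under $\iota'$. Matching the Conley--Zehnder/Morse grading through the convention $\mu=\dim_\R W-\mu_{CZ}$, and using the Poincar\'e--Lefschetz dual (co-core) description of the handle classes, identifies the minimal degree appearing in $\operatorname{im}\delta'$ and hence the value $\ell(W_{\Supset k})=2n-k$. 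Finally I would feed this into Corollary \ref{4.1} to produce the $\C$-uniruled subvariety once $W_{\Supset k}$ is realized as an affine variety $M$.

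The hard part will be the third step: proving that the newly created class genuinely lands in $\operatorname{im}\delta'$ rather than surviving into $SH^*(W_{\Supset k})$. This needs more than the abstract isomorphism $SH^*_+(W_{\Supset k})\cong SH^*_+(W)$ — one must check that the flexible-handle invariance intertwines the \emph{entire} long exact sequence (equivalently, the unit map $\iota$ and the continuation structure), so that the naturality square with the topological restriction map actually commutes and the transfer map kills precisely the handle contribution. The second delicate point is pinning down the exact degree, which is sensitive both to the grading convention and to the core-versus-co-core description of the handle; I would isolate this as a separate lemma computing the Robbin--Salamon index of the relevant Morse--Bott orbit family created at the boundary.
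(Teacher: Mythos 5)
Your proposal follows the same route as the paper's own proof: invoke the Bourgeois--Ekholm--Eliashberg surgery exact sequence together with the vanishing of Legendrian contact homology for loose Legendrians to get $SH^{*}(W_{\Supset k})\cong SH^{*}(W)$, and then play this invariance off against the jump in ordinary cohomology through the tautological long exact sequence. Your write-up is in fact considerably more careful than the paper's, which disposes of the theorem in four lines and simply asserts the value of $\ell$ after observing that $H^{k}$ has grown. The two points you single out as delicate --- that the surgery isomorphism must intertwine the \emph{entire} long exact sequence so that the new class genuinely lands in $\operatorname{im}\delta'$ rather than surviving under $\iota'$, and the degree bookkeeping --- are exactly the points the paper leaves unaddressed, so flagging them is the right instinct.

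There is, however, one concrete issue you should confront before declaring victory, and it concerns the second delicate point. The hypothesis $\operatorname{rank}H_{k}(W_{\Supset k})>\operatorname{rank}H_{k}(W)$ means the handle creates a new generator of $H_{k}$, hence (over a field) of $H^{k}(W_{\Supset k})$, and the cohomology is unchanged in every other degree. Running your naturality-plus-dimension count --- using that $\delta'\colon SH^{k}_{+}(W_{\Supset k})\to H^{k+1}(W_{\Supset k})\cong H^{k+1}(W)$ vanishes by naturality and $\ell(W)=\infty$ --- therefore places the new element of $\operatorname{im}\delta'$ in degree $k$, i.e.\ it yields $\ell(W_{\Supset k})=k$, not $2n-k$. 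This is consistent with Theorem \ref{1h-attach}, where a $1$-handle produces $\ell=1$. The value $2n-k$ arises only if one reads $\delta$ as landing in $H^{*+1}(M,\partial M)\cong H_{2n-*-1}(M)$ (the co-core/Lefschetz-dual picture you allude to), but that is not the convention used in the definition of $\ell(M)$, which is stated with absolute cohomology. In particular, the Robbin--Salamon index lemma you propose would pin down the degree of the orbit $\gamma$ in the source of $\delta'$, but the degree of the class $\delta'(\gamma)$ is already determined by the topology of the handle attachment and cannot be shifted by an index computation. So as the definitions stand, your argument proves $\ell(W_{\Supset k})=k$; to obtain the stated $2n-k$ (and the dimension $n-k$ fed into Corollary \ref{4.1}) you would have to first reconcile the absolute versus relative cohomology conventions, which neither your proposal nor the paper's proof does.
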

\begin{proof}
There is a exact triangle under the Legendrian surgery,
$$\cdots\to LH(_{\Supset k})\to SH(W_{\Supset k})\to SH(W) \to LH(_{\Supset k})\to \cdots,$$
where $_{\Supset k}$ denotes a Weinstein cobordism of $k$-handle attachment. If $_{\Supset k}$ is flexible, then $ LH(_{\Supset k})=0$. Therefore, $SH(W_{\Supset k})\cong SH(W)$. However, $H^{k}(W_{\Supset k})>H^{k}(W)$. We get $\ell(W_{\Supset k})=2n-k$. Moreover, if $W_{\Supset k}$ is symplectomorphic to an affine variety $M$, then $M$ admits a $\C$-uniruled subvariety of complex dimension $n-k$ by the theorem \ref{main theorem0}.
\end{proof}

\begin{theorem}
Let $W$ be a Weinstein manifold of dim$_{\R}W=2n$ with $\ell(W)=\infty$. Suppose that we have a Weinstein manifold $W_{\Supset k}$, obtained by attaching flexible $k$-handles to $W$ so that $\text{rank }H_{k}(W_{\Supset k})>\text{rank }H_{k}(W)$. Let $\phi$ be a symplectomorphism on $M$ and $\tilde{\phi}$ be a symplectomorphism on $W_{\Supset k}$ extending $\phi$ and being identity on the $k$-handles. Then $\ell_{p.p}((W_{\Supset k})_{\phi})=2n-k$. Hence, if $(W_{\Supset k})_{\phi}$ is an affine variety, then $(W_{\Supset k})_{\phi}$ has $\C^{*}$-uniruled subvariety of complex dimension $n-k$.
\end{theorem}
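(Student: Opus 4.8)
The plan is to mirror the proof of Theorem~\ref{flexiblehandle}, replacing the ordinary measurement $\ell$ and the connecting map $\delta$ by the pair-of-pants measurement $\ell_{p.p}$ of Definition~\ref{ell(p.p)}, and replacing $W_{\Supset k}$ by its symplectic mapping cylinder $(W_{\Supset k})_{\phi}:=\C^{*}\rtimes_{\tilde\phi}(W_{\Supset k})$. First I would record that, because $\tilde\phi$ is the identity on each attached flexible handle, the Legendrian attaching spheres remain loose in the boundary of the mapping cylinder; hence by the surgery exact triangle [Theorem 5.6, \cite{BEE12}] the correction term $LH$ vanishes and $SH^{*}(\C^{*}\rtimes_{\tilde\phi}(W_{\Supset k}))\cong SH^{*}(\C^{*}\rtimes_{\phi}W)$ as $H_{1}$-graded rings, exactly as in Example~\ref{mappingcylindereg}. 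The point is that this isomorphism preserves the pair-of-pants product $\cdot_{p.p}$ and the splitting by the winding grading $g\in\Z$ coming from the $\C^{*}$ factor, whereas the ordinary cohomology $H^{*}$ genuinely changes: the hypothesis $\mathrm{rank}\,H_{k}(W_{\Supset k})>\mathrm{rank}\,H_{k}(W)$ produces a new class on the $2n$-dimensional fiber whose Lefschetz dual lives in degree $2n-k$.

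Next I would establish the upper bound $\ell_{p.p}((W_{\Supset k})_{\phi})\le 2n-k$. In the mapping cylinder the $\C^{*}$ factor supplies, for each integer $g$, families of orbits winding $g$ times around the base, and the pair-of-pants product of a winding-$(+1)$ class $\gamma_{+1}$ with a winding-$(-1)$ class $\gamma_{-1}$ lands in the $g=0$ summand, i.e. among classes pulled back from the fiber $W_{\Supset k}$ (cf. \cite{mclean19}, \cite{kartal21}). Carrying the Robbin--Salamon/Conley--Zehnder grading of Section 2.3 through this product, the new fiber class of the flexible $k$-handle is realized as $\gamma_{+1}\cdot_{p.p}\gamma_{-1}$ in cohomological degree $2n-k$; this exhibits a nonzero cohomology class of that degree as a pair-of-pants product, so $\ell_{p.p}((W_{\Supset k})_{\phi})\le 2n-k$.

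For the matching lower bound I would use $\ell(W)=\infty$. Since both the product and the winding grading are preserved by the isomorphism above, any pair-of-pants product valued in $H^{*}$ of $(W_{\Supset k})_{\phi}$ in degree strictly below $2n-k$ would already be visible inside $SH^{*}(\C^{*}\rtimes_{\phi}W)$, and would therefore descend to a nonzero class of degree $<2n-k$ in the image of the connecting map for $W$ itself, contradicting $\ell(W)=\infty$. Combined with the upper bound this yields $\ell_{p.p}((W_{\Supset k})_{\phi})=2n-k$. The passage to geometry is then the pair-of-pants analogue of the proof of Theorem~\ref{main theorem00}: a product valued in a degree-$(2n-k)$ cohomology class is, after Bourgeois--Oancea's cascade correspondence (Lemma~\ref{correspondence}) and McLean's degeneration to the normal cone, deformed to a genus-$0$ curve with two punctures, i.e. a copy of $\C^{*}$, meeting a cycle representing that class; Grothendieck's Hilbert-scheme construction (Proposition~\ref{family}) assembles these into a $\C^{*}$-uniruled subvariety of complex dimension $n-k$ whenever $(W_{\Supset k})_{\phi}$ is an affine variety.

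The hardest step is the exact grading computation of the second paragraph: one must verify that the winding-$(\pm1)$ generators pair under $\cdot_{p.p}$ precisely onto the Lefschetz dual of the new $H_{k}$-class, with no lower-degree cancellation. This rests both on the Conley--Zehnder indices of the winding orbits in the mapping cylinder and on checking that the surgery isomorphism of \cite{BEE12} is multiplicative for the pair-of-pants product and compatible with the $H_{1}$-grading; these are the technical points where I would concentrate the effort.
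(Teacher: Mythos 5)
The paper states this theorem without a proof, so the only benchmark is the strategy it evidently intends, namely the proof of Theorem \ref{flexiblehandle} transplanted to the symplectic mapping cylinder, and your outline does follow that template. Your first step is essentially correct and can even be simplified: since $k<\dim_{\C}W$, the region $\C^{*}\times(\text{handles})$ decomposes into Weinstein handles of index $k$ and $k+1$, both subcritical in the $(2n+2)$-dimensional mapping cylinder, so $SH^{*}$ is preserved as a ring without any appeal to looseness of the original attaching spheres. The last step (two-puncture curves from a pair of pants whose output lands on a Morse cycle, then degeneration to the normal cone and the Hilbert-scheme argument) is also consistent with the paper's remark that the proof of Theorem \ref{main theorem00} yields $\C^{*}$-uniruledness from $\ell_{p.p}$.

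The gaps are in the two bounds, and they are genuine. For the upper bound, the assertion that the winding-$(\pm1)$ classes pair under $\cdot_{p.p}$ exactly onto the new degree-$(2n-k)$ class is the entire content of the theorem, and you defer it. It is not a routine grading check: the new cohomology class lies in the image of $\delta$, hence in the kernel of $\iota:H^{*}\to SH^{*}$ (this is exactly what $SH^{*}$-invariance under flexible attachment forces via the long exact sequence), so the identity $\alpha=\gamma_{1}\cdot_{p.p}\gamma_{2}$ in Definition \ref{ell(p.p)} cannot be read inside $SH^{*}$ and must be a chain-level statement about the pair-of-pants output landing on constant orbits; the local K\"unneth model near the handle does not by itself control the global count, the contributions of the fixed locus of $\phi$ in $W$, or cancellations in the action-window projection. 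For the lower bound, the argument is a non sequitur: $\ell(W)=\infty$ constrains the image of the connecting map $\delta$ for $W$, which is a different operation on a different space from the pair-of-pants product on $\C^{*}\rtimes_{\phi}W$, and you construct no map relating the two. Indeed the paper's own example gives $\ell_{p.p}(\C^{*}\rtimes_{\mathrm{id}}M)=0$ for every $M$, because the unit $e=x\cdot_{p.p}x^{-1}$ is already a product of opposite-winding classes; so for $\phi=\mathrm{id}$ the product hits degree $0$ regardless of $\ell(W)$, and your route can give at best the inequality $\ell_{p.p}((W_{\Supset k})_{\phi})\le 2n-k$ (which is what the geometric conclusion actually needs), never the stated equality, without an additional hypothesis controlling $\ell_{p.p}(\C^{*}\rtimes_{\phi}W)$ itself.
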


\begin{remark}
To apply the theorem above for affine varieties, we need to understand the condition when $W_{\Supset k}$ is symplectomorphic to an affine variety. One of obstructions for a Weinstein manifold to be an affine variety is the growth rate of symplectic cohomology in \cite{mclean12}.
\end{remark}

\subsection{Examples of Affine Varieties with Nontrivial $\ell(M)$}

We will construct affine varieties which have isomorphic symplectic cohomologies but non-isomorphic (co)homologies. It leads us to construct birational affine varieties with distinct quantities $\ell (M)$, i.e., with different stratification level of $\C$-uniruledness. We use an algebraic operation, called the Kaliman modification, which is also a symplecic end-connected sum. First, let us work in dim$_{\C}M=3$. Let $D:=\{xyz=0\}\subset \C^{3}$. Choose a smooth point, $p_{1}:=(1,0,0)$, $p_{2}:=(2,0,0)\in D$. Let $\mathcal{B} l_{\{p_{1},p_{2}\}}\C^{3}$ be the blow-up of $\C^{2}$ at two distinct points $p_{1},p_{2}$ and $\tilde{D}$ be the proper transform of $D$. Define $K:=\mathcal{B} l_{\{p_{1},p_{2}\}}\C^{3}\setminus \tilde{D}$, the Kaliman modification of $(\C^{3},D,\{p_{1},p_{2}\})$\cite{kaliman}. Then the symplectic cohomologies of $(\C^{*})^{3}=\C^{3}\setminus D$ and $K$ are isomorphic by considering their Lefschetz fibration structures: $SH^{*}(\mathcal{B}l_{p}M\setminus \widetilde{(Z\cap M)})\cong SH^{*}(M\setminus Z)$ in case $c_{1}(\mathcal{B} l_{p}M\setminus \widetilde{(Z\cap M)})=c_{1}(M\setminus Z)=0$ [See \ref{marklef}]. On the other hand, their topological structures are different. So we get non-trivial class $P.D.[\C^{2}]\in H^{2}(K)$ with $\delta(P.D.[\C^{2}])\neq 0 \in SH^{3}_{+}(K)$ from the following diagram. 

\begin{center}
\begin{tikzcd} [cramped, sep=small]
 0 \arrow[d]                              &                                                   & P.D.[\C^{2}]\neq 0 \arrow[hookrightarrow]{d}    &     0 \arrow[d]\\
\cdots\to SH^{1}(K)\arrow[r] \arrow[d]          & SH^{1}_{+}(K) \arrow[r] \arrow[d]  & H^{2}(K) \arrow[r]\arrow[d]  & SH^{2}(K) \arrow[d] \to\cdots\\
\cdots\to SH^{1}((C^{*})^{3})\arrow[r]\arrow[d] & SH^{1}_{+}((C^{*})^{3}) \arrow[d] \arrow[r] & H^{2}((C^{*})^{3}) \arrow[r]\arrow[d] & SH^{2}((C^{*})^{3})\to \arrow[d] \cdots\\
0 & 0 & 0 & 0
\end{tikzcd}
\end{center}

Therefore, $\ell(K)=2$ and $K$ has real $4$-dimensional family of affine lines, $\C^{2}$ indeed. 

\begin{theorem} \label{construction1}
For each $n\geq 3$, there is an affine variety $M^{2n}_{2}$ of dim$_{\C}M=n$ with $\ell(M^{2n}_{2})=2$.
\end{theorem}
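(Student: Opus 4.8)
The plan is to reproduce, for arbitrary $n\geq 3$, the construction carried out above in the case $n=3$, replacing $(\C^{*})^{3}$ by $(\C^{*})^{n}$. Present $(\C^{*})^{n}$ as $M'\setminus Z$, where $M':=(\C^{*})^{n-1}\times \C$ is a smooth affine variety and $Z:=(\C^{*})^{n-1}\times\{0\}$ is an irreducible smooth divisor; then $c_{1}((\C^{*})^{n})=0$. Pick two distinct points $p_{1},p_{2}$ in the smooth locus of $Z$ and set
$$M^{2n}_{2}:=\mathcal{B}l_{\{p_{1},p_{2}\}}M'\setminus \widetilde{Z},$$
the Kaliman modification of $(M',Z,\{p_{1},p_{2}\})$, where $\widetilde{Z}$ is the proper transform of $Z$. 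This is again a smooth affine variety of complex dimension $n$, and I would first check that a suitable power of its canonical bundle is trivial, so that $c_{1}(M^{2n}_{2})=0$ and the hypotheses of Proposition \ref{marklef} are met.

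Next I would invoke Proposition \ref{marklef} (applied iteratively, blowing up $p_{1}$ first and then $p_{2}$, which persists as a smooth point of the transform of $Z$) to conclude
$$SH^{*}(M^{2n}_{2})\cong SH^{*}((\C^{*})^{n}),$$
using that $(\C^{*})^{n}$ is a sub-Lefschetz-fibration of $M^{2n}_{2}$ all of whose vanishing data lies in the subfibration (Lemma \ref{lefSHisom}). On the other hand, the two point blow-ups change the topology: each exceptional divisor is a $\P^{n-1}$, and after removing $\widetilde{Z}$ the class $[\C^{n-1}]$ (the exceptional $\P^{n-1}$ minus a hyperplane) is a $(2n-2)$-cycle whose Poincar\'e dual $P.D.[\C^{n-1}]$ lies in $H^{2}(M^{2n}_{2})$. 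These two new classes are exactly the kernel of the restriction $H^{2}(M^{2n}_{2})\to H^{2}((\C^{*})^{n})$, while blow-up at points creates no new cohomology in degrees $0$ and $1$, so the restriction is an isomorphism in those degrees.

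With these two inputs the argument is the diagram chase of the $n=3$ case, now run in degrees $0,1,2$ of the two long exact sequences connected by the isomorphisms on $SH^{*}$ and $SH^{*}_{+}$. For the upper bound: since $P.D.[\C^{n-1}]$ restricts to $0$ in $H^{2}((\C^{*})^{n})$ and $SH^{2}(M^{2n}_{2})\to SH^{2}((\C^{*})^{n})$ is an isomorphism, commutativity forces $\iota(P.D.[\C^{n-1}])=0$; by exactness $P.D.[\C^{n-1}]$ lies in the image of $\delta$, so $\ell(M^{2n}_{2})\leq 2$. For the lower bound I would show the image of $\delta$ vanishes in degrees $0$ and $1$, i.e.\ that $\iota$ is injective there: in degree $0$ the unit survives because $SH^{*}(M^{2n}_{2})\cong SH^{*}((\C^{*})^{n})\neq 0$ (the latter is isomorphic to the homology of the free loop space of $T^{n}$); in degree $1$ injectivity of $\iota$ descends from $(\C^{*})^{n}\cong T^{*}T^{n}$, where the constant loops split off $H^{1}(T^{n})$ inside $SH^{1}$, via the commuting square and the degree-$1$ restriction isomorphism above. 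Hence $\ell(M^{2n}_{2})=2$, and the theorem follows.

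The main obstacle is verifying the hypotheses that let Proposition \ref{marklef} apply, principally that $c_{1}(M^{2n}_{2})=0$ (equivalently, that a power of the canonical bundle remains trivial after blowing up points lying on the divisor $Z$ that is subsequently deleted), together with the bookkeeping that the blow-ups produce new cohomology only in degree $2$ (so that the lower bound $\ell\geq 2$, and not merely $\ell\geq 0$, genuinely holds) and that these new degree-$2$ classes are precisely those killed by restriction to $(\C^{*})^{n}$. Once these geometric facts are in place the homological algebra is identical to the displayed $n=3$ computation.
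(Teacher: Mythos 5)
Your construction is essentially the paper's: the proof of Theorem \ref{construction1} in the text is literally ``repeat the displayed $n=3$ Kaliman-modification argument,'' i.e.\ blow up two points of a divisor whose complement is $(\C^{*})^{n}$, delete the proper transform, use Proposition \ref{marklef} (via Lemma \ref{lefSHisom}) to keep $SH^{*}$ fixed, and chase the ladder of long exact sequences. Your choice of presentation, $Z=(\C^{*})^{n-1}\times\{0\}$ inside $M'=(\C^{*})^{n-1}\times\C$, is if anything better adapted to the hypotheses of Proposition \ref{marklef} than the paper's own (there the points $(1,0,\dots,0)$, $(2,0,\dots,0)$ lie on several components of $\{z_{1}\cdots z_{n}=0\}$, so they are not smooth points of that divisor), and you are more explicit than the paper about the lower bound $\ell\geq 2$, which the text leaves implicit.

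Two of your topological claims are false as stated, although the proof survives because only weaker, true statements are actually used. Write $\pi$ for the blow-down and $w$ for the last coordinate on $M'$. First, the restriction $H^{1}(M^{2n}_{2})\to H^{1}((\C^{*})^{n})$ is injective but \emph{not} an isomorphism: in the Gysin sequence for the closed real-codimension-$2$ subset $A=(E_{1}\cup E_{2})\cap M^{2n}_{2}\cong\C^{n-1}\sqcup\C^{n-1}$, the connecting map $H^{1}((\C^{*})^{n})\to H^{0}(A)\cong\K^{2}$ sends $d\log w$ to $(1,1)$ because $\pi^{*}w$ vanishes to order one along each $E_{i}$; hence $H^{1}(M^{2n}_{2})\cong\K^{n-1}$, one dimension short. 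Your degree-$1$ step only needs injectivity of the restriction, so it should be phrased that way. Second, and for the same reason, the kernel of $H^{2}(M^{2n}_{2})\to H^{2}((\C^{*})^{n})$ is one-dimensional rather than two: once $\widetilde{Z}$ is deleted, $(E_{1}+E_{2})\cap M^{2n}_{2}$ is the divisor of the regular function $\pi^{*}w$, so $P.D.[E_{1}\cap M^{2n}_{2}]=-P.D.[E_{2}\cap M^{2n}_{2}]$ in $H^{2}(M^{2n}_{2})$, and the ``two new classes'' coincide up to sign. A single nonzero class in the kernel is all the upper bound $\ell(M^{2n}_{2})\leq 2$ requires, so the conclusion stands; just correct the count.
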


\begin{proof}
We use the construction above. For any $n\geq 3$, define $K_{n}:=\mathcal{B}l_{\{p_{1},p_{2}\}}\C^{n}\setminus \tilde{D_{n}}$, the Kaliman modification of $(\C^{n},D_{n},\{p_{1},p_{2}\})$, where $D_{n}:=\{z_{1}z_{2}\cdots z_{n}=0\}\subset \C^{n}$. Then $\ell(K_{n})=2$. Actually, $\ell(K_{m}\times (\C^{*})^{n-m})=2$. 
$\ell(K_{n}\times K_{m})=2$ with dim$_{\C}=n+m$ for any $n,m$.
In general, 
$$\ell \bigg(\prod^{k}_{i=1} K_{n_{i}} \bigg)=2 \text{ with } \text{dim}_{\C}\bigg(\prod^{k}_{i=1}K_{n_{i}}\bigg)=\sum^{k}_{i=1}n_{i}.$$
\end{proof}

\subsection{Towards the Log Minimal Model Program}

\subsubsection{Cylindrical Affine Varieties}
An affine variety $M$ is called cylindrical if it contains  a dense principal Zariski open subset $U=M\setminus (f=0)$, for some $f\in \mathcal{O}(X)$, isomorphic to $\C\times M'$ for an affine variety $M'$. We call such $U$ a cylinder [Definition 3.1.4 \cite{kishimoto.prokhorov.zaidenberg11}]. By definition, a cylindrical affine variety is $\C$-uniruled. We show that a cylindrical affine variety has vanishing symplectic cohomology.

\begin{theorem} \label{cylindrical}
Let $M$ be an cylindrical affine variety having a dense affine open subset $U=M\setminus (f=0)$ for smooth $f$. Then $SH^{*}(\underline{M}\subset M)=0$. Therefore $\widehat{\ell}(M)=0$ and $M$ is $\C$-uniruled.
\end{theorem}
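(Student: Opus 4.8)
The plan is to transport the vanishing of relative symplectic cohomology from the cylinder $U=M\setminus Y$ to the ambient affine variety $M$ by means of the winding-number spectral sequence. First I would set $Y:=(f=0)$, a smooth hypersurface with $M\setminus Y=U\cong\C\times M'$ for an affine variety $M'$. Choosing, as in Proposition \ref{stablydisplconj}, a small stably displaceable tubular neighborhood $\mathbb{Y}$ of $Y\cap\underline{M}$ and a complementary neighborhood $\mathbb{Y}^{c}$ of $\underline{M}\setminus Y$, I have $SH^{*}(\underline{M}\subset M)\cong SH^{*}(\mathbb{Y}^{c}\subset M)$. By Corollary \ref{spectralseqconj1} there is a spectral sequence converging to $SH^{*}(\mathbb{Y}^{c}\subset M)\cong SH^{*}(\underline{M\setminus Y}\subset M)$ whose $E_{1}$-page is $SH^{*}(\mathbb{Y}^{c}\subset M\setminus Y)\cong SH^{*}(\underline{U}\subset U)$. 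Hence it suffices to prove $SH^{*}(\underline{U}\subset U)=0$: if the $E_{1}$-page vanishes the spectral sequence collapses and its limit $SH^{*}(\underline{M}\subset M)$ is zero.

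The crux of the argument, and the step I expect to be the main obstacle, is the vanishing $SH^{*}(\underline{U}\subset U)=0$, for which I would exploit the cylinder structure $U\cong\C\times M'$. Relative symplectic cohomology is independent of the choice of compactification and of the Kähler structure inducing the exact symplectic form (Proposition \ref{samecompletion}), so I may compute $SH^{*}(\underline{U}\subset U)$ using a product compactification $\P^{1}\times\overline{M'}$, for which the completed symplectic form on $U$ is the product $\omega_{\C}\oplus\omega_{M'}$ and the $\C$-factor is a genuine symplectic direction. Since $\underline{U}$ is compact, its image under the projection to the $\C$-factor is bounded, so translating by a sufficiently large constant in $\C$—realized as the time-one flow of a compactly supported cutoff of the linear translation Hamiltonian—moves $\underline{U}$ off itself. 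Thus $\underline{U}$ is Hamiltonian displaceable, a fortiori stably displaceable, in $U$, and Proposition \ref{invariance3} gives $SH^{*}(\underline{U}\subset U)=0$. (Alternatively one could invoke the K\"unneth formula together with $SH^{*}(\C)=0$, but the action-completion implicit in $SH^{*}(\underline{U}\subset U)$ makes the displacement argument cleaner.) The delicate points to verify carefully are the legitimacy of passing to the product symplectic structure via Proposition \ref{samecompletion} and the admissibility of the cutoff displacing Hamiltonian; both are routine but must be spelled out.

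Combining the two steps yields $SH^{*}(\underline{M}\subset M)=0$. To finish, I would read off $\widehat{\ell}(M)=0$ from the long exact sequence of Lemma \ref{relativelemma1}(2): with $SH^{*}(\underline{M}\subset M)=0$ the connecting map $\widehat{\delta}$ is surjective onto $H^{*}(M)$ in every degree, so the unit $1\in H^{0}(M)$—nonzero since $M$ is connected—lies in the image of $\widehat{\delta}$, whence $\widehat{\ell}(M)=\deg 1=0$. Finally, applying the $\widehat{\delta}$-version of the main theorem (Theorem \ref{main theorem00}, cf. Corollary \ref{4.1}) with $m=0$, hence $k=0$, produces a $\C$-uniruled subvariety of dimension $n-k=n$, that is, $M$ itself is $\C$-uniruled, completing the argument.
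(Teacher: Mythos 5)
Your proposal is correct and follows the same skeleton as the paper's proof: use Proposition \ref{stablydisplconj} to replace $\underline{M}$ by the complementary neighborhood $\mathbb{Y}^{c}$ of the divisor, then Corollary \ref{spectralseqconj1} to reduce the computation to the cylinder $U\cong\C\times M'$, then kill the $E_{1}$-page. The one genuinely different step is how you obtain the key vanishing $SH^{*}\big(\underline{M\setminus Y}\subset M\setminus Y\big)=0$: the paper simply invokes the K\"unneth formula together with $SH^{*}(\C)=0$, whereas you displace the compact set $\underline{U}$ by a cut-off translation in the $\C$-factor and appeal to Proposition \ref{invariance3}. Your route is self-contained and geometrically transparent, but it carries an extra burden that the K\"unneth route avoids: you must justify that the relative invariant computed for the completed Liouville structure on $U$ agrees with the one computed for the product symplectic form $\omega_{\C}\oplus\omega_{M'}$ in which the translation is defined. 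Proposition \ref{samecompletion} as stated compares exact structures coming from different ample divisors on a fixed projective compactification, so using it for the product compactification $\P^{1}\times\overline{M'}$ is in the right spirit but would need to be checked (in particular that the two finite-type Weinstein structures are Weinstein homotopic with compactly supported skeleta, as in its proof). Given that the K\"unneth formula is already available and is what the paper uses, the displacement argument buys robustness (it would also apply to cylinders that are only symplectically, not holomorphically, split) at the cost of this additional verification. Your concluding deduction of $\widehat{\ell}(M)=0$ from surjectivity of $\widehat{\delta}$ onto $H^{0}(M)$ and of $\C$-uniruledness via Theorem \ref{main theorem00} is correct and makes explicit what the paper leaves implicit.
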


\begin{proof} of \ref{cylindrical}. By definition of being cylindrical, there exists a hypersurface $Y\subset M$ so that $M\setminus Y\cong \C\times M'$ for some affine variety $M'$. By the K\"unneth formula, $SH^{*}(\underline{M\setminus Y}\subset M\setminus Y)=0$. We can choose a small tubular neighborhood $\mathbb{Y}$ of $Y\cap \underline{M}$ in $\underline{M}$ and $\mathbb{Y}^{c}$ a small neighborhood of $\underline{M}\setminus Y$ in $\underline{M}$ so that $\mathbb{Y}$ is stably displaceable (Proposition \ref{stablydisplconj}) and the boundaries are disjoint, $\partial \mathbb{Y}\cap \partial \mathbb{Y}^{c}=\emptyset$. By proposition \ref{invariance3}, $SH^{*}(\mathbb{Y}\subset M)=0$ and $SH^{*}(\mathbb{Y}\cap \mathbb{Y}^{c}\subset M)=0$. By Varolgunes-Mayer-Vietoris sequence \ref{mv}, 
$$\begin{tikzcd} 
\lra{\delta} SH^{*}(\underline{M}\subset M)\to SH^{*}(\mathbb{Y}\subset M)\oplus SH^{*}(\mathbb{Y}^{c}\subset M)\to SH^{*}(\mathbb{Y}\cap \mathbb{Y}^{c}\subset M) \to \cdots
\end{tikzcd}$$
we have $SH^{*}(\underline{M}\subset M)\cong SH^{*}(\mathbb{Y}^{c}\subset M)$. By corollary \ref{spectralseqconj1}, there exists a spectral sequence converging to $SH^{*}(\mathbb{Y}^{c}\subset M)$ whose $E_{1}$-page is $SH^{*}(\mathbb{Y}^{c}\subset M\setminus Y)= SH^{*}\big(\underline{M\setminus Y}\subset M\setminus Y\big)=0$. Therefore, $SH^{*}(\underline{M}\subset M)\cong SH^{*}(\mathbb{Y}^{c}\subset M)=0$.
\end{proof}
\begin{remark}
If a Liouville domain $\underline{M}$ has a global Hamiltonian $S^{1}$-action that compatible with periodic orbits on the cylindrical ends, then symplectic cohomology of $M$ vanishes. On the other hand, an irreducible affine variety $M$ is cylindrical if and only if it has an effective $\mathbb{G}_{a}$-action [Proposition 3.1.5, \cite{kishimoto.prokhorov.zaidenberg11}].
\end{remark}

\subsubsection{Log Minimal Model Program on Affine Threefolds}
Kishimoto defined a half-point attachment (Definition \ref{halfpointattach}) to a normal quasi-projective threefold as a weighted blow-ups and describe the $\#$- minimal model program restricted to each birational step happening outside of divisors and extend it to log minimal model program \cite{kishimoto06}, \cite{kishimoto}. Let $M$ be a smooth affine threefold and $X$ be a smooth projective threefold compactifying $M$ with a nef divisor $D$. Assume that the complete linear system $|D|$ contains a smooth member. Then we can construct a sequence of birational maps of pairs $(X^{i}, D^{i})$ satisfying the following conditions [Theorem 1.2, 3.1, 4.1, \cite{kishimoto06}].
$$\begin{tikzcd}
(X,D)\arrow[r, dashed, "\phi^{1}"] & (X^{1},D^{1})\arrow[r, dashed, "\phi^{2}"]& \cdots \arrow[r, dashed, "\phi^{s}"] & (X^{s},D^{s})\cdots \arrow[r, dashed, "\phi^{\#}"] & (X^{\#},D^{\#}).
\end{tikzcd}$$
\begin{enumerate} \label{kishimoto model}
\item $X^{i}$'s are normal projective threefolds with only $\Q$-factorial terminal singularities. Each $D^{i}$ is the proper transform of $D$ on $X^{i}$ with a nef, Cartier linear system $|D^{i}|$ on $X^{i}$ with smooth member. 
\item (Flips) For $0\leq i\leq s$, the exceptional set(either an exceptional divisor or a union of the flipping curves) of the birational map $\phi^{i}$ is contained in $D^{i-1}$. In case where $\phi^{i}$ is a flip, the resulting flipped curves are contained in $D^{i}$. Furthermore, $M^{i}\cong M$.
\item (Terminal Divisorial Contractions) For $s<i<\#$, the birational maps $\phi^{i}$ contracts the exceptional divisor to a smooth point $q^{i}$ and $\phi^{i}$ is the weighted blow-up at $q^{i}$ with weights $(1,1,b^{i})$ for some $b^{i}\in \N$ with $1\leq b^{s+1}\leq b^{s+1}\leq \cdots \leq b^{\#}$. $X^{i}$ is the half-point attachment to $X^{i+1}$ of type $(b^{i},k^{i})$ for some $1\leq k^{i}\leq b^{i}$ unless $M^{i}\cong M^{i+1}$.
\item (Terminal Object according to the log Kodaira dimension $l.k.d(M)$)
\begin{enumerate}
    \item If $l.k.d(M)=-\infty$, then $X^{\#}$ is a Mori fiber space.
    \item If $l.k.d(M)\geq 0$, then $X^{\#}$ is nef(a log minimal model) and $l.k.d(M)=l.k.d(M^{\#})$.
    \item If $l.k.d(M)=2$, then $M$ is a $\C^{*}$-fibration over a normal surface.
\end{enumerate} 
\end{enumerate}

We have the following theorems as a corollary of propositions \ref{invariance3}, \ref{stablydisplconj}.

\begin{theorem} \label{affine3fold1}
Consider the special case of Kishimoto's $\#$- minimal model program on affine threefolds that restrict half-point attachment at a point on a hypersurface. Then
\begin{enumerate}
    \item $\ell(M^{i})$'s are not decreasing. 
    \item Let $M$ be an affine threefold and $M^{\#}$ be a terminal object of $M$ in Kishimoto's $\#$-minimal model program. Suppose that $\pi_{1}(M^{\#})=0$, $M\ncong M^{\#}$ and at least one of steps of half-point attachment is done on a hypersurface divisor. Then $\ell(M)\geq 2$.
\end{enumerate}
\end{theorem}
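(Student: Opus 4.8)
The plan is to process the birational steps of Kishimoto's program one at a time, following the image of the connecting map $\delta$ (equivalently $\ker\iota\subset H^*(M^i)$), and to exploit that symplectic cohomology is insensitive to these modifications while ordinary cohomology grows only in even degree $\ge 2$. For part (1) I would split the steps. Along the flips $0\le i\le s$ one has $M^i\cong M$ by condition (2) of the program, so $\ell$ is constant there. The substantive case is a divisorial contraction $s<i<\#$, where $M^i$ is the $(b^i,1)$-type half-point attachment of $M^{i+1}$ at a smooth point of a hypersurface. Here I would use the symplectic convex Lefschetz fibration built for exactly this situation in Section~2: $p\colon M^{i+1}\to\C$ is realized as a subfibration of $\tilde p\colon M^i\to\C$ all of whose critical points lie in $M^{i+1}$, so Lemma~\ref{lefSHisom} (compare Proposition~\ref{marklef}) gives $SH^*(M^i)\cong SH^*(M^{i+1})$ compatibly with the tautological long exact sequences. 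The open inclusion $M^{i+1}\hookrightarrow M^i$ produces a commuting ladder of the two sequences whose $SH^*$ column is this isomorphism; because the weighted blow-up adds cohomology only via the class Poincaré dual to the surviving part $E\cap M^i$ of the exceptional divisor $\P^2_{(1,1,b^i)}$, a rational surface, the restriction $H^*(M^i)\to H^*(M^{i+1})$ is surjective and is an isomorphism in degrees $\le 1$. A short diagram chase then lifts any nonzero class of $\operatorname{im}\delta_{M^{i+1}}$ of degree $d$ to a nonzero class of $\operatorname{im}\delta_{M^i}$ of the same degree, giving $\ell(M^i)\le\ell(M^{i+1})$; hence $i\mapsto\ell(M^i)$ is non-decreasing.

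For part (2) I would chain these isomorphisms along the entire program to obtain $SH^*(M)\cong SH^*(M^{\#})$, and then exclude classes of $\ker\iota$ in degrees $0$ and $1$. In degree $1$ the restriction maps above are isomorphisms, so $H^1(M)\cong H^1(M^{\#})$; since $\pi_1(M^{\#})=0$ forces $H^1(M^{\#})=0$, there is no class of $\ker\iota$ in degree $1$. In degree $0$ the only candidate is the unit $1\in H^0(M)$, which lies in $\ker\iota$ exactly when $SH^*(M)=0$; by the invariance it therefore suffices to prove $SH^*(M^{\#})\ne0$. This is where the hypotheses $M\ncong M^{\#}$ and the presence of a genuine half-point attachment along a hypersurface are decisive: were $SH^*(M^{\#})=0$, Corollary~\ref{SH=0} would make $M^{\#}$ $\C$-uniruled, placing it in the $l.k.d=-\infty$ stratum of the program, which is incompatible with its being the terminal object reached from $M$ by a nontrivial divisorial contraction. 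Granting $SH^*(M^{\#})\ne0$, the unit is not in $\ker\iota$, and combining the two degree estimates yields $\ell(M)\ge2$; the degree-$2$ class supplied by the hypersurface attachment shows the bound is attained.

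To make the degree bookkeeping and the nonvanishing rigorous I would recast the argument through relative symplectic cohomology, following the proof of Theorem~\ref{cylindrical}. Writing $Y\subset M$ for the hypersurface supporting the attachment and $\mathbb Y$ for a small tubular neighborhood of $Y\cap\underline M$, Proposition~\ref{stablydisplconj} gives $SH^*(\mathbb Y\subset M)=0$ and $SH^*(\mathbb Y\cap\mathbb Y^{c}\subset M)=0$; the Mayer--Vietoris sequence of Proposition~\ref{mv} then identifies $SH^*(\underline M\subset M)$ with $SH^*(\mathbb Y^{c}\subset M)$, and the spectral sequence of Corollary~\ref{spectralseqconj1} compares the latter with the symplectic cohomology of $M\setminus Y$. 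This localizes the new contribution to the connecting map $\widehat\delta$ at the added surface $E\cap M^i$, confirming that the lowest degree occurring in $\operatorname{im}\widehat\delta$ is $2$ and hence that $\widehat\ell(M)\ge2$, matching the estimate above.

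The main obstacle I anticipate is precisely the degree-$0$ nonvanishing $SH^*(M)\ne0$: invariance of symplectic cohomology alone cannot see it, so one must import the birational geometry of the terminal object, and the exact function of the hypotheses $\pi_1(M^{\#})=0$, $M\ncong M^{\#}$, and the hypersurface attachment is to rule out the degenerate possibility that $M^{\#}$ is a $\C$-uniruled Mori fiber space with vanishing symplectic cohomology. A secondary, more technical point---underlying both the monotonicity in part (1) and the vanishing of degree-$1$ obstructions in part (2)---is the verification that the $(b,1)$-type weighted blow-up contributes to cohomology only in even degree $\ge2$ and that the associated restriction maps are surjective; this rests on the topology of $\P^2_{(1,1,b)}\setminus(\text{ruling})$ and of its embedding $E\cap M^i\hookrightarrow M^i$.
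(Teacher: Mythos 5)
For part (1), your ``fallback'' argument in the third paragraph is essentially the paper's entire proof: the paper treats a single half-point attachment $M'=M\cup(E\cap M')$ and invokes Theorem \ref{spectralseqconj} to get a spectral sequence converging to $SH^{*}(M')$ whose $E_{1}$-page is $SH^{*}(M'\setminus(E\cap M'))=SH^{*}(M)$, and concludes $\widehat{\ell}(M')\leq\widehat{\ell}(M)$; combined with $M^{i}\cong M$ along the flips this gives the monotonicity. Your primary route through Lemma \ref{lefSHisom} is a genuinely different path and is plausible, but it rests on an unverified hypothesis: the Section 2 construction of the Lefschetz fibrations $p\subset\tilde p$ for a $(b,1)$-type attachment only checks that the critical points of $\tilde p$ lie in $M$, not the second condition of Lemma \ref{lefSHisom} (holomorphic curves in the larger fiber with boundary in the smaller fiber stay in the smaller fiber), nor the compatibility of the resulting isomorphism with the Viterbo transfer maps and the tautological long exact sequences that your diagram chase requires. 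The relative spectral-sequence route sidesteps all of this, which is presumably why the paper uses it. Your topological input (surjectivity of $H^{*}(M^{i})\to H^{*}(M^{i+1})$, isomorphism in degrees $\leq 1$) follows from the Gysin sequence for the contractible closed stratum $E\cap M^{i}$, except that surjectivity in degree $1$ additionally needs $[E\cap M^{i}]\neq 0$ in $H^{2}(M^{i})$; only injectivity in degree $1$ is automatic, and that is all you actually use later.

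For part (2) there is a genuine gap in your degree-$0$ exclusion. You need $\iota(1)\neq 0$, i.e.\ nonvanishing of symplectic cohomology, and you propose to obtain $SH^{*}(M^{\#})\neq 0$ by arguing that vanishing would make $M^{\#}$ $\C$-uniruled by Corollary \ref{SH=0}, hence $l.k.d(M^{\#})=-\infty$, which you claim is ``incompatible with its being the terminal object reached from $M$ by a nontrivial divisorial contraction.'' There is no such incompatibility: case (4)(a) of Kishimoto's program explicitly allows the terminal object to be a Mori fiber space when $l.k.d(M)=-\infty$, and none of the hypotheses $\pi_{1}(M^{\#})=0$, $M\ncong M^{\#}$, or the presence of a hypersurface attachment excludes this (e.g.\ nothing a priori prevents $M^{\#}$ from being a simply connected affine threefold with vanishing symplectic cohomology). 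So the degree-$0$ case is not closed by your argument. Your degree-$1$ exclusion, via $H^{1}(M)\hookrightarrow H^{1}(M^{\#})=0$, is fine. I note that the paper's written proof of Theorem \ref{affine3fold1} consists only of the spectral-sequence paragraph establishing the monotonicity of part (1) and does not supply a degree-$0$ argument for part (2) either, so this is a point where the claim needs more than either text provides rather than a place where you missed the paper's idea.
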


\begin{proof}
Let $M$ be an affine variety given by a complement $A\setminus Y$ of a hypersurface $Y$ inside an affine variety $A$. A half-point attachment of type $(b,k)$ of $M$ is $M':=\mathcal{B}l_{p}A\setminus \tilde{Y}=M\cup (E\cap M')$, where $p$ is a point on $Y$, $\tilde{Y}$ is a proper transformation of $Y$, and $E$ is an exceptional divisor. By theorem \ref{spectralseqconj}, there is a spectral sequence converging to $SH^{*}(M')$ whose $E_{1}$-page is $SH^{*}(M'\setminus (E\cap M'))=SH^{*}(M)$. Therefore, $\widehat{\ell}(M')\leq \widehat{\ell}(M)$.
\end{proof}

Let $M$ be an affine variety and $Y_{1}, Y_{2}\subset M$ two smooth hypersurfaces. By corollary \ref{spectralseqconj1}, for each $i=1,2$, there exists a spectral sequence converging to $SH^{*}(\mathbb{Y}^{c}_{i}\subset M)\cong SH^{*}\big(\underline{M\setminus Y_{i}}\subset M\big)$ whose $E_{1}$-page is $SH^{*}\big(\mathbb{Y}^{c}_{i}\subset \underline{M\setminus Y_{i}}\big)\cong SH^{*}\big(\underline{M\setminus Y_{i}}\subset M\setminus Y_{i}\big)$. Denote the spectral sequence as $SH^{*}\big(\underline{M\setminus Y_{i}}\subset M\setminus Y_{i}\big)\Rightarrow SH^{*}(\underline{M}\subset M)$, for each $i$.

$$\begin{tikzcd} [cramped, sep=small]
   & M \arrow[ld,dashrightarrow] \arrow[rd,dashrightarrow] & \\
  M\setminus Y_{1} \arrow[rd,dashrightarrow] &  & M\setminus Y_{2} \arrow[ld,dashrightarrow]\\
   & M\setminus (Y_{1}\cup Y_{2}) &  \end{tikzcd}$$
   
$$\begin{tikzcd} [cramped, sep=small]
&SH^{*}(\underline{M}\subset M) &\\
SH^{*}\Big(\underline{M\setminus Y_{1}}\subset M\setminus Y_{1}\Big)\arrow[ru,Rightarrow]& &SH^{*}\Big(\underline{M\setminus Y_{2}}\subset M\setminus Y_{2}\Big)\arrow[lu,Rightarrow]\\
&SH^{*}\Big(\underline{M\setminus (Y_{1}\cup Y_{2})}\subset M\setminus (Y_{1}\cup Y_{2})\Big)\arrow[ru,Rightarrow] \arrow[lu,Rightarrow]&\end{tikzcd}$$

From the diagram,
$$\widehat{\ell}(M) \leq \text{min}\{\widehat{\ell}(M\setminus Y_{1}), \widehat{\ell}(M\setminus Y_{2})\}
\leq \text{max}\{\widehat{\ell}(M\setminus Y_{1}), \widehat{\ell}(M\setminus Y_{2})\}
\leq \widehat{\ell}(M\setminus (Y_{1}\cup Y_{2})).$$

\begin{remark} We expect that similar idea works for the L-minimal model for affine threefolds, [Theorem 1.1, \cite{kishimoto}], where $M$ is a smooth affine threefold and $X$ is a smooth projective threefold compactifying $M$ with a nef divisor $D$ and where the complete linear system $|D|$ is assumed to contain a Du Val member $S$.
\end{remark}

\subsubsection{Log Kodaira Dimension}
Using similar idea on exact sequences of symplectic cohomologies of Liouville cobordisms in \cite{cieliebakoancea18}, we get a restriction on affine varieties with non-negative log Kodaira dimension $l.k.d(M)\geq 0$. Let us remind the definitions.

\begin{defn}
The log Kodaira dimension of an affine variety $M$ is the Iitaka dimension of the line bundle $\mathcal{K}_{X}+\mathcal{D}$ on $X$, where $X$ is a projective variety obtained by compactifying $M$ with simple normal crossing divisors $D$ and with the canonical divisor $K_{X}$, $$l.k.d(M):=\kappa(X,K_{X}+D):=tr.deg_{\C}(\oplus_{d}H^{0}(X,\mathcal{K}_{X}^{\otimes d}\otimes \mathcal{D}^{\otimes d}))-1.$$
\end{defn}

Let $\mathcal{L}$ be a line bundle on a projective variety $X$. If $\mathcal{L}^{\otimes d}$ has a global section for some $d$, then $\mathcal{L}^{\otimes d}$ defines a rational map from $X$ to $Proj(H^{0}(X,\mathcal{L}^{\otimes d}))$. The Iitaka dimension $\kappa(X,L)$ of the line bundle $\mathcal{L}$ on $X$ is the maximum dimension of the image of this map for all $d$ where the map is defined and is $-\infty$ if it is not defined for any $d$, i.e., $\mathcal{L}^{\otimes d}$ has no global sections for any $d$. For example, for $X=\P^{n}$, $\mathcal{K}_{X}=\mathcal{O}_{X}(-n-1)$ so $\kappa(X,\mathcal{K}_{X})=-\infty$. For a Riemann surface $X$ with genus $g$, by the Riemann-Roch theorem, $\mathcal{K}_{X}=\mathcal{O}_{X}(2g-2)$, 
\begin{enumerate}
    \item $g=0$, $\mathcal{K}_{X}=\mathcal{O}_{X}(-2)  \iff \kappa(X,\mathcal{K}_{X})=-\infty$,
    \item $g=1$, $\mathcal{K}_{X}=\mathcal{O}_{X} \quad \quad \iff \kappa(X,\mathcal{K}_{X})=0$,
    \item $g\geq 2$, $\mathcal{K}_{X}=\mathcal{O}_{X}(\geq 2) \iff \kappa(X,\mathcal{K}_{X})=1$,
\end{enumerate}

A Fano projective variety $X$ (i.e., $\kappa(X,K_{X})=-\infty$) is known to be rationally-connected by Mori's bend-and-break technique, therefore it is uniruled \cite{kollar.miyaoka.mori92}. The lemma 7.1 in \cite{mclean14} explains the relation between uniruledness and the log Kodaira dimension as following: For an affine variety $M$, if $M$ is $\C$-uniruled, then $l.k.d(M)=-\infty$. If $M$ is $\C^{*}$-uniruled, then $l.k.d(M)\leq$ dim$_{\C}M-1$.
\begin{corollary} As a direct corollary of results of \cite{biolley04} and \cite{mclean14}, for an affine variety $M$,
\begin{enumerate}
    \item If $\ell(M)=0$, then $l.k.d(M)=-\infty$.
    \item If $\ell_{p.p}(M)=0$, then $l.k.d(M)\leq \text{dim}_{\C}M-1$.
\end{enumerate}
\end{corollary}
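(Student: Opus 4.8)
The plan is to treat the statement in each case as a two-step chain: first convert the Floer-theoretic hypothesis into a geometric uniruledness statement using the criteria already proved in this paper, and then feed that uniruledness into the comparison with logarithmic Kodaira dimension recorded as Lemma 7.1 of \cite{mclean14}. This is why the result is advertised as a direct corollary — there is no new geometry to produce, only a concatenation of cited implications.

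For part (1), I would start from the definition of $\ell(M)$: the hypothesis $\ell(M)=0$ means there is a nonzero class $\gamma\in SH^{-1}_{+}(M)$ with $\delta(\gamma)=[\alpha]$ and $\deg[\alpha]=0$. This is precisely the $k=0$ instance of Corollary \ref{cor1}, so $M$ is $\C$-uniruled: a nonconstant image of $\C$ passes through a generic point of $M$. I would then invoke the first half of Lemma 7.1 of \cite{mclean14}, which says that $\C$-uniruledness of an affine variety forces $l.k.d(M)=-\infty$. The geometric reason, which I would recall rather than reprove, is that an affine line through a generic point obstructs every pluri-log-canonical section, so that $H^{0}(X,\mathcal{K}_{X}^{\otimes d}\otimes\mathcal{D}^{\otimes d})=0$ for all $d$, whence the Iitaka dimension is $-\infty$.

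For part (2), I would read off from Definition \ref{ell(p.p)} that $\ell_{p.p}(M)=0$ means some degree-zero class in $H^{*}(M)$ is realized as a pair-of-pants product $\gamma_{1}\cdot_{p.p}\gamma_{2}$ with $\gamma_{1}\in SH^{*}_{g}(M)$ and $\gamma_{2}\in SH^{*}_{-g}(M)$. This is exactly the hypothesis of the theorem stated earlier (``If $\ell_{p.p}(M)=0$, then $M$ is $\C^{*}$-uniruled''), whose proof runs through the same degeneration-to-the-normal-cone machinery as the main theorem but produces a cylinder rather than a plane through the distinguished point; granting that theorem, $M$ is $\C^{*}$-uniruled. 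The second half of Lemma 7.1 of \cite{mclean14} then yields $l.k.d(M)\leq \text{dim}_{\C}M-1$, the point being that a $\C^{*}$ through a generic point cuts the Iitaka dimension by at least one.

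The main obstacle here is not a computation but a bookkeeping check at the interface of the two theories. I must confirm that the notion of $\C$-uniruled (respectively $\C^{*}$-uniruled) delivered by Corollary \ref{cor1} and by the pair-of-pants theorem is \emph{literally} the notion appearing in the hypothesis of Lemma 7.1 of \cite{mclean14} — in particular that ``for a generic point'' is interpreted identically on both sides, and that the compactification data $(X,D)$ used to define $l.k.d(M)$ is compatible with the one used to set up symplectic cohomology and to extract the rational and cylindrical curves. The historical origin of detecting such birational constraints by Floer-theoretic means goes back to \cite{biolley04}, which is why both references are cited; but once these definitional matches are verified, both parts follow immediately.
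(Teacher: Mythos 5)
Your proposal is correct and matches the paper's intent exactly: the paper offers no independent argument for this corollary, presenting it as the immediate concatenation of Corollary \ref{cor1} (respectively the theorem that $\ell_{p.p}(M)=0$ implies $\C^{*}$-uniruledness) with Lemma 7.1 of \cite{mclean14}, which is quoted verbatim just before the statement. Your additional care about matching the notions of uniruledness and the compactification data on both sides is sensible but introduces nothing beyond what the paper implicitly assumes.
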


\section{Further Questions and Concluding Remarks}
We summarize further questions as follows.
\begin{question}
In theorem \ref{main theorem0}, when is the union $\bigcup_{\omega\in \text{Amp }\subset H^{2}(M)} \bigcup_{[\mho]\in \text{im}\delta}\Xi_{\mho}$ of all the uniruled subvarieties of $M$ from the image of the map $\delta$ maximal as stratified uniruled-subvarieties of $M$ ($\bigcup_{D \text{, ample }} \bigcup_{[\mho]\in \text{im}\widehat{\delta}_{D}}\widetilde{\Xi}_{\mho}$ for a projective variety $X$, respectively)?
\end{question}

For a log-Calabi-Yau pairs $(X,D)$ (log-general for an ample $D$), it is known that quantum cohomology $QH^{*}(X)$ is a deformation of symplectic cohomology $SH^{*}(X\setminus D)$, \cite{borman.sheridan.varolgunes21}, \cite{mclean18}.
\begin{question}
Is $\bigcup_{D \text{, ample }} \bigcup_{[\mho]\in \text{im}\widehat{\delta}_{D}}\widetilde{\Xi}_{\mho}$ related to the genus $0$ Gromov-Witten invariant of $X$?
\end{question}

From the long exact sequences for Liouville cobordisms \cite{cieliebakoancea18},\\
\begin{tikzcd} 
\cdots\to SH^{*-1}(C, \partial^{-}C)\to    SH^{*-1}_{+}(C, \partial^{-}C)\lra{\delta} H^{*}(C, \partial^{-}C) \to SH^{*}(C, \partial^{-}C)\to\cdots,
\end{tikzcd}
\begin{defn} \label{defn2}
    $\ell(C, \partial^{-}C):=$min$\{$deg$([\alpha]): [\alpha] \in H^*(C,\partial^{-}C)$ with $[\alpha]=\delta(\gamma)$ for some $0\neq \gamma\in SH^{*-1}_{+}(C,\partial^{-}C)\}$.
\end{defn}
We reviewed that symplectic cohomology is invariant under the Kaliman modification \cite{mclean08} and that symplectic cohomology of blow-up of ADE-singularities is vanishing \cite{mclean-ritter18}. Considering that h-principle for symplectic cobordism of dimension $2n>4$ with concave overtwisted contact boundary exist \cite{eliashberg.murphy21}, we have the following questions as well. 

\begin{question} Let $C$ be an exceptional locus under the Kishimoto's half-point attachment. Is there a suitable (possibly, non-exact) Liouville cobordism structure on $C$, after deforming Liouville structures? If so, does $\ell(C, \partial^{-}C)$ vanish?
\end{question}

\begin{question} \label{connectedsum2}
If $M$ is an affine variety of dim$_{\C}M>1$ with non-negative log Kodaira dimension. Then is $M$ a "nontrivial" connected sum of affine varieties of non-negative log Kodaira dimension, by using $\ell(C, \partial^{-}C)$?
\end{question}

\begin{question} \label{construction2}
For each $n\geq 5$, is there an affine variety $M^{2n}_{2k}$ of dim$_{\C}M=n$ with $\ell(M^{2n}_{2k})=2k$ for all $4\leq 2k<n$? For example, $M^{8}_{4}$, $M^{10}_{4}$, $M^{12}_{6}$?
\end{question}

A smooth complex affine surface $M$ is $\C$-uniruled if and only if $l.k.d(M)=-\infty$ and is known to be $\C$-fibration \cite{miyanishi.sugie80}. For a smooth projective variety $X$, separably rationally connectedness implies that Kodaira dimension of $X$ is $-\infty$, i.e., dim$_{\C}H^{0}(X,\mathcal{K}_{X}^{\otimes m})=0$ for all $m>0$. The converse is conjectured to be true and the cases of dim$_{\C}X\leq 3$ are proven [3.8, 3.8.1, \cite{kollar96}, \cite{kollar.miyaoka.mori92}]. We ask similar questions for the  $\C$-uniruledness.

\begin{question} \label{2uniruled}
If $SH^*(M)$ has an invertible element other than the unit and the idempotents, then is $M$ $2$-uniruled(i.e. $\C^{*}:=(\C\setminus\{0\})$-uniruled or $\C$-uniruled)? Is there any relation to Seidel's representation in quantum cohomology and invertible elements in \cite{seidelthesis}?
\end{question}

\begin{remark}
Symplectic cohomology has a fruitful structure, the $\mathcal{L}_{\infty}$ structure, from higher homotopy of Chas-Sullivan string topology on the loop space of a symplectic manifold [\cite{chas.sullivan99}, \cite{fabert19}, \cite{fabert11}, \cite{siegel21}]. Can we detect rationally connectedness of $(X,D)$ by understanding obstruction of higher operations on symplectic cohomology which is from TQFT(Topological Quantum Field Theory) or from SFT(Symplectic Field Theory), which is symplectic invariant? If so, how could we compute them effectively?
\end{remark}

$S^{1}$-equivariant symplectic cohomology has the Gysin sequence which make a commutative diagram with the action-filtered long exact sequences by Bourgeois, Oansea in \cite{bourgeois.oancea13gysin}.

$$\begin{tikzcd} [cramped, sep=small]
\cdots \to SH^{*}(M)\arrow[r]\arrow[d]           & SH^{*}_{+}(M)\arrow[r,"\delta"]\arrow[d]                 & H^{*+1}(M)\arrow[r,"\iota"]\arrow[d]         & SH^{*+1}(M)\arrow[d]\to \cdots\\
\cdots \to SH^{*}_{S^{1}}(M)\arrow[r]\arrow[d]   & SH^{*}_{S^{1},+}(M)\arrow[r,"\delta_{S^{1}}"]\arrow[d]   & H^{*+1}_{S^{1}}(M)\arrow[r,"\iota"]\arrow[d] & SH^{*+1}_{S^{1}}(M)\arrow[d]\to \cdots\\
\cdots \to SH^{*+2}_{S^{1}}(M)\arrow[r]\arrow[d] & SH^{*+2}_{S^{1},+}(M)\arrow[r,"\delta_{S^{1}}"]\arrow[d] & H^{*+3}_{S^{1}}(M)\arrow[r,"\iota"]\arrow[d] & SH^{*+3}_{S^{1}}(M)\arrow[d]\to \cdots\\
\cdots \to SH^{*+1}(M)\arrow[r]                  & SH^{*+1}_{+}(M)\arrow[r,"\delta"]                        & H^{*+2}(M)\arrow[r,"\iota"]                  & SH^{*+2}(M) \to \cdots.
\end{tikzcd}$$
We can define the following definition and want to generalize the result of this paper to equivariant symplectic cohomology.
\begin{defn}
$\ell_{S^{1}}(M):=$min$\{$deg$([\alpha]):[\alpha] \in H^{*}_{S^{1}}(M)$ with $[\alpha]=\delta_{S^{1}}(\gamma)$ for some $0\neq \gamma\in SH^{*-1}_{S^{1},+}(M)\}$.
\end{defn}
$S^{1}$-equivariant symplectic cohomology $SH^{*}_{S^{1}, k}(\C^{*}\rtimes_{\phi} M)$ of the symplectic mapping cylinder of $M$ is isomorphic to the fixed point symplectic cohomology of $\phi:M\to M$ [Lemma B. 16, \cite{mclean19}]. It is proven that the invariants of isolated singularities, the multiplicity and the log canonical threshold, can be computed from $SH^{*}_{S^{1}}(\C^{*}\rtimes_{\phi} M)$ [Corollary 1.4, \cite{mclean19}]. 
\begin{question} 
\begin{enumerate}
    \item Find geometric meaning of differential maps or a map induced from Seidel's connection of $S^{1}$-equivariant cohomology. 
    \item As the long exact sequence of $SH^{*}(M)$ detects rational curves on $M$, can we detect subvarieties of $M$ from $SH^{*}_{S^{1}}(M)$ or $SH^{*}_{S^{1}}(\C^{*}\rtimes_{\phi} M)$? If so, what is birational geometric meaning of the subvarieties of $M$ or a mirror of $M$?
\end{enumerate}
\end{question}

\subsubsection{Symplectic View on Birational Geometry: Cohomological or Categorical} Mathematical objects are explored up to equivalences, or up to symmetries. Radial/conformal symmetry is one of them. Projectivisation of algebraic varieties gives many benefits to understand the varieties, for example, compactness without boundary and has a strong fact that every irreducible $n$-dimensional smooth projective complex variety with ample tangent bundle is isomorphic to $\C P^{n}$ \cite{mori79}. However, when we classify projective varieties or log-pairs, we need to relax the condition on "up to isomorphisms" because two non-isomorphic algebraic varieties are meaningfully related by birational equivalence, being isomorphic away from some divisors or subvarieties. Birational equivalence would come from the choice of poles or various points of view from divisors when we projectivise varieties or different steps of "projecitivation/de-projectivisation". The minimal model program is to find core objects/properties of algebraic varieties up to birational equivalence. To use rational curves on a variety is one of Mori's groundbreaking works that initiated the program. Roughly speaking, for any smooth complex projective variety $X$ (we refer \cite{kollar96} and references therein.),
\begin{itemize}
    \item If the Kodaira dimension $\kappa(X,\mathcal{K}_{X})=-\infty$, $X$ is birational to a Fano fibration.
    \item If $\kappa(X,\mathcal{K}_{X})\geq 0$, then we want to get rid of rational curves $C$ with $K_{X}.C<0$ so that the result variety $X'$ has the nef canonical divisor $K_{X}$.
\end{itemize}
There have been approaches understanding birational geometry using quantum cohomology \cite{li.ruan09} and are many evidences that show symplectic cohomology can be also useful to understand birational geometry, \cite{mclean09}, \cite{mclean12}, \cite{mclean14}, \cite{mcleancomputation}, \cite{mclean19}, \cite{mclean18}. For example, Li-Ruan showed that any two smooth minimal models of a projective Calabi-Yau 3-fold have the same quantum cohomology [Corollary A.3, \cite{li.ruan01}]. Two birational projective Calabi-Yau varieties have isomorphic Zariski-dense open affine subvarieties. So, considering that small quantum cohomologies are deformation of Hamiltonian Floer cohomology of affine subvarietes, McLean showed that birational projective Calabi-Yau varieties of $\text{dim}_{\C}=n>3$ have isomorphic small quantum cohomology algebras after tensoring with Novikov rings corresponding to certain divisors (relating to K\"ahler structures) \cite{mclean18}.

Meanwhile, various aspects of mirror symmetry have been found: relations between symplectic geometry and algebraic geometry, or geometry and algebra, or covariant functors and contravariant functors, or duality between cores and cocores, and the Gross-Siebert program. Roughly speaking, homological mirror symmetry predicts the equivalence of two categories, the derived Fukaya category of intersections of Lagrangian submanifolds of a symplectic manifold $X$ and the derived category of coherent sheaves on mirror algebraic variety $X^{\vee}$. Historically, Hodge-theoretic and then enumerative geometric mirror symmetry was found first but it is conjectured that homological mirror symmetry recovers those classical mirror symmetry. Homological mirror symmetry for compact manifolds without boundary extends to compact manifolds with boundary or a log-pair. The open-closed map relates Hochschild cohomology of (Wrapped) Fukaya category and quantum cohomology(resp. symplectic cohomology). Conjecturally, a mirror $M^{\vee}$ of $M$ satisfies that $SH^{*}(M;\C)\cong \oplus_{p+q=*}H^{q}(M^{\vee},\wedge^{p}\mathcal{T}_{M^{\vee}})$. Very roughly speaking, conjectural constructions of a mirror of affine variety $M$ is $Spec(SH^{*}(M))$ with Landau-Ginzburg superpotential on it. For example, a mirror of $\C^{*}$ is known to be $\C^{*}$ itself: $SH^{*}(\C^{*};\Lambda)=SH^{*}(T^{*} S^{1};\Lambda)=\Lambda [x,x^{-1}]$ and Spec$(SH^{*}(\C^{*}))=\C^{*}$. Under this conjectural mirror picture, mirror of a variety having vanishing symplectic cohomology is a point. Therefore, uniruled locus in a variety is "simple". A remark is that in the minimal model program, the canonical ring, defined by $\oplus_{m}H^{0}(X,\mathcal{K}_{X}^{\otimes m})$ is an important birational invariant: If the canonical ring is finitely generated and $\mathcal{K}_{X}$ is big, then the canonical model of $X$ in the minimal model program is $\text{Proj}_{\C}(\oplus_{m}H^{0}(X,\mathcal{K}_{X}^{\otimes m}))$. Birkar-Cascini-Hacon-M$^{\text{c}}$Kernan proved that the canonical ring is finitely generated and the existence of minimal models \cite{BCHM10}. On the other hand of symplectic geometry, for any affine variety $M$ from a log Calabi-You pair $(X,D)$ and any field $\K$ [Theorem 1.1, \cite{pomerleano21}],
\begin{enumerate}
    \item $SH^{0}(M,\K)$ is a finitely generated $\K$-algebra.
    \item $SH^{*}(M,\K)$ is a finitely generated module over $SH^{0}(M,\K)$.
    \item For any Lagrangian submanifolds $L, L'$ of $M$, the wrapped Lagrangian Floer cohomology $WF^{*}(L,L',\K)$ is a finitely generated module over $SH^{0}(M,\K)$.
\end{enumerate}
In general, it is hard to compute symplectic cohomology. One of solutions could be to apply mirror symmetry to compute symplectic cohomology in more systematic way and it would be interesting to characterize the minimal model program in mirror side. On the other hand, it would be also interesting to understand how Lagrangian skeletons with arboreal singularities, which are Weinstein homotoped to a Liouville domain or a log pair, behave under birational morphisms (see also \cite{nadler15}, \cite{starkston18}, \cite{mustata.nicaise15}, \cite{nicaise.xu16}, \cite{mikhalkin19}) and how Nadler's categories (and equivalence of categories, see \cite{nadler.arboreal17}, \cite{nadler17}, \cite{nadler19}, \cite{nadler16}) behave under the Minimal Model Program. 
\section{Appendix}
\subsection{Several Conventions} \label{several conventions}
Let $(M,\omega)$ be a symplectic manifold of dim$_{\R}M=2n$, $u:\R\times \R/\Z\to M$ with $\lim_{s\to \pm \infty}u(s,t)=x_
{\pm}(t)$ be a solution of the Floer equation: $J\cdot \frac{\partial u}{\partial s}=\frac{\partial u}{\partial t}-X_{H}$ with asymptotic Hamiltonian periodic orbits $x_{\pm}:\R/\Z\to M$ with $\dot{x}_{\pm}=X_{H}(x_{\pm})$ and $v$ is a symplectic filling of $x$. Let us compare four conventions which are "the same" up to direction of differentials, signs and degrees. 
\begin{center}
\begin{tabular}{|c|c|} 
 \hline
 Symplectic Homology \cite{cieliebakoancea18} & Symplectic Cohomology \cite{abouzaid15}, \cite{ritter10}\\
 \hline
 $\omega(X_{H},\cdot)=-dH(\cdot)$, $\omega(\cdot, J\cdot)=g(\cdot,\cdot)$ & $\omega(X_{H},\cdot)=-dH(\cdot)$, $\omega(\cdot, J\cdot)=g(\cdot,\cdot)$\\
 $J\cdot X_{H}=-grad_{g}H$ & $J\cdot X_{H}=grad_{g}H$\\
 $\mathcal{A}=\int v^{*}\omega -\int x^{*}H$ & $\mathcal{A}=-\int v^{*}\omega +\int x^{*}H$\\
 $\mathcal{A}_{H}(x_{+})-\mathcal{A}_{H}(x_{-})=\int_{\R\times \R/\Z}|\partial_{s}u|^{2}dsdt$ &  $\mathcal{A}_{H}(x_{+})-\mathcal{A}_{H}(x_{-})=-\int_{\R\times \R/\Z}|\partial_{s}u|^{2}dsdt$\\
 Conley-Zehnder Index$=n-$Morse Index & Conley-Zehnder Index$=n-$Morse Index\\
 Differentials decreases the action, the grading & Differentials increases the action, the grading\\
 $CF^{[a,b)}_{*}\to CF^{[a,c)}_{*}\to CF^{[b,c)}_{*}$ for $a<b<c$ & $CF_{[a,b)}^{*}\to CF_{[a,c)}^{*}\to CF_{[b,c)}^{*}$ for $a<b<c$\\
 \hline
 Symplectic Homology \cite{bourgeois.oancea09}, \cite{mclean12}  & Symplectic Cohomology \cite{varolgunes19}\\
 \hline
 $\omega(X_{H},\cdot)=dH(\cdot)$, $\omega(\cdot, J\cdot)=g(\cdot,\cdot)$ & $\omega(X_{H},\cdot)=dH(\cdot)$, $\omega(\cdot, J\cdot)=g(\cdot,\cdot)$\\
 $J\cdot X_{H}=-grad_{g}H$& $J\cdot X_{H}=grad_{g}H$\\
 $\mathcal{A}=-\int v^{*}\omega -\int x^{*}H$& $\mathcal{A}=\int v^{*}\omega +\int x^{*}H$\\
 $\mathcal{A}_{H}(x_{+})-\mathcal{A}_{H}(x_{-})= -\int_{\R\times \R/\Z}|\partial_{s}u|^{2}dsdt$ & $\mathcal{A}_{H}(x_{+})-\mathcal{A}_{H}(x_{-})= \int_{\R\times \R/\Z}|\partial_{s}u|^{2}dsdt$\\
 $-$Conley-Zehnder Index=$-n$+Morse Index & Conley-Zehnder Index\\
 Differentials decreases the grading & Differentials increases the grading\\
 \hline
\end{tabular}
\end{center}

\end{document}